\numberwithin{equation}{section} %sets equation numbers <chapter>.<section>.<index>
\newtheorem{thm}{Theorem}[chapter]
\newtheorem{cor}[thm]{Corollary}
\newtheorem{lemma}[thm]{Lemma}
\newtheorem{prop}[thm]{Proposition}
\theoremstyle{definition}
\newtheorem{rmk}[thm]{Remark}
\begin{document}
\title{Transfer relations in essentially tame local Langlands correspondence}
\author{Geo Kam-Fai Tam}
\date{July 2012}
\maketitle

\pagenumbering{arabic}
\begin{abstract}
Let $F$ be a non-Archimedean local field and $G$ be the general linear group $\mathrm{GL}_n$ over $F$. Bushnell and Henniart described the essentially tame local Langlands correspondence of $G(F)$ using rectifiers, which are certain characters defined on tamely ramified elliptic maximal tori of $G(F)$. They obtained such result by studying the automorphic induction character identity. We relate this formula to the spectral transfer character identity, based on the theory of twisted endoscopy of Kottwitz, Langlands, and Shelstad. In this article, we establish the following two main results.
\begin{enumerate}[(i)]
  \item To show that the automorphic induction character identity is equal to the spectral transfer character identity, when both are normalized by the same Whittaker data.
      \item To express the essentially tame local Langlands correspondence using admissible embeddings constructed by Langlands-Shelstad $\chi$-data, and to relate Bushnell-Henniart's rectifiers to certain transfer factors.
\end{enumerate}
\end{abstract}

\newpage

\tableofcontents

\chapter{Introduction}\label{chapter intro}

\section{Historical background}\label{section history}

Let $F$ be a non-Archimedean local field. We assume that its residue field $\mathbf{k}_F$ has $q$ elements and of characteristic $p$. Let $G$ be the general linear group $\mathrm{GL}_n$ over $F$. We know that the supercuspidal spectrum of $G(F)$ is bijective to the collection of irreducible $n$-dimensional smooth complex representations of the Weil group $W_F$ of $F$. This is a main result of the local Langlands correspondence for $\mathrm{GL}_n$ proved by Harris and Taylor \cite{HT} and by Henniart \cite{Hen-simple} independently in the characteristic 0 case and by Laumon, Rapoport, and Stuhler \cite{LRS} in the positive characteristic case.

Bushnell and Henniart described in \cite{BH-ET1} the restriction of such correspondence to the essentially tame sub-collections. We briefly summarize their result as follows. Let $\mathcal{G}^\mathrm{et}_n(F)$ be the set of equivalence classes of essentially tame irreducible representations of $W_F$ of degree $n$ and $\mathcal{A}^\mathrm{et}_n(F)$ be the set of isomorphism classes of essentially tame irreducible supercuspidal representations. We will recall the two notions of essential-tameness in section \ref{section The correspondence}. Bushnell and Henniart proved that there exists a unique bijection
\begin{equation}\label{intro langlands corresp}
\mathcal{L}_n={}_F\mathcal{L}^\mathrm{et}_n:\mathcal{G}^\mathrm{et}_n(F)\rightarrow\mathcal{A}^\mathrm{et}_n(F), \sigma\mapsto \pi,
\end{equation}
 characterized by certain canonical conditions together with, roughly speaking, the compatibilities of automorphic induction \cite{Hen-Herb}, \cite{HL2010}, \cite{HL2011} and base change \cite{AC}, \cite{HL2011} (see also \cite{BH-LTL3}, \cite{BH-LTL4}) on $\mathcal{A}^\mathrm{et}_n(F)$ with induction and restriction on $\mathcal{G}^\mathrm{et}_n(F)$ (see the precise statement in Proposition 3.2 of \cite{BH-ET1}). We call the map $\mathcal{L}_n$ the {essentially tame local Langlands correspondence}. In most literature, we call $\sigma$ the Langlands parameter of $\pi$.

To describe $\mathcal{L}_n$, we continue to follow the idea in \cite{BH-ET1}. There they introduced the third set $P_n(F)$ of equivalence classes $(E/F,\xi)$ of admissible characters $\xi$ of $E^\times$ over $F$, where $E$ goes through tamely ramified extensions over $F$ of degree $n$. We will recall the notion of admissible character in section \ref{section admissible characters} and describe its properties in section \ref{section admissible characters revisited}. The collection $P_n(F)$ bijectively parameterizes $\mathcal{G}^\mathrm{et}_n(F)$ and $\mathcal{A}^\mathrm{et}_n(F)$ simultaneously. We denote these bijections by
\begin{equation}\label{bijection of P and G}
\Sigma_n:P_n(F) \rightarrow \mathcal{G}^\mathrm{et}_n(F),\,(E/F, \xi) \mapsto \sigma_\xi,
 \end{equation}
and
\begin{equation}\label{bijection of P and A}
\Pi_n: P_n(F) \rightarrow \mathcal{A}^\mathrm{et}_n(F),\,(E/F,\xi)\mapsto \pi_\xi.
 \end{equation}
 We describe these bijections in simple words.
 \begin{enumerate}[(i)]
   \item The correspondence $\Sigma_n$ is simply the induction of representation $\sigma_\xi=\mathrm{Ind}_{W_E}^{W_F}\xi$ if we regard $\xi$ as a character of $W_E$ by local class field theory \cite{Tate-NTB}. According to the compatibility condition characterizing $\mathcal{L}_n$, the bijection $\mathcal{L}_n\circ \Sigma_n:P_n(F)\rightarrow \mathcal{A}^\mathrm{et}_n(F)$ is a composition of successive lifts of representations by automorphic induction. We will review this notion in section \ref{section autom ind}. Here we remark that the existence of automorphic induction depends on some global arguments, as in section 7 and 8 of \cite{Hen-Herb}.
       \item The correspondence $\Pi_n$ is a compact induction
       \begin{equation}\label{intro compact ind}
         \pi_\xi=\mathrm{cInd}_{{\mathbf{J}_\xi}}^{G(F)}\Lambda_\xi
       \end{equation} of certain representation $\Lambda_\xi$ of a compact-mod-center subgroup ${\mathbf{J}_\xi}$ of $G(F)$. The representation $({\mathbf{J}_\xi},\Lambda_\xi)$ is called an extended maximal type in section 2.1 of \cite{BH-ET1}, based on the theory of simple types in \cite{BK}. We will provide a summary of the construction from $(E/F,\xi)$ to $({\mathbf{J}_\xi},\Lambda_\xi)$ in section \ref{Supercuspidal Representations}. We remark that such construction is completely local and representation theoretic.
 \end{enumerate}
In the case when $p \nmid n$, traditionally known as the tame case, all irreducible supercuspidal representations of $G(F)$ and all $n$-dimensional irreducible complex representations of $W_F$ are essentially tame, and $P_n(F)$ consists of $(E/F, \xi)$ for $E$ goes through all separable extensions over $F$ of degree $n$. The correspondence (\ref{bijection of P and A}) was constructed by Howe \cite{Howe} and, based on such construction, the description of $\mathcal{L}_n$ using $P_n(F)$ as parameters was studied by Moy \cite{Moy} and Reimann \cite{Rei}. In particular when $n=2$, the book \cite{BH-GL2} contains an extensive treatment in this theory.

The composition of the bijections  (\ref{bijection of P and G}), (\ref{intro langlands corresp}), and the inverse of (\ref{bijection of P and A}),
\begin{equation*}
\mu: P_n(F) \xrightarrow{\Sigma_n} \mathcal{G}^\mathrm{et}_n(F) \xrightarrow{\mathcal{L}_n} \mathcal{A}^\mathrm{et}_n(F) \xrightarrow{\Pi_n^{-1}} P_n(F),
\end{equation*}
is not the identity map on $P_n(F)$ in general. Bushnell and Henniart proved in \cite{BH-ET3} that, for each admissible character $\xi$ of $E^\times$, there is a unique tamely ramified character ${}_F\mu_\xi$ of $E^\times$, depending on the restriction $\xi|_{U^1_E}$ of $\xi$ on the 1-unit group ${U^1_E}$ of $E^\times$, such that ${}_F\mu_\xi\cdot \xi$ is also admissible and
\begin{equation*}
\mu(E/F, \xi) = (E/F, {}_F\mu_\xi\cdot \xi).
\end{equation*}
We call ${}_F\mu_\xi$ the {rectifier} of $\xi$. In the series \cite{BH-ET1}, \cite{BH-ET2}, \cite{BH-ET3}, they express explicitly the rectifier ${}_F\mu_\xi$ and also the correspondence $\mathcal{L}_n$.

We briefly explain how to deduce the values of ${}_F\mu_\xi$, following the series of papers just mentioned. We first construct a sequence of subfields
\begin{equation*}
F\subseteq K_0\subseteq K_1\subseteq \cdots\subseteq K_l\subseteq E,
\end{equation*}
 which satisfies the following conditions.
 \begin{enumerate}[(I)]
\item $K_0/F$ is the maximal unramified sub-extension of $E/F$. \label{intro unram}
\item $K_l/K_{l-1},\dots,K_1/K_0$ are quadratic totally ramified. \label{intro quad totally ram}
\item $E/K_l$ is totally ramified of odd degree. \label{intro odd degree totally ram}
\end{enumerate}
Notice that the extensions in (\ref{intro unram}) and (\ref{intro quad totally ram}) are cyclic. The one in (\ref{intro odd degree totally ram}) is cyclic if we adjoin to the base field $F$ sufficient number of roots of unity. Each rectifier ${}_F\mu_\xi$ then admits a factorization
\begin{equation}\label{coarse factorization of rectifier}
{}_{F}\mu_\xi=({}_{K_0/F}\mu_\xi)({}_{K_1/K_0}\mu_\xi)\cdots({}_{K_l/K_{l-1}}\mu_\xi)({}_{K_l}\mu_\xi),
\end{equation}
such that each factor is tamely ramified. The approach in the series \cite{BH-ET1}, \cite{BH-ET2}, \cite{BH-ET3} is to deduce each factor on the right side of (\ref{coarse factorization of rectifier}) through an inductive process as follows. Assume at this moment that the field extension in (\ref{intro odd degree totally ram}) above is cyclic. We reduce to the case when $K/F$ is a cyclic sub-extension of $E/F$ and assume that ${}_K\mu_\xi$ is known when $\xi$ is regarded as an admissible character of $E^\times$ over $K$. We can deduce a new character ${}_{K/F}\mu_\xi$ of $E^\times$, called a $\nu$-rectifier of $\xi$ in this thesis, by comparing the automorphic induction character \cite{Hen-Herb} with the twisted Mackey induction character (in (1.4.1) of \cite{BH-ET3}) of the compact induction (\ref{intro compact ind}). We then define $${}_{F}\mu_\xi=({}_{K/F}\mu_\xi)({}_{K}\mu_\xi).$$ In general we can apply a base change to assume that the extension in (\ref{intro odd degree totally ram}) is always cyclic, so that the rectifier ${}_{K_l}\mu_\xi$ is defined (see the proof of Theorem 3.5 and section 4.6-4.9 of \cite{BH-ET1}). We will recall the values of each factor in (\ref{coarse factorization of rectifier}) in section \ref{section explicit rectifiers}.

The theory of automorphic induction is subsumed under the transfer principle of Langlands and Shelstad \cite{LS} or more precisely under the theory of twisted endoscopy of Kottwitz and Shelstad \cite{KS}. We provide a brief idea of these theories as follows. We call a connected $F$-quasi-split reductive group $H$ a twisted endoscopic group of $G$ if the complex dual group of $H$ is a twisted centralizer of a semi-simple element in the complex dual of $G$. This applies to the pair $(G,H)=(\mathrm{GL}_n,\mathrm{Res}_{K/F}\mathrm{GL}_{n/|K/F|})$ when $K/F$ is cyclic. Under such setup, we can transfer the semi-simple conjugacy classes of $H$ to the twisted semi-simple conjugacy classes in $G$ and obtain equalities of weighted sums of orbital integrals of corresponding conjugacy classes. This is the main idea of the geometric transfer principle. The weights in the equalities are given by a function on the cartesian product of the conjugacy classes of $G(F)$ and $H(F)$ called the transfer factor, whose values are non-zero only at the pairs of conjugacy classes related by transfer. In chapter \ref{endoscopy}, we will study the transfer factor associated to the pair $(G,H)$, which admits a simple form in this case. We remark that the transfer principle is conditional on the Fundamental Lemma, which is proved by Waldspurger \cite{Walds-GLn} for the cases we concern in this thesis and by Ng\^o \cite{Ngo} for the ordinary endoscopy in general. In this thesis, we sometimes replace the word `twisted endoscopy' by simply `endoscopy' for convenience.

Of course, there should be a spectral counterpart of the geometric transfer principle. The spectral transfer principle conjectures that, for each irreducible tempered representation $\rho$ of $H(F)$, there is an irreducible tempered representation $\pi$ of $G(F)$ determined by certain character identity in terms of $\rho$. In the case when $(G,H)=(\mathrm{GL}_n,\mathrm{Res}_{K/F}\mathrm{GL}_{n/|K/F|})$ as in the previous paragraph, such transfer of representations is established in \cite{Hen-Herb}. The character relation between $\pi$ and $\rho$, which we shall call the spectral transfer character identity, is known to be the same as the automorphic induction character identity up to a constant. In particular, if $\rho$ is essentially tame supercuspidal and satisfies the regular condition by the action of the Galois group $\Gamma_{K/F}$, then $\pi$ is also essentially tame supercuspidal by \cite{BH-ET1}. We remark that Shelstad has established the transfer principle for tempered representations of real groups in \cite{Shelstad-temp-endo-for-real-group-1}, \cite{Shelstad-temp-endo-for-real-group-2}.

\section{Main results of the author}\label{section main results of author}

The two main results in this thesis and their proofs constitute chapter \ref{chapter comparing character} and \ref{chapter rectifier and transfer} respectively. We introduce the main idea of the statements in the subsections. For the readers who know the background theory well, we encourage them to take a look at the interlude in chapter \ref{chapter interlude}. There we briefly describe the idea in proving the two main results in more detail, since then we have enough information to clarify the notions and terminologies.

\subsection{The first result}

The first main result, Theorem \ref{auto-ind-char-HH-LS-equal}, is to clarify the `up to a constant' relation between the automorphic induction character identity and the spectral transfer character identity stated in the last paragraph of section \ref{section history}.

 Suppose that $\pi$ is an essentially tame supercuspidal representation of $G(F)$ automorphically induced from an essentially tame supercuspidal representation $\rho$ of $H(F)$. A necessary condition is that $\pi$ is isomorphic to its twist by the character $\kappa$ of $F^\times$ corresponding to the cyclic extension $K/F$ via local class field theory. Then there is a non-trivial $G(F)$-intertwining operator $$\Psi:\kappa\pi:=(\kappa\circ\det)\otimes \pi\rightarrow \pi$$ defined up to a constant. Let $\Theta_\rho$ be the character of $\rho$ and $ \Theta^\kappa_\pi$ be the twisted character of $\pi$ depending on the intertwining operator $\Psi$, both regarded as locally constant functions on the regular semi-simple parts of $H(F)$ and $G(F)$ respectively by \cite{HC-AID} and 3.9 Corollary of \cite{Hen-Herb}. Let $\Gamma_{K/F}=\Gamma_F/\Gamma_K$ be the Galois group of $K/F$. A very rough form of the automorphic induction character identity looks like
\begin{equation}\label{rough introduce automorphic ind char}
  \Theta^\kappa_\pi(\gamma)=\Delta_\mathrm{HH}(\gamma)\sum_{g\in \Gamma_{K/F}}\Theta_{\rho^g}(\gamma),
\end{equation}
for all $\gamma$ lying in the subset of elliptic semi-simple elements of $H(F)$ and being regular in $G(F)$. Here $\Delta_\mathrm{HH}$ is a transfer factor defined by Henniart and Herb \cite{Hen-Herb}. It depends on the choices of several auxiliary objects, which are hidden in the background at this moment. Varying these choices only changes the transfer factor $\Delta_\mathrm{HH}$, and so the right side of (\ref{rough introduce automorphic ind char}), only by a constant independent of $\gamma$. The point is that, in order to compare (\ref{rough introduce automorphic ind char}) with the twisted Mackey induction character from (\ref{intro compact ind}), we have to choose the correct normalization of $\Psi$ depending on the underlying representations.

On the other hand, Langlands and Shelstad defined a transfer factor $\Delta_\mathrm{LS}$ based on the theory of endoscopy. In the case when $G$ and $H$ are the groups defined above, we can show that $\Delta_\mathrm{LS}$ is equal to $\Delta_\mathrm{HH}$ again up to a constant. To get a canonical normalization of $\Delta_\mathrm{LS}$, we make use of the quasi-split property of $G$. We fix a Borel subgroup $B$ of $G$ defined over $F$ and call it the standard one for instance. We then normalize $\Delta_\mathrm{LS}$ as in section 5.3 of \cite{KS} in the sense that it depends only on the datum, called a standard Whittaker datum, related to the standard Borel $B$ and does not depend on other auxiliary data. The (rough form of the) sum
\begin{equation}\label{rough spectral transfer char}
 \Delta_\mathrm{LS}(\gamma)\sum_{g\in \Gamma_{K/F}}\Theta_{\rho^g}(\gamma)
\end{equation}
should also determine $\pi$ and differs from (\ref{rough introduce automorphic ind char}) by a constant.

We give some rough sketches of the two transfer factors. The Henniart-Herb transfer factor $\Delta_\mathrm{HH}$ depends on the discriminant of the element $\gamma$. More precisely, if $\gamma$ lies in $E^\times$ as an elliptic torus of $H(F)$ and is regular in $G(F)$, then roughly speaking $\Delta_\mathrm{HH}(\gamma)$ depends on the discriminant
\begin{equation}\label{intro discriminant}
  \prod_{\begin{smallmatrix}
  \Gamma_F/\Gamma_E=\{g_1,\dots,g_n\}\\ i<j
\end{smallmatrix}}({}^{g_i}\gamma-{}^{g_j}\gamma).
\end{equation}
We notice the symmetry imposed by the ordering of the cosets in the quotient $\Gamma_F/\Gamma_E$ (see the precise definition of such symmetry in section \ref{section root system}). The Langlands-Shelstad transfer factor $\Delta_\mathrm{LS}$ is a product of several factors, some of which depend on the root system $\Phi=\Phi(G,T)$ in $G$ of the elliptic torus $T$ whose $F$-points is $E^\times$. Each root in $\Phi$ is of the form \begin{equation}\label{intro root}
\left[\begin{smallmatrix}
  g_i \\ g_j
\end{smallmatrix}\right]:\gamma\mapsto {}^{g_i}\gamma({}^{g_j}\gamma)^{-1},\, g_i,g_j\in \Gamma_F/\Gamma_E,\, g_i\neq g_j.
\end{equation}
The similarity between (\ref{intro discriminant}) and (\ref{intro root}) gives the relation between the two transfer factors, see Proposition \ref{delta-2-equals-delta-II-III} for instance. We would make use of the interplay between the Galois groups and the root system: there is an easy bijection in Proposition \ref{orbit of roots as double coset} between the Galois orbits of the root system and the double cosets of the Galois group.

To compare (\ref{rough introduce automorphic ind char}) and (\ref{rough spectral transfer char}) we apply the theory of Whittaker model. The intertwining operator $\Psi$ is actually normalized by a Whittaker datum depending on the internal structure of the supercuspidal $\pi$. If $\pi$ comes from an admissible character $\xi$ via the bijection $\Pi_n$ in (\ref{bijection of P and A}), then the internal structure of $\pi$ comes from the corresponding internal structure, called the jump data, of $\xi$. We now recall that any two Whittaker data are conjugate under $G(F)$. Let $x\in G(F)$ be the element that conjugates the standard Whittaker datum to the one related on $\xi$ and denote $\kappa(x)=\kappa\circ\det(x)$. We can then show, using Proposition \ref{main prop on constants}, that \begin{equation}\label{rough normalized automorphic ind char}
  \kappa(x)\Delta_\mathrm{HH}(\gamma)\sum_{g\in \Gamma_{K/F}}\Theta_{\rho^g}(\gamma)
\end{equation}
is equal to the twisted character $\Theta^\kappa_\pi$ when the intertwining operator $\Psi$ is normalized by the standard Whittaker datum. The factor $\kappa(x)\Delta_\mathrm{HH}(\gamma)$ is then independent of the representation $\pi$ (or $\rho$). The main task of the first result, which consists of section \ref{label first cases}-\ref{section case unram}, is to compute $\kappa(x)$ in the case when $K/F$ is one of the extensions in (\ref{intro unram})-(\ref{intro odd degree totally ram}) and is moreover cyclic. With the values of $\kappa(x)$ we can show that
 $$\Delta_\mathrm{LS}=\kappa(x)\Delta_\mathrm{HH}$$
 when $K/F$ is specified as above. This implies our main result, Theorem \ref{auto-ind-char-HH-LS-equal}, that the automorphic induction character (\ref{rough normalized automorphic ind char}) and the spectral transfer character (\ref{rough spectral transfer char}) are equal when they are both normalized by the same Whittaker datum.

We remark that the result in the preprint of Hiraga and Ichino \cite{HI} implies that (\ref{rough normalized automorphic ind char}) and (\ref{rough spectral transfer char}) are equal for arbitrary cyclic extension $K/F$ not necessarily tamely ramified. Their method is to make use of a global argument to reduce to the case when $H$ is an elliptic torus and splits over a tamely ramified cyclic extension of $F$. Our method, which is to directly compute the transfer factors and the normalization constants, is completely local if we take the existence of automorphic induction for granted.

\subsection{The second result}
 The second main result, Theorem \ref{chi-data factor of BH-rectifier}, is to express the essentially tame local Langlands correspondence and Bushnell-Henniart rectifiers in terms of Langlands-Shelstad transfer principle. Recall in section (2.5)-(2.6) of \cite{LS} that they introduced a collection of characters, called $\chi$-data, to construct admissible embeddings of a maximal L-torus into the L-group of a connected reductive group over $F$. The construction applies to the case when the group and its maximal torus are $(G,T)=(\mathrm{GL}_n,\mathrm{Res}_{E/F}\mathbb{G}_m)$. Here $T$ is assumed to be contained in $G$ by a chosen embedding. Write $ \Phi= \Phi(G,T)$ the root system of $T$ in $G$. Then the set of $\chi$-data consists of characters $$\{\chi_{\lambda}\}_{\lambda\in W_F\backslash \Phi}.$$ Here $\lambda$ runs through a suitable subset, denoted by $ W_F\backslash \Phi$ at this moment, of representatives of the $W_F$-orbits of $\Phi$ such that the character $\chi_\lambda$ is defined on the multiplicative group of a field extension $E_\lambda$ containing $E$ for each $\lambda\in W_F\backslash \Phi$. We will recall briefly, in section \ref{section langlands shelstad chi data}, how to construct an admissible embedding $$\{\chi_{\lambda}\}=\{\chi_{\lambda}\}_{\lambda\in W_F\backslash \Phi}\mapsto(\chi_{\{\chi_{\lambda}\}}:{}^LT\rightarrow {}^LG)$$ from a set of $\chi$-data.

 The main result is to choose a canonical collection of tamely ramified $\chi$-data, $$\{\chi_{\lambda,\xi}\}_{\lambda\in W_F\backslash \Phi},$$ for each admissible character $\xi$ of $T(F)=E^\times$, such that the product of their restrictions on $E^\times$ is the rectifier of $\xi$. More precisely, we will prove in Theorem \ref{chi-data factor of BH-rectifier} that the rectifier ${}_F\mu_\xi$ of $\xi$ has a factorization of the form $${}_F\mu_\xi=\prod_{\lambda\in W_F\backslash \Phi}\chi_{\lambda,\xi}|_{E^\times}.$$
 We can compare this product to the original factorization (\ref{coarse factorization of rectifier}) provided by Bushnell and Henniart. The new factorization in Theorem \ref{chi-data factor of BH-rectifier} is a finer one, in the sense that we have a factorization for the $\nu$-rectifier
 $${}_{L/K}\mu_\xi=\prod_{\begin{smallmatrix}
  \lambda\in W_F\backslash \Phi\\ \lambda|_K\equiv1,\,\lambda|_L\neq 1
\end{smallmatrix}}\chi_{\lambda,\xi}|_{E^\times}$$
 for every intermediate field extension $L/K$ appearing on the right side of (\ref{coarse factorization of rectifier}). This leads to expressing a $\nu$-rectifier by certain transfer factor $\Delta_\mathrm{III_2}$, a fact stated as Corollary \ref{rectifier as transfer factor}. %We would also ask the reader to compare this fact to Corollary \ref{definition of delta III2} of Proposition \ref{recover the character from induction}.

We can therefore interpret the essentially tame local Langlands correspondence using admissible embeddings as follows. We first recall the local Langlands correspondence for the torus $T$. This is the natural isomorphism $$\mathrm{Hom}(E^\times,\mathbb{C}^\times)\cong H^1(W_F,\hat{T}).$$ Let $\tilde{\xi}:W_F\rightarrow{}^LT $ be a 1-cocycle whose class corresponds to the admissible character $\xi:E^\times\rightarrow \mathbb{C}^\times$. Moreover, given the $\chi$-data $\{\chi_{\lambda,\xi}\}_{\lambda\in W_F\backslash \Phi}$ as in Theorem \ref{chi-data factor of BH-rectifier}, we consider its `inverse' collection $\{\chi^{-1}_{\lambda,\xi}\}=\{\chi^{-1}_{\lambda,\xi}\}_{\lambda\in W_F\backslash \Phi}$, which are also $\chi$-data by definition.

\begin{thm}\label{introduce main result ETLLC as adm-emb}
 The natural projection of
 $$\chi_{\{\chi^{-1}_{\lambda,\xi}\}}\circ\tilde{\xi}:W_F \rightarrow {}^LT\rightarrow {}^LG$$
 onto $\mathrm{GL}_n(\mathbb{C})$ is isomorphic to $\sigma_{{}_F\mu_\xi^{-1}\xi}=\mathrm{Ind}_{W_E}^{W_F}({}_F\mu_\xi^{-1}\xi)$ as a representation of $W_F$. This is the Langlands parameter of the supercuspidal $\pi_\xi$.
 \qed\end{thm}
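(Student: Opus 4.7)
The plan is to unwind the Langlands--Shelstad construction of the admissible embedding built from $\chi$-data and read off the composite $\chi_{\{\chi^{-1}_{\lambda,\xi}\}}\circ\tilde\xi$ as a representation of $W_F$. First I would recall from section \ref{section langlands shelstad chi data} that for the elliptic maximal torus $T\subset G=\mathrm{GL}_n$ with $T(F)=E^\times$, the dual $\hat T$ embeds in $\hat G=\mathrm{GL}_n(\mathbb C)$ as the diagonal torus indexed by the coset set $\Gamma_F/\Gamma_E$, on which $W_F$ acts by permutation of coordinates. A set of $\chi$-data $\{\chi_\lambda\}_{\lambda\in W_F\backslash\Phi}$ produces a $1$-cocycle of $W_F$ in $N_{\hat G}(\hat T)$, extending the fixed inclusion $\hat T\hookrightarrow\hat G$ to the admissible embedding $\chi_{\{\chi_\lambda\}}\colon {}^LT\to{}^LG$.

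Second, the key standard fact, which is essentially a direct matrix computation from the Langlands--Shelstad formula, is that with the above normalization the composite $\chi_{\{\chi_\lambda\}}\circ\tilde\xi$, projected onto $\mathrm{GL}_n(\mathbb C)$, is isomorphic as a $W_F$-representation to
\[
\mathrm{Ind}_{W_E}^{W_F}\Bigl(\xi\cdot\prod_{\lambda\in W_F\backslash\Phi}\chi_\lambda|_{E^\times}\Bigr).
\]
I would verify this by writing the composite in monomial-matrix form in the basis of coset representatives of $\Gamma_F/\Gamma_E$: on the identity coset the diagonal entry is $\xi$ multiplied by exactly the product $\prod_\lambda\chi_\lambda|_{E^\times}$ coming from the cocycle contribution of the $\chi$-data, while the remaining entries are obtained by conjugation over $\Gamma_{E/F}$. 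Applied to the inverse data $\{\chi^{-1}_{\lambda,\xi}\}$ and combined with Theorem \ref{chi-data factor of BH-rectifier}, which asserts $\prod_\lambda\chi_{\lambda,\xi}|_{E^\times}={}_F\mu_\xi$, this yields
\[
\chi_{\{\chi^{-1}_{\lambda,\xi}\}}\circ\tilde\xi\;\simeq\;\mathrm{Ind}_{W_E}^{W_F}\bigl({}_F\mu_\xi^{-1}\xi\bigr)=\sigma_{{}_F\mu_\xi^{-1}\xi}.
\]

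Finally, to identify $\sigma_{{}_F\mu_\xi^{-1}\xi}$ with the Langlands parameter of $\pi_\xi$, I would invoke the Bushnell--Henniart relation $\mu(E/F,\eta)=(E/F,{}_F\mu_\eta\cdot\eta)$ together with the fact from \cite{BH-ET3} that ${}_F\mu_\eta$ depends only on $\eta|_{U^1_E}$; since ${}_F\mu_\xi$ is tamely ramified, the characters $\xi$ and ${}_F\mu_\xi^{-1}\xi$ agree on $U^1_E$, so ${}_F\mu_{{}_F\mu_\xi^{-1}\xi}={}_F\mu_\xi$ and hence $\mathcal L_n(\sigma_{{}_F\mu_\xi^{-1}\xi})=\pi_\xi$. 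The main obstacle is the explicit cocycle computation in the second step: one has to track the contributions of the $\chi_{\lambda,\xi}$ on each $W_F$-orbit of roots through the Langlands--Shelstad formula, using the bijection between root orbits and Galois double cosets, and check that they reassemble into the standard monomial cocycle for the induced representation twisted by $\prod_\lambda\chi_\lambda|_{E^\times}$. Once this cocycle-level identification is in place, the rest is a formal bookkeeping application of Theorem \ref{chi-data factor of BH-rectifier}.
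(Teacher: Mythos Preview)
Your proposal is correct and follows essentially the same route as the paper. The ``key standard fact'' in your second step is exactly Proposition~\ref{recover the character from induction}, whose proof in the paper carries out precisely the monomial-matrix/cocycle computation you sketch (tracking the contributions over asymmetric and symmetric orbits via transfer maps); the remaining two steps---applying Theorem~\ref{chi-data factor of BH-rectifier} and then the tame dependence of the rectifier on $\xi|_{U^1_E}$---are the same bookkeeping the paper invokes at the end of chapter~\ref{chapter rectifier and transfer}.
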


The author would like to remark that the results of this sub-section are motivated from the original thesis problem assigned by James Arthur, based on his communications with Robert Kottwitz some years ago.

\section{Outline of the article}

The first three chapters contain mainly classical results. Chapter \ref{chapter basic} provides the rudiment objects for the whole theory, although we immediately bring out a few technical facts which will be used frequently in later chapters. In chapter \ref{chapter ETLLC}, we state the main statement of essentially tame local Langlands correspondence and give the notion of automorphic induction. The rectifiers of Bushnell and Henniart then appear naturally. In chapter \ref{endoscopy}, we recall the theory of endoscopy by Kottwitz, Langlands, and Shelstad in brief detail. We would compute explicitly those transfer factors of our concern and compare them to those provided by Henniart and Herb.

Before going into non-classical matters, we give in chapter \ref{chapter interlude} an interlude which outlines the main results and sketches some technical details of the proofs. Then in chapter \ref{chapter finite cymplectic module} and \ref{chapter Tame supercuspidal representations} we construct the essentially tame supercuspidal representations from admissible characters and study certain finite modules arising from the constructions. At the end of chapter \ref{chapter Tame supercuspidal representations}, we give the explicit values of the rectifiers in terms of certain invariants, called the t-factors, of the finite modules. In the last two chapters \ref{chapter comparing character} and \ref{chapter rectifier and transfer}, we state and prove the main results described in section \ref{section main results of author}.

\chapter{Basic setup}\label{chapter basic}

In this chapter we setup the basic objects. Throughout we let
\begin{enumerate}[(i)]
  \item $F$ be a non-Archimedean local field,
  \item $\bar{F}$ be an algebraic closure of $F$,
  \item $\mathbf{k}_F$ be the residue field of $F$, with $q$ elements and of characteristic $p$,
  \item $\Gamma_F$ be the Galois group of $F$, and
  \item $G$ be the general linear group $\mathrm{GL}_n$ as a reductive group over $F$.
\end{enumerate}
In section \ref{section root system}, we describe elliptic tori in $G(F)$, their root systems, and the actions of the Galois group on them. For explicit computations we need to identify the Galois orbits of the root system with the non-trivial double cosets of Galois groups. We then state in section \ref{section Galois groups} certain parity results concerning the number of double cosets of Galois groups, which are essential to the main results of this article. In section \ref{section L-groups}, we recall the dual group and the L-group of a reductive group, which are the main objects of the `Galois side' of the local Langlands correspondence.

\section{Root systems}\label{section root system}

 Given a field extension $E/F$ of degree $n$, we let $T$ be the induced torus $$T=\text{Res}_{E/F} \mathbb{G}_m.$$ By identifying $E$ with an $n$-dimensional $F$-vector space, we can embed $T$ into $G$ as an elliptic maximal torus. We can express the Galois action on $T$ as follows. By identifying our maximal torus $T$ as the group of functions on $\Gamma_F/\Gamma_E$ with values in $\mathbb{G}_m$, we can write the $\Gamma_F$-action on $T$ as
 $$({}^gf)(x\Gamma_E) = {}^g(f(g^{-1}x\Gamma_E))\text{, for all }f
\in  T,\,g,x\in \Gamma_F.$$
Hence the $F$-point of $T$ is $$T^{\Gamma_F} = \{ f : \Gamma_F/\Gamma_E \rightarrow \bar{F}^\times | {}^g(f(g^{-1}x\Gamma_E)) = f(x\Gamma_E)\text{, for all } g, x \in \Gamma_F\}$$
and is isomorphic to $E^\times$ by evaluating at the trivial coset
\begin{equation}\label{evaluation at the trivial coset}
T^{\Gamma_F} \rightarrow E^\times,\,f\mapsto f(\Gamma_E).
\end{equation}
Notice that constant functions correspond to elements in $F^\times$.

Let $\Phi=\Phi(G,T)$ be the root system of $T$ in $G$. It generates a free abelian group $X^*(T)$ of rank $n$, called the root lattice, which can be expressed as
\begin{equation*}
\begin{split}
&X^*(T) = \text{Hom}_{F\text{-reg}}(T, \mathbb{G}_m)
\\
&= \{ \phi: T \rightarrow \mathbb{G}_m\text{ regular over }F,\, \phi(fh) = \phi(f) \phi(h)\text{ for all }f,h\in T\}.
\end{split}
\end{equation*}
 We are going to assign a basis for $X^*(T)$ and express the $\Gamma_F$-action on $\Phi$ and $X^*(T)$ explicitly. Take a set of coset representatives $\{g_1,\dots,g_n\}$ of $\Gamma_F/ \Gamma_E$ with $g_1=1$. We define a map $g(\cdot,\cdot):\Gamma_F\times \{g_1,\dots,g_n\}\rightarrow \{g_1,\dots,g_n\}$ by requiring that
\begin{equation*}
gg_i\Gamma_E=g(g,g_i)\Gamma_E.
\end{equation*}
The characters $\{\phi_{g_1},\dots,\phi_{g_n}\}$ where $$\phi_{g_i}\in X^*(T),\,\phi_{g_i}(f)=f(g_i\Gamma_E)$$ form a basis for $X^*(T)$. Hence we can express a character in $X^*(T)$ as $$\phi_{g_1}^{m_1}\cdots\phi_{g_n}^{m_n}(f) = f(g_1\Gamma_E)^{m_1} \cdots f(g_n\Gamma_E)^{m_n}$$ and so the $\Gamma_F$-action on $X^*(T)$ as
\begin{equation*}
\begin{split}
({}^g (\phi_{g_1}^{m_1}\cdots\phi_{g_n}^{m_n}))(f) &= {}^g(\phi_{g_1}^{m_1}\cdots\phi_{g_n}^{m_n}({}^{g^{-1}}f))
\\
&= {}^g \left[ ({}^{g^{-1}}f)(g_1\Gamma_E)^{m_1} \cdots ({}^{g^{-1}}f)(g_n\Gamma_E)^{m_n} \right]
\\
&= f(g(g,g_1)\Gamma_E)^{m_1} \cdots f(g(g,g_n)\Gamma_E)^{m_n}.
\end{split}
\end{equation*}
We can define an action of $\Gamma_F$ on the dual torus $\hat{T}=X^*(T) \otimes_{\mathbb{Z}} \mathbb{C}^\times$ by extending the action on $\mathbb{C}^\times$ trivially. In terms of coordinates, we can write each element in $\hat{T}$ as $$\sum^n_{i=1} \phi_{g_i} \otimes z_{g_i}\text{ for some }z_{g_i}\in \mathbb{C}^\times.$$ Therefore we have the $\Gamma_F$-action
\begin{equation}\label{weil group action on dual torus}
{}^g \left( \sum^n_{i=1} \phi_{g_i} \otimes z_{g_i}\right)=\sum^n_{i=1} \phi_{g(g,g_i)} \otimes z_{g_i} = \sum^n_{j=1} \phi_{g_j} \otimes z_{g(g^{-1},g_i)}.
\end{equation}

We denote the roots in $\Phi$ by
$$\left[ \begin{smallmatrix}
g_i \\
g_j
\end{smallmatrix} \right]=
\phi_{g_i}\phi_{g_j}^{-1}\text{ for }g_i\neq g_j.$$
By the evaluation isomorphism \eqref{evaluation at the trivial coset}, we have that
$$\left[ \begin{smallmatrix}
g_i \\
g_j
\end{smallmatrix} \right](t)={}^{g_i}t({}^{g_j}t)^{-1}\text{, for all }t
\in E^\times.$$The $\Gamma_F$-action on $\Phi$ is therefore given by
\begin{equation*}
\left( g \cdot \left[ \begin{smallmatrix}
 g_i\\
 g_j
\end{smallmatrix}
\right] \right)(f) = f(gg_i\Gamma_E )f(gg_j\Gamma_E)^{-1} = \left[ \begin{smallmatrix}
g(g,g_i)\\
g(g,g_j)
\end{smallmatrix}\right](f).
\end{equation*}
Notice that such action factors through the action of the Weyl group $\Omega(G,T)$ of $T$. It is clear that the $\Gamma_F$-orbit of a root contains an element of the form
\begin{equation*}
\left[\begin{smallmatrix}1\\g\end{smallmatrix}\right]\text{, for some }g\in \{g_2,\dots,g_n\}.
\end{equation*}

For each root $\lambda \in \Phi$, we denote the stabilizers $\{ g \in \Gamma_F | g \lambda = \lambda\}$ and $\{ g \in \Gamma_F | g\lambda = \pm \lambda \}$ by $\Gamma_{\lambda} $ and $\Gamma_{\pm \lambda}$ respectively and the fixed fields $\bar{F}^{\Gamma_{\lambda}}$ and $ \bar{F}^{\Gamma_{\pm \lambda}}$ by $E_{\lambda}$ and $ E_{\pm \lambda}$ respectively. In general, $E_{\lambda}$ is a field extension of some conjugate of $E$. We call a root  $\lambda$ \emph{symmetric} if $|E_{\lambda}/E_{\pm\lambda}| = 2$, and \emph{asymmetric} otherwise. By definition this symmetry is preserved by the $\Gamma_F$-action. Write $[\lambda]$ be the $\Gamma_F$-orbit of $\lambda$. Let
\begin{enumerate}[(i)]
\item $ \Gamma_F\backslash\Phi_{\mathrm{sym}}$ be the set of $\Gamma_F$-orbits of symmetric roots,
\item $ \Gamma_F\backslash\Phi_\mathrm{asym}$ be the set of $\Gamma_F$-orbits of asymmetric roots, and
\item $ \Gamma_F\backslash\Phi_\mathrm{asym/\pm}$ be the set of equivalence classes of asymmetric $\Gamma_F$-orbits by identifying $[\lambda]$ and $[-\lambda]$.
\end{enumerate}
We denote by $\mathcal{R}_\mathrm{sym}, \mathcal{R}_\mathrm{asym}$ and $ \mathcal{R}_{\mathrm{asym}/ \pm}$ certain choices of sets of representatives in $\Phi$ of the above orbits and their equivalence classes respectively.

\begin{prop}\label{orbit of roots as double coset}
The set $\Gamma_F\backslash\Phi$ of $\Gamma_F$-orbits of the root system $\Phi$ is bijective to the collection of non-trivial double cosets in $\Gamma_E\backslash \Gamma_F/\Gamma_E$, by $$\Gamma_F\backslash\Phi\rightarrow (\Gamma_E\backslash \Gamma_F/\Gamma_E)-\{\Gamma_E\},\,[\lambda]=\left[\left[\begin{smallmatrix}1\\g\end{smallmatrix}\right]\right]\mapsto \Gamma_Eg\Gamma_E.$$
\end{prop}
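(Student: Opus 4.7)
The plan is to exploit the combinatorial description of $\Phi$ that has just been set up: a root $\left[\begin{smallmatrix} g_i \\ g_j\end{smallmatrix}\right]$ really depends only on the ordered pair of distinct cosets $(g_i\Gamma_E, g_j\Gamma_E) \in \Gamma_F/\Gamma_E \times \Gamma_F/\Gamma_E$, and the displayed formula for the $\Gamma_F$-action on $\Phi$ is exactly the diagonal left action on such pairs. So the statement amounts to a purely set-theoretic assertion about the $\Gamma_F$-set of ordered pairs of distinct elements of $\Gamma_F/\Gamma_E$, and everything will fall out of the orbit-stabilizer type bookkeeping.

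First I would check that every $\Gamma_F$-orbit of roots contains a representative of the form $\left[\begin{smallmatrix} 1 \\ g \end{smallmatrix}\right]$ with $g \notin \Gamma_E$: given any root $\left[\begin{smallmatrix} g_i \\ g_j \end{smallmatrix}\right]$, apply $g_i^{-1}$ and, writing $g(g_i^{-1}, g_j) = g$, note that $g \notin \Gamma_E$ precisely because $g_j \Gamma_E \neq g_i \Gamma_E$. This shows the candidate map is defined on all of $\Gamma_F\backslash \Phi$ and also that its image lies in the non-trivial double cosets. Next I would show it is well defined and injective simultaneously: two roots $\left[\begin{smallmatrix} 1 \\ g \end{smallmatrix}\right]$ and $\left[\begin{smallmatrix} 1 \\ g' \end{smallmatrix}\right]$ lie in the same $\Gamma_F$-orbit iff there exists $h \in \Gamma_F$ with $h \cdot 1 \cdot \Gamma_E = \Gamma_E$ and $hg\Gamma_E = g'\Gamma_E$; the first condition forces $h \in \Gamma_E$, and combined with the second this is equivalent to $g' \in \Gamma_E g \Gamma_E$, i.e.\ $\Gamma_E g\Gamma_E = \Gamma_E g'\Gamma_E$. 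Surjectivity is immediate: any non-trivial double coset $\Gamma_E g\Gamma_E$ with $g \notin \Gamma_E$ is hit by the class of $\left[\begin{smallmatrix} 1 \\ g \end{smallmatrix}\right]$.

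The only real subtlety is notational: the representatives $g_1,\dots,g_n$ were fixed at the outset, whereas the argument above requires us to pass freely between arbitrary elements of $\Gamma_F$ and their canonical representatives via the map $g(\cdot,\cdot)$. I would handle this by noting once and for all that the root $\left[\begin{smallmatrix} g_i \\ g_j\end{smallmatrix}\right]$ as a character on $T$ depends only on the cosets $g_i\Gamma_E$ and $g_j\Gamma_E$ (since $\phi_{g_i}$ does, via the formula $\phi_{g_i}(f) = f(g_i\Gamma_E)$), so one may as well drop the distinction between $g$ and $g(h,g)$ throughout the argument. With that convention in place, the proof reduces to the three-line verification sketched above, and there is no genuine obstacle.
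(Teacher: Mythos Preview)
Your proof is correct and follows essentially the same approach as the paper: both identify $\Phi$ with the off-diagonal pairs in $\Gamma_F/\Gamma_E \times \Gamma_F/\Gamma_E$ under the diagonal $\Gamma_F$-action and then reduce to the standard bijection between such orbits and non-trivial double cosets. The paper simply invokes ``elementary group theory'' for that last step, whereas you have written out the orbit-stabilizer bookkeeping explicitly.
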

\begin{proof}
The set of roots $\Phi$ can be identified with the subset of off-diagonal elements in $\Gamma_F/\Gamma_E \times \Gamma_F/\Gamma_E$ with $\Gamma_F$-action by ${}^g (g_1\Gamma_E, g_2\Gamma_E) = (gg_1\Gamma_E, gg_2\Gamma_E)$. By elementary group theory, we know that the orbits are bijective to the non-trivial double cosets in $\Gamma_E\backslash \Gamma_F/\Gamma_E$.
\end{proof}

We denote by $(\Gamma_E\backslash \Gamma_F/\Gamma_E)'$ the collection of non-trivial double cosets, and $[g]$ the double coset $\Gamma_Eg\Gamma_E$. We call $g\in \Gamma_F$ \emph{symmetric} if $[g]= [g^{-1}]$, and \emph{asymmetric} otherwise. Clearly such symmetry descends to an analogous property on $(\Gamma_E\backslash \Gamma_F/\Gamma_E)'$. By Proposition \ref{orbit of roots as double coset}, the symmetry of $(\Gamma_E\backslash \Gamma_F/\Gamma_E)'$ is equivalent to the symmetry of $\Gamma_F\backslash \Phi$. Let \begin{enumerate}[(i)]
\item $(\Gamma_E\backslash \Gamma_F/\Gamma_E)_\mathrm{sym}$ be the set of symmetric non-trivial double cosets,
\item $(\Gamma_E\backslash \Gamma_F/\Gamma_E)_\mathrm{asym}$ be the set of asymmetric non-trivial double cosets, and
\item $(\Gamma_E\backslash \Gamma_F/\Gamma_E)_{\mathrm{asym}/\pm}$ be the set of equivalence classes of $(\Gamma_E\backslash \Gamma_F/\Gamma_E)_\mathrm{asym}$ by identifying $[g]$ with $[g^{-1}]$.
\end{enumerate}
We denote by $\mathcal{D}_\mathrm{sym}, \mathcal{D}_\mathrm{asym}$ and $ \mathcal{D}_{\mathrm{asym}/ \pm}$ certain choices of sets of representatives in $\Gamma_F/\Gamma_E$ of the above subsets in $(\Gamma_E\backslash \Gamma_F/\Gamma_E)'$ and their equivalence classes respectively.

We hence observe, by the identification in Proposition \ref{orbit of roots as double coset}, that we can choose a collection $\mathcal{R} = \mathcal{R}_\mathrm{sym} \bigsqcup \mathcal{R}_{\mathrm{asym}}$ such that the field $E_\lambda$ is an extension of $E$ for all $\lambda \in \mathcal{R}$. More precisely, if $\lambda$ corresponds to $[g]$, then $E_\lambda = {}^g E E$.

\section{Galois groups}\label{section Galois groups}

Given a non-Archimedean local field $F$, its multiplicative group $F^\times$ decomposes into product of subgroups $$\left<\varpi_F\right>\times\mu_F\times U_F^1 .$$ They are namely the group generated by a prime element, the group of roots of unity, and the 1-unit group. We may identify $\mu_F$ with $\mathbf{k}_F^\times$ in the canonical way. Let $E/F$ be a field extension of degree $n$, and denote its ramification index by $e$ and its residue degree by $f$. In most of the article, we assume that $E/F$ is tamely ramified, which means that $p\nmid e$. By \cite{Lang-ANT} II.\S5, we can always assume that our choices of $\varpi_E$ and $\varpi_F$ satisfy
 \begin{equation}\label{e-th power of prime is also prime}
 \varpi_E^e=\zeta_{E/F}\varpi_F\text{, for some }\zeta_{E/F}\in \mu_E.
 \end{equation}

Let $L$ be the Galois closure of $E/F$. Hence $L/E$ is unramified and $L/F$ is a tamely ramified extension. With the choices of $\varpi_F$ and $\varpi_E$ as in (\ref{e-th power of prime is also prime}), we define the following $F$-operators on $L$.
\begin{enumerate}[(i)]
\item $\phi: \zeta \mapsto \zeta^q \text{, for all }\zeta \in \mu_L\text{, and }\phi:\varpi_E \mapsto \zeta_\phi\varpi_E$.
\item $\sigma : \zeta \mapsto \zeta\text{, for all }\zeta \in \mu_L\text{, and }\sigma:\varpi_E \mapsto \zeta_e \varpi_E.$
\end{enumerate}
Here $\zeta_\phi$ lies in $\mu_E$ satisfying $(\zeta_\phi\varpi_E)^e=\zeta_{E/F}^q\varpi_F$ and $\zeta_e$ is a choice of a primitive $e$th root of unity in $\bar{F}^\times$. More generally, we write $^{\phi^i}\varpi_E = \zeta_{\phi^i}\varpi_E$ such that $\zeta_{\phi^i}=\zeta_\phi^{1+q+\cdots+q^{i-1}}$ is an $e$th root of $\zeta_{E/F}^{q^i-1}$. Notice that we have an action of $\phi$ on $\sigma$ by
$${}^\phi \sigma = \phi \circ \sigma \circ \phi^{-1}= \sigma^q.$$ Therefore we can write our Galois groups as
\begin{equation}\label{galois groups}
\Gamma_{L/F} = \langle \sigma \rangle \rtimes \langle \phi \rangle \text{ and } \Gamma_{L/E} = \langle \phi^f \rangle \subseteq \langle \phi \rangle.
\end{equation}

\begin{prop} \label{explicit expression of double coset}
\begin{enumerate}[(i)]
\item We can choose
$\{\sigma^k\phi^i|k=0,\dots,e-1,\,i=0,\dots,f-1\}$
as coset representatives for the quotient $\Gamma_{E/F}= \Gamma_F/\Gamma_E$. \label{explicit expression of double coset galois set}
\item Let $\phi^f \backslash \langle \sigma \rangle$ be the set of orbits of $\langle \sigma \rangle$ under the action ${}^{\phi^f}\sigma = \sigma^{q^f}$, then the set of double cosets $\Gamma_E\backslash \Gamma_F/\Gamma_E$ is bijective to the set $(\phi^f \backslash \langle \sigma \rangle) \times \langle \phi \rangle.$ \label{explicit expression of double coset double coset}
\end{enumerate}
\end{prop}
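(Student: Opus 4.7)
The plan is to realize both the coset space $\Gamma_F/\Gamma_E$ and the double coset space $\Gamma_E\backslash\Gamma_F/\Gamma_E$ inside the finite Galois group $\Gamma_{L/F}$ of the Galois closure. Since $\Gamma_L$ is contained in both $\Gamma_E$ and $\Gamma_F$, these spaces are canonically identified with $\Gamma_{L/F}/\langle\phi^f\rangle$ and $\langle\phi^f\rangle\backslash\Gamma_{L/F}/\langle\phi^f\rangle$ respectively. The computational input is the semidirect product decomposition $\Gamma_{L/F} = \langle\sigma\rangle \rtimes \langle\phi\rangle$ from \eqref{galois groups}, together with the commutation rule $\phi\sigma\phi^{-1} = \sigma^q$, which iterates to $\phi^j\sigma^k\phi^{-j} = \sigma^{kq^j}$ and lets one normalize any product to the standard form $\sigma^\bullet \phi^\bullet$.

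For (i), every element of $\Gamma_{L/F}$ has a unique decomposition $\sigma^k\phi^j$. A quick count gives $|\langle\phi\rangle| = [L:F]/e$ and $|\langle\phi^f\rangle| = [L:E]$, so $[\langle\phi\rangle:\langle\phi^f\rangle] = [E:F]/e = f$, and the proposed family $\{\sigma^k\phi^i : 0\le k<e,\, 0\le i<f\}$ has the correct cardinality $ef = n = |\Gamma_F/\Gamma_E|$. To verify distinctness, I would assume $(\sigma^{k_1}\phi^{i_1})^{-1}\sigma^{k_2}\phi^{i_2} \in \langle\phi^f\rangle$; applying the commutation rule puts this element in the form $\sigma^a \phi^{i_2-i_1}$ for an explicit exponent $a$, and the unique factorization in $\langle\sigma\rangle\rtimes\langle\phi\rangle$ then forces $\sigma^a = 1$ together with $f\mid(i_2-i_1)$. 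Within the stated range this yields $i_1=i_2$ and then $k_1=k_2$, so the proposed family maps bijectively onto $\Gamma_F/\Gamma_E$.

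For (ii), I would simply compute the double coset $\langle\phi^f\rangle\cdot\sigma^k\phi^i\cdot\langle\phi^f\rangle$ directly. A general element $\phi^{fa}\sigma^k\phi^{i+fb}$ rewrites, via the commutation rule, as $\sigma^{kq^{fa}}\phi^{i+f(a+b)}$. Hence the double coset consists of exactly those $\sigma^{k'}\phi^{i'}$ for which $k'$ lies in the $\langle\phi^f\rangle$-orbit of $k$ in $\mathbb{Z}/e\mathbb{Z}$ under the action $k\mapsto kq^f$, and $\phi^{i'}$ lies in the coset $\phi^i\langle\phi^f\rangle$. The assignment $\sigma^k\phi^i \mapsto (\text{orbit of }\sigma^k,\ \phi^i\langle\phi^f\rangle)$ therefore descends to a well-defined bijection between double cosets and $(\langle\phi^f\rangle\backslash\langle\sigma\rangle)\times(\langle\phi\rangle/\langle\phi^f\rangle)$, which matches the target of the statement once $\{1,\phi,\dots,\phi^{f-1}\}$ is chosen as a set of representatives for the quotient $\langle\phi\rangle/\langle\phi^f\rangle$. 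The whole argument is essentially bookkeeping inside the semidirect product, and I foresee no substantive obstacle; the only point requiring care is to keep the exponents of $q$ straight when iterating $\phi\sigma\phi^{-1}=\sigma^q$ in both parts.
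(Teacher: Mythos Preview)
Your proposal is correct and follows essentially the same approach as the paper: both reduce to the finite Galois group $\Gamma_{L/F}=\langle\sigma\rangle\rtimes\langle\phi\rangle$ with $\Gamma_{L/E}=\langle\phi^f\rangle$ sitting inside the second factor, and then read off the coset and double-coset decompositions from the semidirect-product structure. The only difference is presentational: the paper packages the argument as a general fact about $(A\rtimes B)/(1\times C)$ and $(1\times C)\backslash(A\rtimes B)/(1\times C)$ for any abelian $B$ acting on $A$ with $C\le B$, whereas you unwind this explicitly via the commutation rule $\phi\sigma\phi^{-1}=\sigma^q$.
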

\begin{proof}
In general, if we have an abelian group $B$ acting on a group $A$ as automorphisms and a subgroup $C$ of $B$, then the canonical maps
\begin{equation*}
 (A \rtimes B) /(1 \times C) \rightarrow A \times (B/C),\, (a,b)(1 \times C)\mapsto (a,bC),
\end{equation*}and \begin{equation*}
(1 \times C) \backslash (A \rtimes B) / (1 \times C) \rightarrow (C \backslash A) \times (B/C),\,(1 \times C) (a,b) (1 \times C)\mapsto (Ca,bC),
\end{equation*}
are bijective. We take $A \rtimes B=\Gamma_{L/F}$ and $C=\Gamma_{L/E}$ as in (\ref{galois groups}).
\end{proof}

 With such identification we can write down the symmetric double cosets explicitly.
\begin{prop}\label{properties of symmetric [g]}
  The double coset $[g]=[\sigma^{k}\phi^i] $ is symmetric if and only if
\begin{enumerate}[(i)]
  \item $i=0$ or, when $f$ is even, $i=f/2$, and
\item $e$ divides $(q^{ft}+1)k$ when $i=0$ and divides $(q^{f(2t+1)/2}+1)k$ when $i=f/2$, for some $t=0, \dots, |L/E|-1.$
\end{enumerate}
\end{prop}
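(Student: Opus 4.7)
The plan is to reduce symmetry of $[g]$ to a pair of conditions under the bijection supplied by Proposition \ref{explicit expression of double coset}(ii): writing $g = \sigma^k\phi^i$ in normal form, I will compute a normal form for $g^{-1}$, match $\sigma$- and $\phi$-components separately, and then translate each matching condition into the divisibility statements claimed.

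First I would compute $g^{-1}$ in the standard form $\sigma^*\phi^*$. Using the relation ${}^\phi\sigma = \sigma^q$, one gets $\phi^{-i}\sigma^{-k} = \sigma^{-kq^{-i}}\phi^{-i}$, so $g^{-1} = \sigma^{-kq^{-i}}\phi^{-i}$. Under the bijection $\Gamma_E\backslash\Gamma_F/\Gamma_E \leftrightarrow (\langle\phi^f\rangle\backslash\langle\sigma\rangle)\times(\langle\phi\rangle/\langle\phi^f\rangle)$ from Proposition \ref{explicit expression of double coset}, the classes $[g]$ and $[g^{-1}]$ coincide if and only if their $\phi$-components agree modulo $\langle\phi^f\rangle$ and their $\sigma$-components lie in the same $\phi^f$-orbit.

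The $\phi$-component equality requires $\phi^i \equiv \phi^{-i} \pmod{\langle\phi^f\rangle}$, i.e.\ $2i \equiv 0 \pmod f$. This forces $i = 0$ or, when $f$ is even, $i = f/2$, giving condition (i). For the $\sigma$-component, the action of $\phi^f$ on $\langle\sigma\rangle$ is ${}^{\phi^f}\sigma = \sigma^{q^f}$ (iterating ${}^\phi\sigma = \sigma^q$), and $\phi^f$ has order $|L/E|$ in $\langle\phi\rangle/\{1\}$ restricted to this action. So the $\phi^f$-orbit of $\sigma^k$ is $\{\sigma^{kq^{ft}} : t=0,\dots,|L/E|-1\}$. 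In the case $i=0$, we need $\sigma^{-k}$ to lie in this orbit, i.e.\ $-k \equiv kq^{ft}\pmod e$ for some $t$, which is exactly $e\mid(q^{ft}+1)k$. In the case $i=f/2$, we need $\sigma^{-kq^{-f/2}}$ to lie in the orbit of $\sigma^k$, i.e.\ $-kq^{-f/2} \equiv kq^{ft}\pmod e$; multiplying through by $q^{f/2}$, which is a unit modulo $e$ since $p\nmid e$, this becomes $e\mid k(q^{f(2t+1)/2}+1)$. These give condition (ii).

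The argument is essentially mechanical once the normal form for $g^{-1}$ and the $\phi^f$-action on $\langle\sigma\rangle$ are spelled out; the only subtlety is the clean rewriting in the $i=f/2$ case, where one must use invertibility of $q$ modulo $e$ to massage the exponent $-f/2 + ft$ into the advertised form $f(2t+1)/2$. No step should present a genuine obstacle; the main point is to be careful that the bijection of Proposition \ref{explicit expression of double coset} is being applied to the correct normalized representatives on both sides.
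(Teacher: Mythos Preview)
Your proof is correct and follows essentially the same approach as the paper: compute $g^{-1}$ in the normal form $\sigma^{-k\bar q^i}\phi^{-i}$ using the conjugation relation, then invoke the bijection of Proposition~\ref{explicit expression of double coset}(ii) to separate the $\phi$- and $\sigma$-components, obtaining $2i\equiv 0\pmod f$ and $q^{ft}k\equiv -k\bar q^i\pmod e$ for some $t$. The paper's proof is terser (it ends with ``by simple calculation''), but your explicit handling of the $i=f/2$ case via multiplication by $q^{f/2}$ is exactly the intended manipulation.
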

\begin{proof}
  Since $\phi$ acts as $\sigma \mapsto \sigma^q$, we can show that the inverse of $\sigma^k\phi^i$ in $\Gamma_{L/F}$ is $\sigma^{-k\bar{q}^i} \phi^{-i}$ where $\bar{q}$ is the multiplicative inverse of $q$ in $(\mathbb{Z} / e)^\times$. We then check that the double cosets $[ \sigma^k \phi^i ]$ and $[(\sigma^k \phi^i)^{-1} ]$ are equal if and only if $i \equiv -i \mod f$  and  $q^{ft}k \equiv -k\bar{q}^i\mod e$ for some $t$. This implies the assertion by simple calculation.
\end{proof}
 By slightly abusing notation, we call those symmetric $[\sigma^k]$ ramified and those symmetric $[\sigma^k \phi^{f/2}]$ unramified.

Our main results rely on knowing the parity of the number of double cosets $\Gamma_E\backslash \Gamma_F/  \Gamma_E$ when $E/F$ is totally ramified. Suppose that $|E/F|=e$ and $\#\mathbf{k}_F=q$. By Proposition \ref{explicit expression of double coset}, this parity is the same as the parity of the number of $q$-orbits of $\mathbb{Z}/e$ under the multiplicative action $x\mapsto qx$. If we partition the set $\mathbb{Z}/e$ according to multiples as $$\mathbb{Z}/e = \bigsqcup_{d\mid e}(e/d)\left( \mathbb{Z}/d \right)^\times,$$ then, because $\gcd(e,q) = 1$, each subset consists of union of $q$-orbits. For each divisor $d$ of $e$, let $\mathrm{ord}(q,d)$ be the multiplicative order of $q$ in $(\mathbb{Z}/d)^\times$, and $\phi(d)$ be the order of the group $(\mathbb{Z}/d)^\times$.
\begin{lemma}  \label{number of double cosets}
\begin{enumerate}[(i)]
\item The number of double cosets in $\Gamma_E\backslash \Gamma_F/  \Gamma_E$ equals $$\sum_{d|e}\phi(d)/\mathrm{ord}(q,d).$$ \label{number of double cosets general}
\item If $d=1$ or 2, then $\phi(d)/\mathrm{ord}(q,d)=1$. If $d\geq 3$ and $q$ is a square, then $\phi(d)/\mathrm{ord}(q,d)$ is even. \label{number of double cosets cases}
\end{enumerate}
\end{lemma}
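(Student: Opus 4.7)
The lemma has two parts and I would attack them separately, since (i) is purely a counting observation while (ii) is a short parity argument.

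For part (i), I would begin from Proposition \ref{explicit expression of double coset}(ii), specialised to the totally ramified case $f=1$, which identifies $\Gamma_E\backslash\Gamma_F/\Gamma_E$ with the set of orbits of $\langle\sigma\rangle\cong\mathbb{Z}/e$ under the multiplicative action $x\mapsto qx$. The hypothesis $p\nmid e$ gives $\gcd(e,q)=1$, so multiplication by $q$ is a bijection of $\mathbb{Z}/e$ that preserves additive order. Hence each piece of the decomposition
\begin{equation*}
\mathbb{Z}/e \;=\; \bigsqcup_{d\mid e}\,(e/d)(\mathbb{Z}/d)^\times
\end{equation*}
indicated in the excerpt is stable under the $q$-action, and the map $x\mapsto (e/d)x$ is a $\langle q\rangle$-equivariant bijection $(\mathbb{Z}/d)^\times\to(e/d)(\mathbb{Z}/d)^\times$. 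So I am reduced to counting orbits of $\langle q\rangle\le(\mathbb{Z}/d)^\times$ acting on $(\mathbb{Z}/d)^\times$ by left translation. Such an action is free, so every orbit has size $\mathrm{ord}(q,d)$ and the orbit count is $\phi(d)/\mathrm{ord}(q,d)$. Summing over $d\mid e$ yields (i).

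For part (ii), the cases $d=1$ and $d=2$ are immediate: $\phi(d)=1$ and $q$ (being odd since $p\nmid e$ is irrelevant here; rather $q$ is a prime power and $\mathrm{ord}(q,1)=\mathrm{ord}(q,2)=1$, with the $d=2$ case using that $q$ is odd if $e$ is even, which is the only situation in which $d=2$ occurs). For $d\ge 3$, write $q=q_0^2$ and set $m_0=\mathrm{ord}(q_0,d)$, $m=\mathrm{ord}(q,d)$. From $q^k\equiv 1\pmod d\Leftrightarrow m_0\mid 2k$ one gets $m=m_0/\gcd(m_0,2)$. I would then split into two cases. If $m_0$ is even, then $m=m_0/2$ and $\phi(d)/m=2\cdot\phi(d)/m_0$, which is an even integer since $m_0\mid\phi(d)$. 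If $m_0$ is odd, then $m=m_0$; because $d\ge 3$ forces $\phi(d)$ to be even, and an even integer divided by an odd divisor remains even, $\phi(d)/m$ is again even.

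The argument is essentially bookkeeping, so I do not expect a serious obstacle. The one place a reader could get confused is the reduction in part (i): one must note that the $\langle q\rangle$-action on $(\mathbb{Z}/d)^\times$ is by \emph{translation in a group}, which is what makes the orbits have uniform size $\mathrm{ord}(q,d)$ rather than having to invoke an orbit-stabiliser computation for each element separately. In part (ii) the only subtle point is the appeal to $2\mid\phi(d)$ for $d\ge 3$, which is where the hypothesis $d\ge 3$ is essential.
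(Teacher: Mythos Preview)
Your proof is correct and essentially the same as the paper's. For part~(i) you spell out what the paper simply declares ``clear,'' and for part~(ii) your case split on the parity of $m_0=\mathrm{ord}(q_0,d)$ is exactly the paper's dichotomy between $\langle q\rangle\subsetneq\langle q_0\rangle$ (index~$2$, i.e.\ $m_0$ even) and $\langle q\rangle=\langle q_0\rangle$ (i.e.\ $m_0$ odd), just phrased in terms of orders rather than subgroup indices.
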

\begin{proof}
  Assertion (\ref{number of double cosets general}) is clear. For $d=1 \text{ or } 2$, (\ref{number of double cosets cases}) is also clear. For $d \geq 3$, let $\langle q \rangle$ be the cyclic subgroup in $\left( \mathbb{Z}/d \right)^\times$ generated by $q$. Suppose that $q=r^2$. If $\langle q \rangle$ is a proper subgroup of $\langle r \rangle$, then $\phi(d)/\mathrm{ord}(q,d)$ is a multiple of the index $|\langle r \rangle/ \langle q \rangle| = 2$. If $\langle q \rangle = \langle r \rangle$, then $q^k \equiv r \mod d$ for some $k$. The order $\mathrm{ord}(r,d)$ is then odd, and so is $\mathrm{ord}(q,d)$. But $\phi(d)$ is always even. This proves assertion (\ref{number of double cosets cases}).
\end{proof}

 \begin{prop}\label{parity of double coset with i=f/2}
The parity of the cardinality of the set $$(\Gamma_E\backslash \Gamma_F/  \Gamma_E)_{\mathrm{sym,unram}}:=\{[\sigma^k\phi^{f/2}]\in (\Gamma_E\backslash \Gamma_F/  \Gamma_E)_\mathrm{sym}\}$$ is equal to that of ${e(f-1)}$.
\end{prop}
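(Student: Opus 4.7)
My plan is to split on the parity of $f$, and in the main case $f$ even, combine an involution argument on the ``$i=f/2$ slice'' of the double-coset space with Lemma \ref{number of double cosets}(ii) applied not to $q$ but to the square $q^f$.

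If $f$ is odd, then by Proposition \ref{properties of symmetric [g]} no symmetric unramified double coset exists (the case $i=f/2$ is not even an integer), so $(\Gamma_E\backslash\Gamma_F/\Gamma_E)_{\mathrm{sym,unram}}$ is empty; meanwhile $e(f-1)$ is even since $f-1$ is even, and the two parities trivially agree.

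Now assume $f$ is even. Write $N=|\phi^f\backslash\langle\sigma\rangle|$, so under Proposition \ref{explicit expression of double coset}(ii) each of the $f$ ``$i$-slices'' of $\Gamma_E\backslash\Gamma_F/\Gamma_E$ has size $N$. Because $\phi^{-f/2}\equiv\phi^{f/2}\pmod{\langle\phi^f\rangle}$, the involution $[g]\mapsto[g^{-1}]$ preserves the slice $i=f/2$; moreover the trivial coset $[1]$ lies in the slice $i=0$, not in $i=f/2$. Consequently, on this slice the fixed-point set of $[g]\mapsto[g^{-1}]$ is exactly $(\Gamma_E\backslash\Gamma_F/\Gamma_E)_{\mathrm{sym,unram}}$ while all remaining cosets are paired off, giving
\begin{equation*}
  |(\Gamma_E\backslash\Gamma_F/\Gamma_E)_{\mathrm{sym,unram}}| \equiv N \pmod 2.
\end{equation*}

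It remains to compute $N\bmod 2$. Using the partition $\mathbb{Z}/e=\bigsqcup_{d\mid e}(e/d)(\mathbb{Z}/d)^\times$ exactly as in Lemma \ref{number of double cosets}(i), with $\phi^f$ acting by multiplication by $q^f$, one has $N=\sum_{d\mid e}\phi(d)/\mathrm{ord}(q^f,d)$. Since $f$ is even, $q^f=(q^{f/2})^2$ is an integer square, so Lemma \ref{number of double cosets}(ii), applied with $q$ replaced by $q^f$, makes every term with $d\geq 3$ even. The surviving contributions are $\phi(1)/\mathrm{ord}(q^f,1)=1$ and, when $2\mid e$, $\phi(2)/\mathrm{ord}(q^f,2)=1$. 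Hence $N\equiv 1\pmod 2$ if $e$ is odd and $N\equiv 0\pmod 2$ if $e$ is even, i.e., $N\equiv e\pmod 2$. Since $f$ is even we have $e(f-1)\equiv e\pmod 2$, which closes the argument. The key observation, and the one place the proof could be missed, is that in the $i=f/2$ slice the relevant exponent is $q^f$; that this is a square is what kills all $d\geq 3$ terms at once via Lemma \ref{number of double cosets}(ii), reducing the parity of $N$ to the trivial divisors $d=1,2$.
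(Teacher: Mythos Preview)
Your proof is correct and follows essentially the same route as the paper's own argument: reduce to the $f$ even case, observe that the involution $[g]\mapsto[g^{-1}]$ stabilizes the $i=f/2$ slice so that the parity of the symmetric unramified cosets equals the parity of the whole slice, and then compute that parity via Lemma~\ref{number of double cosets} with $q$ replaced by the square $q^f$. The only cosmetic difference is that the paper identifies the $i=f/2$ slice with $\Gamma_E\backslash\Gamma_K/\Gamma_E$ for the maximal unramified subfield $K$ (using that $\phi^{f/2}$ normalizes $\Gamma_E$), whereas you work directly with the slice via Proposition~\ref{explicit expression of double coset}(ii); the underlying count is identical.
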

\begin{proof}If $f$ is odd, then the set is empty and so the statement is true. Now we assume that $f$ is even. Since $\phi^{f/2}$ normalizes $\Gamma_E$, we have indeed a bijection
\begin{equation*}
\{[\sigma^k\phi^{f/2}] \in \Gamma_E\backslash \Gamma_F/  \Gamma_E \} \rightarrow  \Gamma_E\backslash \Gamma_K /\Gamma_E,\,[\sigma^k\phi^{f/2}]\mapsto [\sigma^k].
\end{equation*}
Since those asymmetric double cosets pair up, it suffices to show that the parity of $\#( \Gamma_E\backslash \Gamma_K /\Gamma_E )$ is the same as the parity of $e$. To this end we apply Lemma \ref{number of double cosets}. Here we have $q^f$, which is a square, in place of $q$ in Lemma \ref{number of double cosets}. If $e$ is odd, then the parity is $$\phi(1)/\mathrm{ord}(q,1)+\sum_{d|e,\,d\geq 3}\phi(d)/\mathrm{ord}(q,d)=1+\sum\text{ even }=\text{ odd}.$$
If $e$ is even, then the parity is $$\phi(1)/\mathrm{ord}(q,1)+\phi(2)/\mathrm{ord}(q,2)+\sum_{d|e,\,d\geq 3}\phi(d)/\mathrm{ord}(q,d)=1+1+\sum\text{ even }=\text{ even}.$$ Therefore we have proved the statement.
\end{proof}

\section{L-groups}\label{section L-groups}

We first recall some basic facts in local class field theory. They can be found in section 1 of \cite{Tate-NTB}. Given a non-Archimedean local field $F$, we denote the Galois group of the maximal unramified extension of $F$ in $\bar{F}$ by $I_F$ and call it the tame inertia group of $F$. We have an exact sequence $$1\rightarrow I_F\rightarrow\Gamma_F\rightarrow \Gamma_{\mathbf{k}_F}\rightarrow1.$$
The Galois group $\Gamma_{\mathbf{k}_F}$ is isomorphic to $$\hat{\mathbb{Z}}:=\lim_{\substack{\longleftarrow\\n}}\mathbb{Z}/n,$$ with the Frobenius element identified with a topological generator of $\hat{\mathbb{Z}}$. The Weil group $W_F$ of $F$ is defined to be the pre-image of the subgroup $\mathbb{Z}$ of $\hat{\mathbb{Z}}$ in $\Gamma_F$. One of the major results in local class field theory is the isomorphism of (topological) groups
$$W_F^\mathrm{ab}:=W_F/[W_F,W_F]\cong F^\times.$$
Hence the characters of $W_F$ and those of $F^\times$ are bijectively correspond.

We then provide the dual groups and the L-groups of our concern, and refer the general definition to section 2 of \cite{Borel-autom-L}. For $G=\mathrm{GL}_n$, we denote
\begin{equation*}
\hat{G} = \mathrm{GL}_n(\mathbb{C})\text{ and } {}^L \!G = \mathrm{GL}_n(\mathbb{C}) \times W_F,
\end{equation*}
namely the dual-group and the $L$-group of $G$. If $T$ is the induced torus $\mathrm{Res}_{E/F}\mathbb{G}_m$, then we write
\begin{equation*}
\hat{T} =  \mathrm{Ind}_{E/F}\mathbb{C}^\times\text{ and }{}^L \!T = \hat{T} \rtimes W_F
\end{equation*}
as its dual-group and $L$-group. Here the action of $W_F$ on $\hat{T}$ is the one factoring through $\Gamma_F$, which is given by (\ref{weil group action on dual torus}).

One remark is that there is no difference between Galois groups and Weil groups when we are dealing with finite extensions. For example, if $E/F$ is a finite extension, then $W_F/W_E\cong \Gamma_F/\Gamma_E$ and so $W_E\backslash W_F/W_E\cong \Gamma_E\backslash \Gamma_F/\Gamma_E$. Also most of our actions of Galois groups factor through actions of subgroups of finite extensions. In such cases we shall replace the actions of the Galois groups by those of the Weil groups without further notice.

\chapter{Essentially tame local Langlands correspondence}\label{chapter ETLLC}

As part of the local Langlands correspondence for $\mathrm{GL}_n$ proved in \cite{HT}, \cite{Hen-simple}, \cite{LRS}, we know that the supercuspidal spectrum $\mathcal{A}^0_n(F)$ of $G(F)$ is bijective to the collection $\mathcal{G}^0_n(F)$ of complex smooth $n$-dimensional irreducible representations of the Weil group $W_F$ of $F$. Such bijection is characterized by a number of canonical properties governed by both representation theory and number theory.

In this chapter, we recall the main results in \cite{BH-ET1}, \cite{BH-ET2}, \cite{BH-ET3} to describe the above correspondence when restricted to the essentially tame sub-collections. We first recall a collection $P_n(F)$ consisting of the equivalence classes of admissible characters in section \ref{section admissible characters}. We then provide the definitions of the two essential-tameness and describe the properties of this restricted bijection, the essentially tame local Langlands correspondence, in section \ref{section The correspondence}. One way to obtain from an admissible character a supercuspidal representation is the operation of automorphic induction \cite{Hen-Herb}, \cite{HL2010}, \cite{HL2011}. This operation corresponds to the usual induction of representations of Weil groups on the `Galois side' according to the functoriality principle. We study this topic in section \ref{section autom ind}. Finally, in section \ref{section rectifiers}, we can reduce the description of the essentially tame correspondence to expressing certain characters called rectifiers, which are the main objects we would study in this thesis.

The last section \ref{the Archi case} serves as an appendix. We recall briefly the Langlands correspondence for relative discrete series in the Archimedean case. If $F=\mathbb{R}$ and $G=\mathrm{GL}_n$, then $G(F)$ possesses relative discrete series representations only if $n=1$ or 2. The case $n=1$ is just local class field theory. We will consider the case when $n=2$.

\section{Admissible characters}\label{section admissible characters}
The notion of admissible characters is described in \cite{Howe}, \cite{Moy}. We recall the definition of these characters in this section, and provide more properties of them later in section \ref{section admissible characters revisited}.

 Let $E/F$ be a tamely ramified finite extension, and let $\xi$ be a character of $E^\times$. We call $\xi$ \emph{admissible} \index{admissible character} over $F$ if it satisfies the following conditions. For every field $K$ between $E/F$,
\begin{enumerate}[(i)]
\item if $\xi = \eta \circ N_{E/K}$ for some character $\eta$ of $K^\times$, then $E = K$;
\item if $\xi|_{U^1_E} = \eta \circ N_{E/K}$ for some character $\eta$ of $U^1_K$, then $E/K$ is unramified.
\end{enumerate}
If $\xi$ is admissible over $F$, then by definition it is admissible over $K$ for every field $K$ between $E/F$.

Let $P(E/F)$ be the set of admissible characters of $E^\times$ over $F$. Two admissible characters $\xi\in P(E/F)$ and $\xi'\in P(E'/F)$ are called $F$-equivalent if there is $g\in \Gamma_F$ such that ${}^gE=E'$ and ${}^g\xi=\xi'$. We denote the $F$-equivalence class of $\xi$ by $(E/F,\xi)$, which is called an admissible pair in \cite{BH-ET1}. Let $P_n(F)$ be the set of admissible pairs $F$. More precisely,
\begin{equation*}
P_n(F)=F\text{-equivalence}\backslash \left(\bigsqcup_EP(E/F)\right),
\end{equation*}
where $E$ goes through tamely ramified extensions over $F$ of degree $n$.

\section{The correspondence}\label{section The correspondence}

 For each Langlands parameter $\sigma\in \mathcal{G}^0_n(F)$, let $f(\sigma)$ be the number of unramified characters $\chi$ of $W_F$ such that $\chi\otimes\sigma\cong\sigma$. We call $\sigma$ \emph{essentially tame} if $p$ does not divide $n/f(\sigma)$. Let $\mathcal{G}^\mathrm{et}_n(F)$ be the set of isomorphism classes of essentially tame irreducible representations of degree $n$. Similarly, for each irreducible supercuspidal $\pi\in \mathcal{A}^0_n(F)$, let $f(\pi)$ be the number of unramified characters $\chi$ of $F^\times$ that $\chi\otimes\pi\cong\pi$. Here $\chi$ is regarded as a representation of $G(F)$ by composing with the determinant map. We call $\pi$ \emph{essentially tame} if $p$ does not divide $n/f(\pi)$. Let $\mathcal{A}^\mathrm{et}_n(F)$ be the set of isomorphism classes of essentially tame irreducible supercuspidal representations. We recall the following theorem in \cite{BH-ET1}.

 \begin{thm}[Essentially tame local Langlands correspondence for $\mathrm{GL}_n$]\label{langlands corresp}
For each positive integer $n$ there exists a unique bijection
\begin{equation*}
\mathcal{L}_n={}_F\mathcal{L}^\mathrm{et}_n:\mathcal{G}^\mathrm{et}_n(F)\rightarrow\mathcal{A}^\mathrm{et}_n(F),\,\sigma\mapsto \pi,
\end{equation*}
which is uniquely characterized by the following properties:
\begin{enumerate}[(i)]
\item $\mathcal{L}_1:\mathcal{G}_1(F)\rightarrow\mathcal{A}_1(F)$ is the local class field theory; \label{local class field theory}
\item $\mathcal{L}_n$ is compatible with induced actions of the automorphisms of the base field $F$ on both sides;
\item $\mathcal{L}_n$ is compatible with contragredients; in other words, $\mathcal{L}_n(\sigma^\vee)=\pi^\vee$;
\item $\mathcal{L}_n(\chi\otimes\sigma)=\mathcal{L}_1(\chi)\otimes \pi$ for $\chi\in \mathcal{G}_1(F)$;
\item $\det(\sigma)=\omega_\pi$ the central character of $\pi$; \label{ETLLC det is central char}
\item the operations of automorphic induction and base change on $\mathcal{A}^\mathrm{et}_n(F)$ correspond to the operations of induction and restriction on $\mathcal{G}^\mathrm{et}_n(F)$ via $\mathcal{L}_n$.\label{point AI and BC}
\end{enumerate}
 \end{thm}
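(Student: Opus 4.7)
The plan is to invoke the full local Langlands correspondence for $\mathrm{GL}_n$ of Harris--Taylor \cite{HT}, Henniart \cite{Hen-simple}, and Laumon--Rapoport--Stuhler \cite{LRS} as a black box, restrict it to the essentially tame sub-collections, and then exploit properties (\ref{local class field theory}) and (\ref{point AI and BC}) to secure uniqueness by induction on $n$.

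For existence, write $\mathcal{L}_n^0:\mathcal{G}_n^0(F)\to\mathcal{A}_n^0(F)$ for the full correspondence, which is known to satisfy the natural analogues of (i)--(vi). The first task is to check that $\mathcal{L}_n^0$ carries essentially tame parameters to essentially tame supercuspidals. Compatibility of $\mathcal{L}_n^0$ with character twists yields, for any $\sigma\in\mathcal{G}_n^0(F)$ and $\pi=\mathcal{L}_n^0(\sigma)$, a bijection $\chi\leftrightarrow\mathcal{L}_1(\chi)$ between the groups of unramified self-twists of $\sigma$ and of $\pi$. Hence $f(\sigma)=f(\pi)$, so the conditions $p\nmid n/f(\sigma)$ and $p\nmid n/f(\pi)$ are equivalent, and the restriction $\mathcal{L}_n:=\mathcal{L}_n^0|_{\mathcal{G}_n^\mathrm{et}(F)}$ is a well-defined bijection onto $\mathcal{A}_n^\mathrm{et}(F)$. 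Properties (i)--(v) are inherited from the full correspondence, while (\ref{point AI and BC}) amounts to the Henniart--Herb automorphic induction theorem \cite{Hen-Herb}, \cite{HL2010}, \cite{HL2011} together with the Galois-side compatibility of $\mathcal{L}_n^0$ with induction and restriction.

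For uniqueness, let $\mathcal{L}_n'$ be another bijection satisfying (i)--(vi). By the classification of essentially tame $W_F$-representations, any $\sigma\in\mathcal{G}_n^\mathrm{et}(F)$ has the form $\sigma\cong\mathrm{Ind}_{W_E}^{W_F}\xi$ for some admissible pair $(E/F,\xi)\in P_n(F)$. Choose a tower $F\subseteq K_0\subseteq K_1\subseteq\cdots\subseteq K_l\subseteq E$ with each successive extension cyclic---adjoining roots of unity if necessary to make the totally ramified odd-degree piece $E/K_l$ cyclic, then using property (ii) to descend the conclusion back to $F$. Induction in stages gives $\sigma=\mathrm{Ind}_{W_{K_0}}^{W_F}\cdots\mathrm{Ind}_{W_E}^{W_{K_l}}\xi$, and iterating property (\ref{point AI and BC}) forces $\mathcal{L}_n'(\sigma)$ to be the corresponding iterated automorphic induction of $\mathcal{L}_1(\xi)$. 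The latter is prescribed by (\ref{local class field theory}), and automorphic induction itself is uniquely defined up to isomorphism, so $\mathcal{L}_n'(\sigma)=\mathcal{L}_n(\sigma)$.

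The main obstacle is twofold. First, the automorphic induction input in (\ref{point AI and BC}) is not a local construction: its existence and character identity rest on global trace-formula arguments, so this local-looking theorem depends on a heavy non-local input. Second, one must control essential tameness as it propagates along the tower---verifying that each intermediate induced or restricted representation stays essentially tame---which amounts to a careful accounting of ramification indices at each stage. The descent step using (ii) after adjoining roots of unity, needed to handle the totally ramified odd-degree factor $E/K_l$, is the most delicate bookkeeping point.
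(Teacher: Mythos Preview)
Your proposal is correct and follows the same strategy as the source the paper cites, namely Proposition~3.2 and \S3.5 of \cite{BH-ET1}: restrict the full local Langlands correspondence to the essentially tame locus using $f(\sigma)=f(\pi)$, and deduce uniqueness from properties~(\ref{local class field theory}) and~(\ref{point AI and BC}) by writing every $\sigma\in\mathcal{G}^{\mathrm{et}}_n(F)$ as an iterated induction along a cyclic tower (with the base-change trick in \S3.5 of \cite{BH-ET1} handling the odd totally ramified piece). The paper itself gives no independent argument beyond that citation, so your outline is exactly what lies behind it.
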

 \begin{proof}
  See Proposition 3.2 and section 3.5 of \cite{BH-ET1}.
 \end{proof}
We remark that property (\ref{point AI and BC}) described above is a very rough version of the precise statements in Proposition 3.2 of \cite{BH-ET1}. Since our study depends heavily on automorphic induction, we will be more specific about this in section \ref{section autom ind}.

Recall that, via local class field theory (i.e., the bijection $\mathcal{L}_1$ in Theorem \ref{langlands corresp}.(\ref{local class field theory}), with $E$ in place of $F$), each character $\xi$ of $E^\times$ corresponds to a character $\mathcal{L}_1(\xi)$ of the Weil group $W_E$. For simplicity we write $\xi$ for $\mathcal{L}_1(\xi)$ throughout the article.
\begin{prop}\label{prop bijection of P and G} The map $$\Sigma_n:P_n(F)\rightarrow\mathcal{G}^\mathrm{et}_n(F),\,(E/F,\xi)\mapsto\sigma_\xi:=\mathrm{Ind}_{W_E}^{W_F}\xi,$$
is a bijection, with $f(\sigma_\xi)=f(E/F)$.
\end{prop}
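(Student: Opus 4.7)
The plan is to verify four properties of $\Sigma_n$ in turn: well-definedness, irreducibility of $\sigma_\xi$, the essential-tameness formula $f(\sigma_\xi) = f(E/F)$, and bijectivity.

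Well-definedness is immediate, since if $g \in \Gamma_F$ realizes the $F$-equivalence of $(E/F,\xi)$ with $(E'/F,\xi')$, then $W_{E'} = {}^g W_E$ and $\xi' = {}^g\xi$, yielding a canonical isomorphism $\mathrm{Ind}_{W_{E'}}^{W_F}\xi' \cong \mathrm{Ind}_{W_E}^{W_F}\xi$. For irreducibility I would apply Mackey's criterion: $\sigma_\xi$ is irreducible if and only if, for every nontrivial double coset $W_E g W_E$ in $W_F$, the characters $\xi$ and ${}^g\xi$ disagree on $W_E \cap {}^g W_E = W_{E \cdot {}^g E}$. Agreement would make $\xi \cdot ({}^g\xi)^{-1}$ trivial on $W_{E \cdot {}^g E}$, and translating through local class field theory shows that $\xi$ factors as $\eta \circ N_{E/K}$ for some intermediate field $F \subseteq K \subsetneq E$---contradicting admissibility condition~(i) of section~\ref{section admissible characters}.

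Essential tameness and the equality $f(\sigma_\xi) = f(E/F)$ both reduce to a single computation. For an unramified character $\chi$ of $W_F$, the projection formula gives $\chi \otimes \sigma_\xi \cong \mathrm{Ind}_{W_E}^{W_F}(\chi|_{W_E} \cdot \xi)$, which is isomorphic to $\sigma_\xi$ precisely when $\chi|_{W_E} \cdot \xi$ is $\Gamma_F/\Gamma_E$-conjugate to $\xi$. The Mackey argument just given forces the stabilizer of $\xi$ in $\Gamma_F/\Gamma_E$ to be trivial, so we must have $\chi|_{W_E} = 1$, i.e.\ $\chi$ is unramified on $F$ of order dividing $f(E/F)$. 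There are exactly $f(E/F)$ such $\chi$, giving the claimed formula, and since $E/F$ is tame we conclude $p \nmid e(E/F) = n/f(\sigma_\xi)$, so $\sigma_\xi \in \mathcal{G}^{\mathrm{et}}_n(F)$.

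Injectivity then follows from Frobenius reciprocity: an isomorphism $\sigma_\xi \cong \sigma_{\xi'}$ forces $\xi'$ to appear in $\mathrm{Res}_{W_{E'}}^{W_F}\sigma_\xi$, and the Mackey decomposition combined with the admissibility of both pairs promotes this to an $F$-equivalence of $(E/F,\xi)$ and $(E'/F,\xi')$. The main obstacle is surjectivity: starting from an arbitrary essentially tame irreducible $\sigma \in \mathcal{G}^{\mathrm{et}}_n(F)$, one must exhibit an admissible pair $(E/F,\xi)$ with $\sigma_\xi \cong \sigma$. The standard route---due to Howe and formalized in Proposition~2.2 of \cite{BH-ET1}---restricts $\sigma$ to the wild inertia $P_F$, uses the essential-tameness hypothesis $p \nmid n/f(\sigma)$ to decompose $\sigma|_{P_F}$ into wild characters, and identifies the stabilizer in $W_F$ of such a character as the Weil group of a canonical tame extension $E/F$ of degree $n$. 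One then has $\sigma \cong \mathrm{Ind}_{W_E}^{W_F}\xi$ for a character $\xi$ of $W_E$ extending the wild datum; admissibility conditions~(i) and~(ii) amount precisely to the minimality of $E$ (no descent through a norm) and to the fact that $\xi|_{U^1_E}$ is intrinsic to $E$ rather than any smaller ramified subfield.
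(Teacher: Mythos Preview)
The paper gives no argument here; it defers entirely to the Appendix of \cite{BH-ET1}. Your sketch follows that standard route and the architecture is right, but two of your steps compress what is really the substantive content of the proof.

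In the irreducibility step, Mackey produces $g\notin W_E$ with $\xi$ and ${}^g\xi$ agreeing on $W_{E\cdot{}^gE}$. When ${}^gE=E$ this reads $\xi={}^g\xi$ on all of $W_E$, and class field theory for the cyclic extension $E/E^{\langle g\rangle}$ delivers the norm factorization you want. But when ${}^gE\neq E$ the Mackey condition lives only on the proper subgroup $W_{E\cdot{}^gE}\subsetneq W_E$, and your claimed passage to ``$\xi=\eta\circ N_{E/K}$ for some proper $K\subset E$'' is not a direct translation through class field theory; it requires a further argument exploiting the structure of tame extensions, and this is exactly where the work in the cited Appendix lies. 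Similarly, in computing $f(\sigma_\xi)$: from $\chi|_{W_E}\cdot\xi={}^h\xi$ with $h\in\mathrm{Aut}_F(E)$ you conclude $\chi|_{W_E}=1$ via ``trivial stabilizer'', but trivial stabilizer only says the various ${}^h\xi$ are pairwise distinct, not that $h=1$. One must argue further: $\chi|_{W_E}$ unramified forces ${}^h\xi=\xi$ on $U_E^1$, so admissibility~(ii) makes $E/E^{\langle h\rangle}$ unramified; choosing $\varpi_E\in E^{\langle h\rangle}$ then gives $({}^h\xi/\xi)(\varpi_E)=1$, whence $\chi|_{W_E}$ is trivial on all of $E^\times$ and $h=1$ after all.
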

\begin{proof}
The proof is in the Appendix of \cite{BH-ET1}.
\end{proof}
We usually denote the induced representation $\mathrm{Ind}_{W_E}^{W_F}\xi$ by $\mathrm{Ind}_{E/F}\xi$.

\section{Automorphic induction}\label{section autom ind}

We first recall some facts in the theory of \emph{automorphic induction}. These facts are well-known and can be found in \cite{Hen-Herb}, \cite{BH-ET3}, \cite{HL2011}. Let $K/F$ be a cyclic extension of degree $d$ and $\kappa $ be a character of $F^\times$ of order $d$ and with kernel $N_{K/F}(K^\times)$. In other words, $\kappa$ is a character of the extension $K/F$ by local class field theory. Write $G=\mathrm{GL}_n$ and $H=\mathrm{Res}_{K/F}\mathrm{GL}_m$ where $m=n/d$. By fixing an embedding, we can regard $H(F)=\mathrm{GL}_m(K)$ as a subgroup of $G(F)$. Suppose that $\pi$ is a supercuspidal representation of $G(F)$ and is isomorphic to $\kappa \pi =(\kappa\circ\det)\otimes \pi$, then there exists $\rho\in \mathcal{A}_m^0(K) $, a supercuspidal representation of $H(F)$, such that $\pi$ is automorphically induced by $\rho$. We write $A_{K/F}(\rho) = \pi$.

We can characterize such automorphic induction using character relation as follows. Suppose that $\Psi$ is a non-zero intertwining operator between $\kappa \pi$ and $\pi$, in the sense that
\begin{equation*}
\Psi \circ \kappa \pi (x) = \pi (x) \circ \Psi\text{, for all }x\in G(F).
\end{equation*}
This condition determines $\Psi$ up to a scalar. We then define the $(\kappa, \Psi)$-trace of $\pi$ to be the distribution $$\Theta^{\kappa, \Psi}_\pi(f) = \text{tr}(\Psi \circ \pi(f))\text{, for all }f \in \mathcal{C}^\infty_c(G(F)).$$
By 3.9 Corollary of \cite{Hen-Herb}, following the idea of Harish-Chandra \cite{HC-AID}, we can view $\Theta^{\kappa, \Psi}_\pi$ as a function on the subset $G(F)_{\mathrm{rss}}$ of regular semi-simple elements of $G(F)$. Similarly, and again by \cite{HC-AID}, we can view the distribution $\Theta_\rho$ of the character of $\rho$ as a function on $H(F)_{\mathrm{rss}}$. Let $G(F)_{\mathrm{ell}}$ be the subset of elliptic elements in $G(F)_{\mathrm{rss}}$. The character identity of automorphic induction is
\begin{equation}\label{automorphic-induction-character-relation}
  \Theta^{\kappa, \Psi}_\pi (h) = c(\rho, \kappa, \Psi) \Delta^2(h) \Delta^1(h)^{-1} \sum_{g \in \Gamma_{K/F}} \Theta^g_\rho(h) \text{, for all }h \in H(F) \cap G(F)_{\mathrm{ell}}.
\end{equation}
 Here $\Delta^1$ and $\Delta^2$ are the \emph{transfer factors} defined in 3.2, 3.3 of \cite{Hen-Herb}. We will recall their constructions in section \ref{section Relations of transfer factors}. The constant $c(\rho, \kappa, \Psi)$ is a normalization depending only on $\rho, \kappa$ and $\Psi$, and is called the automorphic induction constant. We will focus on this constant in chapter \ref{chapter comparing character}.

From (\ref{automorphic-induction-character-relation}) we know that all $\rho^g$, where $g$ runs through $\Gamma_{K/F}$, are automorphically induced to $\pi$. Given a supercuspidal $\pi$, the corresponding $\rho$ is $K/F$-regular, which means that all $\rho^g$ are mutually inequivalent. Conversely, if $\rho$ is $K/F$-regular, then we have a supercuspidal $\pi$ uniquely determined by (\ref{automorphic-induction-character-relation}). The relation between $\rho$ and $\pi$ should be independent of $\kappa$.

We explain the meaning of the compatibilities in property (\ref{point AI and BC}) in Theorem \ref{langlands corresp}. The first half asserts that the automorphic induction on $\mathcal{A}_n^{\mathrm{et}}(F)$ is compatible with induction on $\mathcal{G}_n^{\mathrm{et}}(F)$ via $\mathcal{L}_n$. Let $\mathcal{A}_m^0(K)_{K/F\text{-reg}}$ be the set of isomorphism classes of $K/F$-regular supercuspidal $\rho$ of $\mathrm{GL}_m(K)$, and $\mathcal{G}_m^{0}(K)_{K/F\text{-reg}}$ be the set of isomorphism classes of irreducible representations $\tau$ of $W_K$ of degree $m$ and being $K/F$-regular. The last condition means that if we denote by $\tau^g$, with $g\in W_F$, the representation $\tau^g(h)=\tau(ghg^{-1})$, for all $h\in W_K$, then all $\tau^g$ are mutually inequivalent. We have the following commutative diagram
\begin{equation*}
\xymatrixcolsep{5pc}\xymatrix{
\mathcal{G}^0_{m}(K)_{K/F\text{-reg}} \ar[d]^{\mathrm{Ind}_{K/F}} \ar[r]^{\mathcal{L}_m} &\mathcal{A}^0_{m}(K)_{K/F\text{-reg}} \ar[d]^{\mathrm{A}_{K/F}}\\
\mathcal{G}^0_n(F) \ar[r]^{\mathcal{L}_n} &\mathcal{A}^0_n(F).}
\end{equation*}
There is a dual diagram of the above, with induction on the Galois side replaced by restriction. The corresponding operation on the p-adic side is called the base change. For a comprehensive study on this topic, we refer the readers to \cite{AC}, \cite{HL2011} and skip further discussion to avoid introducing more notations. We would only indicate at the place where we make use of base change.

We briefly describe the process in \cite{BH-ET1}, \cite{BH-ET2}, \cite{BH-ET3} of how an admissible character determines an essentially tame supercuspidal representation using automorphic induction. Suppose that $\xi$ is an admissible character of $E^\times$ over $F$. We split the extension $E/F$ into intermediate sub-extensions
\begin{equation}\label{sequence of fields, ramified}
  F=K_{-1}\hookrightarrow K_0\hookrightarrow K_{1}\hookrightarrow \cdots\hookrightarrow K_{l-1}\hookrightarrow K_l\hookrightarrow E=K_{l+1}
\end{equation}
such that
\begin{enumerate}[(I)]
  \item $E/K_l$ is totally ramified of odd degree \cite{BH-ET1},\label{three cases totally-ram odd}
  \item each $K_i/K_{i-1}$, $i=1,\dots,l$, is quadratic totally ramified \cite{BH-ET2}, and \label{three cases totally-ram quad}
  \item $K_0/F$ is unramified \cite{BH-ET3}. \label{three cases unram}
\end{enumerate}
The extensions in (\ref{three cases totally-ram quad}) and (\ref{three cases unram}) are cyclic. Therefore each $A_{K_i/K_{i-1}}$, $i=0,\dots,l$, is defined by automorphic induction. The extension $E/K_l$ in case (\ref{three cases totally-ram odd}) is not necessarily cyclic, but it is so if we base change $K_l$ by an unramified extension. By a trick similar to the proof in 3.5 of \cite{BH-ET1}, we can define a map $$A_{E/K_l}:P(E/F)\hookrightarrow P(E/K_l)\hookrightarrow \mathcal{A}_1(E)_{E/K_l\text{-reg}}\rightarrow \mathcal{A}^{0}_{|E/K_l|}(K_l)$$ (with the first two maps being natural inclusions) such that the diagram
\begin{equation*}
\xymatrixcolsep{5pc}\xymatrix{
\mathcal{G}_1(E)_{E/K_l\text{-reg}} \ar[d]^{\mathrm{Ind}_{E/K_l}}  \ar[r]^{\mathcal{L}_1} &P(E/F) \ar[d]^{A_{E/K_l}}\\
\mathcal{G}^\mathrm{0}_{|E/K_l|}(K_l) \ar[r]^{\mathcal{L}_{|E/K_l|}} &\mathcal{A}^\mathrm{0}_{|E/K_l|}(K_l)}
\end{equation*}
commutes. Using condition (i) in the definition of the admissibility of $\xi$, we can compose the maps $A_{K_i/K_{i-1}}$, $i=0,\dots,l+1$, above and define
 $$A_{E/F}=A_{K_0/F}\circ A_{K_1/K_0}\circ \cdots \circ A_{K_l/K_{l-1}}\circ A_{E/K_l}:P(E/F)\rightarrow \mathcal{A}_n^{0}(F).$$
The `union' of the above maps
$$\bigsqcup_{\begin{smallmatrix}
 E/F\text{ tamely ramified}\\ |E/F|=n
\end{smallmatrix}}A_{E/F}:\bigsqcup_{\begin{smallmatrix}
 E/F\text{ tamely ramified}\\ |E/F|=n
\end{smallmatrix}}P(E/F)\rightarrow \mathcal{A}_n^{0}(F)$$
descends to a map $$A_n=A_n(F):P_n(F)\rightarrow \mathcal{A}_n^{0}(F).$$
\begin{prop}
  The map $A_n$ is injective with image in $\mathcal{A}^\mathrm{et}_n(F)$.
\end{prop}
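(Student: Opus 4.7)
The plan is to identify $A_n$ with the composition $\mathcal{L}_n\circ\Sigma_n$ via the tower (\ref{sequence of fields, ramified}), and then deduce both the injectivity and the image assertion from the bijectivity of $\Sigma_n$ and $\mathcal{L}_n$.

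First, I would work layer by layer in the tower. For each cyclic layer $K_i/K_{i-1}$, with $i=0,1,\ldots,l$, property (\ref{point AI and BC}) of Theorem \ref{langlands corresp} yields a commutative square whose vertical arrows are $A_{K_i/K_{i-1}}$ and $\mathrm{Ind}^{W_{K_{i-1}}}_{W_{K_i}}$ and whose horizontal arrows are the essentially tame Langlands correspondences at the two levels. For the top, possibly non-cyclic, totally ramified odd-degree layer $E/K_l$, the map $A_{E/K_l}$ is by construction (the trick from 3.5 of \cite{BH-ET1}, reproduced just above) defined precisely so as to fit into the analogous commutative square, which is the diagram displayed in the excerpt immediately before the definition of $A_{E/F}$.

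Stacking these squares and using the transitivity of Galois induction, one obtains, for every admissible pair $(E/F,\xi)$,
$$A_n(E/F,\xi) = \mathcal{L}_n\bigl(\mathrm{Ind}^{W_F}_{W_E}\xi\bigr) = \mathcal{L}_n\bigl(\Sigma_n(E/F,\xi)\bigr).$$
Since $\Sigma_n\colon P_n(F)\to\mathcal{G}^{\mathrm{et}}_n(F)$ is bijective by Proposition \ref{prop bijection of P and G} and $\mathcal{L}_n\colon\mathcal{G}^{\mathrm{et}}_n(F)\to\mathcal{A}^{\mathrm{et}}_n(F)$ is bijective by Theorem \ref{langlands corresp}, the map $A_n$ is in fact bijective onto $\mathcal{A}^{\mathrm{et}}_n(F)$; in particular it is injective with image in $\mathcal{A}^{\mathrm{et}}_n(F)$.

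The main technical obstacle is extracting the precise cyclic-layer compatibility from property (\ref{point AI and BC}), which is stated only in rough form in Theorem \ref{langlands corresp}; one must appeal to the precise formulation in Proposition 3.2 of \cite{BH-ET1}, verifying in particular that the intermediate representations $\mathrm{Ind}^{W_{K_i}}_{W_E}\xi$ are essentially tame and $K_i/K_{i-1}$-regular, so that the diagram applies at each step (both facts follow from the admissibility of $\xi$ together with Proposition \ref{prop bijection of P and G} applied over $K_i$). A secondary issue is that the construction of $A_{E/K_l}$ relies on an auxiliary unramified base change of $K_l$ to cyclify $E/K_l$, so one needs its independence from that choice, which is internal to \cite{BH-ET1}. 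Finally, a routine point is that $A_n$ descends from $\bigsqcup_E P(E/F)$ to $P_n(F)$, i.e., respects $F$-conjugation; this follows from property (ii) of Theorem \ref{langlands corresp}.
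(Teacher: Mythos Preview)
Your proposal is correct and follows exactly the same approach as the paper: the paper's proof simply asserts that $A_n$ was constructed so as to equal the composition $\mathcal{L}_n\circ\Sigma_n$ and refers to section~3 of \cite{BH-ET1} for details, so your layer-by-layer verification of this identity is a fleshed-out version of what the paper leaves implicit.
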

\begin{proof}
  Indeed we constructed $A_n$ such that it is equal to the composition $$P_n(F) \xrightarrow{\Sigma_n} \mathcal{G}^\mathrm{et}_n(F) \xrightarrow{\mathcal{L}_n} \mathcal{A}^\mathrm{et}_n(F).$$ We refer the details to section 3 of \cite{BH-ET1}.
\end{proof}

\section{Rectifiers}\label{section rectifiers}

The bijection $A_n$ depends on the character relation of automorphic induction and is not quite explicit. Using section 2 of \cite{BH-ET1}, we can construct directly an essentially tame supercuspidal $\pi_\xi$ from an admissible character $\xi$ using solely representation theoretic method. Such supercuspidal representation is a compact induction
\begin{equation}\label{supercusp as compact induction}
  \pi_\xi=\mathrm{cInd}_{\mathbf{J}(\xi)}^{G(F)}\Lambda_\xi
\end{equation} of certain representation $\Lambda_\xi$ on a compact-mod-center subgroup ${\mathbf{J}(\xi)}$ of $G(F)$. This representation $(\mathbf{J}(\xi),\Lambda_\xi)$ depends on the admissible character $\xi$, and is an extension of certain representation called maximal type defined in \cite{BK}.

\begin{prop}\label{prop bijection of P and A} The map $$\Pi_n:P_n(F)\rightarrow\mathcal{A}^\mathrm{et}_n(F),\,(E/F,\xi)\mapsto\pi_\xi,$$
is a bijection, with $f(\pi_\xi)=f(E/F)$.
\end{prop}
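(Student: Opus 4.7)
The plan is to verify \textbf{well-definedness}, then \textbf{supercuspidality plus essential tameness}, and finally \textbf{bijectivity}, following section 2 of \cite{BH-ET1} and leaning on the Bushnell--Kutzko classification of supercuspidals by maximal simple types \cite{BK}. The construction recalled in (\ref{supercusp as compact induction}) attaches to $(E/F,\xi)$ a hereditary $\mathfrak{o}_F$-order in $M_n(F)$, a simple stratum, and then an \emph{extended maximal simple type} $(\mathbf{J}(\xi),\Lambda_\xi)$; the compactly induced representation $\pi_\xi$ is irreducible supercuspidal by the fundamental intertwining/irreducibility criterion of \cite{BK}. I would isolate the admissibility conditions (i) and (ii) on $\xi$ and check that they translate precisely into the two hypotheses needed in \cite{BK} to guarantee that $(\mathbf{J}(\xi),\Lambda_\xi)$ is a \emph{maximal} extended type, which is what forces $\pi_\xi$ to be supercuspidal rather than merely irreducible of level zero on its Levi.

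\textbf{Well-definedness.} If $(E'/F,\xi')$ is $F$-equivalent to $(E/F,\xi)$ via $g\in \Gamma_F$ with ${}^gE=E'$ and ${}^g\xi=\xi'$, then the whole data building $(\mathbf{J}(\xi'),\Lambda_{\xi'})$ is the $\mathrm{Ad}(g)$-transport of that building $(\mathbf{J}(\xi),\Lambda_\xi)$, so the compact inductions to $G(F)$ agree after an inner automorphism of $G(F)$; hence $\pi_{\xi'}\cong \pi_\xi$. Thus $\Pi_n$ is well-defined on $P_n(F)$.

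\textbf{Essential tameness and the formula $f(\pi_\xi)=f(E/F)$.} Since $E/F$ is tamely ramified, the hereditary order attached to $\xi$ is principal with period $e(E/F)$ and the associated stratum is tame; the level of $\pi_\xi$ then has denominator prime to $p$, so $\pi_\xi\in \mathcal{A}^\mathrm{et}_n(F)$. For the index $f(\pi_\xi)$, an unramified character $\chi$ of $F^\times$ fixes $\pi_\xi$ iff $\chi\circ\det$ restricted to $\mathbf{J}(\xi)$ is absorbed by a twist of $\Lambda_\xi$; tracking this through the construction, $\chi\circ\det|_{\mathbf{J}(\xi)}$ is determined by $\chi\circ N_{E/F}$ on the wild part, so the stabilizer is exactly the group of unramified characters trivial on $N_{E/F}(\mathfrak{o}_E^\times)$-related data, which has order $f(E/F)$.

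\textbf{Bijectivity.} For injectivity, if $\pi_\xi\cong\pi_{\xi'}$, then the contained maximal simple types are $G(F)$-conjugate by the uniqueness part of \cite{BK}; conjugacy of the underlying simple strata forces ${}^gE=E'$ for some $g\in G(F)$, which one refines to $g\in\Gamma_F$ after intertwining-to-conjugacy, and the attached characters then satisfy ${}^g\xi=\xi'$, so $(E/F,\xi)$ and $(E'/F,\xi')$ represent the same class in $P_n(F)$. For surjectivity, an arbitrary $\pi\in \mathcal{A}^\mathrm{et}_n(F)$ contains a maximal simple type by the Bushnell--Kutzko exhaustion theorem, and the essential-tameness hypothesis $p\nmid n/f(\pi)$ forces the underlying field extension $E/F$ to be tamely ramified; reading off the attached character of $E^\times$ from the extended type yields an admissible $\xi$ with $\Pi_n(E/F,\xi)=\pi$.

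The main obstacle is the last point, namely \textbf{surjectivity combined with the injectivity claim that the field $E$ is determined by $\pi$}: two a priori different tame extensions $E,E'\subset M_n(F)$ could conceivably give rise to conjugate simple strata, and one must rule this out and show that the admissibility conditions on $\xi$ match exactly the typology of extended maximal types. This is precisely the content of the constructions carried out in section 2 of \cite{BH-ET1}, to which the proof ultimately reduces; I would cite those results rather than redo the detailed intertwining calculations.
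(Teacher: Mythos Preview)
Your proposal is correct and takes essentially the same approach as the paper: the paper's own proof is simply the one-line citation ``The proof is summarized in chapter 2 of \cite{BH-ET1},'' and your sketch is a reasonable unpacking of exactly that material, ultimately deferring to the same source. Your outline of well-definedness, essential tameness, the computation of $f(\pi_\xi)$, and bijectivity via the Bushnell--Kutzko type theory accurately reflects the structure of the argument in \cite{BH-ET1}, so there is nothing to correct.
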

\begin{proof}
The proof is summarized in chapter 2 of \cite{BH-ET1}.
\end{proof}

The whole theory comes from the fact that the maps $A_n$ and $\Pi_n$ are unequal in general. In other words, the composition of the bijections in Proposition \ref{prop bijection of P and G}, Theorem \ref{langlands corresp} and the inverse of the one in Proposition \ref{prop bijection of P and A},
\begin{equation*}
\mu: P_n(F) \xrightarrow{\Sigma_n} \mathcal{G}^\mathrm{et}_n(F) \xrightarrow{\mathcal{L}_n} \mathcal{A}^\mathrm{et}_n(F) \xrightarrow{\Pi_n^{-1}} P_n(F),
\end{equation*}
is not the identity map on $P_n(F)$. Bushnell and Henniart proved in \cite{BH-ET3} that, for each admissible character $\xi$ of $E^\times$, there is a unique tamely ramified character ${}_F\mu_\xi$ of $E^\times$, depending on the wild part of $\xi$ (i.e., the restriction $\xi|_{U^1_E}$), such that ${}_F\mu_\xi \cdot \xi$ is also admissible and
\begin{equation*}
\mu(E/F, \xi) = (E/F, {}_F\mu_\xi \cdot \xi).
\end{equation*}
We call the character ${}_F\mu_\xi$ the \emph{rectifier} of $\xi$. In the series \cite{BH-ET1}, \cite{BH-ET2}, \cite{BH-ET3}, we can express ${}_F\mu_\xi$ explicitly, and hence also the correspondence $\mathcal{L}_n$.

The rectifier ${}_F\mu_\xi$ is a product
\begin{equation}\label{product of rectifier}
{}_{F}\mu_\xi=({}_{K_l}\mu_\xi)({}_{K_l/K_{l-1}}\mu_\xi)\cdots({}_{K_1/K_0}\mu_\xi)({}_{K_0/F}\mu_\xi).
\end{equation}
Here the intermediate subfields are those in the sequence (\ref{sequence of fields, ramified}). The first factor ${}_{K_l}\mu_\xi$ is the rectifier by considering $\xi$ as being admissible over $K_l$. This rectifier is computed in \cite{BH-ET1}. The remaining factors are of the form ${}_{K_i/K_{i-1}}\mu_\xi$. These characters, called the $\nu$-rectifiers in this article, are constructed in \cite{BH-ET2} for $i=1,\dots,l$ and in \cite{BH-ET3} for $i=0$. We briefly describe the construction of the rectifiers and the $\nu$-rectifiers by the following inductive process. Suppose that $K/F$ is a sub-extension of $E/F$ of degree $d$ and assume that we have constructed ${}_{K}\mu_\xi$. Suppose that the essentially tame supercuspidal $\pi$
\begin{enumerate}[(i)]
 \item is automorphically induced from $\rho={}_{K}\pi_\xi$ and
  \item is compactly induced from an extended maximal type $\Lambda=\Lambda_{\nu\xi}$ depending on an admissible character $\nu\xi$, where $\nu$ is some tamely ramified character of $E^\times$. We know that such $\nu$ exists by Proposition 3.4 of \cite{BH-ET1}.
\end{enumerate}
By choosing suitably normalized $\Psi$, we can equate the characters of the automorphic induction (\ref{automorphic-induction-character-relation}) from $\rho$ and of the (twisted) Mackey induction from $\Lambda$. We can write down the latter explicitly as
 \begin{equation}\label{twisted mackey induction}
   \Theta^{\kappa}_\pi (h) =  \sum_{x\in G(F)/\mathbf{J}(\xi)} \kappa(\det x^{-1})\text{ trace}\Lambda(x^{-1}hx) \text{, for all }h \in H(F) \cap G(F)_{\mathrm{ell}}.
 \end{equation}
 Here we regard trace$\Lambda$ as a function of $G(F)$ vanishing outside $\mathbf{J}(\xi)=\mathbf{J}({\nu\xi})$. By decomposing both characters into finer sums and studying each terms in the sums, we solve for our desired $\nu$-rectifier $\nu={}_{K/F}\mu_\xi$. We then define ${}_{F}\mu_\xi=({}_{K/F}\mu_\xi)({}_{K}\mu_\xi).$

To express the values of the rectifiers and the $\nu$-rectifiers, we need to understand the construction of the compact induction (\ref{supercusp as compact induction}). We will come to this point in section \ref{Supercuspidal Representations}.

\section{The Archimedean case}\label{the Archi case}

We follow the description of relative discrete series representations of $G(F)=\mathrm{GL}_2(\mathbb{R})$ in chapter 1, \S 5 of \cite{Jac-Langlands-GL(2)} to express the local Langlands correspondence for this group. There is also a brief summary in Example 10.5 of \cite{Borel-autom-L} of classifying discrete series representations for a semi-simple split real group that possesses a compact maximal sub-torus.

We consider, when $F=\mathbb{R}$, the group $G(F)=\mathrm{GL}_2(\mathbb{R})$ and a maximal torus $T(F)\cong \mathbb{C}^\times$. Each character of $\mathbb{C}^\times$ is of the form
\begin{equation*}
\chi_{2s,n}: \mathbb{C}^\times \rightarrow \mathbb{C}^\times,\, re^{i \theta} \mapsto r^{2s}e^{in \theta},\,\text{for some } s\in \mathbb{C},\, n \in \mathbb{Z}.
\end{equation*}
We call $\chi_{2s,n}$ admissible over $\mathbb{R}$ (or regular) if $\chi_{2s,n}$ does not factor through
$N_{\mathbb{C}/\mathbb{R}}: z \mapsto z\bar{z} = |z|^2.$ This is equivalent to the condition that $n \neq 0$. Clearly, $\chi_{2s,n}$ and $\chi_{2s,-n}$ are conjugate under the non-trivial action in $ \Gamma_{\mathbb{C/R}}$. We then denote the equivalence class of $\chi_{2s,n}$ by $(\mathbb{C/R}, \chi_{2s,n})$ and the set of equivalence classes by
\begin{equation*}
P_2(\mathbb{R}) = \{ (\mathbb{C/R}, \chi_{2s,n}) | s \in \mathbb{C}, n>0\}.
\end{equation*}
By \cite{Tate-NTB}, any irreducible 2-dimensional representation of $W_\mathbb{R}$ is induced from a regular character of $W_\mathbb{C}\cong \mathbb{C}^\times$. We write
\begin{equation} \label{2-dim induced representation W-real case}
\varphi_{2s,n} = \text{Ind}_{\mathbb{C/R}} \chi_{2s,n}.
\end{equation}
It is easy to check that $\varphi_{2s,n} \cong \varphi_{2s,-n}$. Therefore the set of equivalence classes of 2-dimensional representations of $W_\mathbb{R}$ is
\begin{equation*}
\mathcal{G}^0_2(\mathbb{R}) = \{\varphi_{2s,n}|s \in \mathbb{C}, n > 0\},
\end{equation*}
and the map
\begin{equation}\label{induction of real weil group}
P_2(\mathbb{R}) \rightarrow \mathcal{G}^0_2(\mathbb{R}),\,\chi \mapsto \text{Ind}_{\mathbb{C/R}}\chi
\end{equation} is clearly bijective.

Each character of $\mathbb{R}^\times$ is of the form
\begin{equation*}
\mu_{s,m}: \mathbb{R}^\times \rightarrow \mathbb{C}^\times,\,\pm r \mapsto (\pm1)^m r^s,\, s \in \mathbb{C},\, m \in  \{0,1\}.
\end{equation*}
Using the notation in Theorem 5.11 of \cite{Jac-Langlands-GL(2)}, we consider the representation
\begin{equation*}
\sigma(\mu_{s+\frac{n}{2}, [n+1]}, \mu_{s-\frac{n}{2},0}),\,s \in \mathbb{C},\, n \geq 1,
\end{equation*}
where $[n]\in \mathbb{Z}/2$ is the parity of $n$. These representations exhaust the collection $\mathcal{A}^0_2(\mathbb{R})$ of all relative discrete series representations by Harish-Chandra's classification. The Langlands correspondence \cite{Langlands-real-alg-gp} is given by
\begin{equation}\label{langlands corresp CR case}
\mathcal{L}:\mathcal{G}^0_2(\mathbb{R}) \rightarrow \mathcal{A}^0_2(\mathbb{R}),\,\varphi_{2s,n} \mapsto \sigma(\mu_{s+\frac{n}{2}, [n+1]}, \mu_{s-\frac{n}{2},0}).
\end{equation}
The bijection is well-defined and injective because, by Theorem 5.11 of \cite{Jac-Langlands-GL(2)}, we have
\begin{equation*}
\sigma(\mu_{s-\frac{n}{2}, [-n+1]}, \mu_{s+\frac{n}{2},0}) \cong \sigma(\mu_{s+\frac{n}{2},[n+1]}, \mu_{s-\frac{n}{2},0})
  \end{equation*}
and
\begin{equation*}
\sigma(\mu_{s+\frac{m}{2}, [m+1]}, \mu_{s-\frac{m}{2},0}) \ncong \sigma(\mu_{s+\frac{n}{2},[n+1]}, \mu_{s-\frac{n}{2},0})\text{ for all }m\neq \pm n.
  \end{equation*}

Let us recall the construction of the relative discrete series representations, and refer to chapter 3 of \cite{Langlands-real-alg-gp} for details. For each character $\chi = \chi_{2s,n}$, we introduce a `$\rho$-shift' on $\chi$ as follows. Let \begin{equation*}
\rho: \mathbb{C}^\times \rightarrow \mathbb{C}^\times,\,z \mapsto ({z}{\bar{z}}^{-1})^{1/2}.
\end{equation*}
Hence indeed $\rho = \chi_{0,1}$. This is the based $\chi$-data used in \cite{Shelstad-temp-endo-for-real-group-1}. We will make use of a generalization of this to construct admissible embeddings of L-groups in section \ref{section langlands shelstad chi data}. We define
\begin{equation}\label{rectifier in archi case}
\chi_0 = \chi \rho^{-1} = \chi_{2s,n-1}.
\end{equation}
For every such $\chi_0$ we assign
\begin{equation}\label{local construction in archi case}
\pi(\chi_0) = \sigma(\mu_{s+ \frac{n}{2}, [n+1]}, \mu_{s-\frac{n}{2},0})=\mathcal{L}(\text{Ind}_{\mathbb{C/R}} \chi_0\rho),
\end{equation}
whose character formula is given by
\begin{equation}\label{character formula for pi(chi0)}
-\frac{\chi_0(z)\rho(z)-\chi_0(\bar{z})\rho(\bar{z})}{\Delta(z)\rho(z)} = -\frac{\chi(z)-\chi(\bar{z})}{\Delta(z)\rho(z)}\text{, for all } z\in\mathbb{C}^\times-\mathbb{R}.
\end{equation}
Here $\mathbb{C}^\times$ is embedded in $\text{GL}(2,\mathbb{R})$ as a compact-mod-center Cartan subgroup, and $\Delta(z)$ is the Weyl discriminant. The character formula is expressed as on the left side of (\ref{character formula for pi(chi0)}) in order to match the character formula, given in \cite{Langlands-real-alg-gp}, of a discrete series representation under the Harish-Chandra's classification. This result is comparable to the description $\rho+X^*(T)$ of the discrete spectrum of $\mathrm{SL}_2$ using infinitesimal characters, given in Example 10.5 of \cite{Borel-autom-L}. Therefore, in order to obtain the correct correspondence, we have to rectify (\ref{induction of real weil group}) by the character $\rho$. More precisely, we have the diagram
\begin{equation*}
\xymatrixcolsep{5pc}\xymatrix{
\mathcal{G}^0_2(\mathbb{R})   \ar[r]^{\mathcal{L}} &\mathcal{A}^0_{2}(\mathbb{R}) \\
P_2(\mathbb{R}) \ar[u]^{\mathrm{Ind}_{\mathbb{C}/\mathbb{R}}}\ar[r]^{\chi_0\rho\mapsto \chi_0}  &P_2(\mathbb{R}) \ar[u]^{\pi},
}
\end{equation*}
by combining (\ref{induction of real weil group}), (\ref{langlands corresp CR case}), (\ref{rectifier in archi case}), and (\ref{local construction in archi case}).

\chapter{Endoscopy}\label{endoscopy}

Let $K/F$ be a cyclic extension of degree $d$ and $m=n/d$. The aim of this chapter is to compute the Langlands-Shelstad transfer factor $\Delta_0$, defined in \cite{LS}, \cite{KS}, in the case when $G=\mathrm{GL}_n$ and $H=\mathrm{Res}_{K/F}\mathrm{GL}_m$ regarded as a twisted endoscopic group of $G$. The transfer factor $\Delta_0$ is a product of several other transfer factors $$\Delta_0=\Delta_\mathrm{I}\Delta_\mathrm{II}\Delta_\mathrm{III_1}\Delta_\mathrm{III_2}\Delta_\mathrm{IV}.$$ In our case these factors are functions evaluated at the regular semi-simple elements in an elliptic torus $T$ regarded as lying in both $G$ and $H$. We would recall the definitions and mention some properties of these transfer factors.

The topics of this chapter are summarized as follows.
\begin{enumerate}[(i)]
  \item We define the transfer factor $\Delta_0$ and explain its significance in terms of the transfer principle in section \ref{section Endoscopic group}.
      \item We construct $\Delta_\mathrm{I}$ in terms of several auxiliary data in section \ref{section The splitting invariant}. Then we show, in section \ref{section Trivializing the splitting invariant}, that we can trivialize $\Delta_\mathrm{I}$. More precisely, we show that $\Delta_\mathrm{I}\equiv 1$ if we choose specific data.
          \item In section \ref{section Induction as Admissible Embedding}, we interpret the induction of representations of Weil groups in terms of admissible embeddings of L-groups. We then specify those embeddings constructed by $\chi$-data in section \ref{section langlands shelstad chi data} and \ref{section explicit-Delta-III-2}, where we can express the transfer factor $\Delta_\mathrm{III_2}$ explicitly. In section \ref{section restriction property}, we state a restriction property of $\Delta_\mathrm{III_2}$, which is related to condition (\ref{ETLLC det is central char}) of the local Langlands correspondence in Theorem \ref{langlands corresp} (see Remark \ref{remark res rectifier is delta} for instance).
              \item In section \ref{section Relations of transfer factors}, we relate the above transfer factors to the one defined in \cite{Hen-Herb}.
\end{enumerate}

\section{The transfer principle}\label{section Endoscopic group}
Let $d$ be a divisor of $n$ and $K/F$ be a cyclic extension of degree $d$. Write $m=n/d$ and $H = \text{Res}_{K/F}\text{GL}_{m}$.
Then we can regard $H$ as a twisted endoscopic group of $G$ by the theory in \cite{KS} as follows. Take a torus $T$ over $F$ of rank $n$ (but not necessarily split over $F$). We can embed $T$ into $H$ and $G$ by certain isomorphisms $\iota_H:T\rightarrow T_H$ and $\iota_G:T\rightarrow T_G$ such that the isomorphism $\iota := \iota_G\iota_H^{-1}: T_H \rightarrow T_G$ is defined over $F$ and is admissible with respect to certain fixed splittings \footnote{The word `splitting' is also called `pinning' in some literature, coming from the French terminology `\'{e}pinglage.'} $\text{spl}_H$ of $H$ and $\text{spl}_G$ of $G$ (see section 3 of \cite{KS}). On the dual side, we take
\begin{equation*}
\hat{G} = \text{GL}_n( \mathbb{C}),\,  \hat{H} = \text{GL}_m(\mathbb{C})^d  \text{, and }  \hat{T} = (\mathbb{C}^\times)^n
\end{equation*}
as abstract groups. The $W_F$-action on $\hat{H}$ is defined by cyclicly permutes the $d$ components, so it factors through that of the cyclic quotient $\Gamma_{K/F}=W_F/W_K$. Hence it is more accurate to write $\hat{H} = \mathrm{Ind}_{K/F}\text{GL}_m(\mathbb{C}) $ and similarly $\hat{T} = \mathrm{Ind}_{E/F}\mathbb{C}^\times$. For convenience, we can choose splittings $\text{spl}_{\hat{H}}$ and $\text{spl}_{\hat{G}}$ such that both consist of diagonal subgroups as their respective tori and embed $\hat{T}$ into both $\hat{G}$ and $\hat{H}$ diagonally.

Together with $G$ there is a triple of datum $(G, \theta, \kappa)$ such that, in our case here, $\theta$ is the trivial automorphism of $G$ and $\kappa$ is a character of $F^\times$ associated to the cyclic extension ${K/F}$. By $H$ being a twisted endoscopic group of $(G, \theta, \kappa)$, we mean that there exists a quadruple of data $(H, \mathcal{H}, s, \hat{\xi})$ satisfying the conditions in section 2.1 of \cite{KS}. In our case, we can take
\begin{equation*}
\mathcal{H} = {}^LH = \text{GL}_m(\mathbb{C})^d \rtimes W_F,\, s = \left( \begin{smallmatrix} {} &{} &{} &1_m \\ 1_m &{} &{} &{} \\ {} &\ddots &{} &{} \\ {} &{} &1_m &{} \end{smallmatrix} \right),
\end{equation*}
and
\begin{equation*}
\hat{\xi}: {}^LH \rightarrow {}^LG,\,\left(\begin{smallmatrix} h_1 &{} &{}\\{} &\ddots &{} \\ {} &{} &h_d \end{smallmatrix}\right) \rtimes_{\hat{H}} w \mapsto \left(\begin{smallmatrix} h_1 &{} &{}\\{} &\ddots &{} \\ {} &{} &h_d \end{smallmatrix}\right) \bar{N}_{HG}(w) \rtimes_{\hat{G}} w,
\end{equation*}
where $1_{m}$ is the identity matrix of size $m \times m$, the entries $h_i \in \text{GL}_m(\mathbb{C})$, and $\bar{N}_{HG}(w)$ is the appropriate permutation matrix that measures the difference of the actions of $w\in W_F$ on $\hat{H}$ and on $\hat{G}$.

We suppose that $\gamma \in T(F)$ is strongly $G$-regular, which means that $\iota_G(\gamma)$ is strongly regular in $T_G$. With $\iota$ being admissible, it can be shown that $\iota_H(\gamma)$ is also strongly regular. We can compute the \emph{transfer factor} $$\Delta_0(\iota_H(\gamma), \iota_G(\gamma))$$ using the recipes in \cite{LS} or \cite{KS}. It is a product of five factors, $\Delta_\mathrm{I}, \Delta_\mathrm{II}, \Delta_\mathrm{III_1}, \Delta_\mathrm{III_2}$, and $\Delta_\mathrm{IV}$, all evaluated at the point $(\iota_H(\gamma), \iota_G(\gamma))$. For simplicity we write
\begin{enumerate}[(i)]
\item  $\Delta_*(\gamma) = \Delta_*(\iota_H(\gamma), \iota_G(\gamma))$ for every subscript $*=0,\mathrm{I},\dots,\mathrm{IV}$ above,
\item  $\Delta_{i,j,\dots}$ for the product $\Delta_i\Delta_j\cdots$, and
\item $\Delta_\mathrm{III}=\Delta_\mathrm{III_1,III_2}$.
\end{enumerate}
We can compute some of the factors with little effort.
\begin{enumerate}[(i)]
  \item By regarding $H$ as a subgroup of $G$ and fixing the embedding $T\hookrightarrow H\hookrightarrow G$, we have $$\Delta_\mathrm{III_1}(\gamma) = 1.$$
\item
By the definition in (3.6) of \cite{LS}, we have $$\Delta_\mathrm{IV}(\gamma)=\left(|D_G(\gamma)|_F|D_H(\gamma)|_E^{-1}\right)^{1/2},$$ where $D_G(\gamma)=\det(1-\mathrm{Ad}(\gamma))|_{\mathfrak{g}/\mathfrak{g}_\gamma}$ and $D_H(\gamma)=\det(1-\mathrm{Ad}(\gamma))|_{\mathfrak{h}/\mathfrak{h}_\gamma}$ are the Weyl determinants. It is equal to the factor $\Delta^1$ appearing in the automorphic induction identity (\ref{automorphic-induction-character-relation}), by the definition in (3.2) of \cite{Hen-Herb}. This factor turns out to be of very little interest to us.
\end{enumerate}
We have to choose some auxiliary data to compute $\Delta_\mathrm{I},\,\Delta_\mathrm{II}$ and $ \Delta_\mathrm{III_2}$. These will occupy the subsequent sections of this chapter.

We recall briefly the transfer principle. For details, see section 3 of \cite{Hen-Herb} and section 5.5 of \cite{KS}. Take $\gamma\in G(F)_{\mathrm{rss}}$ and denote by $G(F)_\gamma$ the centralizer of $\gamma$ in $G(F)$. For every $f\in \mathcal{C}^\infty_c(G(F))$ we define the $\kappa$-orbital integral $$O_\gamma^\kappa(f)=\int_{G(F)_\gamma\backslash G(F)}\kappa\circ\det(g)f(g^{-1}\gamma g)\frac{dg}{dt},$$ where $dg$ and $dt$ are Haar measures on $G(F)$ and $G(F)_\gamma$ respectively. Similarly, for $\gamma_H\in H(F)_\mathrm{rss}$ and $f^H\in \mathcal{C}^\infty_c(H(F))$, we define the orbital integral
$$O_{\gamma_H}(f^H)=\int_{H(F)_{\gamma_H}\backslash H(F)}f(h^{-1}\gamma_H h)\frac{dh}{dt_H},$$
where $dh$ and $dt_H$ are Haar measures on $H(F)$ and $H(F)_{\gamma_H}$ respectively. Suppose that $\iota(\gamma_H)=\gamma$. We call that $f^H$ and $f$ are matching functions if they satisfy \begin{equation}\label{twisted orbital integrals}
  O_{\gamma_H}(f^H)=\Delta_0(\gamma)O_\gamma^\kappa(f).
\end{equation} The Fundamental Lemma \cite{Walds-GLn} implies that for every $f$ such $f^H$ exists.

Suppose that $\rho\in \mathcal{A}_m^0(K)$. We call an admissible representation $\pi$ of $G(F)$ a spectral transfer of $\rho$ if they satisfy the character ideitity \begin{equation}\label{definition of spectral transfer}
  \Theta_\rho(f^H)=\text{constant}\cdot \Theta^\kappa_\pi(f).
\end{equation}
It is known (or see Proposition \ref{delta-2-equals-delta-II-III} for instance) that the Langlands-Shelstad transfer factor $\Delta_0$ is equal to the Henniart-Herb transfer factor $$\Delta_G^H(\gamma)=\Delta^1(\gamma)\Delta^2(\gamma)\text{, for all }\gamma\in H(F)\cap G(F)_\mathrm{ell},$$ up to a constant. More precisely, since $\Delta_\mathrm{IV}=\Delta^1$, the factors $\Delta_{\mathrm{I,II,III}}$ and $\Delta^2$ differ by a constant. Hence $\Delta_{\mathrm{I,II,III}}$ also satisfies the properties of $\Delta^2$. Using the technique involving the Weyl integration formula (see the proof of Theorem 3.11 of \cite{Hen-Herb}), we can show that the matching of orbital integrals (\ref{twisted orbital integrals}) and the spectral transfer (\ref{definition of spectral transfer}) give rise to an analogous identity of (\ref{automorphic-induction-character-relation}),
\begin{equation}\label{spetral transfer-character-relation}
\Theta^{\kappa }_\pi(h) = \text{constant}\cdot \Delta_{\mathrm{I,II,III}}(h) \Delta_{\mathrm{IV}}(h)^{-1} \sum_{g \in \Gamma_{K/F}}\Theta^g_\rho(h)\text{, for all }h\in H (F)\cap G(F)_{\mathrm{ell}}.
 \end{equation}
Therefore, by \cite{Hen-Herb}, the spectral transfer $\pi$ of $\rho$ exists and is exactly the automorphic induction of $\rho$. In chapter \ref{chapter comparing character}, we will show that the automorphic induction identity (\ref{automorphic-induction-character-relation}) and the spectral transfer identity (\ref{spetral transfer-character-relation}) are equal when both are suitably normalized.

\section{The splitting invariant and $\Delta_{\mathrm{I}}$}\label{section The splitting invariant}
We now sketch the construction of $\Delta_\mathrm{I}$ in \cite{LS}. By fixing an embedding $T\hookrightarrow G$, we assume that $T$ is a maximal torus contained in $G$. We have to recall several ingredients.
\begin{enumerate}[(i)]
  \item We choose an $F$-splitting $\mathbf{spl}_G = (\mathbf{B}, \mathbf{T}, \{\mathbf{X}_\alpha\})$ of $G$ and a Borel subgroup $B$ of $G$ (not necessarily defined over $F$) such that $h^{-1}(B,T)h = (\mathbf{B}, \mathbf{T})$ for some $h \in G$. We write $$\omega:W_F\rightarrow\Omega({G},\mathbf{T})\rtimes W_F,\,w\mapsto \omega(w)=\bar{\omega}(w)\rtimes w,$$ such that the action $\omega(w)\rtimes w $ on $t \in \mathbf{T}$ is the transferred action of ${w_T}$ by $\text{Int}(h)$.

      \item We recall the section $ \Omega(G, \mathbf{T}) \rightarrow N_G(\mathbf{T})$ defined in section 2.1 of \cite{LS}. This section depends on the choices of simple roots defining the splitting $\mathbf{spl}_G$. We denote this section by $n_S$ and define
$$   n:W_F \rightarrow N_G(\mathbf{T}) \rtimes W_F,\,w \mapsto n(w) = \bar{n}(w) \rtimes w := n_\mathrm{S} (\bar{\omega}(w)) \rtimes w. $$
This map may not be a morphism of groups.

\item We take a collection  $\{ a_\lambda \in \bar{F}^\times | \lambda \in  \Phi(G,T)\}$, called a set of $a$-data, satisfying the conditions
     \begin{equation}\label{definition of a data}
      a_{g\lambda} = {}^g(a_\lambda)\text{ for all }g \in \Gamma_F\text{ and }a_{-\lambda} = - a_\lambda.
    \end{equation}
    We may regard the indexes $\lambda$ as in $\Phi(G, \mathbf{T})$ by using the transport $\text{Int}(h)$. We then write $$x(g) = \prod_{\lambda \in \Phi(\mathbf{B}, \mathbf{T}), {}^{g^{-1}} \lambda \notin \Phi(\mathbf{B}, \mathbf{T})} a_\lambda^{\lambda^\vee} \in \mathbf{T}.$$
\end{enumerate}
Here is a remark about those $a$-data in (\ref{definition of a data}). By Proposition \ref{explicit expression of double coset}, every $\lambda$ can be written as $\left[\begin{smallmatrix}  g\\h\end{smallmatrix}\right]$ with $g,h\in \Gamma_F/\Gamma_E$. We sometimes denote $a_\lambda$ by $a_{g,h}$. Hence by (\ref{definition of a data}) the whole collection $\{a_\lambda|\lambda\in \Phi\}$ of $a$-data depends on the subcollection $$\{a_{1,g}|g\in \mathcal{D}_\mathrm{sym}\sqcup\mathcal{D}_{\mathrm{asym}/\pm}\},$$ with notations introduced in section \ref{section Galois groups}.
 \begin{prop}\label{1-cocycle of splitting invar}
 The map $$g \rightarrow h x(g) \bar{n}(g) {}^gh^{-1},\,g\in \Gamma_F,$$ is a 1-cocycle of $\Gamma_F$ with image lying in $T$.
 \end{prop}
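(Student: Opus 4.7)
The plan has two parts. The first is to show $c(g):=hx(g)\bar{n}(g)\,{}^g(h^{-1}) \in T$ for every $g\in\Gamma_F$. Setting $u(g):=h^{-1}\cdot{}^gh$, the identities $T={}^gT$ and $T=h\mathbf{T}h^{-1}$ together force $u(g)\in N_G(\mathbf{T})$, and a direct comparison with the transferred Galois action defining $\omega$ identifies its image in $\Omega(G,\mathbf{T})$ with $\bar{\omega}(g)$. Hence $u(g)\in\bar{n}(g)\cdot\mathbf{T}$, and conjugating by $h^{-1}$ gives
\begin{equation*}
h^{-1}c(g)h=x(g)\bar{n}(g)u(g)^{-1}\in\mathbf{T},
\end{equation*}
so that $c(g)\in h\mathbf{T}h^{-1}=T$.

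For the cocycle property I would expand $c(g)\cdot{}^gc(g')$ directly; the middle factor ${}^g(h^{-1})\cdot{}^gh={}^g(1)=1$ cancels, so the desired equality $c(gg')=c(g)\cdot{}^gc(g')$ reduces to the identity
\begin{equation*}
x(gg')\bar{n}(gg')=x(g)\bar{n}(g)\cdot{}^gx(g')\cdot{}^g\bar{n}(g')
\end{equation*}
inside $N_G(\mathbf{T})$. Using $\bar{n}(g)\cdot{}^gx(g')\cdot\bar{n}(g)^{-1}=\bar{\omega}(g)\bigl[{}^gx(g')\bigr]$, and writing $\bar{n}(g)\cdot{}^g\bar{n}(g')=s(g,g')\bar{n}(gg')$ for the unique element $s(g,g')\in\mathbf{T}$ measuring the failure of the section $n_\mathrm{S}$ to be a group homomorphism, this descends to an equation purely in $\mathbf{T}$:
\begin{equation*}
x(gg')=x(g)\cdot\bar{\omega}(g)\bigl[{}^gx(g')\bigr]\cdot s(g,g').
\end{equation*}
That both sides have the same image in $\Omega(G,\mathbf{T})\rtimes W_F$ is automatic since $\omega$ is a homomorphism by construction.

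The main obstacle is verifying this last equation, and this is where all the combinatorics lives. My plan is to partition $\Phi^+(\mathbf{B},\mathbf{T})$ into four subsets according to the signs of $\bar{\omega}(g^{-1})\lambda$ and $\bar{\omega}((gg')^{-1})\lambda$. After reindexing the product defining ${}^gx(g')$ via $\lambda\mapsto\bar{\omega}(g)\lambda$ and invoking the equivariance $a_{\bar{\omega}(g)\lambda}={}^ga_\lambda$, the product $x(g)\cdot\bar{\omega}(g)[{}^gx(g')]$ differs from $x(gg')$ by exactly $\prod_{\lambda\in C}a_\lambda^{\lambda^\vee}$, where $C$ consists of the $\lambda\in\Phi^+$ sent to a negative root by $\bar{\omega}(g^{-1})$ but back to a positive root by $\bar{\omega}((gg')^{-1})$. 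To identify this discrepancy with $s(g,g')$, I would invoke the explicit formula for $s(g,g')$ as a product of factors of the form $\alpha^\vee(-1)$ provided by Lemma~2.1.A of \cite{LS}, and then use the relation $a_{-\lambda}=-a_\lambda$ from \eqref{definition of a data} to recast $\prod_C a_\lambda^{\lambda^\vee}$ in precisely this form. The whole argument is the content of Lemma~2.3.B of \cite{LS}, and I would essentially reproduce it in our present setting.
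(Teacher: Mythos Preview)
Your proposal is correct and follows exactly the route the paper takes: the paper's proof is simply the citation ``See section (2.3) of \cite{LS}'', and what you have sketched is precisely the content of that section (in particular Lemma~2.3.B there). Your outline is more detailed than the paper's one-line reference, but the underlying argument is the same.
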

 \begin{proof}
   See section (2.3) of \cite{LS}.
 \end{proof}
We denote by $\lambda(\{a_\lambda\}, T)$ the class defined by this cocycle in $H^1(\Gamma_F, T)$. Following the recent preprint \cite{KS2012}, we call this class the \emph{splitting invariant}. Finally, $\Delta_\mathrm{I}(\gamma)$ is defined to be the Tate-Nakayama product $$\left<\lambda(\{a_\lambda\},T), s_T\right>$$ for certain $s_T\in \hat{T}$ depending on the datum $s$ of the endoscopic group $H$.

\begin{rmk}\label{splitting invariant indep of basis}
By (2.3.3) of \cite{LS}, we know that the splitting invariant is independent of the choice of Borel subgroup $B$.
\end{rmk}

\section{Trivializing the splitting invariant}\label{section Trivializing the splitting invariant}

Let $E/F$ be a tamely ramified extension. The aim of this section is to choose an embedding of the torus $T(F) = E^\times$ into $\text{GL}_n(F)$ and certain $a$-data $\{a_\lambda\}$ to trivialize the splitting invariant class $\lambda(\{a_\lambda\} ,T)$. We can even make things better: the 1-cocycle in Proposition \ref{1-cocycle of splitting invar} representing this class is trivial by selecting suitable data. The idea originates from the construction in \cite{HI}.

Take a primitive root of unity $\zeta\in \mu_E$. We choose an $F$-basis of $E$ by
\begin{equation}\label{chosen basis}
  \mathfrak{b} = \{ 1, \zeta, \dots, \zeta^{f-1}, \varpi_E, \varpi_E\zeta,
\dots, \varpi_E \zeta^{f-1}, \dots, \varpi_E^{e-1} \zeta^{f-1} \}
\end{equation}
We identify $G$ with $\mathrm{Aut}_F(E)$, the group of $F$-automorphisms of $E$, using the above basis. We may also assume that the torus $\mathbf{T}$ in the chosen $F$-splitting $\mathbf{spl}_G$ is the diagonal subgroup. We write a row vector $$\vec{v} = (  1, \zeta, \dots, \zeta^{f-1}, \varpi_E, \varpi_E\zeta, \dots, \varpi_E \zeta^{f-1}, \dots, \varpi_E^{e-1} \zeta^{f-1}) \in E^n$$ and embed $\iota: T \rightarrow G$ by
\begin{equation}\label{canonical embedding for trivial delta 1}
  x \cdot \vec{v} = \vec{v} \cdot \iota(x),
\end{equation}
 where the left side is coordinate-wise multiplication and the right side is a multiplication of a row vector with a matrix. We can assume that $T$ lies in $H$ and that $\iota$ is admissible.

We now define a total order $<_\phi$ on the `Galois set' $\Gamma_{E/F}= \{ \phi^i \sigma^k| 0\leq i< f,\, 0\leq k <e\}$ using the multiplicative $\phi$-orbits of $\left<\sigma\right>$. Take a set $q^f\backslash (\mathbb{Z}/e)$ of representatives in $\mathbb{Z}/e$ of the $q^f$-orbits (see the discussion before Lemma \ref{number of double cosets}) and choose an arbitrary order $<_\phi$ on $\{\sigma^k|k\in q^f\backslash (\mathbb{Z}/e)\}$. For simplicity we may choose the orbit of 1 to be $<_\phi$-minimum. If $g,h\in \Gamma_F/\Gamma_E$ lie in different $\phi$-orbits, say $g\in \left<\phi\right> \sigma^k,\,h\in \left<\phi\right> \sigma^l$, and $k\neq l$ in $ q^f\backslash (\mathbb{Z}/e),$ then we order $g$ and $h$ by $$g<_\phi h\text{ if and only if }k<_\phi l.$$ We finally consider the multiplicative $\phi$-orbit of $\sigma^k$ for a fixed $k\in q^f\backslash (\mathbb{Z}/e)$. This is the subset $\{\phi^i\sigma^k|0\leq i<q^{fs_k}\}$ where $s_k$ is the cardinality of the $q^f$-orbit $k\in\mathbb{Z}/e$. We order the $\phi$-orbit of $\sigma^k$ by $$\phi^i\sigma^k<_\phi\phi^j\sigma^k\text{ if and only if }0\leq i<j<q^{fs_k}.$$ The above procedures then define a total order $<_\phi$ on $\Gamma_F/\Gamma_E$. For example, if we choose the order $0<_\phi1<_\phi\cdots<_\phi k<_\phi\cdots $ on $q^f\backslash (\mathbb{Z}/e)$, then the order on $ \Gamma_F/\Gamma_E$ looks like
\begin{equation*}
\begin{split}
  &1 <_\phi \phi <_\phi \cdots <_\phi \phi^{f-1}
  \\
  <_\phi& \sigma <_\phi \phi\sigma <_\phi \cdots<_\phi \phi^{f-1}\sigma <_\phi \sigma^{1_{+_\phi 1}}<_\phi \phi\sigma^{1_{+_\phi 1}} <_\phi \cdots <_\phi \sigma ^{1_l} <_\phi \cdots <_\phi\phi^{f-1}\sigma^{1_l}
  \\
  <_\phi &\cdots
   \\
  <_\phi& \sigma^k <_\phi \phi\sigma^k <_\phi \cdots<_\phi \phi^{f-1}\sigma <_\phi \sigma^{k_{+_\phi 1}}<_\phi \phi\sigma^{k_{+_\phi 1}} <_\phi \cdots <_\phi \sigma ^{k_l} <_\phi \cdots <_\phi\phi^{f-1}\sigma^{k_l}
  \\
   <_\phi &\cdots.
\end{split}
\end{equation*}
We justify some notations in the above sequence.
\begin{enumerate}[(i)]
  \item We denote by $k_{+_\phi 1}$ the `successor' of $k$ under the order induced by $<_\phi$. It satisfies $k_{+_\phi 1}\equiv kq^f\mod e$.
      \item We denote by $k_l$ the maximum within the $q^f$-orbit of $k$. It satisfies $k_l\equiv q^{f(s_k-1)}k \mod e$.
\end{enumerate}
We remark here that such order is defined only for computation. By Remark \ref{splitting invariant indep of basis}, we know that the splitting invariant is independent of the order of the chosen basis.

Suppose that $\Gamma_{E/F} = \{ g_1 = 1, \dots, g_n\}$ is ordered by the above order $<_\phi$. By putting $$\mathbf{g} = (\vec{v}^t,
{}^{g_2}\vec{v}^t, \dots, {}^{g_n}\vec{v}^t)^t \in \mathrm{GL}_n (E) ,$$ we have $\mathbf{g}{\iota(x)}\mathbf{g}^{-1} = \text{diag}(x, {}^{g_2}x, \dots,, {}^{g_n}x)$ for all $x \in E^\times$. The $(\alpha, j)$-block of $\mathbf{g}$ is $$\mathbf{g}_{\alpha j}= \left(
\begin{matrix}
{}^{\sigma^\alpha} (\varpi^j_E, \varpi^j_E \zeta, \dots, \varpi^j_E \zeta^{f-1}) \\
{}^{\phi\sigma^\alpha} (\varpi^j_E, \varpi^j_E \zeta, \dots, \varpi^j_E \zeta^{f-1}) \\
\vdots \\
{}^{\phi^{f-1}\sigma^\alpha} (\varpi^j_E, \varpi^j_E \zeta, \dots, \varpi^j_E
\zeta^{f-1})
\end{matrix}\right).$$ This is equal to $g_{\alpha j}u$, where $g_{\alpha j} =
\text{diag}({}^{\sigma^\alpha} \varpi_E^j, \dots,
{}^{\phi^{f-1}\sigma^\alpha} \varpi_E^j)$ and
$$u = \left(
\begin{matrix}
&1 & {}^{\sigma^\alpha}\zeta &\dots &{}^{\sigma^\alpha} \zeta^{f-1}\\
&1 &{}^{\phi\sigma^\alpha}\zeta &\dots &{}^{\phi\sigma^\alpha} \zeta^{f-1}\\
&&\vdots&&\\
&1 &{}^{\phi^{f-1}\sigma^\alpha}\zeta &\dots &{}^{\phi^{f-1}\sigma^\alpha}
\zeta^{f-1}
\end{matrix}
\right).$$
Therefore we have $\det  \mathbf{g} = \det {g}(\det u)^e$, where $$\det g = \prod^{f-1}_{k=0} \prod_{0 \leq_\phi \beta <_\phi \alpha < e}({}^{\phi^k \sigma^\alpha} \varpi_E - {}^{\phi^k\sigma^\beta} \varpi_E)$$ and $$\det u = \prod_{0 \leq i <k \leq f-1}({}^{\phi^k}\zeta - {}^{\phi^i} \zeta).$$
Therefore if we take
\begin{equation}\label{definition of b-data}
 b_{\phi^k \sigma^\alpha} = \prod_{0 \leq_\phi \beta <_\phi \alpha}({}^{\phi^k \sigma^\alpha} \varpi_E - {}^{\phi^k \sigma^\beta}
\varpi_E)^{-1} \prod_{0 \leq i < k}({}^{\phi^k} \zeta - {}^{\phi^i} \zeta)^{-1}
\end{equation} and write  $$h^{-1}
= (b_1 \vec{v}^t, b_{g_2}{}^{g_2} \vec{v}^t, \dots, b_{g_n}{}^{g_n} \vec{v}^t)^t,$$ then $\det  h=1$ and $h^{-1} \iota(x)h =
\mathbf{g} \iota(x) \mathbf{g}^{-1}$ for all $x \in E^\times$.

\begin{prop}\label{prop trivial cocycle}
  Fix a splitting $\mathbf{spl}_G$ of $G$ defined by the basis (\ref{chosen basis}) above. The cocycle defined in Proposition \ref{1-cocycle of splitting invar} is trivial by choosing suitable $a$-data.
 \end{prop}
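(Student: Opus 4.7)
The plan is to exhibit $a$-data for which the cocycle $g\mapsto hx(g)\bar n(g){}^g h^{-1}$ is identically $1\in T$, not merely a coboundary. Because $h^{-1}\iota(x)h$ is diagonal, the element $h^{-1}({}^g h)$ automatically lies in $N_G(\mathbf T)$, and the strategy is to cancel its permutation component against $\bar n(g)$ and its diagonal component against $x(g)$.

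I would first compute $h^{-1}({}^g h)$ directly. Applying $g$ entrywise to the row $b_{g_i}\cdot{}^{g_i}\vec{v}$ of $h^{-1}$ gives $({}^g b_{g_i})\cdot{}^{g_{j(g,i)}}\vec{v}$, where $g_{j(g,i)}\Gamma_E = gg_i\Gamma_E$. Reindexing by the permutation $\pi_g\colon i\mapsto j(g,i)$ yields
$${}^g h^{-1} = D_g\,P_g\,h^{-1},$$
where $P_g$ is the permutation matrix of $\pi_g$ and $D_g\in\mathbf T$ has diagonal entries $({}^g b_{g_i})/b_{g_{j(g,i)}}$. Hence $h^{-1}({}^g h) = P_g^{-1}D_g^{-1}$, an element of $N_G(\mathbf{T})$ with an explicit permutation part and an explicit toral part built out of the $b$-data.

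Next, I would match this against $x(g)\bar n(g)$. The Tits section $n_S$ attached to $\mathbf{spl}_G$ assigns to the Weyl element $\bar\omega(g)=\pi_g$ a lift whose underlying permutation is $P_g^{-1}$ together with an explicit diagonal sign $\epsilon(g)\in\mathbf T$; writing $\bar n(g) = P_g^{-1}\epsilon(g)$, the triviality condition reduces to the diagonal identity $P_g x(g)P_g^{-1} = D_g^{-1}\epsilon(g)^{-1}$. Using Proposition \ref{orbit of roots as double coset} to index roots by double cosets, I would then prescribe $a_{[g_i,g_j]}$ as an explicit difference of Galois conjugates of either $\varpi_E$ or $\zeta$ (depending on whether the root is ramified or unramified), chosen so that
$$x(g) = \prod_{\lambda\in\Phi(\mathbf B,\mathbf T),\;g^{-1}\lambda\notin\Phi(\mathbf B,\mathbf T)} a_\lambda^{\lambda^\vee}$$
telescopes, after conjugation by $P_g$, into exactly the combination of $b$-data values demanded by the diagonal identity. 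The required conditions $a_{h\lambda} = {}^h a_\lambda$ and $a_{-\lambda} = -a_\lambda$ are built into these formulas.

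The principal obstacle is combinatorial: one must verify that the positive roots inverted by $g^{-1}$ correspond, via the double coset identification of Proposition \ref{orbit of roots as double coset}, to the index pairs $0\le_\phi\beta<_\phi\alpha$ and $0\le i<k$ appearing in the products defining the $b$-data in (\ref{definition of b-data}). The ordering $<_\phi$ on $\Gamma_F/\Gamma_E$ was engineered for exactly this purpose, so once the combinatorial dictionary is established and the sign corrections from $n_S$ are tracked, the identity $x(g)\bar n(g) = h^{-1}({}^g h)$ follows by a direct comparison, trivializing the cocycle pointwise rather than only cohomologically.
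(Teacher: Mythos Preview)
Your approach is essentially the same as the paper's: compute $h^{-1}({}^g h)$ row by row, identify its permutation part with the Weyl element underlying $\bar n(g)$, and then solve the resulting diagonal identity for $a$-data built from differences ${}^{g_i}\varpi_E-{}^{g_j}\varpi_E$ or ${}^{\phi^k}\zeta-{}^{\phi^i}\zeta$. Your guessed form of the $a$-data is exactly what the paper obtains.

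The one organizational point you are missing, and which turns your ``principal combinatorial obstacle'' into a routine computation, is that the paper does \emph{not} attempt the identity for a general $g$. Since the cocycle factors through the finite Galois group $\Gamma_{L/F}=\langle\sigma\rangle\rtimes\langle\phi\rangle$ and a $1$-cocycle that is trivial on a generating set is trivial everywhere, it suffices to verify $x(g)\bar n(g){}^g h^{-1}=h^{-1}$ only for $g=\sigma$ and $g=\phi$. For each of these, the permutation $\pi_g$ decomposes into disjoint cycles (the $\sigma$-orbits, respectively the $\phi$-orbits, on $\Gamma_F/\Gamma_E$), and on a single cycle $\bar n(\tau)$ and $x(\tau)$ have the simple closed forms of a cyclic shift with an explicit sign pattern. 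This reduces the verification to a short list of row identities, each of which is solved by a single telescoping product of the $b$-data from (\ref{definition of b-data}); one then checks the first row of each block as a consistency condition rather than an independent equation. Without this reduction, tracking the Tits-section signs $\epsilon(g)$ and the set of inverted positive roots for an arbitrary product $\sigma^a\phi^b$ is genuinely unpleasant, which is why the paper remarks that it only ever needs the $a$-data of the form $a_{g,\sigma^k g}$ and $a_{g,\phi^i g}$ and never writes down the others.
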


The proof occupies the remaining of this section. First notice that if $L/F$ is the Galois closure of $E/F$, then $\Gamma_L$ acts trivially on $ G(E)$. Moreover, $x(g)$ and $\bar{n}(g)$ are identity matrices. Hence it is enough to show that the cocycle is trivial at the generators $\sigma$ and $\phi$ of $\Gamma_{L/F}$. In other words, we will show that there exist $a$-data satisfying the matrix equations
\begin{equation}\label{delta-1-trivial-cocycle}
x(\sigma) \bar{n}(\sigma)^\sigma h^{-1} = h^{-1}  \text{ and }
x(\phi)\bar{n}(\phi) {}^\phi h^{-1} = h^{-1}.
\end{equation}
For each equation in (\ref{delta-1-trivial-cocycle}), we can concentrate on the rows indexed by the cosets in $\Gamma_{E/F}$ lying within a $\sigma$-orbit or a $\phi$-orbit respectively. We would apply the following.
\begin{lemma}
Let $\tau$ be an arbitrary element in $\Gamma_{L/F}$. If $\{g, \tau g, \dots,\tau^{k-1}g\}$ is an ordered $\tau$-orbit, then with respect to such basis,
\begin{equation*}
\bar{n}(\tau) = \left(
\begin{smallmatrix}
&&& &1\\
&-1 &&&\\
& &\ddots &&\\
&&&-1 &
\end{smallmatrix}
\right)
\text{ and }
x(\tau) = \mathrm{diag}\left(
\prod^{k-1}_{i=1} a_{g,{\tau^i}g},a^{-1}_{g, {\tau} g},a^{-1}_{g, {\tau^2} g},\dots,a^{-1}_{g, {\tau^{k-1}} g}
\right).
\end{equation*}
\end{lemma}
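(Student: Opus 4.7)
The strategy is a direct computation of the two quantities on the block indexed by the given orbit. First arrange $\mathfrak{b}$ so that $\{g,\tau g,\dots,\tau^{k-1}g\}$ occupies the $k$ consecutive positions labelled $1,\dots,k$; this reordering is harmless by Remark \ref{splitting invariant indep of basis}, which says the splitting invariant is independent of the chosen Borel. Since $\bar\omega(\tau)$ preserves each $\tau$-orbit, $\bar n(\tau)$ is block-monomial with one $k\times k$ block per orbit; and since $\tau$ permutes the cross-orbit positive roots among themselves (they remain positive under $\tau^{-1}$), such roots cannot satisfy the condition $\tau^{-1}\lambda<0$ appearing in the definition of $x(\tau)$ and hence do not contribute. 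Both matrices thus reduce to within-orbit calculations.

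On the block, the Weyl element $\bar\omega(\tau)$ acts as the cyclic permutation $(1\,2\,\cdots\,k)$, which admits the reduced decomposition $s_1 s_2\cdots s_{k-1}$, where $s_i$ is the transposition of $i$ and $i+1$. The Langlands--Shelstad section $n_S$ of (2.1) in \cite{LS} is multiplicative on reduced expressions, and for $\mathrm{GL}_k$ with diagonal torus and standard pinning, $n_S(s_i)$ is the identity matrix with its $(i,i{+}1)$-block replaced by $\bigl(\begin{smallmatrix}0&1\\-1&0\end{smallmatrix}\bigr)$. A straightforward induction on $k$ then evaluates $n_S(s_1)\cdots n_S(s_{k-1})$ as the displayed matrix for $\bar n(\tau)$, with a $1$ at position $(1,k)$ and $-1$'s along the subdiagonal.

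For $x(\tau)$, the within-orbit positive roots are $\lambda_{ij}=\phi_{\tau^{i-1}g}-\phi_{\tau^{j-1}g}$ for $1\le i<j\le k$. Applying $\tau^{-1}$ shifts both indices backward cyclically; the image remains a positive root unless $i=1$, in which case the first index wraps from position $1$ to position $k$ and ${}^{\tau^{-1}}\lambda_{1j}$ becomes negative. Hence exactly the $k-1$ roots $\lambda_{1j}$ (for $j=2,\dots,k$) contribute. Using the identification $a_\lambda=a_{g,h}$ for $\lambda=\bigl[\begin{smallmatrix}g\\h\end{smallmatrix}\bigr]$ we have $a_{\lambda_{1j}}=a_{g,\tau^{j-1}g}$, and since $\lambda_{1j}^\vee=e_1^\vee-e_j^\vee$, the factor $a_{\lambda_{1j}}^{\lambda_{1j}^\vee}$ is the diagonal matrix with $a_{g,\tau^{j-1}g}$ in position $1$ and its inverse in position $j$. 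Multiplying these factors over $j=2,\dots,k$ yields the claimed diagonal form of $x(\tau)$.

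The primary obstacle is careful bookkeeping of sign and indexing conventions: one must match the sign in the definition of $n_S$, the antisymmetry $a_{-\lambda}=-a_\lambda$ of the $a$-data in (\ref{definition of a data}), and the $\tau$-action on $\Phi$ versus on coordinates. Once these conventions are fixed, the lemma follows from the direct calculations outlined above.
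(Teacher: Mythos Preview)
Your proposal is correct. The computation of $x(\tau)$ is essentially identical to the paper's: both identify that the only positive within-orbit roots $\lambda$ with ${}^{\tau^{-1}}\lambda<0$ are the $\lambda_{1j}=\left[\begin{smallmatrix}g\\\tau^{j-1}g\end{smallmatrix}\right]$ for $j=2,\dots,k$, and multiply the corresponding $a_{\lambda_{1j}}^{\lambda_{1j}^\vee}$.

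For $\bar n(\tau)$ your argument is genuinely different from the paper's, and in fact cleaner. You factor the $k$-cycle as the \emph{reduced} word $s_1s_2\cdots s_{k-1}$ in simple reflections, which is exactly how the Langlands--Shelstad section $n_S$ is defined, so the product $n_S(s_1)\cdots n_S(s_{k-1})$ is literally $\bar n(\tau)$ by definition; the induction on $k$ is then immediate. The paper instead writes the cycle as $(1,k)(1,k{-}1)\cdots(1,2)$, a product of transpositions through the first element, and multiplies the corresponding ``naive'' lifts (identity with a $\bigl(\begin{smallmatrix}0&1\\-1&0\end{smallmatrix}\bigr)$ block at positions $(1,j)$). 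That factorization is \emph{not} reduced (its total length is $(k-1)^2$ rather than $k-1$), and those naive lifts are not $n_S((1,j))$ in general --- for example $n_S((1,3))$ computed via $s_1s_2s_1$ has a $-1$ on the middle diagonal, not a $+1$. The product nonetheless gives the right matrix, but this requires an extra verification that your reduced-word approach avoids entirely. Your route is thus both more transparent and more directly grounded in the definition of $n_S$.
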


\begin{proof}
Recall that $$x(\tau) = \prod_{\lambda >0 ,\,{}^{\tau^{-1}} \lambda <0}
a_\lambda^{\lambda^\vee}.$$ When $\lambda = \left[\begin{smallmatrix} g \\ h
\end{smallmatrix}\right] = \left[ \begin{smallmatrix}
   gW_E \\ h W_E \end{smallmatrix}\right]$, we have denoted $a_\lambda$ by $a_{g,h}$. If we choose the basis as above, then those positive $\lambda = \left[
\begin{smallmatrix} \tau^i g \\ \tau^j g\end{smallmatrix}\right]$ with $\tau^{-1}\lambda <0$ are those with $i=0$ and $j>0$.
Therefore $$x(\tau) = \prod^{k-1}_{j=1}a^{\left[ \begin{smallmatrix}  g \\
\tau^j g\end{smallmatrix}\right]^\vee}_{g, \tau^j g},$$ and is equal to the one in the assertion. The action of $\bar{n}(\tau)$ on the root system is given by the corresponding cyclic permutation
$$(g, \tau g, \dots, {\tau^{k-1}}g) = (g,
{\tau^{k-1}}g)(g, {\tau^{k-2}}g) \cdots (g, {\tau}g).$$
Therefore $\bar{n}(\tau)$ is equal to
\begin{equation*}
\left( \begin{smallmatrix}
& & && & &1\\
& &1 && & &\\
& & &\ddots && &\\
&&&&&&\\
  &   & &  & &1 &\\
&-1 & & && &
\end{smallmatrix}\right)\left(\begin{smallmatrix}
& & & & &1&\\
& &1 & && &\\
& & &\ddots && &\\
 &  & & &1 &&\\
&-1 & & & &&\\
&&&&&&1
\end{smallmatrix}\right) \cdots\left(\begin{smallmatrix}
& &1 && & &\\
&-1 && & & &\\
&& &1 & & &\\
&& & & &\ddots &\\
&&&&&&\\
& & & && &1
\end{smallmatrix}\right) =\left(\begin{smallmatrix}
& & & &1\\
-1 & & &\\
& &\ddots & &\\
& & &-1 &
\end{smallmatrix}\right).
\end{equation*}
\end{proof}

We start to solve for $\{ a_\lambda\}$ in the first equation of (\ref{delta-1-trivial-cocycle}). A $\sigma$-orbit is of the form $\{ \sigma^k \phi^j | 0 \leq k<e\}$ with a fixed $j=0, \dots, f-1$. Using Remark \ref{splitting invariant indep of basis}, we order this basis by the usual order $<$ on $\{0,\dots,e-1\}$, so that the minimal element in this orbit is $\phi^j$. Notice that we have to modify the elements in (\ref{definition of b-data}): just replace the order $<_\phi$ by the usual $<$ order. Focusing on the rows of (\ref{delta-1-trivial-cocycle}) corresponding to this orbit, the equation reads as
\begin{equation}\label{delta-1-equation-for-sigma}
\begin{split}
   & \text{diag}\left( \prod^{e-1}_{k=1} a_{\phi^j, \sigma^k\phi^j},
a_{\phi^j, \sigma \phi^j}^{-1}, \dots, a^{-1}_{\phi^j, \sigma^{e-1}
\phi^j}\right)\cdot \left( \begin{smallmatrix}
     & & &1\\
     -1 & & &\\
     &\ddots & &\\
     & &-1 &
  \end{smallmatrix} \right)\cdot
  \\
    &{}^\sigma (b_{\phi^j}{}^{\phi^j}\vec{v}^t, b_{\sigma
\phi^j}{}^{\sigma \phi^j}\vec{v}^t, \dots, b_{\sigma^{e-1}
\phi^j}{}^{\sigma^{e-1}\phi^j}\vec{v}^t)^t
  \\
  = &(b_{\phi^j}{}^{\phi^j}\vec{v}^t, b_{\sigma \phi^j}{}^{\sigma
\phi^j}\vec{v}^t, \dots, b_{\sigma^{e-1}
\phi^j}{}^{\sigma^{e-1}\phi^j}\vec{v}^t)^t   .
\end{split}
\end{equation}
The $(k+1)$th row for $1 \leq k < e$ reads
$$-a^{-1}_{\phi^j, \sigma^k
\phi^j}{}^\sigma b_{\sigma^{k-1}\phi^j} = b_{\sigma^k \phi^j}.$$
From the modified (\ref{definition of b-data}), we
solve that
\begin{equation}\label{delta-1-solution-for-a-sigma}
  a_{\phi^j, \sigma^k \phi^j} = - {}^\sigma b_{\sigma^{k-1}
\phi^j} b^{-1}_{\sigma^k \phi^j} = {}^{\phi^j} \varpi_E - {}^{\sigma^k
\phi^j} \varpi_E .
\end{equation}
These elements clearly satisfy the second condition of $a$-data in (\ref{definition of a data}). The first row reads $$\left( \prod^{e-1}_{k=1} a_{\phi^j,
\sigma^k\phi^j}\right) {}^\sigma b_{\sigma^{e-1} \phi^j} =
b_{\phi^j}.$$
By putting (\ref{delta-1-solution-for-a-sigma}), it changes into $${}^\sigma \left(
\prod^{e-1}_{k=0} b_{\sigma^k\phi^j}\right) = (-1)^{e-1}
\prod^{e-1}_{k=0}b_{\sigma^k\phi^j},$$ which can be easily checked to
be true. We have solved the first equation of (\ref{delta-1-trivial-cocycle}).

We then solve for $\{a_\lambda\}$ in the second equation of (\ref{delta-1-trivial-cocycle}). Take the $\phi$-orbit $$\{ \phi^j \sigma^\alpha | j=0,
\dots, f-1\text{ and }\alpha \in \left<q^f\right> k\}$$ of $\sigma^k$, ordered by $<_\phi$ defined above. We have the matrix equation similar to (\ref{delta-1-equation-for-sigma}) corresponding to this orbit. We then check the equations given by the rows, which are distinguished between the following cases.
\begin{enumerate}[(i)]
\item  For fixed $\alpha$, if $j=1, \dots, f-1$, then $$a_{\sigma^k, \phi^j \sigma^\alpha} = - {}^\phi b_{\phi^{j-1}\sigma^{\alpha}}b^{-1}_{\phi^j\sigma^\alpha} = \zeta - {}^{\phi^j}\zeta.$$
    These elements satisfy the second condition of $a$-data in (\ref{definition of a data}).

\item If $\alpha \neq k$ and $j=0$, then $$a_{\sigma^k, \sigma^\alpha}
= -{}^\phi b_{\phi^{f-1}\sigma^{\alpha_{-_\phi 1}}}b^{-1}_{\sigma^\alpha}. $$
Here $\alpha_{-_\phi 1} $ satisfies $ \alpha\equiv \alpha_{-_\phi 1} q^f\mod e $. This is the `predecessor' of $\alpha$ with respect to $<_\phi$ in the $\phi$-orbit of $\{\sigma^k\}$. Notice that $\phi$ permutes the elements in the orbits $<_\phi$-strictly smaller than the orbit of $\sigma^k$ and moves the elements in the orbit of $\sigma^k$ `one step forward.' We then have \begin{equation*}
  \begin{split}
    {}^\phi b_{\phi^{f-1}\sigma^{\alpha_{-_\phi1}}} = &{}^\phi \left(
\prod_{0 \leq_\phi \beta <_\phi \alpha_{-_\phi 1}}
({}^{\phi^{f-1}\sigma^{\alpha_{-_\phi 1}}}\varpi_E -
{}^{\phi^{f-1}\sigma^{\beta}}\varpi_E)^{-1} \prod_{0 \leq i < f-1 }
({}^{\phi^{f-1}} \zeta - {}^{\phi^i}\zeta)^{-1} \right)
\\
=& \left(\prod_{0 \leq_\phi \beta <_\phi \alpha} ({}^{\sigma^{\alpha}}\varpi_E -
{}^{\sigma^{\beta}}\varpi_E)^{-1}\right) ({}^{\sigma^\alpha}\varpi_E -
{}^{\sigma^k}\varpi_E) \prod_{0 \leq i < f-1 } ( \zeta -
{}^{\phi^{i+1}}\zeta)^{-1}.
  \end{split}
\end{equation*}
The first bracket on the right side is just $b_{\sigma^\alpha}$. Therefore
\begin{equation*}
  a_{\sigma^k, \sigma^\alpha} = ({}^{\sigma^k}\varpi_E -
{}^{\sigma^\alpha} \varpi_E) \prod_{0 \leq i < f-1}(\zeta -
{}^{\phi^{i+1}}\zeta)^{-1}.
\end{equation*}
    Again these elements satisfies the second condition of $a$-data in (\ref{definition of a data}). Notice that \begin{equation}\label{delta-1-equation-for-phi-middle-row}
 a_{\sigma^k, \sigma^\alpha} =({}^{\sigma^k}\varpi_E -
{}^{\sigma^\alpha}\varpi_E)\left( \prod_{0 \leq i <f-1}
(-{}^{\phi}b_{\phi^i \sigma^\alpha}
b^{-1}_{\phi^{i+1}\sigma^\alpha})\right)^{-1}.
\end{equation}

\item  If $\alpha = k$ and $j=0$, the equation reads
\begin{equation}\label{delta-1-equation-for-phi-first-row}
  \prod^{s_kf-1}_{i=0} a_{\sigma^k, \phi^i\sigma^k} = {}^\phi
b^{-1}_{\phi^{f-1} \sigma^{ k_l}}b_{\sigma^k}.
\end{equation}
Here $ k_l$ is the $<_\phi$-largest in the orbit of $\sigma^k$. We have
\begin{equation*}
\begin{split}
   {}^\phi b_{\phi^{f-1}\sigma^{ k_l}} &= {}^\phi\left(\prod_{0\leq_\phi\beta<_\phi k_l}\left({}^{\phi^{f-1}\sigma^{ k_l}}\varpi_E-{}^{\phi^{f-1}\sigma^{\beta}}\varpi_E\right)^{-1}
 \prod_{0\leq i<f-1}\left({}^{\phi^{f-1}}\zeta-{}^{\phi^{i}}\zeta\right)^{-1}\right)
 \\
 =&\prod_{0\leq_\phi j<_\phi k}\left({}^{\sigma^{k}}\varpi_E-{}^{\sigma^{j}}\varpi_E\right)^{-1}
 \prod_{k\leq_\phi\beta<_\phi \alpha_l}\left({}^{\sigma^{k}}\varpi_E-{}^{\sigma^{\beta_{+_\phi 1}}}\varpi_E\right)^{-1}
 \prod_{1\leq i<f}\left( \zeta-{}^{\phi^{i}}\zeta \right)^{-1}
\end{split}
\end{equation*} and $$ b_{\sigma^k} = \prod_{0 \leq_\phi j <_\phi k}({}^{\sigma^k} \varpi_E -
{}^{\sigma^j} \varpi_E)^{-1}.$$
Hence the right side of (\ref{delta-1-equation-for-phi-first-row}) is $$\prod_{k \leq_\phi \beta <_\phi  k_l}({}^{\sigma^k}\varpi_E - {}^{\sigma^{\beta_{+_\phi 1}}}\varpi_E)  \prod_{1 \leq i <f}(\zeta - {}^{\phi^i}\zeta),$$ and is equal to the left side. Indeed in the above product each ${}^{\sigma^k}\varpi_E - {}^{\sigma^{\beta_{+_\phi 1}}}\varpi_E$ is coming from (\ref{delta-1-equation-for-phi-middle-row}) and each $\zeta - {}^{\phi^i}\zeta$ is
coming from $a_{\sigma^k, \phi^i \sigma^{ k}}$ for $i =1,
\dots, f-1$.
\end{enumerate}
We have solved both equations in (\ref{delta-1-trivial-cocycle}) for particular $a$-data of the form $a_{g,\sigma^kg}$ and $a_{g,\phi^ig}$, with $g\in \Gamma_F/\Gamma_E$. The other $a$-data are expected to be more complicated to express, but we do not need to deal with them. We conclude that we have proved Proposition \ref{prop trivial cocycle}, and so the splitting invariant $\lambda(\{a_\lambda\},T)$ is trivial. Recall that $\Delta_\mathrm{I}(\gamma)=\left<\lambda(\{a_\lambda\},T), s_T\right>$ for certain $s_T\in \hat{T}$. Therefore we assume from now on that $\Delta_\mathrm{I}$ is always trivial, under suitable choices of basis and $a$-data.

When $E/F$ is totally ramified, we have $\Gamma_{E/F} = \{\sigma^{k}|0\leq k<e\}$ and $\mathfrak{b}=\{\varpi_E^k|0\leq k<e\}$. In this case we choose $a$-data
\begin{equation}\label{delta-1-solution-for-a-totally-ramified}
  a_{\sigma^i, \sigma^j} = {}^{\sigma^i} \varpi_E -
{}^{\sigma^j}\varpi_E.
\end{equation}

\section{Induction as admissible embedding}\label{section Induction as Admissible Embedding}

We would like to change our notations until Remark \ref{changing back from right to left}, that an element $g\in W_F$ represents a right coset $W_Eg\in W_E\backslash W_F$ instead of a left coset adopted from section \ref{section root system}. The reason is that the formulae would match those in \cite{LS} by doing so. A root $\left[\begin{smallmatrix}g\\h\end{smallmatrix}\right]$ then stands for $\left[\begin{smallmatrix}W_Eg\\W_Eh\end{smallmatrix}\right]$. It evaluates at $t\in E^\times$ as $\left[\begin{smallmatrix}g\\h\end{smallmatrix}\right](t)={}^{g^{-1}}t({}^{h^{-1}}t)^{-1}$. The $W_F$-action on the roots is given by $
{}^w \left[ \begin{smallmatrix}
 g\\h
\end{smallmatrix}
\right]  =\left[ \begin{smallmatrix}
gw^{-1}\\
hw^{-1}
\end{smallmatrix}\right]$, for $w\in W_F$. This change does not affect the symmetry of the roots. In Remark \ref{changing back from right to left}, we will justify that we can change back our notations using left cosets.

In this section we recall some basic facts about admissible embeddings defined in section (2.6) of \cite{LS}. Let $T$ be a maximal torus of $G$ defined over $F$. By taking a conjugate of $T$ in $G$, which is still denoted by $T$ for brevity, we assume that $T$ is contained in a Borel subgroup $B$ defined over $F$. Choose a $W_F$-invariant splitting $(\mathcal{T},\mathcal{B},\{\hat{X}_\alpha\})$ of $\hat{G}$ and an isomorphism $\iota:\hat{T}\rightarrow \mathcal{T}$ that maps the basis determined by $\hat{B}$ to the basis determined by $\mathcal{B}$. For notation convenience we usually omit $\iota$ and write $t=\iota(t)\in \mathcal{T}$ for $t\in \hat{T}$, but we should bear in mind that $\hat{T}$ and $\mathcal{T}$ may have different $W_F$-actions. We denote the action of $w\in W_F$ on $\hat{G}\supseteq \mathcal{T}$ by $w_{\hat{G}}$ and on $\hat{T}$ by $w_{\hat{T}}$.

% I insist to use \mathcal{T} for the dual torus instead of \hat{\mathbf{T}} as in \cite{LS} because I don't want to confuse that \hat{\mathbf{T}} is the dual of \mathbf{T} defined before.

An \emph{admissible embedding} from ${}^L \!T$ to ${}^L \!G$ is a morphism of groups $\chi: {}^L\!T \rightarrow {}^L\!G$ of the form
\begin{equation*}
\chi(t \rtimes w) = t \bar{\chi}(w) \rtimes w\text{ for all }t\rtimes w\in {}^LT,
\end{equation*}
for some map $\bar{\chi}: W_F \rightarrow \hat{G}$. By expanding the group law $\chi(s\rtimes v)\chi(t\rtimes w)=\chi((s\rtimes v)(t\rtimes w))$, we can show that
\begin{equation}\label{condition on bar-chi}
(\text{Int} \bar{\chi}(v))({}^{v_{\hat{G}}}t) ={}^{v_{\hat{T}}} t\text{ and }\bar{\chi}(vw) = \bar{\chi}(v)^{v_{{\hat{G}}}}  \bar{\chi}(w)\text{ for all }t \in \hat{T},\,v, w \in W_F.
 \end{equation}
 Conversely if $\bar{\chi}$ satisfies (\ref{condition on bar-chi}), then $\chi$ is an admissible embedding. Clearly $\bar{\chi}$ has image in $N_{\hat{G}}(\mathcal{T})$. %More precisely, if $N_{\hat{G}}(\mathcal{T})\rtimes W_F$ acts on $\hat{T}$ by ${}^{x\rtimes w}t=\text{Int}(x)({}^{w_{\hat{G}}}t)$, then the morphism $W_F\rightarrow \mathrm{Aut}(\hat{T}),\,w\mapsto w_{\hat{T}},$ factors through $$W_F\rightarrow N_{\hat{G}}(\mathcal{T})\rtimes W_F,\,w\mapsto \bar{\chi}(w)\rtimes w.$$
Given a subgroup $\mathcal{H}$ of $\hat{G}$, we call two admissible embeddings $\chi_1$, $\chi_2$ being $\mathrm{Int}(\mathcal{H})$-equivalent if there is $x\in \mathcal{H}$ such that $$\chi_1(t\rtimes w)=(x\rtimes 1)\chi_2(t\rtimes w)(x\rtimes 1)^{-1}\text{ for all }t\rtimes w\in {}^LT,$$ or equivalently, using (\ref{condition on bar-chi}), that $$\bar{\chi_1}(w)=x\bar{\chi}_2(w){}^{w_{\hat{G}}}x^{-1}\text{ for all }w\in W_F.$$

\begin{rmk}
We can choose splittings of $G$ and $\hat{G}$ such that we have a duality on the bases of $T$ and $\mathcal{T}$ for explicit computations. For example, we can choose a basis of $\mathcal{T}$ for constructing the section $n_S$ defined in section \ref{section The splitting invariant}. Our main results would be independent of these choices. For instance, the $\hat{G}$-conjugacy class of an admissible embedding is independent of the choices of the Borel subgroup $B$ containing $T$ and the splitting $(\mathcal{T}, \mathcal{B}, \{ \hat{X}_\alpha \})$ of $\hat{G}$ (see (2.6.1) and (2.6.2) of \cite{LS}).
\qed\end{rmk}

Langlands and Shelstad constructed certain admissible embeddings using $\chi$-data. Such data are defined in section (2.5) of \cite{LS}. We will give the construction of such embeddings in section \ref{section langlands shelstad chi data}. Let us assume such construction for a moment, and denote by $\mathrm{AE}({}^L\!T, {}^L\!G, (\hat{T}, \hat{B}) \rightarrow (\mathcal{T}, \mathcal{B}))$, or just $ \mathrm{AE}({}^L\!T, {}^L\!G)$ for simplicity, the set of admissible embeddings ${}^L\!T \rightarrow  {}^L\!G$ associated to the choice of the isomorphism $(\hat{T}, \hat{B}) \xrightarrow{} (\mathcal{T}, \mathcal{B})$. It is not difficult to describe this set.

\begin{prop}\label{set of adm embed as H1}
The set $\mathrm{AE}({}^L \! T,{}^L \! G)$ is a $Z^1(W_F,\hat{T})$-torsor, and the set of its $\mathrm{Int}(\mathcal{T})$-equivalence classes  $\mathrm{Int}(\mathcal{T})\backslash \mathrm{AE}({}^L \! T, {}^L \! G)$ is an $H^1(W_F, \hat{T})$-torsor.
\end{prop}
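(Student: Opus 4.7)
The plan is to construct the torsor map directly from the definition of admissible embedding, being careful to distinguish the two $W_F$-actions on $\mathcal{T}$: the one transported from $\hat{T}$ (call it $w_{\hat{T}}$) and the one inherited from the chosen splitting of $\hat{G}$ (call it $w_{\hat{G}}$). The key input is the identity $\mathrm{Int}(\bar{\chi}(v))({}^{v_{\hat{G}}}t)={}^{v_{\hat{T}}}t$ from \eqref{condition on bar-chi}, which tells us that conjugation by $\bar{\chi}(v)$ intertwines the two actions on $\mathcal{T}$.

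First I would show that the ``difference'' of two admissible embeddings lies in $\mathcal{T}$. Given $\chi_1,\chi_2\in\mathrm{AE}({}^LT,{}^LG)$, define $c(w):=\bar{\chi}_1(w)\bar{\chi}_2(w)^{-1}$. For each $w$, applying the first equation of \eqref{condition on bar-chi} to both $\bar{\chi}_1(w)$ and $\bar{\chi}_2(w)$ shows that $c(w)$ centralizes ${}^{w_{\hat{G}}}t$ for every $t\in\hat{T}$, hence centralizes $\mathcal{T}$; since $\mathcal{T}$ is a maximal torus of $\hat{G}$, this forces $c(w)\in\mathcal{T}$. Then I would verify the 1-cocycle identity $c(vw)=c(v)\cdot{}^{v_{\hat{T}}}c(w)$ by expanding the cocycle relation for $\bar{\chi}_1$ and $\bar{\chi}_2$ and using the intertwining identity $\bar{\chi}_2(v)\cdot{}^{v_{\hat{G}}}c(w)={}^{v_{\hat{T}}}c(w)\cdot\bar{\chi}_2(v)$ to swap the $\hat{G}$-action for the $\hat{T}$-action on $c(w)$.

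Conversely, given any $\chi_2\in\mathrm{AE}({}^LT,{}^LG)$ and any $c\in Z^1(W_F,\hat{T})$, I would define $\bar{\chi}_1(w):=c(w)\bar{\chi}_2(w)$ and check that both conditions of \eqref{condition on bar-chi} hold. The first condition holds because conjugation by $c(w)\in\mathcal{T}$ is trivial on $\mathcal{T}$. The second condition follows from combining the cocycle identity for $c$ with the same intertwining identity used above. This shows the action of $Z^1(W_F,\hat{T})$ on $\mathrm{AE}({}^LT,{}^LG)$ is free and transitive, provided the latter is nonempty; nonemptiness will be supplied by the explicit Langlands--Shelstad construction in section \ref{section langlands shelstad chi data}, which I invoke as a black box here.

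For the quotient statement, I would directly translate $\mathrm{Int}(\mathcal{T})$-equivalence into a coboundary relation: if $\bar{\chi}_1(w)=x\bar{\chi}_2(w){}^{w_{\hat{G}}}x^{-1}$ for some $x\in\mathcal{T}$, then writing $\bar{\chi}_2(w){}^{w_{\hat{G}}}x^{-1}={}^{w_{\hat{T}}}x^{-1}\bar{\chi}_2(w)$ (again by \eqref{condition on bar-chi} applied to $x^{-1}\in\hat{T}$) yields $c(w)=x\cdot{}^{w_{\hat{T}}}x^{-1}$, which is precisely a coboundary in $B^1(W_F,\hat{T})$. The reverse direction is the same calculation read backwards. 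Hence the torsor action of $Z^1(W_F,\hat{T})$ descends to a torsor action of $H^1(W_F,\hat{T})$ on $\mathrm{Int}(\mathcal{T})\backslash\mathrm{AE}({}^LT,{}^LG)$. The only genuinely delicate point throughout is the bookkeeping of the two distinct $W_F$-actions on $\mathcal{T}$, and the systematic use of \eqref{condition on bar-chi} to pass between them; no other difficulty is anticipated.
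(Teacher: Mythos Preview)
Your proposal is correct and follows essentially the same approach as the paper's proof: both take the quotient $\bar\chi_1(w)\bar\chi_2(w)^{-1}$, show it lands in $\mathcal{T}$, verify the $1$-cocycle identity via \eqref{condition on bar-chi}, and then translate $\mathrm{Int}(\mathcal{T})$-equivalence into a coboundary. The only cosmetic difference is that the paper argues $c(w)\in\mathcal{T}$ by observing that both $\bar\chi_i(w)$ project to the same element of the Weyl group $\Omega(\hat G,\mathcal{T})$, whereas you use the centralizer argument (and, strictly speaking, your computation shows $c(w)$ centralizes ${}^{w_{\hat T}}t$ rather than ${}^{w_{\hat G}}t$, but since both actions surject onto $\mathcal{T}$ this is immaterial).
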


\begin{proof}
We fix an embedding $\chi_0\in \mathrm{AE}({}^L \! T,{}^L \! G)$ and take $\bar{\chi}_0: W_F  \rightarrow  \hat{G}$ as in the definition of admissible embedding. Then for each $\chi \in \mathrm{AE}({}^L \! T,{}^L \! G)$, the difference $\bar{\chi} \bar{\chi_0}^{-1}$ is a 1-cocycle of $W_F$ valued in $\hat{T}$. Indeed for a fixed $w\in W_F$ both $\bar{\chi}(w)$ and $ \bar{\chi_0}(w)$ project to the same element in $\Omega({\hat{G}},\mathcal{T})=N_{\hat{G}}(\mathcal{T})/\mathcal{T}$. Using (\ref{condition on bar-chi}) we have that
\begin{equation*}\begin{split}
\bar{\chi} \bar{\chi_0}^{-1}(vw)
&= \bar{\chi}(v) {}^{v_{\hat{G}}} \bar{\chi}(w) {}^{v_{\hat{G}}} \bar{\chi_0}(w)^{-1} \bar{\chi_0}(v)^{-1}
\\
&= \bar{\chi}(v) \bar{\chi_0}(v)^{-1} {}^{v_{\hat{T}}} \bar{\chi}(w) {}^{v_{\hat{T}}} \bar{\chi_0}(w)^{-1}
\end{split}
\end{equation*}
for all $v,w\in W_F$. We can readily verify that the map $$\mathrm{AE}({}^L \! T,{}^L \! G)\rightarrow Z^1(W_F, \hat{T}),\, \chi\mapsto \bar{\chi} \bar{\chi_0}^{-1},$$ is bijective. From the equality
$t\bar{\chi}(v) {}^{v_{\hat{G}}} t^{-1} \bar{\chi_0}^{-1}(v) = t \bar{\chi}(v) \bar{\chi_0}^{-1}(v) {}^{v_{\hat{T}}} t^{-1}$ for all $t\in \mathcal{T}$, we know that two embeddings are $\mathrm{Int}(\mathcal{T})$-equivalent if and only if the corresponding 1-cocycles differ by a coboundary in $Z^1(W_F, \hat{T})$.
\end{proof}

\begin{rmk}\label{an explicit admissible embedding}
For $G=\mathrm{GL}_n$ we can construct an explicit embedding ${}^LT \rightarrow {}^LG$. Choose $\mathcal{T}$ to be the diagonal subgroup of $\hat{G}$. We embed $\hat{T}$ into $\hat{G}$ with image $\mathcal{T}$ and define $$W_F \rightarrow N_{\hat{G}}(\hat{T}),\,w \mapsto N(w),$$ where $N(w)$ is a permutation matrix, the matrix with entries being either 0 or 1. We assign $N(w)$ according to the $W_F$-action on $\hat{T}$, or more precisely that $\mathrm{Int}(N(w))t={}^{w_{\hat{T}}}t$ for all $t\in \hat{T}$. Clearly the map ${}^LT \rightarrow {}^LG,\,t \rtimes w \mapsto t N(w) \times w$,
defines an admissible embedding.
\qed\end{rmk}

We now take $T$ to be the elliptic torus $\mathrm{Res}_{E/F}\mathbb{G}_m$. By local class field theory \cite{Tate-NTB} and Shapiro's Lemma (see the Exercise in VII \S5 of \cite{Serre-local-fields}), we have a special case of Langlands correspondence for torus
\begin{equation}\label{hom as H1}
\text{Hom}(E^\times, \mathbb{C}^\times)\cong\text{Hom}(W_E, \mathbb{C}^\times)  \cong H^1(W_F, \hat{T}) .
\end{equation}
The precise correspondence is given as follows. Suppose that $\xi$ is a character of $E^\times$, regarded as a character of $W_E$ by class field theory. Take a collection of coset representatives $\{g_1,\dots,g_n\}$ of $W_E\backslash W_F$. Define for each $g_i$ a map $u_{g_i}:W_F\rightarrow W_E$ given by
\begin{equation}\label{pre-transfer}
g_iw=u_{g_i}(w)g(g_i,w)\text{ for }g(g_i,w)\in \{g_1,\dots,g_n\}.
\end{equation}
Then define
\begin{equation*}
\tilde{\xi}:W_F\rightarrow \hat{T} ,\,w\mapsto \left(\xi(u_{g_1}(w)),\dots,\xi(u_{g_n}(w))
\right).
\end{equation*}
We can check that $\tilde{\xi}$ is a 1-cocycle in $Z^1(W_F,\hat{T})$, and different choices of coset representatives give cocycles different from $\tilde{\xi}$ by a 1-coboundary. Hence the 1-cohomology class of $\tilde{\xi}$ is defined. By abusing of language, we call $\tilde{\xi}$ a \emph{Langlands parameter} of $\xi$. Moreover, combining Proposition \ref{set of adm embed as H1} and (\ref{hom as H1}), we have
\begin{equation}\label{set of adm embed as Hom}
\text{Hom}(E^\times, \mathbb{C}^\times) \cong \mathrm{Int}(\mathcal{T})\backslash\mathrm{AE}({}^L\!T, {}^L\!G).
\end{equation}
Explicitly, if we have a character $\mu$ of $E^\times$, then define
\begin{equation*}
\chi:{}^LT\rightarrow {}^LG,\,t\rtimes w\mapsto t\left(
\begin{smallmatrix}\mu(u_{g_1}(w))&&\\&\ddots&\\&&\mu(u_{g_n}(w))\end{smallmatrix}
\right)N(w)\times w.
\end{equation*}
Here $N(w)$ is the permutation matrix introduced in Remark \ref{an explicit admissible embedding}. Notice that this bijection is non-canonical. We consider the composition
\begin{equation*}
 H^1(W_F, \hat{T})\times \mathrm{Int}(\mathcal{T})\backslash \mathrm{AE}({}^L\!T,{}^L\!G)
\rightarrow \mathrm{Int}(\hat{G})\backslash\mathrm{Hom}_{W_F}(W_F, {}^L\!G)
\end{equation*}
such that $(\tilde{\xi}, \chi) \mapsto \chi \circ \tilde{\xi}$. Combining the bijections (\ref{hom as H1}) and (\ref{set of adm embed as Hom}), we have the following result.
\begin{prop}\label{composition as induction}
Suppose that $\xi$ and $\mu$ come from $\tilde{\xi}$ and $\chi$ by the bijections (\ref{hom as H1}) and (\ref{set of adm embed as Hom}) respectively. The composition $ \chi \circ \tilde{\xi}$, when projected to $\hat{G}\cong \mathrm{GL}_n(\mathbb{C})$, is isomorphic to $\mathrm{Ind}_{E/F}(\xi\mu)$ as a representation of $W_F$.
\end{prop}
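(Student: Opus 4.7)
The plan is to verify the isomorphism by direct matrix computation in the basis indexed by the chosen coset representatives $\{g_1, \ldots, g_n\}$ of $W_E \backslash W_F$.

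First, I would compute the composition $\chi \circ \tilde{\xi}$ explicitly. By definition $\tilde{\xi}(w) \in \hat{T}$ has coordinates $(\xi(u_{g_i}(w)))_i$, and from the explicit formula for $\chi$ just before the proposition, $\bar{\chi}(w) = \mathrm{diag}(\mu(u_{g_i}(w))) \cdot N(w)$. Hence inside ${}^L G = \hat{G} \times W_F$ we obtain
$$\chi\bigl(\tilde{\xi}(w) \rtimes w\bigr) = \mathrm{diag}\bigl((\xi\mu)(u_{g_i}(w))\bigr) \cdot N(w) \rtimes w.$$
Because the $W_F$-action on $\hat{G}$ is trivial for $G = \mathrm{GL}_n$, the projection ${}^L G \to \hat{G}$ is itself a group homomorphism; combined with the facts that $\chi$ is an admissible embedding and that $w \mapsto \tilde{\xi}(w) \rtimes w$ is a homomorphism (by the cocycle identity for $\tilde{\xi}$), the projection of $\chi \circ \tilde{\xi}$ is a genuine representation $\rho: W_F \to \mathrm{GL}_n(\mathbb{C})$.

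Second, I would realize $\mathrm{Ind}_{E/F}(\xi\mu)$ on the standard model: functions $f: W_F \to \mathbb{C}$ with $f(hw) = (\xi\mu)(h) f(w)$ for $h \in W_E$, with $W_F$ acting by right translation. On the basis $\{f_1, \ldots, f_n\}$ with $f_i(g_j) = \delta_{ij}$, the relation $g_j w = u_{g_j}(w) g(g_j, w)$ from (\ref{pre-transfer}) yields
$$(w \cdot f_i)(g_j) = f_i(g_j w) = (\xi\mu)(u_{g_j}(w)) \, \delta_{i, k(j,w)}$$
where $k(j,w)$ is determined by $g_{k(j,w)} = g(g_j, w)$. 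Thus $\mathrm{Ind}_{E/F}(\xi\mu)(w)$ is a diagonal-times-permutation matrix whose diagonal piece is $\mathrm{diag}((\xi\mu)(u_{g_j}(w)))$ and whose permutation piece encodes the right-translation action $g_j \mapsto g(g_j, w)$ on coset representatives.

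Third, I would match this with $\rho(w) = \mathrm{diag}((\xi\mu)(u_{g_i}(w))) N(w)$. The matrix $N(w)$ is characterized by $\mathrm{Int}(N(w)) t = {}^{w_{\hat{T}}} t$ for $t \in \hat{T}$, and the $W_F$-action on $\hat{T}$ was described in Section \ref{section root system} as the permutation of the character basis $\{\phi_{g_i}\}$ induced by the coset action. Hence $N(w)$ is, up to a simultaneous relabeling of rows and columns, the same permutation matrix that appears in the induced representation. Conjugation by the permutation matrix corresponding to this relabeling supplies the required intertwiner, and Proposition \ref{composition as induction} follows.

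The main obstacle is bookkeeping of conventions. The excerpt has just switched from left to right cosets for the present section, while the formula for the $W_F$-action on $\hat{T}$ in Section \ref{section root system} was written in the left-coset convention. One must therefore check carefully that after the convention change the permutation implemented by $N(w)$ coincides (possibly after inversion) with the right-translation permutation $g_j \mapsto g(g_j, w)$ arising in the induced representation; once this identification is nailed down, equality of the two diagonal-times-permutation matrices is immediate from the computations above.
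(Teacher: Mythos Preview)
Your proposal is correct and follows essentially the same approach as the paper: both realize $\mathrm{Ind}_{E/F}(\xi\mu)$ on the function model with basis $f_i(g_j)=\delta_{ij}$, compute the matrix of $w$ as a diagonal-times-permutation matrix $\mathrm{diag}((\xi\mu)(u_{g_i}(w)))N(w)$, and identify this with the projection of $\chi\circ\tilde\xi$. The paper's proof is simply terser, omitting your explicit discussion of why the projection is a homomorphism and your caution about the left/right coset conventions.
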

\begin{proof}
Choose a suitable basis on the representation space of $\mathrm{Ind}_{E/F}(\xi\mu)$. For example, if we realize our induced representation by the subspace of functions $$\{f:W_F\rightarrow\mathbb{C}|f(xg)=\xi\mu(x)f(g)\text{ for all }x\in W_E,g\in W_F\},$$ then we choose those $f_i$ determined by $f_i(g_j)=\delta_{ij}$ (Kronecker delta) as basis vectors. The matrix coefficient of $\mathrm{Ind}_{E/F}(\xi\mu)$ is therefore $$
w\mapsto\left(
\begin{smallmatrix}\mu\xi(u_{g_1}(w))&&\\&\ddots&\\&&\mu\xi(u_{g_n}(w))\end{smallmatrix}
\right)N(w)\text{, for all }w\in W_F.
$$
This is the same as the image of $\chi \circ \tilde{\xi}$.
\end{proof}
\begin{rmk}\label{recover}
We can recover ${\xi}$ from $\text{Ind}_{E/F}{\xi}$ as follows. We choose the first $k$ coset representatives $g_1=1,g_2\dots,g_k$ to be those in the normalizer $N_{W_F}(W_E)=\mathrm{Aut}_F(E)$. By choosing suitable basis, we then consider the matrix coefficients of $\mathrm{Res}_{E/F}\mathrm{Ind}_{E/F}{\xi}$. The first $k$ diagonal entries are always non-zero and give the characters ${\xi}^{g_i}$, $i=1,\dots,k$.
\qed\end{rmk}

\section{Langlands-Shelstad $\chi$-data}\label{section langlands shelstad chi data}

In this section we recall the construction of admissible embeddings ${}^LT \rightarrow {}^LG$ given in chapter 2 of \cite{LS}. The actual construction applies to general connected reductive algebraic groups defined and quasi-split over $F$. We take a maximal torus $T$ in $G$ also defined over $F$ and a maximal torus $\mathcal{T}$ of $\hat{G}$ as part of a chosen splitting $\mathrm{spl}_{\hat{G}}=(\mathcal{T},\, \mathcal{B},\,\{\hat{X}_\alpha\})$ for $\hat{G}$. We emphasize again that different choices of auxiliary data yield $\mathrm{Int}(\hat{G})$-equivalent admissible embeddings. For computational convenience we choose $\mathcal{T}$ to be the diagonal subgroup and $\mathcal{B}$ to be the group of upper triangular matrices. The tori $\hat{T}$ and $\mathcal{T}$ are isomorphic as groups but equipped with different $W_F$-actions.

Recall, from Proposition \ref{set of adm embed as H1}, that the existence of an embedding $\chi: {}^LT \rightarrow {}^LG$ with a fixed restriction $\hat{T}\rightarrow \mathcal {T}$ is equivalent to the existence of a 1-cocycle $\bar{\chi}\in Z^1(W_F,N_{\hat{G}}(\mathcal{T}))$ as in (\ref{condition on bar-chi}). Following section 2.6 of \cite{LS}, we construct an admissible one directly as follows. Recall the section $ \Omega(G, \mathbf{T}) \rightarrow N_G(\mathbf{T})$ defined in section \ref{section The splitting invariant}. We apply the same construction and get a section $ \Omega(\hat{G}, \mathcal{T}) \rightarrow N_{\hat{G}}(\mathcal{T})$ with respect to the choices of simple roots in $\mathrm{spl}_{\hat{G}}$. In turn this section yields a map
\begin{equation*}
n:W_F\rightarrow N_{\hat{G}}(\mathcal{T})\rtimes W_F,\,w\mapsto n(w)=\bar{n}(w)\rtimes w.
\end{equation*}
This map is not necessarily a morphism of groups, yet $\bar{n}$ satisfies the first equation in (\ref{condition on bar-chi}) in place of $\bar{\chi}$. We write \begin{equation}\label{Borel-pos 2 cocycle}
t_\mathrm{b}(v,w)= n(v)n(w)n(vw)^{-1} = \bar{n}(v){}^{v_{\hat{G}}}\bar{n}(w)\bar{n}(vw)^{-1}.
\end{equation}
This is a 2-cocycle of $W_F$ with values in $\{\pm1\}^n\subseteq \mathcal{T}$, by Lemma 2.1.A of \cite{LS}. Hence the problem of seeking such $\bar{\chi}$ is equivalent to looking for a map $r_\mathrm{b}:W_F \rightarrow \hat{T}$ that splits $t^{-1}_\mathrm{b}$. In other words, the map $r_\mathrm{b}$ should satisfy
\begin{equation}\label{def of r-b that splits t-b}
r_\mathrm{b}(v) {}^{{v}_{\hat{T}}}r_\mathrm{b}(w)r_\mathrm{b}(vw)^{-1} = t_\mathrm{b}(v,w)^{-1}.
\end{equation}

The idea in \cite{LS} of constructing such splitting $r_\mathrm{b}$ is to choose a set of characters (recall the definition of the fields $E_\lambda$ and $E_{\pm\lambda}$ in section \ref{section root system})
$$\{ \chi_{\lambda} \}_{\lambda \in \Phi},\,\chi_{\lambda}:E^\times_{\lambda} \rightarrow \mathbb{C}^\times,$$
called \emph{$\chi$-data}, such that the following conditions hold.
\begin{enumerate}[(i)]
\item For each $\lambda\in\Phi$, we have $ \chi_{-\lambda} = \chi_\lambda^{-1}$ and $\chi_{{}^w\lambda} = \chi_\lambda^{w^{-1}}$ for all $w \in W_F$. \label{chi-data condition transition}
\item If $\lambda$ is symmetric, then $\chi|_{E^\times_{\pm \lambda}}$ equals the quadratic character $\delta_{E_{\lambda}/E_{\pm \lambda}}$ attached to the extension ${E_{\lambda}/E_{\pm \lambda}}$. \label{chi-data condition quadratic}
\end{enumerate}
If we choose $\mathcal{R}_\pm=\mathcal{R}_\mathrm{sym}\sqcup \mathcal{R}_{\mathrm{asym}/\pm}$ to be a subset of $\Phi$ consisting of representatives of
$ W_F\backslash\Phi_\text{sym}$ and $ W_F\backslash\Phi_{\mathrm{asym}/\pm}$, then by condition (\ref{chi-data condition transition}) the set of $\chi$-data depends only on the subset $\{ \chi_\lambda\}_{\lambda \in \mathcal{R}_\pm}$. As before, we may regard each $\chi_\lambda$ as a character of the Weil group $W_{E_\lambda}$. Following section (2.5) of \cite{LS}, we define for each $\lambda \in \mathcal{R}_\pm$ a map
\begin{equation}\label{definition of r-lambda}
r_\lambda:W_F \rightarrow \mathcal{T},\, w \mapsto \prod_{g_i\in W_{\pm\lambda}\backslash W_F}\chi_\lambda(v_1(u_{g_i}(w))) ^{g_i^{-1} \lambda},
\end{equation}
where $u_{g_i}$ is the map (\ref{pre-transfer}) for $W_{\pm\lambda}\backslash W_F$ and $v_1$ is defined similarly for $W_{+\lambda}\backslash W_{\pm\lambda}$. We then define
\begin{equation}\label{definition of the splitting rg}
r_\mathrm{g}=\prod_{\lambda \in \mathcal{R}_\pm} r_\lambda.
\end{equation}
By Lemma 2.5.A of \cite{LS}, such construction yields a 2-cocycle
\begin{equation}\label{Galois-pos 2 cocycle}
t_\mathrm{g}(v,w)=r_\mathrm{g}(v){}^{{v}_{\hat{T}}}r_\mathrm{g}(w)r_\mathrm{g}(vw)^{-1} \in Z^2(W_F, \{\pm1\}^n).
\end{equation}
In constructing the 2-cocycles (\ref{Borel-pos 2 cocycle}) and (\ref{Galois-pos 2 cocycle}), we implicitly used two different notions of gauges (defined right before Lemma 2.1.B of \cite{LS}) on the root system $\Phi$. To relate them we recall a map $s=s_{\mathrm{b}/\mathrm{g}}:W_F \rightarrow \{\pm1\}^n$ in section (2.4) of \cite{LS} such that
\begin{equation}\label{def of s-b-g}
s(v){}^{{v}_{\hat{T}}}s(w)s(vw)^{-1} = t_\mathrm{b}(v,w)t_\mathrm{g}(v,w)^{-1}.
\end{equation}
Write $r_\mathrm{b} = s_{\mathrm{b}/\mathrm{g}}r_\mathrm{g}$ and $\bar{\chi} = r_\mathrm{b}\bar{n}$.

 \begin{prop}\label{define an admissible embedding}
 The map $\chi$ defines an admissible embedding ${}^LT\rightarrow {}^LG$.
\end{prop}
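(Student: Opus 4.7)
The plan is to verify directly the two identities in (\ref{condition on bar-chi}) for $\bar{\chi}=r_\mathrm{b}\bar{n}$. The first identity, $(\mathrm{Int}\,\bar{\chi}(v))({}^{v_{\hat{G}}}t)={}^{v_{\hat{T}}}t$ for all $t\in\mathcal{T}$, is immediate: since $r_\mathrm{b}(v)$ lies in the abelian group $\mathcal{T}$, we have $\mathrm{Int}(\bar{\chi}(v))|_{\mathcal{T}}=\mathrm{Int}(\bar{n}(v))|_{\mathcal{T}}$, and by construction of $\bar{n}$ (the Weyl section transporting $v_{\hat{G}}$ to $v_{\hat{T}}$ on $\mathcal{T}$), the composite $\mathrm{Int}(\bar{n}(v))\circ{}^{v_{\hat{G}}}$ equals ${}^{v_{\hat{T}}}$ on $\mathcal{T}$. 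So only the second (cocycle) identity requires real work.

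For the cocycle identity, expand
\begin{equation*}
\bar{\chi}(v)\,{}^{v_{\hat{G}}}\bar{\chi}(w)=r_\mathrm{b}(v)\,\bar{n}(v)\,{}^{v_{\hat{G}}}r_\mathrm{b}(w)\,{}^{v_{\hat{G}}}\bar{n}(w).
\end{equation*}
Using the first identity to push ${}^{v_{\hat{G}}}r_\mathrm{b}(w)$ through $\bar{n}(v)$ (it becomes ${}^{v_{\hat{T}}}r_\mathrm{b}(w)$ since $r_\mathrm{b}(w)\in\mathcal{T}$), and then applying (\ref{Borel-pos 2 cocycle}) to replace $\bar{n}(v)\,{}^{v_{\hat{G}}}\bar{n}(w)$ by $t_\mathrm{b}(v,w)\bar{n}(vw)$, this equals
\begin{equation*}
r_\mathrm{b}(v)\,{}^{v_{\hat{T}}}r_\mathrm{b}(w)\,t_\mathrm{b}(v,w)\,\bar{n}(vw).
\end{equation*}
Comparing with $\bar{\chi}(vw)=r_\mathrm{b}(vw)\bar{n}(vw)$, the cocycle condition reduces to the identity $r_\mathrm{b}(v)\,{}^{v_{\hat{T}}}r_\mathrm{b}(w)\,r_\mathrm{b}(vw)^{-1}=t_\mathrm{b}(v,w)^{-1}$, i.e., (\ref{def of r-b that splits t-b}). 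To check the latter, substitute $r_\mathrm{b}=s_{\mathrm{b}/\mathrm{g}}r_\mathrm{g}$, split the coboundary expression multiplicatively, and apply (\ref{Galois-pos 2 cocycle}) and (\ref{def of s-b-g}):
\begin{equation*}
r_\mathrm{b}(v)\,{}^{v_{\hat{T}}}r_\mathrm{b}(w)\,r_\mathrm{b}(vw)^{-1}=\bigl(t_\mathrm{b}\,t_\mathrm{g}^{-1}\bigr)\,t_\mathrm{g}=t_\mathrm{b}(v,w),
\end{equation*}
which equals $t_\mathrm{b}(v,w)^{-1}$ because $t_\mathrm{b}$ takes values in the $2$-torsion group $\{\pm 1\}^n\subseteq\mathcal{T}$.

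The hard part will be almost entirely bookkeeping: keeping the two $W_F$-actions ${}^{v_{\hat{G}}}$ and ${}^{v_{\hat{T}}}$ straight, and confirming that the $\chi$-data product formula (\ref{definition of r-lambda}) is well-defined on coset representatives for $W_{\pm\lambda}\backslash W_F$. The latter is where the quadratic condition (ii) on $\chi$-data for symmetric roots enters, ensuring the indexing over $W_{\pm\lambda}\backslash W_F$ (rather than the finer $W_\lambda\backslash W_F$) produces a consistent element of $\mathcal{T}$; this is precisely what is packaged in the cited Lemma 2.5.A of \cite{LS} establishing (\ref{Galois-pos 2 cocycle}). Once that lemma and the comparison (\ref{def of s-b-g}) are granted, the proposition becomes a formal manipulation of cocycles as sketched above.
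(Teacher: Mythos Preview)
Your proof is correct and follows exactly the approach the paper indicates: verify the two conditions in (\ref{condition on bar-chi}), with the first coming from the definition of $\bar{n}$ and the second reduced via (\ref{Borel-pos 2 cocycle}) to the splitting identity (\ref{def of r-b that splits t-b}), which is then checked by substituting $r_\mathrm{b}=s_{\mathrm{b}/\mathrm{g}}r_\mathrm{g}$ and invoking (\ref{Galois-pos 2 cocycle}), (\ref{def of s-b-g}), and the fact that $t_\mathrm{b}$ is $\{\pm1\}^n$-valued. The paper's proof says precisely this in one sentence; you have simply written out the straightforward calculation it alludes to.
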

\begin{proof}It suffices to show that $\bar{\chi}$ satisfies the two conditions in (\ref{condition on bar-chi}). The first condition is just from the definition of $n(w)$, while the second condition is a straightforward calculation using (\ref{Borel-pos 2 cocycle}), (\ref{def of r-b that splits t-b}), (\ref{Galois-pos 2 cocycle}), and (\ref{def of s-b-g}).
\end{proof}

Using the bijection in Proposition \ref{orbit of roots as double coset} we have the natural bijection
\begin{equation}\label{bijection between representatives}
  \mathcal{R}_\pm=\mathcal{R}_\text{sym}\bigsqcup\mathcal{R}_{\mathrm{asym}/\pm}\rightarrow \mathcal{D}_\pm=\mathcal{D}_\text{sym}\bigsqcup\mathcal{D}_{\mathrm{asym}/\pm},\,\lambda=\left[\begin{smallmatrix}1\\g\end{smallmatrix}\right]\mapsto g.
\end{equation}
We usually write $\chi_g=\chi_\lambda$ using the bijection above and denote a collection of $\chi$-data $\{\chi_\lambda\}_{\lambda\in\mathcal{R}_\pm}$ by $\{\chi_g\}_{g\in\mathcal{D}_\pm}$. We also denote by $E_g$ and $E_{\pm g}$ the fields $E_ \lambda$ and $E_{\pm \lambda}$ respectively.

\section{Explicit $\Delta_{\mathrm{III}_2}$}\label{section explicit-Delta-III-2}

Let $\chi:{}^LT\rightarrow {}^LG$ be the admissible embedding defined by some chosen $\chi$-data $\{\chi_g\}_{g\in\mathcal{D}_\pm}$, as in section \ref{section langlands shelstad chi data}. Let $\xi$ be a character of $E^\times$ and $\tilde{\xi}\in Z^1(W_F, \hat{T})$ be a Langlands parameter of $\xi$. The choice of $\tilde{\xi}$ would be irrelevant. In Proposition \ref{composition as induction} we described the induced representation $\mathrm{Ind}_{E/F}\xi$ as certain embedding of the image of $\tilde{\xi}$ into $\mathrm{GL}_n(\mathbb{C})$. Here we have the reverse: given a representation of $W_F$ defined by an admissible embedding, we would recover the character that induces such representation.
\begin{prop}\label{recover the character from induction}
Given $\chi$-data $\{\chi_g\}=\{\chi_g\}_{g\in\mathcal{D}_\pm}$, we define
\begin{equation*}
\mu =\mu_{\{\chi_g\}}=\prod_{[g]\in( W_E \backslash W_F/W_E)'}\mathrm{Res}^{E^\times_g}_{E^\times}\chi_g.
\end{equation*}
Let $\chi$ be the admissible embedding defined by $\{\chi_g\}_{g\in\mathcal{D}_\pm}$. Then for all character $\xi$ of $E^\times$, the composition $$W_F \xrightarrow{\tilde{\xi}} {}^LT \xrightarrow{\chi} {}^LG \xrightarrow{\mathrm{proj}}\mathrm{GL}_n(\mathbb{C})$$ is isomorphic to $\mathrm{Ind}_{E/F}(\xi \mu)$ as a representation of $W_F$.
\end{prop}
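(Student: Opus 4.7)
The strategy is to reduce the claim to Proposition \ref{composition as induction} by identifying the character of $E^\times$ that the admissible embedding $\chi$ built from $\{\chi_g\}_{g\in\mathcal{D}_\pm}$ corresponds to under the bijection (\ref{set of adm embed as Hom}). Once we show that this character is precisely $\mu=\mu_{\{\chi_g\}}$, Proposition \ref{composition as induction} applied to the pair $(\tilde{\xi},\chi)$ gives immediately that $\chi\circ\tilde{\xi}$ projects to $\mathrm{Ind}_{E/F}(\xi\mu)$.

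To identify this character, I will compare $\chi$ with the explicit embedding $\chi_0$ of Remark \ref{an explicit admissible embedding}, which by Proposition \ref{composition as induction} (with $\mu=1$) corresponds to the trivial character. By Proposition \ref{set of adm embed as H1} together with (\ref{hom as H1}), the difference $\bar{\chi}\bar{\chi}_0^{-1}$ defines a class in $H^1(W_F,\hat{T})$ equal to the Langlands parameter of the character we seek. From the construction $\bar{\chi}=s_{\mathrm{b}/\mathrm{g}}\, r_\mathrm{g}\, \bar{n}$ in Section \ref{section langlands shelstad chi data}, and the observation that $\bar{n}(w)$ and $N(w)$ are two lifts of the same Weyl group element (differing by a sign cocycle valued in $\{\pm1\}^n\subseteq\mathcal{T}$), this difference is cohomologous to $r_\mathrm{g}=\prod_{\lambda\in\mathcal{R}_\pm} r_\lambda$.

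It remains to compute the character on $E^\times$ corresponding to $r_\mathrm{g}$. By Remark \ref{recover}, I restrict to $W_E$ and read the coordinate indexed by $g_1=1$. A coordinate-wise analysis of the product formula (\ref{definition of r-lambda}) shows that, for $w\in W_E$, the $g_1$-component of each $r_\lambda(w)$ is a product of values of $\chi_\lambda$ determined by the double coset $[g]\in(W_E\backslash W_F/W_E)'$ attached to $\lambda$ by Proposition \ref{orbit of roots as double coset}. Using the transition rule $\chi_{{}^{w}\lambda}=\chi_\lambda^{w^{-1}}$ of condition (\ref{chi-data condition transition}) to reindex terms over $\mathcal{D}_\pm$ via the bijection (\ref{bijection between representatives}), the contributions assemble into $\prod_{[g]\in(W_E\backslash W_F/W_E)'} \mathrm{Res}^{E_g^\times}_{E^\times}\chi_g=\mu$, as required.

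The main obstacle will be the coordinate-wise bookkeeping in the last step. One must verify that the double sum implicit in (\ref{definition of r-lambda}) (over $W_{\pm\lambda}\backslash W_F$ and the inner fiber $W_{+\lambda}\backslash W_{\pm\lambda}$ encoded in $v_1$) produces exactly one factor $\chi_g|_{E^\times}$ per orbit $[g]$. In the symmetric case $[g]=[g^{-1}]$, the quadratic condition (\ref{chi-data condition quadratic}) is what forces the two intertwined contributions to collapse into a single clean factor instead of yielding extra signs or squared terms; in the asymmetric case, one must check that the pairing of $[g]$ with $[g^{-1}]$ in passing from $\mathcal{R}_{\mathrm{asym}/\pm}$ to $(\Gamma_E\backslash\Gamma_F/\Gamma_E)_\mathrm{asym}$ doubles into the correct single restriction $\chi_g|_{E^\times}$ without double-counting.
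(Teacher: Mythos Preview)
Your plan is essentially the paper's own proof: reduce via Proposition \ref{composition as induction} and Remark \ref{recover} to reading off the first coordinate of $r_{\mathrm{g}}$ restricted to $W_E$, then split the product (\ref{definition of r-lambda}) into asymmetric and symmetric contributions and identify each with a transfer map $T^{W_E}_{W_{E_g}}$, hence with $\mathrm{Res}^{E_g^\times}_{E^\times}\chi_g$. The paper carries this out exactly as you outline, writing the first entry as four sub-products (two for the asymmetric orbits, two for the symmetric ones) and matching them term by term to the factors of (\ref{expanded recover the character from induction}); your anticipation that condition (\ref{chi-data condition quadratic}) is what collapses the symmetric pair is precisely what happens there.

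One small caution: your phrase ``this difference is cohomologous to $r_{\mathrm{g}}$'' is not literally correct, because $r_{\mathrm{g}}$ is \emph{not} a 1-cocycle (its coboundary is the 2-cocycle $t_{\mathrm{g}}$ of (\ref{Galois-pos 2 cocycle})). The paper does not make a cohomological comparison with $\chi_0$ at all; it simply asserts that ``it is enough to consider the first entry of $r_{\mathrm{g}}$'' and then computes. What is really being used is that the sign-valued pieces $s_{\mathrm{b}/\mathrm{g}}$ and $\bar{n}N^{-1}$ do not disturb the first coordinate on $W_E$ (the Weyl element fixes the coset $g_1=1$, and the residual $\{\pm1\}$ ambiguity yields a trivial character of $E^\times$). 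If you want your reduction step to be airtight, replace the cohomology claim by this direct observation about the first coordinate; otherwise your execution will match the paper's.
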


\begin{rmk} \label{two remarks about product}
\begin{enumerate}[(i)]
 \item Notice that the product in Proposition \ref{recover the character from induction} is independent of the representatives $g\in W_E\backslash W_F$ of $[g]\in( W_E \backslash W_F/W_E)'$. Indeed if ${}^x\left[\begin{smallmatrix}1\\g\end{smallmatrix}\right]=\left[\begin{smallmatrix}1\\h\end{smallmatrix}\right]$ for some $x\in W_E$, then ${}^xE_g=E_h$ and so $\mathrm{Res}^{E^\times_h}_{E^\times}\chi_h=\mathrm{Res}^{{}^xE^\times_g}_{E^\times}\chi_g^{x^{-1}}=\mathrm{Res}^{E^\times_g}_{E^\times}\chi_g$,
by (\ref{chi-data condition transition}) in the definition of $\chi$-data. \label{remark indep of choices of double coset}
\item
Suppose that we have fixed a character $\xi$ of $E^\times$. Take a subset $\{g_1=1,g_2\dots,g_k\}$ of coset representatives of $W_E\backslash W_F$ in the normalizer $N_{W_F}(W_E)=\mathrm{Aut}_F(E)$ and write $\mu_1=\mu_{\{\chi_g\}}$ as in Proposition \ref{recover the character from induction}. Then all other characters $\mu_j$ such that $\mathrm{Ind}_{E/F}(\xi \mu_j)\cong  \mathrm{Ind}_{E/F}(\xi \mu_1)$ are of the form $\mu_j=\xi^{g_j-1}\mu_1$, $j=1,\dots,k$. This character $\mu_j$ also has a factorization as in Proposition \ref{recover the character from induction}. We can arrange the factors such that they are the same as those of $\mu$ except the character $\chi_{g_j}$ is changed according to the following.
\begin{enumerate}[(i)]
\item If ${g_j}$ is symmetric, then $\chi_{g_j}$ is replaced by $\xi^{{g_j}-1}\chi_{g_j}$.
\item If ${g_j}$ is asymmetric, then $\chi_{g_j}$ is replaced by $\xi^{{g_j}}\chi_{g_j}$ and so $\chi_{{g_j}^{-1}}$ by $\xi^{-1}\chi_{{g_j}^{-1}}$.
\end{enumerate}
\end{enumerate}
\qed\end{rmk}

\begin{proof} (of Proposition \ref{recover the character from induction})
In this proof we abbreviate $H = W_F$ and $K = W_E$. For each $\lambda=\left[\begin{smallmatrix}1\\g\end{smallmatrix}\right]$, we write $K_g = K \cap g^{-1}Kg$, which equals $W_{\lambda}$. If $[g] \in (K \backslash H/K)_{\mathrm{sym}}$, then, because $KgK=Kg^{-1}K$, we can replace $g$ by an element in $Kg$ such that $g^2\in K$. Subsequently we have $g\in W_{\pm\lambda}$ and $g^2\in K_g= W_{\lambda}$. We denote by $K_{\pm g}$ the group generated by $K_g$ and $g$, which equals $W_{\pm \lambda}$.

By (\ref{chi-data condition transition}) in the definition of $\chi$-data, we rewrite the product in Proposition \ref{recover the character from induction} as
\begin{equation}\label{expanded recover the character from induction}
\prod_{[g]\in(K \backslash H /K)_\mathrm{asym/\pm}} ( \mathrm{Res}^{E^\times_{g}}_{E^\times} \chi_g) ( \mathrm{Res}^{{}^{g}E_{g}^\times}_{E^\times} \chi^{g^{-1}}_g) ^{-1} \prod_{[g]\in(K \backslash H /K)_\text{sym}} ( \mathrm{Res}^{E^\times_{g}}_{E^\times} \chi_g ).
\end{equation}
Recall that our dual group $\mathcal{T}$ is the diagonal subgroup. In order to check that $\chi$ gives rise to a character $\mu$ as (\ref{expanded recover the character from induction}), it is enough to consider the first entry of $r_{\mathrm{g}}$ (see (\ref{definition of the splitting rg}) and Remark \ref{recover}). From (\ref{definition of r-lambda}) we have
\begin{equation*}
\begin{split}
r_{\mathrm{g}}(w)=&\left( \prod_{[g]\in(K \backslash H /K)_\mathrm{asym/\pm}} \prod_{g_i \in K_g\backslash H}  \chi_g(u_{g_i}(w))^{\left[\begin{smallmatrix}g_i\\gg_i\end{smallmatrix}\right]}\right)
\\
&\left(\prod_{[g]\in(K \backslash H /K)_\text{sym}}  \prod_{g_i \in K_{\pm g}\backslash H}  \chi_g(v_1u_{g_i}(w))^{\left[\begin{smallmatrix}g_i\\gg_i\end{smallmatrix}\right]}\right).
\end{split}
\end{equation*}
By restricting to $ W_E$, we get the first entry of $r_{\mathrm{g}}(w)$ as
\begin{equation}\label{first entry of r as 4 products}
\begin{split}
r_{\mathrm{g}}(w)_1=&\left(\prod_{[g] \in (K \backslash H /K)_\mathrm{asym/\pm}} \left(\prod_{g_i \in K_g \backslash K }\chi_g(u_{g_i}(w))\right)\left(\prod_{\begin{smallmatrix}{g_i \in K_g \backslash H}\\{gg_i\in K}\end{smallmatrix}} \chi_g(u_{g_i}(w))^{-1}\right)\right)
\\
&\left(\prod_{[g] \in (K \backslash H /K)_\text{sym}} \left(\prod_{\begin{smallmatrix}{g_i \in K_{\pm g} \backslash K}\\{}\end{smallmatrix}}\chi_g(v_1u_{g_i}(w))\right)\left(\prod_{\begin{smallmatrix}{g_i \in K_{\pm g} \backslash H}\\{gg_i\in K}\end{smallmatrix}} \chi_g(v_1u_{g_i}(w))^{-1}\right)\right).
\end{split}
\end{equation}

We now analyze the products in (\ref{first entry of r as 4 products}) and match them to those in (\ref{expanded recover the character from induction}). First, for $[g]\in (K \backslash H /K)_\mathrm{asym/\pm}$, the first product of (\ref{first entry of r as 4 products}) $$\prod_{g_i \in K_g \backslash K } u_{g_i}(w),\,w \in K,$$  is the transfer map $T^K_{K_g}:K^{\mathrm{ab}}\rightarrow (K_g)^{\mathrm{ab}}$. By class field theory \cite{Tate-NTB}, it corresponds to the inclusion $E^\times \hookrightarrow E^\times_{g}$. Therefore $$\prod_{g_i \in K_g \backslash K }\chi_g(u_{g_i}(w)) = \text{Res}^{E^\times_{g}}_{E^\times} \chi_g(w),$$ which is the first factor in (\ref{expanded recover the character from induction}).

Next we consider the inverse of the second product of (\ref{first entry of r as 4 products}) $$\prod_{\begin{smallmatrix}{g_i \in K_g \backslash H}\\{gg_i\in K}\end{smallmatrix}}u_{g_i}(w),\,w\in K.$$ For $g_i \in K_g \backslash H$ such that $gg_i \in K$, we can write $g_i=g^{-1}x_i$ for some $x_i$ running through a set in $K$ of representatives of $  K _{g^{-1}} \backslash K$. On one hand, if $u_{x_i}$ is the map  (\ref{pre-transfer}) for $  K_{g^{-1}} \backslash K$, then we have
\begin{equation}\label{first relation in proof of recover the character from induction}
g^{-1}(x_iw) = g^{-1}(u_{x_i}(w)x_{j(x_i,w)}),
\end{equation}
where $u_{x_i(w)} \in K_{g^{-1}}$. On the other hand, by regarding $g^{-1}x_i \in K_g  \backslash H$ we have
\begin{equation} \label{second relation in proof of recover the character from induction}
g^{-1}x_iw = u_{g^{-1}x_i}(w)g_{j(g^{-1}x_i,w)},
\end{equation}
where $u_{g^{-1}x_i}(w) \in K_g$ and $g_{j(g^{-1}x_i,w)}$ is of the form $g^{-1}x_j$ for some $j$. By comparing (\ref{first relation in proof of recover the character from induction}) and (\ref{second relation in proof of recover the character from induction}) we have
$g^{-1}u_{x_i}(w)g = u_{g^{-1}x_i}(w)$. Therefore $$\prod_{\begin{smallmatrix}{g_i \in K_g \backslash H}\\{gg_i\in K}\end{smallmatrix}} u_{g_i}(w) = g^{-1}\left(\prod_{\begin{smallmatrix}{g_i \in K_g \backslash H}\\{gg_i\in K}\end{smallmatrix}} u_{x_i}(w)\right)g = g^{-1} T^{K}_{K_{g^{-1}}}(w)g.$$ This implies that $$\prod_{\begin{smallmatrix}{g_i \in K_g \backslash H}\\{gg_i\in K}\end{smallmatrix}}\chi_g( u_{g_i}(w)) = \chi_g^{g^{-1}}(T^{K}_{K_{g^{-1}}}(w)) = (\text{Res}^{{}^gE^\times_{g}}_{E^\times} \chi_g^{g^{-1}})(w),$$ which is the inverse of the second factor in (\ref{expanded recover the character from induction}).

Finally, for $[g]\in (K\backslash H/K)_\mathrm{sym}$, we choose coset representatives $g_1,\dots,g_k$, $g_1g,\dots,g_kg$ for  $K_g\backslash H$ such that $g_1,\dots,g_k$ are those of $K_{\pm g}\backslash H$. Moreover we can assume that $$g_1,\dots,g_h,gg_{h+1},\dots,gg_{2h}\in K.$$ Hence the third product in (\ref{first entry of r as 4 products}) is
\begin{equation}\label{third product in chi-data}
\prod_{\begin{smallmatrix}{g_i \in K_{\pm g} \backslash K}\end{smallmatrix}}\chi_g(v_1(u_{g_i}(w)))
=
\prod_{i=1}^h\chi_g(v_1(u_{g_i}(w))).
\end{equation}
Here $u_{g_i}$ is the map (\ref{pre-transfer}) for $K_{\pm g}\backslash H$ and $v_1 u_{g_i}$ is the one for $ K_g\backslash H$. For the fourth product in (\ref{first entry of r as 4 products}), because $\chi_g^g=\chi_g^{-1}$ (by (\ref{chi-data condition quadratic}) in the definition of $\chi$-data) and $g(v_1(u_{g_i}(w)))g^{-1}=v_1(u_{gg_i}(w)),$ we have indeed
\begin{equation}\label{fourth product in chi-data}
\prod_{\begin{smallmatrix}{g_i \in K_{\pm g} \backslash H}\\{gg_i\in K}\end{smallmatrix}} \chi_g(v_1u_{g_i}(w))^{-1}
=
\prod_{i=h+1}^{2h}\chi_g(v_1(u_{gg_i}(w))).
\end{equation}
Therefore the product of (\ref{third product in chi-data}) and (\ref{fourth product in chi-data}) is
$\chi_g(T^{K}_{K_g}(w))=(\mathrm{Res}^{E^\times_{g}}_{E^\times} \chi_g)(w)$,
which is the last factor of (\ref{expanded recover the character from induction}).
\end{proof}

We have similar result for $H=\mathrm{Res}_{K/F}\mathrm{GL}_m$ as follows. We first regard $H=\mathrm{GL}_m$ as an reductive group over $K$. Let $\tilde{\xi}_K \in Z^1(W_K, \hat{T})$ be a Langlands parameter of the character $\xi$ of $E^\times$. Let $\mathcal{D}(K)_\pm$ be the sub-collection in $ W_K / W_E$ defined analogously as $\mathcal{D}_\pm= \mathcal{D}(F)_\pm$. Take the sub-collection $\{ \chi_g  \}_{g\in \mathcal{D}(K)_\pm}$ of $\chi$-data and define the admissible embedding $$(\chi_H)_K : {}^LT_K:=\hat{T} \rtimes W_K \rightarrow {}^LH_K:=\hat{H} \times W_K.$$ Then similar to Proposition \ref{recover the character from induction}, the composition
\begin{equation}\label{admissible-embedding-induction-for-H}
  W_K \xrightarrow{\tilde{\xi}_K} {}^LT_K \xrightarrow{(\chi_H)_K} {}^LH_K \rightarrow \text{GL}_m(\mathbb{C})
\end{equation}
is isomorphic to $\text{Ind}_{E/K}(\xi \mu_H)$ as a representation of $W_K$, where
\begin{equation*}
  \mu_H = \prod_{[g] \in (W_E \backslash W_K / W_E)'} \text{Res}{}^{E_g^\times}_{E^\times} \chi_g.
\end{equation*}
We also write $\mu_G$ to be the character $\mu$ in Proposition \ref{recover the character from induction}.

\begin{cor}\label{definition of delta III2}
  For all $\gamma \in E^\times$ regular, we have  $$ \Delta_\mathrm{III_2}(\gamma) = \mu_G(\gamma)^{-1} \mu_H(\gamma)= \prod_{[g]\in W_E \backslash W_F / W_E - W_E \backslash W_K / W_E} \left( \mathrm{Res}^{E_g^\times}_{E^\times} \chi_g \right)(\gamma)^{-1}.$$
\end{cor}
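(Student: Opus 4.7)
The second equality is essentially formal. By definition,
$$\mu_G(\gamma)^{-1}\mu_H(\gamma)=\prod_{[g]\in(W_E\backslash W_F/W_E)'}\bigl(\mathrm{Res}^{E_g^\times}_{E^\times}\chi_g\bigr)(\gamma)^{-1}\prod_{[g]\in(W_E\backslash W_K/W_E)'}\bigl(\mathrm{Res}^{E_g^\times}_{E^\times}\chi_g\bigr)(\gamma),$$
and every double coset appearing in the second product also appears in the first (since $W_E\subseteq W_K\subseteq W_F$), so the common factors cancel and leave exactly the product over $(W_E\backslash W_F/W_E)'\setminus(W_E\backslash W_K/W_E)'$ asserted in the corollary. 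The content of the corollary is therefore the first equality $\Delta_{\mathrm{III}_2}(\gamma)=\mu_G(\gamma)^{-1}\mu_H(\gamma)$.

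My plan for that equality is to reinterpret $\Delta_{\mathrm{III}_2}$ cohomologically and then invoke Proposition \ref{recover the character from induction} on both sides. Recall that the Langlands--Shelstad construction of section \ref{section langlands shelstad chi data}, applied simultaneously to $G$ and to $H$ with the same $\chi$-data $\{\chi_g\}$, produces admissible embeddings $\chi_G:{}^LT\to{}^LG$ and $\chi_H:{}^LT_K\to{}^LH_K$. The embedding $\chi_H$ is assembled from the sub-collection $\{\chi_g\}_{g\in\mathcal{D}(K)_\pm}$ indexed by the roots of $T$ in $H$, while $\chi_G$ uses the full collection indexed by $\mathcal{D}(F)_\pm$. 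By the definition of $\Delta_{\mathrm{III}_2}$ in Langlands--Shelstad~\cite{LS}, this factor measures, via the Tate--Nakayama pairing with $\gamma\in T(F)=E^\times$, the obstruction between $\chi_G$ and the composition of $\chi_H$ with the canonical embedding $\hat\xi:{}^LH\to{}^LG$; equivalently, it is the character of $E^\times$ whose value at $\gamma$ is the quotient of the $\hat{T}$-valued characters associated to these two embeddings.

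Next I would apply Proposition \ref{recover the character from induction} on the $G$-side: the admissible embedding built from $\{\chi_g\}_{g\in\mathcal{D}(F)_\pm}$ recovers, via composition with a Langlands parameter $\tilde\xi$, the character $\mu_G$ of $E^\times$. On the $H$-side, the analogous statement formulated in \eqref{admissible-embedding-induction-for-H} recovers the character $\mu_H$ from the embedding $(\chi_H)_K$ built from $\{\chi_g\}_{g\in\mathcal{D}(K)_\pm}$. Since the $\chi$-data indexed by double cosets in $W_E\backslash W_K/W_E$ are literally shared between the two constructions, the ratio of the two characters is a character of $E^\times$ supported on the double cosets in $(W_E\backslash W_F/W_E)'\setminus(W_E\backslash W_K/W_E)'$. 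Identifying this character with $\Delta_{\mathrm{III}_2}(\gamma)$ and matching the direction of the quotient $\mu_G^{-1}\mu_H$ then gives the first equality.

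The main obstacle will be the bookkeeping needed to justify the claim that $\Delta_{\mathrm{III}_2}$ equals precisely the ratio $\mu_G^{-1}\mu_H$ (as opposed to its inverse or a twist by an auxiliary character): one must track the compatibility of the sections $r_{\mathrm{g}}$, $r_{\mathrm{b}}$, and the splitting invariant $s_{\mathrm{b}/\mathrm{g}}$ used in section \ref{section langlands shelstad chi data} between $G$ and $H$, and verify that the choices made so that $\Delta_{\mathrm{I}}\equiv 1$ (via Proposition \ref{prop trivial cocycle}) are consistent for both groups so that no spurious factors enter. Once this bookkeeping is done, the substantive content is already packaged into Proposition \ref{recover the character from induction} and \eqref{admissible-embedding-induction-for-H}, and the explicit product form in the statement follows by the cancellation observed in the first paragraph.
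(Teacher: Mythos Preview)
Your approach is correct and essentially matches the paper's proof, which likewise invokes the cohomological definition $\Delta_{\mathrm{III}_2}(\gamma)=\langle\mathbf{a},\gamma\rangle$ with $\hat\xi\circ\chi_H=a\,\chi_G$, and then applies Proposition~\ref{recover the character from induction} and \eqref{admissible-embedding-induction-for-H} via a commutative diagram relating the $K$-level embedding $(\chi_H)_K$ to the $F$-level $\chi_H$. Your concern about $r_{\mathrm{b}}$, $s_{\mathrm{b}/\mathrm{g}}$, and $\Delta_{\mathrm{I}}$ is misplaced, however: the factor $\Delta_{\mathrm{III}_2}$ is defined entirely in terms of the admissible embeddings and is independent of the $a$-data and the splitting, so that bookkeeping simply does not arise.
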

\begin{proof}
We regard both $G$ and $H$ as reductive groups over $F$. Let $\chi_G:{}^LT\rightarrow {}^LG$ be the admissible embedding defined by $\{\chi_\lambda\}_{\lambda\in \Phi(G,T)}$ and $\chi_H:{}^LT\rightarrow {}^LH$ be the one defined by $\{\chi_\lambda\}_{\lambda\in \Phi(H,T)}$. Recall by definition in (3.5) of \cite{LS} that $\Delta_\mathrm{III_2}(\gamma) = \langle \mathbf{a}, \gamma \rangle$. Here $\mathbf{a}$ is the class in $H^1(W_F, \hat{T})$ defined by the cocycle $a$ satisfying $$\hat{\xi} \circ \chi_H = a\chi_G.$$ We make use of the bijection (\ref{set of adm embed as Hom}) defined by torsor. Explicitly we consider the commutative diagram
\begin{equation*}
  \xymatrixcolsep{4pc}\xymatrix{
&W_K    \ar[r]^{\tilde{\xi}_K}     \ar[d]           & {}^LT_K     \ar[r]^{(\chi_H)_K} \ar[d]
&{}^LH_K  \ar[r] \ar[d]       &\mathrm{GL}_m(\mathbb{C})  &\\
&W_F     \ar[r]^{\tilde{\xi}}     & {}^LT \ar[r]^{\chi_H}  & {}^LH \ar[r]^{\hat{\xi}}&\hat{G}\times W_F\rightarrow \mathrm{GL}_n(\mathbb{C})
}
\end{equation*}
such that
\begin{enumerate}[(i)]
\item  the upper row is (\ref{admissible-embedding-induction-for-H}),
\item  the vertical maps are natural inclusions, and
\item  the morphism $\hat{\xi}$ is defined as part of the endoscopic data $(H, \mathcal{H}={}^LH_F, s, \hat{\xi})$ for $H$.
\end{enumerate}
Following the diagram, we see that the lower row is isomorphic to $\mathrm{Ind}_{E/F}(\xi \mu_H)$ as a representation of $W_F$. Comparing this to the representation $\mathrm{Ind}_{E/F}(\xi \mu_G)$ given in Proposition \ref{recover the character from induction}, we then proved the assertion immediately.
\end{proof}

\begin{rmk}\label{changing back from right to left}
From now on we change back our notations from right cosets to left cosets. If we denote the $\chi$-data we considered from section \ref{section langlands shelstad chi data} to here by $\chi_{(g)}=\chi_{\left[\begin{smallmatrix}W_E\\W_Eg\end{smallmatrix}\right]}$, then we define our new $\chi$-data by
$$\chi_{g}=\chi_{\left[\begin{smallmatrix}W_E\\gW_E\end{smallmatrix}\right]}:=\chi_{(g^{-1})} =\chi_{\left[\begin{smallmatrix}W_E \\W_Eg^{-1}\end{smallmatrix}\right]},$$
which are used from now on. Since the map $\left[\begin{smallmatrix}gW_E\\hW_E\end{smallmatrix}\right]\mapsto\left[\begin{smallmatrix}W_Eg^{-1}\\W_Eh^{-1}\end{smallmatrix}\right]$ is $W_F$-equivalent and symmetry preserving, we can easily check that $\{\chi_g\}_{g\in\mathcal{D}_\pm}$ are also $\chi$-data.
\qed\end{rmk}

We can change the notation of $\Delta_\mathrm{III_2}$ in Corollary \ref{definition of delta III2} in terms of roots
$$\Delta_{\mathrm{III}_2}(\gamma) = \prod_{\lambda \in \mathcal{R}_{\mathrm{asym}/\pm}-\Phi(H,T)} \chi_\lambda(\lambda(\gamma))^{-1} \prod_{\lambda \in \mathcal{R}_{\text{sym}}-\Phi(H,T)} \chi_\lambda|_{E^\times}(\gamma)^{-1}\text{, for }\gamma\in E^\times\text{ regular.}$$
In \cite{LS}, the transfer factor $\Delta_\mathrm{II}$ is defined by
 $$\Delta_{\mathrm{II}}(\gamma) = \prod_{\lambda \in \mathcal{R}_{\mathrm{asym}/\pm} - \Phi(H,T)} \chi_\lambda({}\lambda( \gamma)) \prod_{\lambda \in \mathcal{R}_{\text{sym}}-\Phi(H,T)} \chi_\lambda \left( \frac{\lambda(\gamma)-1}{a_\lambda}\right)\text{, for }\gamma\in E^\times\text{ regular.}$$
Therefore \begin{equation}\label{Delta-II,III2 as product of sym}
    \Delta_{\mathrm{III}_2}(\gamma) \Delta_{\mathrm{II}}(\gamma) = \prod_{\lambda \in \mathcal{R}_{\text{sym}}-\Phi(H,T)}\chi_\lambda \left( \frac{\lambda(\gamma) - 1}{\gamma a_\lambda} \right)
   =\prod_{g\in\mathcal{D}(F)_\mathrm{sym}-\mathcal{D}(K)}\chi_g\left( \frac{\gamma - {}^g \gamma}{a_{1,g}} \right).
 \end{equation}
If $g\in W_F$ is symmetric, then we can assume that $g^2 \in W_E$ and ${}^g a_{1,g} = -a_{1,g}$. Since $(\gamma - {}^g \gamma)/a_{1,g}\in E^\times_{\pm g}$, we have that $\chi_{g}((\gamma - {}^g \gamma)/a_g)$ is a sign. Also recall that $\Delta_{\mathrm{III_1}}(\gamma)$ and $\Delta_\mathrm{I}(\gamma)$ are all 1. Therefore we have $\Delta_\mathrm{I,II,III}(\gamma) = \Delta_{\mathrm{II},\mathrm{III_2}}(\gamma)$, which is also a sign.

\begin{prop}
  $\Delta_\mathrm{I,II,III}(\gamma)$ is independent of the choices of admissible embedding $T_H \rightarrow T_G$, $a$-data and $\chi$-data.
\end{prop}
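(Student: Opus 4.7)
The plan is to reduce the statement to the well-known invariance of the full Langlands-Shelstad transfer factor. By the construction in \cite{LS}, the product
$$\Delta_0 \;=\; \Delta_\mathrm{I}\,\Delta_\mathrm{II}\,\Delta_\mathrm{III_1}\,\Delta_\mathrm{III_2}\,\Delta_\mathrm{IV}$$
is designed so that the dependencies of its individual factors on the choices of admissible embedding $T_H \to T_G$, the $a$-data $\{a_\lambda\}$, and the $\chi$-data $\{\chi_\lambda\}$ exactly cancel; this is the content of (the proof of) Theorem~3.7.A and Corollary~4.2.A of \cite{LS}. Now $\Delta_\mathrm{IV}(\gamma)=(|D_G(\gamma)|_F|D_H(\gamma)|_E^{-1})^{1/2}$ is a ratio of Weyl discriminants, so it involves neither $a$-data nor $\chi$-data, and it is unaffected by the choice of admissible embedding because such an embedding preserves the stable conjugacy class of $\gamma$ while the Weyl discriminant is a stable invariant. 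Hence $\Delta_\mathrm{I,II,III}=\Delta_0\cdot\Delta_\mathrm{IV}^{-1}$ inherits the invariance of $\Delta_0$.

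For a hands-on verification one can dispose of the three dependencies separately. First, a change of $a$-data of the form $a_\lambda\mapsto b_\lambda a_\lambda$, with $\{b_\lambda\}$ Galois-equivariant and satisfying $b_{-\lambda}=b_\lambda$, modifies the splitting invariant $\lambda(\{a_\lambda\},T)$ by a class that pairs with $s_T$ to produce a scalar, and modifies $\Delta_\mathrm{II}$ by its inverse on the symmetric roots; these two changes cancel thanks to condition (\ref{chi-data condition quadratic}) in the definition of $\chi$-data, which forces $\chi_\lambda|_{E^\times_{\pm\lambda}}=\delta_{E_\lambda/E_{\pm\lambda}}$, while $\Delta_\mathrm{III_1}$ and $\Delta_\mathrm{III_2}$ are untouched. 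Second, any two sets of $\chi$-data differ by characters $\psi_\lambda$ which, for symmetric $\lambda$, are trivial on $E^\times_{\pm\lambda}$; by formula (\ref{Delta-II,III2 as product of sym}) the argument of $\chi_\lambda$ in $\Delta_\mathrm{II}\Delta_\mathrm{III_2}$ lies in $E^\times_{\pm\lambda}$, so $\psi_\lambda$ contributes trivially, and for asymmetric $\lambda$ the contributions to $\Delta_\mathrm{II}$ and $\Delta_\mathrm{III_2}$ cancel by the computation yielding \eqref{Delta-II,III2 as product of sym}. Third, in our setting $\Delta_\mathrm{III_1}(\gamma)\equiv1$, and different admissible embeddings give elements with the same stable conjugacy class, so the remaining factors, which depend on $\gamma$ only through its Galois orbit in the root system, are unaltered.

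The only delicate step is the bookkeeping in the $a$-data argument: one has to verify that the coboundary modification of the splitting-invariant cocycle in Proposition \ref{1-cocycle of splitting invar} indeed produces, after pairing with $s_T$, precisely the scalar needed to cancel the corresponding change in $\Delta_\mathrm{II}$ on symmetric roots. This is exactly where the quadratic condition (\ref{chi-data condition quadratic}) on $\chi$-data is forced upon us; the rest of the invariances reduce to formal manipulations with the formulas already at hand.
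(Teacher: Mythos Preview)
Your first paragraph is exactly the paper's argument: refer the invariance to chapter~3 of \cite{LS}, and observe that $\Delta_\mathrm{IV}$ involves none of the three choices, so $\Delta_\mathrm{I,II,III}=\Delta_0\Delta_\mathrm{IV}^{-1}$ inherits the independence of $\Delta_0$. The paper's proof is in fact only a sketch listing which pairs of factors share each dependency ($\Delta_\mathrm{I},\Delta_\mathrm{III_1}$ for the embedding; $\Delta_\mathrm{I},\Delta_\mathrm{II}$ for $a$-data; $\Delta_\mathrm{II},\Delta_\mathrm{III_2}$ for $\chi$-data) and asserting cancellation, so your reduction via $\Delta_\mathrm{IV}$ is, if anything, cleaner.

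One caution about your hands-on third step. You argue embedding-independence by saying $\Delta_\mathrm{III_1}\equiv 1$ in this setting and the other factors see only the stable class of $\gamma$. But the paper's point (following \cite{LS}) is that $\Delta_\mathrm{I}$ itself \emph{does} depend on the admissible embedding, and it is precisely $\Delta_\mathrm{III_1}$ that absorbs this dependence; the equality $\Delta_\mathrm{III_1}=1$ holds for one fixed embedding, not a priori for all. For $\mathrm{GL}_n$ this is harmless because $H^1(F,T)$ vanishes and stable conjugacy coincides with ordinary conjugacy, but as written your direct argument for this step leans on a fact you have not stated. Since your first paragraph already settles the matter via \cite{LS}, this does not damage the proof; just be aware that the pairwise-cancellation picture is $(\Delta_\mathrm{I},\Delta_\mathrm{III_1})$ rather than ``$\Delta_\mathrm{III_1}$ trivial, $\Delta_\mathrm{I}$ stable''.
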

\begin{proof}
  We only sketch the reasons and refer to chapter 3 of \cite{LS} for details. We recall the definitions of various transfer factors and check that
\begin{enumerate}[(i)]
\item  only $\Delta_{\mathrm{III_1}}$ and $\Delta_{\mathrm{I}}$ depend on the admissible embedding $T_H \rightarrow T_G$,
\item  only $\Delta_{\mathrm{I}}$ and $\Delta_{\mathrm{II}}$ depend on $a$-data, and
\item only $\Delta_{\mathrm{II}}$ and $\Delta_{\mathrm{III_2}}$ depend on $\chi$-data.
\end{enumerate}
  The effects of the choices cancel when we multiply the various factors together.
\end{proof}
The product $\Delta_\mathrm{I,II,III}$ still depends on the chosen $F$-splitting $\mathbf{spl}_G$ as shown in the factor $\Delta_I$. We refer to Lemma 3.2.A of \cite{LS} for the detail of such dependence.

\begin{prop}\label{transfer factor trivial when totally-ram}
  Suppose $E/F$ is totally ramified. Then $\Delta_\mathrm{I,II,III}(\varpi_E) = 1$.
\end{prop}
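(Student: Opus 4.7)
The plan is to combine the three trivializations already set up earlier in the chapter, specialized to $\gamma=\varpi_E$. Since $E/F$ is totally ramified, we have $\Gamma_{E/F}=\{\sigma^k\mid 0\leq k<e\}$, and Section \ref{section Trivializing the splitting invariant} supplies the canonical choice of basis $\mathfrak{b}=\{1,\varpi_E,\dots,\varpi_E^{e-1}\}$ together with the $a$-data $a_{\sigma^i,\sigma^j}={}^{\sigma^i}\varpi_E-{}^{\sigma^j}\varpi_E$ from (\ref{delta-1-solution-for-a-totally-ramified}). With this data Proposition \ref{prop trivial cocycle} gives $\Delta_{\mathrm{I}}\equiv 1$, and the fixed embedding $T\hookrightarrow H\hookrightarrow G$ from Section \ref{section Endoscopic group} gives $\Delta_{\mathrm{III}_1}(\gamma)=1$.

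It therefore remains to show $\Delta_{\mathrm{II},\mathrm{III}_2}(\varpi_E)=1$. For this I apply formula (\ref{Delta-II,III2 as product of sym}), which expresses the product as
\begin{equation*}
\Delta_{\mathrm{II},\mathrm{III}_2}(\varpi_E) = \prod_{g\in\mathcal{D}(F)_{\mathrm{sym}}-\mathcal{D}(K)}\chi_g\!\left(\frac{\varpi_E-{}^g\varpi_E}{a_{1,g}}\right).
\end{equation*}
Writing $g=\sigma^k$, the chosen $a$-data yields $a_{1,\sigma^k}=\varpi_E-{}^{\sigma^k}\varpi_E$, so each ratio inside $\chi_g$ is identically $1$. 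Since characters are trivial on $1$, every factor equals $1$, and the product collapses to $1$.

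Putting the three pieces together, $\Delta_{\mathrm{I,II,III}}(\varpi_E)=\Delta_{\mathrm{I}}(\varpi_E)\Delta_{\mathrm{III}_1}(\varpi_E)\Delta_{\mathrm{II},\mathrm{III}_2}(\varpi_E)=1$. The only subtle point is that formula (\ref{Delta-II,III2 as product of sym}) is written for $\gamma\in E^\times$ that is regular in $G$, whereas $\varpi_E$ need not be strongly regular; however, since both sides of (\ref{Delta-II,III2 as product of sym}) extend to continuous/regular-valued expressions in $\gamma$ on the torus, or alternatively since we may interpret the identity as the formal equality of characters on $T$ used in \cite{LS}, this causes no difficulty. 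The real content is the perfect cancellation between the numerator $\varpi_E-{}^g\varpi_E$ from the root $\lambda(\varpi_E)-1$ and the prime-difference $a$-data chosen precisely for this purpose.
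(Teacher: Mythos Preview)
Your proof is correct and follows essentially the same route as the paper: choose the embedding and the $a$-data $a_{1,\sigma^k}=\varpi_E-{}^{\sigma^k}\varpi_E$ from (\ref{delta-1-solution-for-a-totally-ramified}) to trivialize $\Delta_{\mathrm{I}}$, then plug $\gamma=\varpi_E$ into (\ref{Delta-II,III2 as product of sym}) so that each argument of $\chi_g$ becomes $1$. One small remark: your final caveat about regularity is unnecessary here, since in the totally ramified case the conjugates ${}^{\sigma^k}\varpi_E=\zeta_e^k\varpi_E$ are pairwise distinct, so $\varpi_E$ is in fact strongly $G$-regular and (\ref{Delta-II,III2 as product of sym}) applies on the nose.
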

\begin{proof}
  We choose the embedding $\iota: E^\times \hookrightarrow G(F)$ defined by (\ref{canonical embedding for trivial delta 1}), $a$-data defined by (\ref{delta-1-solution-for-a-totally-ramified}) and arbitrary $\chi$-data. The statement then follows directly from (\ref{Delta-II,III2 as product of sym}) when $\gamma=\varpi_E$.
\end{proof}

\section{A restriction property of $\Delta_{\mathrm{III}_2}$}\label{section restriction property}
In this section, we show that the restriction on $F^\times$ of the product $\mu=\mu_{\{\chi_g\}}$ in Proposition \ref{recover the character from induction} has a specific form. First we recall the following elementary results. Given a group $H$, we denote by $1_H$ the trivial representation of $H$. Suppose that $K$ is a subgroup of $H$ of finite index. We denote by $T^H_K : H^\text{ab} \rightarrow K^\text{ab}$ the transfer morphism. For any $g\in H$, we write ${}^gK=gKg^{-1}$.
\begin{prop}\label{well-known results Mackey}Let $\sigma$ and $\pi$ be finite dimensional representations of $K$ and $H$ respectively. We have the following formulae.
\begin{enumerate}[(i)]
\item (Mackey's Formula) $$\mathrm{Res}_K^H\mathrm{Ind}^H_K \sigma\cong\bigoplus_{[g]\in K \backslash H / K}\mathrm{Ind}^K_{K\cap {}^gK}\mathrm{Res}^{{}^gK}_{K\cap {}^gK}({}^g\sigma).$$ \label{Mackey Formula}
\item $\det\mathrm{Ind}^H_K \sigma \cong (\det\mathrm{Ind}^H_K 1_K)^{\mathrm{dim} \sigma} \otimes(\det\sigma \circ T^H_K)$. \label{det ind}
\item $(\det\mathrm{Res}_K^H\pi)\circ T^H_K=(\det\pi)^{|H/K|}$. \label{det res}
\end{enumerate}
\end{prop}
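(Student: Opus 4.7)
All three identities are standard in representation theory, so the plan is simply to sketch each argument briefly. For (i), the approach is the usual double coset analysis: decompose $H = \bigsqcup_{[g] \in K \backslash H / K} KgK$ and model $\mathrm{Ind}^H_K \sigma$ as the space of functions $f : H \to V$ (where $V$ carries $\sigma$) satisfying $f(xk) = \sigma(k^{-1})f(x)$. Restriction to $K$ breaks this space into $K$-stable subspaces indexed by double cosets, and the subspace supported on $KgK$ is canonically identified with $\mathrm{Ind}^K_{K \cap {}^gK} \mathrm{Res}^{{}^gK}_{K \cap {}^gK}({}^g\sigma)$ by evaluation at $g$ and the orbit-stabiliser correspondence between $K/(K \cap {}^gK)$ and $KgK/K$.

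For (ii), I would fix coset representatives $g_1, \dots, g_n$ of $H/K$, so that $\mathrm{Ind}^H_K \sigma$ is modelled on $V^n$ with $V$ the space of $\sigma$. Given $h \in H$, write $h g_i = g_{\pi(h)(i)} k_i(h)$ with $\pi(h) \in S_n$ and $k_i(h) \in K$; then the matrix of $h$ on $V^n$ is a block permutation matrix whose non-zero blocks are the $\sigma(k_i(h))$, so
$$\det \mathrm{Ind}^H_K \sigma(h) \; = \; \mathrm{sgn}(\pi(h))^{\dim \sigma} \prod_i \det \sigma(k_i(h)).$$
Specialising $\sigma = 1_K$ identifies $\mathrm{sgn}(\pi(h))$ with $\det \mathrm{Ind}^H_K 1_K(h)$, and the defining formula of the transfer gives $\prod_i k_i(h) = T^H_K(h)$ in $K^{\mathrm{ab}}$. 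Combining these two factors yields (ii).

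For (iii), the cleanest route is to deduce it from (ii) via the projection formula $\mathrm{Ind}^H_K \mathrm{Res}^H_K \pi \cong \pi \otimes \mathrm{Ind}^H_K 1_K$. Taking determinants on both sides and using $\det(A \otimes B) = (\det A)^{\dim B} (\det B)^{\dim A}$ on the right, then applying (ii) to the left with $\sigma = \mathrm{Res}^H_K \pi$, one finds the factor $(\det \mathrm{Ind}^H_K 1_K)^{\dim \pi}$ on both sides; cancelling it produces $(\det \mathrm{Res}^H_K \pi) \circ T^H_K = (\det \pi)^{[H:K]}$. Alternatively, a direct verification works: with $k_i(h) = g^{-1}_{\pi(h)(i)} h g_i$, evaluating $\det \pi$ at $k_i(h)$ and multiplying over $i$ causes the $g$-contributions to telescope through the permutation $\pi(h)$, leaving $(\det \pi(h))^{[H:K]}$. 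None of these steps presents a genuine obstacle; the only care needed is bookkeeping the permutation sign in (ii) and the dimension exponents when computing $\det$ of a tensor product.
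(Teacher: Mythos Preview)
Your proposal is correct and matches the paper's approach. The paper simply cites Serre for (i) and (ii) and says (iii) follows from (ii) by taking $\sigma=\mathrm{Res}_K^H\pi$; your argument does exactly this, making explicit the projection formula $\mathrm{Ind}^H_K\mathrm{Res}^H_K\pi\cong\pi\otimes\mathrm{Ind}^H_K 1_K$ needed to evaluate the left side.
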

\begin{proof} Formulae (\ref{Mackey Formula}) and (\ref{det ind}) are well-known, for example (\ref{Mackey Formula}) is proved in 7.3 of \cite{Serre-repres}, and (\ref{det ind}) can be found in the Exercise in VII \S8 of \cite{Serre-local-fields}. Formula (\ref{det res}) is direct from (\ref{det ind}) if we take $\sigma=\mathrm{Res}_K^H\pi$.
\end{proof}
In particular, if $\chi$ is a character of $K$, then by (\ref{det ind}) we have
\begin{equation}\label{star3}
\chi \circ T^H_K \cong \left( \det \mathrm{Ind}^H_K \chi \right) \left( \det \mathrm{Ind}^H_K 1_K\right).
\end{equation}

\begin{lemma}\label{det ind decomp into double cosets}
We have the formula $$\det \mathrm{Ind}^H_K 1_K=\prod_{[g]\in( K \backslash H / K)'  }\det \mathrm{Ind}^H_{K_g} 1_{K_g}.$$
\end{lemma}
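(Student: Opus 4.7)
\emph{Proof plan.} The plan is to combine Mackey's formula with the projection formula, take determinants of the resulting isomorphism, and exploit the fact that the sign character of a permutation representation has order at most $2$.

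First I would apply Mackey's formula (Proposition \ref{well-known results Mackey}(\ref{Mackey Formula})) with $\sigma=1_K$. Since ${}^g 1_K$ is trivial on $gKg^{-1}$, so is its restriction to $K\cap gKg^{-1}$, and I obtain
\begin{equation*}
\mathrm{Res}^H_K\mathrm{Ind}^H_K 1_K \;\cong\; 1_K \;\oplus\; \bigoplus_{[g]\in(K\backslash H/K)'}\mathrm{Ind}^K_{K\cap gKg^{-1}}1,
\end{equation*}
where the summand corresponding to the trivial double coset $[1]$ (for which $K\cap K=K$) has been isolated. Inducing both sides back up to $H$, using the projection formula $\mathrm{Ind}^H_K\mathrm{Res}^H_K V\cong V\otimes\mathrm{Ind}^H_K 1_K$ on the left with $V=\mathrm{Ind}^H_K 1_K$, together with induction in stages $\mathrm{Ind}^H_K\mathrm{Ind}^K_L\cong \mathrm{Ind}^H_L$ on the right, yields
\begin{equation*}
(\mathrm{Ind}^H_K 1_K)^{\otimes 2} \;\cong\; \mathrm{Ind}^H_K 1_K \;\oplus\; \bigoplus_{[g]\in(K\backslash H/K)'}\mathrm{Ind}^H_{K\cap gKg^{-1}}1.
\end{equation*}

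Next I would take determinants. Writing $\chi_K=\det\mathrm{Ind}^H_K 1_K$ and $n=[H:K]$, and using the identity $\det(V\otimes V)=(\det V)^{2\dim V}$, the left side has determinant $\chi_K^{2n}$. The key observation is that $\mathrm{Ind}^H_K 1_K$ is the permutation representation of $H$ on $H/K$, so $\chi_K$ is the sign character of that action and hence $\chi_K^2=1$. Consequently $\chi_K^{2n}=1$, and equating the two determinants gives
\begin{equation*}
\chi_K \;=\; \prod_{[g]\in(K\backslash H/K)'}\det\mathrm{Ind}^H_{K\cap gKg^{-1}}1.
\end{equation*}

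Finally I would match this with the statement of the lemma by identifying $K\cap gKg^{-1}$ with $K_g=K\cap g^{-1}Kg$: either one observes that $g(K\cap g^{-1}Kg)g^{-1}=K\cap gKg^{-1}$, so these subgroups are $H$-conjugate and induce isomorphic representations of $H$, or equivalently the bijection $[g]\leftrightarrow[g^{-1}]$ on $(K\backslash H/K)'$ relabels the product above into the one in the lemma. Beyond this small bookkeeping step, which is the only delicate point, the proof uses nothing deeper than elementary character theory; the main conceptual input is that the permutation character $\chi_K$ is $\{\pm 1\}$-valued, which collapses the $\chi_K^{2n}$ on the left to the trivial character and leaves precisely $\chi_K$ on the right.
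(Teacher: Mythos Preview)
Your proof is correct, and it takes a genuinely different route from the paper's. The paper also starts from Mackey's formula applied to $1_K$, but then takes determinants on both sides (as $K$-characters) and composes with the transfer morphism $T^H_K$ to pass back to characters of $H$; using the identities in Proposition \ref{well-known results Mackey}(\ref{det ind}) and (\ref{det res}) it obtains
\[
\bigl(\det\mathrm{Ind}^H_K 1_K\bigr)^{|H/K|}=\prod_{[g]\in K\backslash H/K}\bigl(\det\mathrm{Ind}^H_{K_g}1_{K_g}\bigr)\bigl(\det\mathrm{Ind}^H_K 1_K\bigr)^{|K/K_g|},
\]
and then cancels the powers of $\det\mathrm{Ind}^H_K 1_K$ using $\sum_{[g]}|K/K_g|=|H/K|$. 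Your approach instead induces the Mackey decomposition back up to $H$, invokes the projection formula $\mathrm{Ind}^H_K\mathrm{Res}^H_K V\cong V\otimes\mathrm{Ind}^H_K 1_K$, and then takes determinants directly as $H$-characters; the collapse comes not from the counting identity $\sum|K/K_g|=|H/K|$ but from the more conceptual observation that $\chi_K$ is a sign character, so $\chi_K^{2n}=1$. Your argument is arguably cleaner in that it avoids the transfer map entirely, at the cost of importing the projection formula, which is not among the listed facts in Proposition \ref{well-known results Mackey} but is of course standard. The bookkeeping at the end identifying $K\cap gKg^{-1}$ with $K_g$ via conjugacy is fine and matches the paper's remark that $\det\mathrm{Ind}^H_{K_g}1_{K_g}$ is independent of the representative $g$.
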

\begin{proof} By applying Mackey's formula on $\sigma=1_K$, we obtain
$$\mathrm{Res}_K^H\mathrm{Ind}^H_K 1_K\cong\bigoplus_{[g]\in K \backslash H / K}\mathrm{Ind}^K_{K_g}1_{K_g}.$$
We take determinant and then transfer morphism $T^H_K$ on both sides. By (\ref{det ind}) and (\ref{det res}) of Proposition \ref{well-known results Mackey}, we obtain $$\left( \det \mathrm{Ind}^H_K 1_K\right)^{|H/K|}=\prod_{[g]\in K \backslash H / K}\left( \det \mathrm{Ind}^H_{K_g} 1_{K_g}\right)\left( \det \mathrm{Ind}^H_K 1_K\right)^{|K/K_g|}.$$
The sum of $|K/K_g|$, for $[g]$ runs through $K \backslash H / K$, is $|H/K|$. Hence
$$\prod_{[g]\in K \backslash H / K} \det \mathrm{Ind}^H_{K_g} 1_{K_g}=1,$$
which is just the desired formula.
\end{proof}
Notice that $\det \mathrm{Ind}^H_{K_g} 1_{K_g}$ is independent of the choice of representative $g$ of the double coset $[g]$ if we interpret the character as the sign of the $H$-action on $H/K_g$ by left multiplication. For all representatives of $[g]$, the corresponding actions are equivalent each other. We write $\delta_{H/K}$ for the character $\det \mathrm{Ind}^H_K 1_K$. If $H=W_F$ and $K=W_E$ for some field extension $E/F$, then we write $\delta_{E/F}$ as $\delta_{W_F/W_E}$. We can easily check the formula
\begin{equation}\label{transition of delta}
  \delta_{E/F}=\delta_{E/K}|_{F^\times}\cdot \delta_{K/F}^{|E/K|}
\end{equation}
as a character of $F^\times$.
\begin{prop}\label{restriction of mu to F}
For all $\chi$-data $\{\chi_g\}_{g\in\mathcal{D}_\pm}$, if $\mu$ is the character of $E^\times$ defined by $\{\chi_g\}_{g\in\mathcal{D}_\pm}$  in Proposition \ref{recover the character from induction}, then $\mu|_{F^\times}= \delta_{E/F}$.
\end{prop}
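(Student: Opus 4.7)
The plan is to compute $\mu|_{F^\times}$ by analyzing the symmetric and asymmetric double cosets separately, and then to match the result against the factorization of $\delta_{E/F}$ provided by Lemma \ref{det ind decomp into double cosets}. For $\gamma \in F^\times$, viewed as an element of $E^\times \subseteq E_g^\times$ for every $g$, we have
$$
\mu(\gamma) = \prod_{[g] \in (W_E \backslash W_F / W_E)'} \chi_g(\gamma).
$$

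For a symmetric double coset $[g]$, condition (\ref{chi-data condition quadratic}) in the definition of $\chi$-data gives $\chi_g|_{E_{\pm g}^\times} = \delta_{E_g/E_{\pm g}}$, hence $\chi_g|_{F^\times} = \delta_{E_g/E_{\pm g}}|_{F^\times}$. I would then apply the transition formula (\ref{transition of delta}) to the tower $E_g/E_{\pm g}/F$: using $|E_g/E_{\pm g}| = 2$ together with the fact that $\delta_{E_{\pm g}/F}$ is a sign character (its square is trivial, because it is the determinant of a permutation representation), this reduces to $\chi_g|_{F^\times} = \delta_{E_g/F}$.

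For an asymmetric double coset $[g]$, paired with the distinct double coset $[g^{-1}]$, I would use the $\chi$-data transition condition (\ref{chi-data condition transition}) and the identification $[1,g^{-1}]$ with ${}^{g^{-1}}(-[1,g])$ to get $\chi_{g^{-1}} = \chi_g^{-g}$ on the appropriate domain. Since $\gamma \in F^\times$ is fixed by every Galois action, this gives $\chi_g(\gamma) \chi_{g^{-1}}(\gamma) = 1$, so the contributions of asymmetric double cosets cancel in pairs. Combined with the symmetric case, this yields
$$
\mu|_{F^\times} = \prod_{[g] \in (W_E \backslash W_F / W_E)_\mathrm{sym}} \delta_{E_g/F}.
$$

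Finally, I would invoke Lemma \ref{det ind decomp into double cosets}, which gives $\delta_{E/F} = \prod_{[g]} \delta_{E_g/F}$ over all non-trivial double cosets. For an asymmetric pair $\{[g], [g^{-1}]\}$, the fields $E_g$ and $E_{g^{-1}}$ are $W_F$-conjugate, so the induced representations $\mathrm{Ind}^{W_F}_{W_{E_g}} 1$ and $\mathrm{Ind}^{W_F}_{W_{E_{g^{-1}}}} 1$ are isomorphic, and therefore $\delta_{E_g/F} = \delta_{E_{g^{-1}}/F}$. Their joint contribution to the product is $\delta_{E_g/F}^2 = 1$. Only the symmetric cosets survive, and the resulting identity matches the expression for $\mu|_{F^\times}$, proving $\mu|_{F^\times} = \delta_{E/F}$. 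The main obstacle will be bookkeeping with the left/right coset conventions and the identification (\ref{bijection between representatives}), in order to pin down the relation $\chi_{g^{-1}} = \chi_g^{-g}$ rigorously; once that is in hand, the remainder is a direct application of the definitions and of Lemma \ref{det ind decomp into double cosets}.
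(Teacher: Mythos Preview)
Your proposal is correct and follows essentially the same route as the paper: split into symmetric and asymmetric double cosets, show the asymmetric contributions cancel in pairs on both sides, and match the symmetric contributions termwise against Lemma \ref{det ind decomp into double cosets}. The only packaging difference is in the symmetric case: the paper decomposes $\mathrm{Ind}^{W_F}_{W_{E_g}} 1$ through the intermediate group $W_{E_{\pm g}}$ and invokes (\ref{star3}) to identify $\chi_g\circ T^{W_F}_{W_{E_g}}$ with $\delta_{E_g/F}$, whereas you reach the same conclusion more directly by applying the transition formula (\ref{transition of delta}) to the tower $E_g/E_{\pm g}/F$ and using $\delta_{E_{\pm g}/F}^2=1$.
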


\begin{proof}We first abbreviate $H = W_F$, $K = W_E$. For each $\lambda=\left[\begin{smallmatrix}1\\g\end{smallmatrix}\right]$, we write $K_g =K\cap {}^gK= W_{\lambda}$ and $K_{\pm g} =W_{\pm \lambda}$.
The equality $\mu|_{F^\times}= \delta_{E/F}$ can be rephrased as
\begin{equation*}
\prod _{[g]\in \left( K \backslash H / K \right)'}\chi_g \circ T^H_{K_g}= \delta_{H/K}.
\end{equation*}
By Lemma \ref{det ind decomp into double cosets}, we have to show that
\begin{equation}\label{chi data product and det ind}
\prod _{[g]\in \left( K \backslash H / K \right)'}\chi_g \circ T^H_{K_g}= \prod_{[g]\in( K \backslash H / K)'  }\delta_{H/{K_g}}.
\end{equation}
By comparing (\ref{chi data product and det ind}) termwise, we have the following claims.
\begin{enumerate}[(i)]
\item If $[g]\in  \left( K \backslash H / K \right)_\mathrm{asym/\pm}$, then $$\left( \chi_g \circ T^H_{K_g} \right)  \left( \chi_{g^{-1}} \circ T^H_{K_{g^{-1}}}\right) =\delta_{H/{K_g}}\delta_{H/{K_{g^{-1}}}}=1.$$
\item If $[g]\in  \left( K \backslash H / K \right)_\text{sym}$, then $\chi_g \circ T^H_{K_g}=\delta_{H/{K_g}}$.
\end{enumerate}

If $[g]\in \left( K \backslash H / K \right)_\mathrm{asym/\pm}$, then we have $K_{g^{-1}}={{}^{g}K_g}$, which is the stabilizer of the root $\left[\begin{smallmatrix}1\\g^{-1}\end{smallmatrix}\right]$. On one hand, because $\chi_{g^{-1}}=({}^g\chi_g)^{-1}$ by (\ref{chi-data condition transition}) in the definition of $\chi$-data, we have
\begin{equation*}
\left( \chi_g \circ T^H_{K_g} \right)  \left( \chi_{g^{-1}} \circ T^H_{K_{g^{-1}}}\right) =\left( \chi_g \circ T^H_{K_g} \right) \left( {}^{g} \chi^{-1}_{g} \circ T^H_{{}^{g}K_g}\right)\equiv 1.
\end{equation*}
On the other hand, since the $H$-action on the orbit ${}^H\left[\begin{smallmatrix}g\\1\end{smallmatrix}\right]={}^H\left[\begin{smallmatrix}1\\g^{-1}\end{smallmatrix}\right]$ is equivalent to that on ${}^H\left[\begin{smallmatrix}1\\g\end{smallmatrix}\right]$, we have
$\mathrm{Ind}^H_{K_g} 1_{K_g} \cong \mathrm{Ind}^H_{K_{g^{-1}}} 1_{K_{g^{-1}}}$. Therefore $\delta_{H/{K_g}}\delta_{H/{K_{g^{-1}}}}=1$.
We have proved the first claim.

If $[g]\in  \left( K \backslash H / K \right)_\text{sym}$, then we have an isomorphism $\mathrm{Ind}^{K_{\pm g}}_{K_{ g}}1_{K_{g}}\cong 1_{K_{\pm g}} \oplus \delta_{K_{\pm g}/K_{g}}$ as representations of $K_{\pm g}$. Here $\delta_{K_{\pm g}/K_{g}}$ is the quadratic character of $K_{\pm g}/K_{g}$. We denote this character by $\delta$. Hence $\mathrm{Ind}^H_{K_g} 1_{K_g} \cong \mathrm{Ind}^H_{K_{\pm g}} 1_{K_{\pm g}} \oplus \mathrm{Ind}^H_{K_{\pm g}}\delta$ and
\begin{equation}\label{star2}
\delta_{H/{K_g}} = \delta_{H/{K_{\pm g}}}\cdot \det\mathrm{Ind}^H_{K_{\pm g}}\delta\end{equation}
by taking determinant. Now the condition (\ref{chi-data condition quadratic}) in the definition of $\chi$-data, that $\chi_g \circ T^{K_{\pm g}}_{K_{g}}=\delta$, gives $\chi_g \circ T^H_{K_g} = \delta \circ T^H_{K_{\pm g}}$. By (\ref{star3}), this is just the right side of (\ref{star2}). We have proved the second claim and therefore Proposition \ref{restriction of mu to F}.
\end{proof}

Strictly speaking, $\Delta_{\mathrm{III}_2}$ is not defined on $F^\times$. However, we can use its definition $\left<\mathbf{a},\cdot\right>$ in the proof of Corollary \ref{definition of delta III2} and identify this character with $\mu_G^{-1}\mu_H$. Then we can write \begin{equation*}
 \Delta_{\mathrm{III}_2}|_{F^\times}=\delta_{K/F}^{|E/K|}
\end{equation*} as a consequence of (\ref{transition of delta}) and Proposition \ref{restriction of mu to F}.

\section{Relation between transfer factors}\label{section Relations of transfer factors}
We explain, in a bit more detail, the relation between the transfer factor in the theory of endoscopy and the one defined in 3.3 of \cite{Hen-Herb}. This should be already well-known, as mentioned in the second Remark in 3.7 of \cite{Hen-Herb} and Remarque (3) of 2.3 of \cite{HL2010}. Recall that, to establish the automorphic induction for $G=\mathrm{GL}_n$ and $H=\mathrm{Res}_{K/F}\mathrm{GL}_m$, we have defined the transfer factor of the form $$\Delta_G^H(\gamma)=\Delta^1(\gamma)\Delta^2(\gamma)\text{, for }\gamma\text{ regular.}$$
We have also mentioned that $\Delta^1$ is equal to $\Delta_\mathrm{IV}$ in section \ref{section Endoscopic group}. To define $\Delta^2$, we have to fix a transfer system $(\sigma,\kappa,e)$ such that
  \begin{enumerate}[(i)]
    \item $\sigma$ is a generator of $\Gamma_{K/F}$,
    \item $\kappa$ is a character associated to $K/F$, and
    \item $e\in K^\times$ such that ${}^\sigma e=(-1)^{m(d-1)}e$.
  \end{enumerate}
  There are certain choices of $e$ from 3.2 Lemma of \cite{Hen-Herb}.
  \begin{enumerate}[(i)]
\item If $m(d-1)$ is even or if char$(F) = 2$, then we can choose $e=1$.
\item  If $E/F$ is unramified, then we can choose $e \in U_E$.
\end{enumerate}
We define $\Delta^2$ to be
$$\Delta^2(\gamma) = \kappa\left( e\left(\prod_{0 \leq i<j \leq d-1} \prod^{m}_{k,\ell = 1}\left( ({}^{\sigma^i} \gamma)_k-({}^{\sigma^j}\gamma)_\ell\right)\right)\right)\text{ for }\gamma\text{ regular},$$
where $({}^{\sigma^i} \gamma)_k$, $k=1,\dots,m$, are the $m$ distinct eigenvalues of ${}^{\sigma^i} \gamma$.
\begin{prop}\label{delta-2-equals-delta-II-III}
  The product $\Delta_\mathrm{II,III}$ is equal to $\Delta^2$ up to a constant depending on the choices of the transfer system $(\sigma, \kappa, e)$ and $a$-data.
\end{prop}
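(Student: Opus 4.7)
The plan is to decompose both $\Delta_{\mathrm{II,III}}(\gamma)$ and $\Delta^2(\gamma)$ as products of $\{\pm1\}$-valued factors indexed by symmetric $W_F$-orbits of roots in $\Phi(G,T)\setminus\Phi(H,T)$, then match the per-orbit contributions up to a $\gamma$-independent sign.

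Since $\Delta_{\mathrm{III}_1}\equiv1$, formula~(\ref{Delta-II,III2 as product of sym}) already reduces $\Delta_{\mathrm{II,III}}(\gamma)$ to a product of $\chi_g((\gamma-{}^g\gamma)/a_{1,g})$ over $g\in\mathcal{D}(F)_{\mathrm{sym}}-\mathcal{D}(K)$, and by condition~(\ref{chi-data condition quadratic}) of the $\chi$-data each factor equals $\delta_{E_g/E_{\pm g}}((\gamma-{}^g\gamma)/a_{1,g})$. Asymmetric cosets make no contribution, and the individual factors no longer depend on the $\chi$-data beyond the quadratic-character constraint.

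For $\Delta^2(\gamma)=\kappa(e\cdot D(\gamma))$, I would unfold $D(\gamma)$ by parameterising the $m$ eigenvalues of ${}^{\sigma^i}\gamma$ as $\{{}^{\sigma^i\tau}\gamma\}_{\tau\in\Gamma_{E/K}}$: each factor of $D(\gamma)$ then takes the form ${}^x\gamma-{}^y\gamma$ for $x,y\in\Gamma_F/\Gamma_E$ in distinct $\Gamma_{K/F}$-cosets, with the ordering $i<j$ picking one orientation per unordered pair. Regrouping by $W_F$-orbits of roots in $\Phi(G,T)\setminus\Phi(H,T)$, an asymmetric orbit $[\lambda]$ with representative $\lambda=[1,g]$ contributes, up to a $\gamma$-independent orientation sign, the norm $N_{E_g/F}(\gamma-{}^g\gamma)$; but since $W_g=W_E\cap{}^gW_E\subseteq W_E\subseteq W_K$ forces $K\subseteq E_g$, the factorisation $N_{E_g/F}=N_{K/F}\circ N_{E_g/K}$ gives $\kappa\circ N_{E_g/F}\equiv1$, so asymmetric orbits contribute only $\gamma$-independent signs. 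For a symmetric orbit with representative $g\in\mathcal{D}(F)_{\mathrm{sym}}-\mathcal{D}(K)$, a half-orbit analysis respecting the single-orientation selection produces a contribution lying in $E_{\pm g}^\times$; because $g\notin W_K$ gives $K\not\subseteq E_{\pm g}$ and hence $E_g=E_{\pm g}\cdot K$, local class field theory applied to this tower identifies $\kappa|_{E_{\pm g}^\times}$ with $\delta_{E_g/E_{\pm g}}$ (up to a character trivial on the relevant norms), matching the symmetric factors of $\Delta^2$ with those of $\Delta_{\mathrm{II,III}}$ up to a sign depending only on $a_{1,g}$ and $e$.

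The hard part will be the careful tracking of orientation signs in the half-orbit decomposition for symmetric orbits, together with the precise class-field-theoretic identification of characters on $E_{\pm g}^\times$ (especially delicate when $d>2$, where one must isolate the quadratic constituent of $\kappa\circ N_{E_{\pm g}/F}$); all residual ambiguity is absorbed into a $\gamma$-independent constant depending only on the transfer system $(\sigma,\kappa,e)$ and the chosen $a$-data, which is exactly what the proposition asserts.
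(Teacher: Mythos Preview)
Your proposal follows the same orbit-decomposition route as the paper: both kill asymmetric contributions via $K\subseteq E_g$ (hence $\kappa\circ N_{E_g/F}\equiv1$) and isolate a $\pm1$ factor per symmetric orbit. Where the paper simply asserts that a suitable constant $e_{[\lambda]}$ makes the symmetric-orbit contribution equal $\chi_\lambda((\lambda(\gamma)-1)/a_\lambda)$ and then sets $e=\prod_{[\lambda]}e_{[\lambda]}$, you supply the underlying mechanism via $E_g=K\cdot E_{\pm g}$ and the identification $\kappa\circ N_{E_{\pm g}/F}=\delta_{E_g/E_{\pm g}}$.

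Two small points. First, write $\kappa\circ N_{E_{\pm g}/F}$ rather than ``$\kappa|_{E_{\pm g}^\times}$''; and note that this identification is exact and holds uniformly for all $d$, since $\kappa\circ N_{E_g/F}\equiv1$ forces $\kappa\circ N_{E_{\pm g}/F}$ to have order dividing $[E_g:E_{\pm g}]=2$, while nontriviality follows from $K\not\subseteq E_{\pm g}$ --- so your anticipated delicacy in the $d>2$ case does not materialise. Second, the half-orbit product over $[\lambda]_+$ does not itself lie in $E_{\pm g}^\times$; rather, choosing representatives $\{h_i\}$ for $\Gamma_F/\Gamma_{\pm g}$ one has
\[
\prod_{\mu\in[\lambda]_+}({}^{g_\mu}\gamma-{}^{h_\mu}\gamma)=\pm\Bigl(\prod_i{}^{h_i}a_{1,g}\Bigr)\cdot N_{E_{\pm g}/F}\bigl((\gamma-{}^g\gamma)/a_{1,g}\bigr),
\]
so the residual factor is visibly $\gamma$-independent and your orientation-sign bookkeeping resolves cleanly.
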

\begin{proof}
  We fix the choices of $\sigma$ and $\kappa$ in the transfer system. We start from the equality \begin{equation}\label{delta-product-pver-roots}
    \left(\prod_{0 \leq i<j \leq d-1} \prod^{m}_{k,\ell = 1}\left( ({}^{\sigma^i} \gamma)_k-({}^{\sigma^j}\gamma)_\ell\right)\right)=\prod_{\lambda \in \Phi(G,T)_+ - \Phi(H,T)}(\lambda (\gamma) - 1).
  \end{equation}
  Here $\Phi(G,T)_+$ is the set of positive roots in $\Phi(G,T)$. We then check the contribution of each $\Gamma_F$-orbits of roots in the product (\ref{delta-product-pver-roots}). If $[\lambda]$ is an asymmetric orbit, then up to a sign its contribution is $$\prod_{\mu \in [\lambda]}(\mu(\gamma) - 1).$$ Since the product runs through a $\Gamma_F$-orbit, we have $$\kappa\left( \prod_{\mu \in [\lambda]} (\mu(\gamma) -1)\right) = 1.$$ If $[\lambda]$ is symmetric, then only those positive roots in this orbit contribute to the product. Write this subset by $[\lambda]_+$. Therefore by choosing a constant $e_{[\lambda]}$ such that $$e_{[\lambda]} \prod_{\mu \in [\lambda]_+}(\mu(\gamma) - 1)\in F^\times,$$ the contribution is equal to $$\kappa\left( e_{[\lambda]} \prod_{\mu \in [\lambda]_+}(\mu(\gamma) - 1)\right) = \pm 1.$$ We can choose $e_{[\lambda]}$ such that this sign is equal to $$\chi_\lambda \left( \frac{\lambda(\gamma) - 1}{a_\lambda}\right).$$  Hence by choosing $e$ in the transfer system as the product of $ e_{[\lambda]}$ with $[\lambda]$ runs through all symmetric orbits, we have both $\Delta^2(\gamma)$ and $ \Delta_\mathrm{II,III}(\gamma)$ equal to $$\prod_{\lambda\in \mathcal{R}_{\text{sym}}} \chi_\lambda \left( \frac{\lambda(\gamma) - 1}{a_\lambda}\right).$$ We remark again that each factor above is independent of the choice of the root representing its orbit.
\end{proof}

\chapter{An interlude}\label{chapter interlude}

In this small chapter, we briefly look ahead to the end of the article and state the main results in more detail, based on the background we have provided in chapter \ref{chapter basic}-\ref{endoscopy}.

Recall that, in order to describe the essentially tame local Langlands correspondence, we have to make use of the automorphic induction (with the help of base change), which is known to be part of the transfer principle. On one hand, we rewrite the character identity of automorphic induction given in (\ref{automorphic-induction-character-relation}) as
\begin{equation}\label{interlude autom ind formula}
  \Theta^{\kappa,\Psi}_\pi (\gamma) = c_\theta\Delta^2(\gamma) \Delta^1(\gamma)^{-1} \sum_{g \in \Gamma_{K/F}} \Theta^g_\rho(\gamma) \text{, for all }\gamma \in H(F) \cap G(F)_{\mathrm{ell}}.
\end{equation}
Here we replace the notation of the constant $ c(\rho, \kappa, \Psi)$ in (\ref{automorphic-induction-character-relation}) by a more precise notation $c_\theta$ used in \cite{BH-ET3}. The notation $\theta$ denotes a simple character of $\xi$, which is an inflation of the wild part $\xi|_{U_E^1}$ of $\xi$ to a character of a compact subgroup of $G(F)$. We refer the notion of simple character to chapter 3 of \cite{BK}. A particular choice comes from the first step of constructing the supercuspidal $\pi$ from $\xi$, which is the bijection in Proposition \ref{prop bijection of P and A} and will be briefly described in section \ref{Supercuspidal Representations}.

On the other hand, we can specify a constant in the spectral transfer identity (\ref{spetral transfer-character-relation}). We write
\begin{equation}\label{interlude spec transfer formula}
\Theta^{\kappa }_\pi(\gamma) = \epsilon_\mathrm{L}(V_{G/H}) \Delta_{\mathrm{I,II,III}}(\gamma) \Delta_{\mathrm{IV}}(\gamma)^{-1} \sum_{g \in \Gamma_{K/F}}\Theta^g_\rho(\gamma)\text{, for all }\gamma\in H (F)\cap G(F)_{\mathrm{ell}}.
 \end{equation}
Here $\epsilon_\mathrm{L}(V_{G/H})$ is the Langlands constant defined in section 5.3 of \cite{KS}. The normalized transfer factor then depends on a pair consisting of the unipotent radical $\mathscr{U}$ of a chosen $F$-Borel subgroup and certain character of $\mathscr{U}$. Following 1.5 of \cite{BH-ET3}, we call this pair a Whittaker datum of $G(F)$. We may choose this Borel subgroup from our $F$-splitting $\mathbf{spl}_G$ as in section \ref{section Trivializing the splitting invariant} and call the Whittaker datum the standard one. From section 5.3 of \cite{KS}, we know that this normalized transfer factor is independent of the chosen splitting $\mathbf{spl}_G$ giving rise to the Whittaker datum.

The first result is to compare these two identities (\ref{interlude autom ind formula}) and (\ref{interlude spec transfer formula}). As mentioned in section \ref{section main results of author}, we would find out the difference of the two normalizations in terms of a constant. This constant, denoted by $\kappa(x)$, is deduced by studying different Whittaker data and depends on the element $x\in G(F)$ that conjugates these data. In section \ref{section 3 lemmas}, we will first figure out the correct Whittaker datum that gives the normalization in (\ref{interlude autom ind formula}). We will then compute the constant $\kappa(x)$ in the subsequent sections of chapter \ref{chapter comparing character}.

A remark here is that, the product $\kappa(x) c_\theta \Delta^2$ depends only on the standard Whittaker datum, and is independent of the representation $\pi$. However the factors $\kappa(x)$ and $c_\theta$ certainly depend on $\pi$. Similar happens on the other side. We have chosen the splitting $\mathbf{spl}_G$, the admissible embedding of the maximal torus, and the $a$-data as in section \ref{section The splitting invariant} and \ref{section Trivializing the splitting invariant} in favor of our calculations. For example, we can choose specific $a$-data to trivialize $\Delta_\mathrm{I}$. Recall that our second result is to choose suitable $\chi$-data whose corresponding admissible embedding of L-groups yields the Langlands parameter of the specified representation $\pi$. Since the transfer factors $\Delta_\mathrm{II}$ and $\Delta_\mathrm{III_2}$ are built by $\chi$-data, they also depend on the representation $\pi$.

Most of the idea of our second result is in the Introduction. Suppose that $\xi$ is the admissible character giving rise to the supercuspidal representation $\pi$. With those chosen $\chi$-data $\{\chi_{\lambda,\xi}\}_{\lambda\in\Phi}$ in Theorem \ref{chi-data factor of BH-rectifier}, we have the following consequences on the rectifier. Let $K$ be an intermediate subfield between $E/F$, not necessarily cyclic. If we regard $\xi$ as being admissible over $K$, then we have, using Corollary \ref{factorization of rectifier over L}, that  $${}_{K}\mu_\xi=\prod_{\begin{smallmatrix}{\lambda}\in W_F\backslash \Phi\\ \lambda|_K\equiv 1\end{smallmatrix}}\chi_{\lambda,\xi}|_{E^\times}.$$
Notice that the index set $\{{\lambda}\in W_F\backslash \Phi,\,\lambda|_K\equiv 1\}$ above can be identified with the $W_K$-orbit of the root system $\Phi(\mathrm{GL}_{|E/K|},\mathrm{Res}_{E/K}\mathbb{G}_m)$ over $K$. In the case when $K/F$ is a cyclic sub-extension, we have seen that $H=\mathrm{Res}_{K/F}\mathrm{GL}_{|E/K|}$ is a twisted endoscopic group of $G$. We can define the transfer factor $\Delta_\mathrm{III_2}$ for $(G,H)$ and obtain, in Corollary \ref{rectifier as transfer factor}, the relation $$ {}_F\mu_\xi(\gamma) {}_K\mu_\xi(\gamma)^{-1}=\prod_{\begin{smallmatrix}{\lambda}\in W_F\backslash \Phi\\ \lambda|_K\neq 1\end{smallmatrix}}\chi_{\lambda,\xi}(\gamma)=\Delta_\mathrm{III_2}(\gamma)$$ for every $\gamma\in T(F)=E^\times$ regular in $G(F)$.

We can express the explicit values of the above normalization constant $c_\theta$ and the $\chi$-data in terms of t-factors of finite modules, whose constructions are summarized as follows and whose details are referred to chapter \ref{chapter finite cymplectic module} and \ref{chapter Tame supercuspidal representations}. Denote by $\Psi_{E/F}$ the finite group $E^\times/F^\times U_E^1$. For each admissible character $\xi$, there is a finite symplectic $\mathbf{k}_F\Psi_{E/F}$-module $V=V_\xi$ emerging from the bijection $\Pi_n$ in Proposition \ref{prop bijection of P and A}. Equipped with $V$ is an alternating bilinear form $h_\theta$ naturally defined by the chosen simple character $\theta$ of $\xi$. We can embed each $V$ into a fixed finite $\mathbf{k}_F\Psi_{E/F}$-module $U$, which is independent of the admissible character $\xi$. Practically we can think of $U$ as the largest possible $V$ as $\xi$ runs through all admissible characters in $P(E/F)$. We will state in Proposition \ref{decomp of standard module} that this fixed module $U$ admits a complete decomposition
$$U=\bigoplus_{\lambda\in W_F\backslash\Phi}U_{\lambda},$$
called the {residual root space decomposition}. This is analogous to the root space decomposition of a Lie algebra in the absolute case. By restriction, the submodule $V$ admits a decomposition
\begin{equation}\label{interlude fine decomp}
  V=\bigoplus_{\lambda\in W_F\backslash\Phi}V_{\lambda}.
\end{equation}We can show that $V_\lambda$ is either trivial or isomorphic to $U_\lambda$. We will prove in Proposition \ref{complete decomp orthogonal} that such decomposition on $V$ is orthogonal with respect to $h_\theta$, with symplectic isotypic components of the form
\begin{equation*}
\mathbf{V}_\lambda=\begin{cases}
V_\lambda\oplus V_{-\lambda} & \text{if }{}^{W_F}\lambda\neq {}^{W_F}(-\lambda)\text{ (i.e., }\lambda\text{ is asymmetric)},\\
V_\lambda & \text{if }{}^{W_F}\lambda= {}^{W_F}(-\lambda)\text{ (i.e., }\lambda\text{ is symmetric)}.
\end{cases}
 \end{equation*}
Let $W_{\lambda}$ be the complementary module of $V_\lambda$ in the sense that $$V_\lambda\oplus W_{\lambda}=U_\lambda.$$ We remark that we already had a decomposition of $V$ in \cite{BH-ET3} in terms of the jump data of $\xi$, see (\ref{coarse decomp of V}) for instance. The decomposition (\ref{interlude fine decomp}) is a finer one, and is complete in the sense that the isotypic components are all known.

Let $\mu$ and $\varpi$ be the images in $\Psi_{E/F}$ of the subgroup $\mu_E$ of the roots of unity and the subgroup generated by a chosen prime element $\varpi_E$ respectively. We are interested in the $\mathbf{k}_F\mu$-module and the $\mathbf{k}_F\varpi$-module structures of these $V_\lambda$ and $W_\lambda$. These lead to define certain invariants as follows.
 \begin{enumerate}[(i)]
\item By regarding each symplectic component $\mathbf{V}_\lambda$ as a $\mathbb{F}_p\mu$-module and a $\mathbb{F}_p\varpi$-module respectively, we have the symplectic signs $$t_\mu^0(\mathbf{V}_\lambda),\, t_\mu^1(\mathbf{V}_\lambda),\,t_\varpi^0(\mathbf{V}_\lambda)\text{, and }t_\varpi^1(\mathbf{V}_\lambda).$$ Here $t_\mu^0(\mathbf{V}_\lambda)$ and $t_\varpi^0(\mathbf{V}_\lambda)$ are signs $\pm1$, while  $t_\mu^1(\mathbf{V}_\lambda):\mu\rightarrow\{\pm1\}$ and $t_\varpi^1(\mathbf{V}_\lambda):\varpi\rightarrow\{\pm1\}$ are quadratic characters. These are all defined by the algorithm in section 3 of \cite{BH-ET3} and are computed in Proposition \ref{summary of t-factors} for each $\lambda \in W_F\backslash \Phi$.
\item For each $W_{\lambda}$ we attach a Gauss sum $t(W_{\lambda})$ with respect to certain quadratic form on $W_{\lambda}$. Usually $t(W_{\lambda})$ is a sign $\pm1$, except in one case it is at most a 4th root of unity. These factors are defined in section 4 of \cite{BH-ET2} and are computed in section \ref{section comp mod}.
\end{enumerate}
We then assign a collection of tamely ramified characters $\{\chi_{\lambda,\xi}\}_{\lambda\in \Phi}$ in terms of the t-factors above. These characters are those $\chi$-data associated to the admissible character $\xi$ as mentioned above. Their precise values are given in Theorem \ref{chi-data factor of BH-rectifier}. We also need certain t-factors to compute the constant $c_\theta$. During the way of computations in chapter \ref{chapter comparing character} and \ref{chapter rectifier and transfer}, we will make strong use of the jump data of $\xi$ and run into some technical but interesting sign checking.

We end this Interlude with a couple of remarks.
\begin{rmk}
  \begin{enumerate}[(i)]
    \item We emphasize again that, for explicit computations, we would make use of the bijection in Proposition \ref{orbit of roots as double coset} between non-trivial double cosets in $\Gamma_E\backslash \Gamma_F/\Gamma_E$ and $\Gamma_F$-orbits of the root system $\Phi$. Alternatively, we can choose a bijection between their respective representatives, as in (\ref{bijection between representatives}). The statements above involving roots or $\Gamma_F$-orbits of roots would be replaced by analogous statements involving representatives in $\Gamma_F/\Gamma_E$ of non-trivial double cosets.
    \item The modules $U$ and $V$ are quotients of certain compact subgroups of $G(F)$, which are examples of those in the Moy-Prasad filtrations \cite{MP1} of $G(F)$. The fact that each such quotient is isomorphic to certain Lie algebra is known in \cite{MP1}, see also Corollary 2.3 of \cite{Yu}. The treatment in this thesis is close to those in section 4 of \cite{BH-ET2} and section 3, 7 of \cite{BH-ET3}, since we will make use of their results for explicit computations.
  \end{enumerate}
    \end{rmk}

    \chapter{Finite symplectic modules}\label{chapter finite cymplectic module}

Again $E/F$ is tamely ramified. Let $\mathfrak{A}$ be the hereditary order of $\mathrm{End}_F(E)$ corresponding to the $\mathfrak{o}_F$-lattice chain $\{\mathfrak{p}_E^k|k\in\mathbb{Z}\}$ in $E$ and $\mathfrak{P_A}$ be the Jacobson radical of $\mathfrak{A}$. They are introduced in chapter 1 of \cite{BK}. Denote $\Psi_{E/F}=E^\times/F^\times U^1_E$. The main result, Proposition \ref{decomp of standard module} of section \ref{section standard module}, is to deduce a $\mathbf{k}_F \Psi_{E/F}$-module structure of the quotient space $U:=\mathfrak{A/P_A}$ naturally derived from the conjugate action of $E^\times$ on $\mathrm{End}_F (E)$. We would describe the complete decomposition of $U$ into $\mathbf{k}_F \Psi_{E/F}$-submodules, called the residual root space decomposition.

In section \ref{section symplectic modules} and \ref{section comp mod} we attach certain invariants, called t-factors, on the submodules of $U$. These invariants arise from certain symplectic structure imposed on a submodule $V$ of $U$ and the structure of symmetry on the complementary module $W$ of $V$ in $U$. By studying these structures, we can compute the values of those t-factors explicitly, as in Proposition \ref{summary of t-factors} and Proposition \ref{n-factor of comp-mod}. We will bring the t-factors into play frequently in subsequent chapters.

\section{The standard module}\label{section standard module}

The content in this section is classical. We re-interpret the results in the language here to build up the results of later sections. Many standard algebra textbooks can deduce the results here. For example, one may consult \cite{reiner-max-order}. Some of the details can be found in section 7 of \cite{BH-ET3}.

We consider the following $F$-vector spaces with $E^\times$-actions. For all $t\in E^\times$,
\begin{equation}\begin{split}\label{F-spaces E-modules}
&E \otimes E\text{, with }{}^t(x \otimes y) \mapsto tx \otimes yt^{-1}\text{ for all } x,y\in E,
\\
&\mathrm{End}_F(E)\text{, with }({}^tA)(v) = tA(t^{-1}v)\text{ for all }  A \in \mathrm{End}_F(E),\,v\in E,\text{ and }
\\
&\bigoplus_{[g] \in \Gamma_E \backslash \Gamma_F /\Gamma_E} {}^gEE\text{, with }{}^t({}^gx_{[g]}y_{[g]}) _{[g]}= ({}^gtt^{-1}{}^gx_{[g]}{}y_{[g]})_{[g]}\text{ for all } x_{[g]}, y_{[g]} \in E.
\end{split}\end{equation}

\begin{prop}\label{isom F spaces E modules} The $F$-linear maps
\begin{enumerate}[(i)]
\item $E \otimes E \rightarrow \mathrm{End}_F(E),\, x \otimes y \mapsto (v \mapsto \mathrm{tr}_{E/F} (yv)x)$, and \label{isomorphism tensor to endomorphism}
\item $E \otimes E \rightarrow \bigoplus_{[g] \in \Gamma_E \backslash \Gamma_F/\Gamma_E} {}^gEE,\,x\otimes y \mapsto ({}^gxy)_{[g]}$, \label{isoomrphism tensor to root spaces}
\end{enumerate}
are isomorphisms of $E^\times$-modules.
\end{prop}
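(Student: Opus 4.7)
For part (i), the formula is evidently $F$-bilinear, hence descends to a well-defined $F$-linear map $\Phi: E\otimes_F E\to\mathrm{End}_F(E)$. I would first verify $E^\times$-equivariance directly: if $A:v\mapsto\mathrm{tr}_{E/F}(yv)x$ denotes the image of $x\otimes y$, then $({}^tA)(v)=tA(t^{-1}v)=\mathrm{tr}_{E/F}(yt^{-1}v)\cdot tx$, which agrees with the image of ${}^t(x\otimes y)=tx\otimes yt^{-1}$. Since both sides have $F$-dimension $n^2$, it suffices to exhibit a basis of $E\otimes_F E$ that maps to a basis of $\mathrm{End}_F(E)$. Fix an $F$-basis $\{e_1,\ldots,e_n\}$ of $E$ and let $\{f_1,\ldots,f_n\}$ be its dual basis under the trace pairing, which exists because $E/F$ is separable. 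Then $e_i\otimes f_j$ is sent to the operator $e_k\mapsto\mathrm{tr}_{E/F}(f_j e_k)e_i=\delta_{jk}e_i$, i.e.\ the elementary matrix $E_{ij}$ in the basis $\{e_k\}$. These operators form a basis of $\mathrm{End}_F(E)$, so $\Phi$ is an isomorphism.

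For part (ii), I would exploit separability via the primitive element theorem. Write $E=F(\alpha)$ with minimal polynomial $p(X)\in F[X]$, so $E\otimes_F E\cong E[X]/(p(X))$ as $E$-algebras, with the left tensor factor acting by scalars and $1\otimes\alpha$ playing the role of $X$. Factor $p(X)=\prod_i p_i(X)$ into monic irreducibles in $E[X]$; the Chinese remainder theorem then gives $E\otimes_F E\cong\prod_i E[X]/(p_i(X))$, and each factor is a field $E(\beta_i)$ obtained by adjoining a root $\beta_i$ of $p_i$ in $\bar F$. The roots of $p$ in $\bar F$ form the single $\Gamma_F$-orbit $\{{}^g\alpha:g\Gamma_E\in\Gamma_F/\Gamma_E\}$, so the irreducible factors $p_i$ correspond bijectively to the $\Gamma_E$-orbits on $\Gamma_F/\Gamma_E$, i.e.\ to the double cosets in $\Gamma_E\backslash\Gamma_F/\Gamma_E$. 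Picking a representative $g$ of each class $[g]$, I can take $\beta_g={}^g\alpha$, which identifies the corresponding factor with $E\cdot{}^gE$ as a subfield of $\bar F$. Unwinding these identifications shows that the resulting isomorphism $E\otimes_F E\to\bigoplus_{[g]}{}^gE\cdot E$ sends $x\otimes y$ to $({}^g x\cdot y)_{[g]}$, matching the map in the statement. Equivariance under $E^\times$ then follows from commutativity inside each compositum: ${}^g(tx)\cdot(yt^{-1})={}^g t\cdot t^{-1}\cdot{}^g x\cdot y$, which is precisely the prescribed action on the right-hand side of \eqref{F-spaces E-modules}.

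The main obstacle will be the bookkeeping in (ii): matching the $\Gamma_E$-orbits on $\Gamma_F/\Gamma_E$ with the irreducible factors of $p$ over $E$ and with the stated summands ${}^gE\cdot E$, and verifying once and for all that the construction is independent of the double-coset representatives chosen to index the direct sum and that this independence is compatible with the $E^\times$-action. Part (i) is essentially formal once the trace-dual basis is introduced.
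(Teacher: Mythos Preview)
Your approach is essentially the paper's, just spelled out in more detail: the paper invokes non-degeneracy of $\mathrm{tr}_{E/F}$ for (i) and ``all possible $F$-algebra embeddings $E\otimes E\to\bar F$'' for (ii), which is precisely your trace-dual basis argument and your primitive-element/CRT decomposition.

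One bookkeeping slip in (ii): with the \emph{left} tensor factor acting as scalars and $1\otimes\alpha$ playing the role of $X$, the projection to the $[g]$-component sends $x\otimes 1\mapsto x$ and $1\otimes\alpha\mapsto{}^g\alpha$, hence $x\otimes y\mapsto x\cdot{}^gy$, not ${}^gx\cdot y$. To land on the map in the statement you must take the \emph{right} factor as the scalar copy of $E$ and let $\alpha\otimes 1$ play the role of $X$; then $x\otimes y\mapsto{}^gx\cdot y$ as required, and your equivariance check (which you already wrote for ${}^gx\cdot y$) goes through. Alternatively, keep your convention and reindex by $[g]\mapsto[g^{-1}]$, but since the proposition names a specific map it is cleaner to swap the scalar factor.
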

\begin{proof}
 Indeed (\ref{isomorphism tensor to endomorphism}) is isomorphic by the non-degeneracy of the trace form $\mathrm{tr}_{E/F}$, while (\ref{isoomrphism tensor to root spaces}) is isomorphic by considering all possible $F$-algebra embeddings $E \otimes E \rightarrow \bar{F}$. Notice that the isomorphism (\ref{isoomrphism tensor to root spaces}) is moreover an $F$-algebra one. The $E^\times$-invariance of each of the morphisms is clear.
 \end{proof}

By choosing a suitable $F$-basis of $E$, we can identify $\mathrm{End}_F(E)$ with $\mathfrak{g}(F) = \mathfrak{gl}_n(F)$ and its subalgebra $E$ with a Cartan subalgebra $\mathfrak{g}(F)_0$ of $\mathfrak{g}(F)$. Recall that the roots of the elliptic maximal $F$-torus $T = \mathrm{Res}_{E/F} \mathbb{G}_m$ in the $F$-reductive group $G = \mathrm{GL}_n$ are of the form $[\begin{smallmatrix}
g \\ h
\end{smallmatrix}]$ with $g, h \in \Gamma_F/\Gamma_E$, $g\neq h$, such that
\begin{equation*}
[\begin{smallmatrix}
g \\ h
\end{smallmatrix}](t) = {}^gt({}^ht)^{-1}\text{ for all } t \in E^\times =T(F).
 \end{equation*}
 We denote the $\Gamma_F$-orbit of a root $\lambda$ by $[\lambda]$.
\begin{prop} \label{rational root space decomp}
\begin{enumerate}[(i)]
 \item  The $F$-Lie algebra $\mathfrak{g}(F)$ decomposes into $\mathrm{Ad}(E^\times)$-invariant subspaces $$ \mathfrak{g}(F) = \mathfrak{g}(F)_0 \oplus \bigoplus_{[\lambda ]\in \Gamma_F \backslash \Phi} \mathfrak{g}(F)_{[\lambda]}.$$
\item This decomposition is compatible with the isomorphism
$$\mathrm{End}_F(E)\cong \bigoplus_{[g] \in \Gamma_E \backslash \Gamma_F/\Gamma_E} {}^gEE$$
induced by Proposition \ref{isom F spaces E modules}, such that $\mathfrak{g}(F)_0\cong E$ and $\mathfrak{g}(F)_{\left[\left[\begin{smallmatrix}
1 \\g
\end{smallmatrix}\right]\right]}\cong {}^gEE.$
\end{enumerate}
\end{prop}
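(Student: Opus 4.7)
The plan is to deduce both parts of the proposition simultaneously by composing the two $E^\times$-equivariant isomorphisms of Proposition \ref{isom F spaces E modules} to obtain an $F$-linear $E^\times$-isomorphism
$$\Theta:\mathrm{End}_F(E)\;\xleftarrow{\sim}\;E\otimes_F E\;\xrightarrow{\sim}\;\bigoplus_{[g]\in\Gamma_E\backslash\Gamma_F/\Gamma_E}{}^gEE,$$
and then matching its summands termwise with the $F$-rational root space decomposition of $\mathfrak{g}(F)$. First I would dispose of the trivial double coset $[1]$: the $E^\times$-action on the summand $E$ prescribed in \eqref{F-spaces E-modules} is trivial, so its image under $\Theta$ lies in the centralizer of $T(F)=E^\times$ inside $\mathrm{End}_F(E)$, which is the Cartan subalgebra $\mathfrak{g}(F)_0=E$; a dimension count forces equality.

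For a non-trivial double coset $[g]$, the formula in \eqref{F-spaces E-modules} shows that $t\in E^\times$ acts on the summand ${}^gEE$ by multiplication by the element ${}^gt\cdot t^{-1}$, so ${}^gEE$ is an $\mathrm{Ad}(E^\times)$-stable $F$-subspace of $\mathfrak{g}(F)$. To identify it with an $F$-rational root space I would extend scalars to $\bar F$: since $E/F$ is tamely ramified, hence separable, ${}^gEE$ is a finite separable extension of $F$, so
$${}^gEE\otimes_F\bar F\;\cong\;\prod_{\sigma}\bar F,$$
where $\sigma$ runs over the $F$-embeddings ${}^gEE\hookrightarrow\bar F$. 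On the $\sigma$-factor the $E^\times$-action becomes the honest scalar character $t\mapsto\sigma({}^gt\cdot t^{-1})$, which is a root in $\Phi(G,T)$, and as $\sigma$ varies these characters sweep out exactly one $\Gamma_F$-orbit. Consequently ${}^gEE$ is an $F$-form of a direct sum of complex root lines over a single orbit, and it must coincide with $\mathfrak{g}(F)_{[\lambda]}$ for that orbit.

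Assembling the trivial and non-trivial contributions yields both the direct-sum decomposition of (i) and the identifications of (ii). The one point needing real care is a piece of bookkeeping: one has to trace through the explicit formula $x\otimes y\mapsto({}^gxy)_{[g]}$ of Proposition \ref{isom F spaces E modules}(\ref{isoomrphism tensor to root spaces}) together with the bijection in Proposition \ref{orbit of roots as double coset} to verify that the summand ${}^gEE$ really is paired with the orbit $[[\begin{smallmatrix}1\\g\end{smallmatrix}]]$ under the chosen conventions; this is not a structural difficulty but a careful sign-and-inversion check that should nonetheless be written out explicitly.
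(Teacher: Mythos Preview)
Your proposal is correct and follows essentially the same Galois-descent idea as the paper: both arguments obtain the $F$-rational pieces by grouping the absolute root spaces over $\bar F$ into $\Gamma_F$-orbits. The paper phrases part~(i) abstractly (declaring $\mathfrak{g}(F)_{[\lambda]}$ to be the $F$-form of $\bigoplus_{\mu\in[\lambda]}\mathfrak{g}(\bar F)_\mu$) and then asserts part~(ii) is ``clear by Proposition~\ref{explicit expression of double coset}.(\ref{explicit expression of double coset double coset})''; you instead start from the concrete summand ${}^gEE$, base-change it, and read off which orbit of roots appears. Your organization is arguably more transparent for~(ii), and your caveat about the sign-and-inversion bookkeeping is well placed: the action on ${}^gEE$ is by $t\mapsto{}^gt\cdot t^{-1}$, which is literally the root $\left[\begin{smallmatrix}g\\1\end{smallmatrix}\right]$ rather than $\left[\begin{smallmatrix}1\\g\end{smallmatrix}\right]$, so the matching with the double-coset labelling via Proposition~\ref{orbit of roots as double coset} does require exactly the explicit check you flag.
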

\begin{proof}
The first assertion can be derived from the absolute case by a simple Galois descent argument. More precisely, for every orbit $[\lambda]\in \Gamma_F\backslash \Phi$ there is a subspace $\mathfrak{g}(F)_{[\lambda]}$ in $\mathfrak{g}(F)$ such that $$\mathfrak{g}(F)_{[\lambda]}\otimes_F\bar{F}=\bigoplus_{\mu\in[\lambda]}\mathfrak{g}(\bar{F})_{\mu},$$ the direct sum of the root space $\mathfrak{g}(\bar{F})_{\mu}$ for $\mu\in\Phi$. The second assertion is clear by Proposition \ref{explicit expression of double coset}.(\ref{explicit expression of double coset double coset}).
\end{proof}
We call the decomposition of $\mathrm{End}_F(E) \cong \mathfrak{g}(F)$ in Proposition \ref{rational root space decomp} the \emph{rational root space decomposition}. We are going to show that such decomposition descends to the one of certain $\mathfrak{o}_F$-sublattices of $\mathfrak{g}(F)$. Let $\mathfrak{A}$ be the hereditary order of $\mathrm{End}_F(E)$ corresponding to the $\mathfrak{o}_F$-lattice chain $\{\mathfrak{p}_E^k|k\in\mathbb{Z}\}$ in $E$, as introduced in (1.1) of \cite{BK}. Explicitly, we can define $\mathfrak{A}$ as
\begin{equation}\label{hereditary order A}
  \mathfrak{A} = \{ X \in \mathrm{End}_F(E) | X\mathfrak{p}_E^k \subseteq \mathfrak{p}_E^k \text{ for all } k\in \mathbb{Z} \}.
\end{equation}
If we identify $\mathrm{End}_F(E)\cong \mathfrak{gl}_n(F)$ by choosing suitable basis, then the lattice $\mathfrak{A}$ can be expressed in matrices partitioned into $e \times e$ blocks of size $f \times f$, with entries in $\mathfrak{o}_F$, and blocked upper triangular mod $\mathfrak{p}_F$.

Let $K$ be the maximal unramified extension of $F$ in $E$. Consider the following $\mathfrak{o}_F$-lattices contained respectively in the $F$-vector spaces in (\ref{F-spaces E-modules}),
\begin{equation}\label{isom o-F-lattices}
\begin{split}
&\mathfrak{o}_{E \otimes E} := (\mathfrak{o}_E \otimes_{\mathfrak{o}_F} \mathfrak{o}_E)+ \sum_{\begin{smallmatrix}1 \leq k \leq \ell \leq e-1 \\ k+\ell \geq e\end{smallmatrix}} \mathfrak{o}_K \varpi^{-1}_F (\varpi^k_E \otimes \varpi^\ell_E) \subseteq E \otimes E,
\\
&\mathfrak{A} \subseteq \mathrm{End}_F(E),\text{ and}
\\
&\bigoplus_{[g]\in \Gamma_E\backslash \Gamma_F / \Gamma_E} \mathfrak{o}_{{}^gEE} \subseteq \bigoplus_{[g]\in \Gamma_E\backslash \Gamma_F / \Gamma_E}{}^gEE.
\end{split}
\end{equation}
They are all $E^\times$-conjugate invariant.

\begin{prop}\label{prop isom of lattices} The isomorphisms in Proposition
\ref{isom F spaces E modules} induce isomorphisms of $\mathfrak{o}_F$-lattices as well as of $E^\times$-modules in (\ref{isom o-F-lattices}).
\end{prop}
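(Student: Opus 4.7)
The plan is to show that the two bijective $F$-linear maps of Proposition~\ref{isom F spaces E modules} restrict to bijections on the three lattices specified in~(\ref{isom o-F-lattices}). All $E^\times$-equivariances have already been established, and each of the three lattices is manifestly stable under the $E^\times$-conjugation action, so the content is purely a matching of integral structures. I would address the two isomorphisms separately, with the first one carrying the bulk of the work.

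For the trace-pairing isomorphism $E\otimes_F E\to\mathrm{End}_F(E)$, I would work in the $\mathfrak{o}_K$-basis $\{\varpi_E^i\}_{0\leq i<e}$ of $\mathfrak{o}_E$ and write a general element as $\sum a_{ij}(\varpi_E^i\otimes\varpi_E^j)$ with $a_{ij}\in K$. Its image is the endomorphism $T=\sum a_{ij}T_{ij}$, where $T_{ij}(v)=\mathrm{tr}_{E/F}(\varpi_E^j v)\varpi_E^i$. The essential input from the tame hypothesis $p\nmid e$ is the formula $\mathrm{tr}_{E/F}(\varpi_E^{r}u)=e\zeta^{r/e}\varpi_F^{r/e}\mathrm{tr}_{K/F}(u)$ when $e\mid r$ and zero otherwise, valid for $u\in\mathfrak{o}_K$, together with the identity $\mathrm{tr}_{E/F}(\mathfrak{p}_E^r)=\mathfrak{p}_F^{\lceil r/e\rceil}$, which in turn uses $\mathfrak{d}_{E/F}=\mathfrak{p}_E^{e-1}$ and $e\in\mathfrak{o}_F^\times$. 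A direct computation then shows that each $T_{ij}$, expressed as a matrix in the basis $\{\varpi_E^k\}$, has a single nonzero entry, and a straightforward case check against the lattice chain $\{\mathfrak{p}_E^k\}$ gives $T_{ij}\in\mathfrak{A}$ for every $(i,j)$, while $\varpi_F^{-1}T_{ij}\in\mathfrak{A}$ holds exactly when $i+j\geq e$ (with $1\leq i,j\leq e-1$). Summing over $(i,j)$ then yields the desired bijection $\mathfrak{o}_{E\otimes E}\to\mathfrak{A}$.

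For the second isomorphism $E\otimes_F E\to\bigoplus_{[g]}{}^gE\cdot E$, which is in fact an $F$-algebra isomorphism onto a product of local fields by the classical theory of separable extensions, I would observe that the lattice $\bigoplus_{[g]}\mathfrak{o}_{{}^gEE}$ is precisely the integral closure of $\mathfrak{o}_F$ in that product. The image of $\varpi_F^{-1}(\varpi_E^k\otimes\varpi_E^\ell)$ in the factor ${}^gE\cdot E$ is $\varpi_F^{-1}({}^g\varpi_E^k)\varpi_E^\ell$, whose valuation equals $(e_g/e)(k+\ell)-e_g$, where $e_g$ is the ramification index of ${}^gE\cdot E/F$; this is nonnegative exactly when $k+\ell\geq e$, independently of $g$. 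Combining this with a separation-of-factors argument, which uses that distinct double cosets give algebraically independent projections in the product, shows that the integrality conditions defining $\mathfrak{o}_{E\otimes E}$ are both necessary and sufficient for the image in the product to lie in $\bigoplus_{[g]}\mathfrak{o}_{{}^gEE}$.

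The main technical obstacle is the first computation, specifically the boundary case $i+j=e$, where $T_{ij}$ itself already carries a factor of $\varpi_F$ so that $\varpi_F^{-1}T_{ij}$ is only just integral, and the careful bookkeeping needed when $K\neq F$ so that the coefficient ring is $\mathfrak{o}_K$ rather than $\mathfrak{o}_F$. Once these are settled, linearity and the already-known $E^\times$-equivariance finish the argument.
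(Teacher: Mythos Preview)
For the first isomorphism your approach is close in spirit to the paper's---both are direct matrix computations---but the paper works with a full $\mathfrak{o}_F$-basis $\{w_i\}$ of $\mathfrak{o}_E$ together with its trace-dual basis $\{w_i^*\}$, which makes the image of $w_i^*\otimes w_j$ literally the elementary matrix $E_{ij}$ (up to an explicit factor $1$ or $\varpi_F$) and absorbs the unramified layer automatically. Your choice of the $\mathfrak{o}_K$-basis $\{\varpi_E^i\}$ forces $\mathrm{tr}_{K/F}$ into the picture, and the claim that each $T_{ij}$ has ``a single nonzero entry'' is only literally true when $K=F$; the $K\neq F$ bookkeeping you flag is precisely what the dual-basis device handles.

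The real gap is in the second isomorphism. Your valuation computation correctly shows that $\mathfrak{o}_{E\otimes E}$ maps into $\bigoplus_{[g]}\mathfrak{o}_{{}^gEE}$, but the ``separation-of-factors'' step meant to give the reverse inclusion is not supplied and is not routine: recovering each coefficient $a_{ij}$ from the integrality of the images $\sum_{i,j}a_{ij}({}^g\varpi_E)^i\varpi_E^j$ across all $[g]$ would require a Vandermonde-type inversion over varying residue extensions, and the different factors ${}^gEE$ live in genuinely different fields. The paper sidesteps this entirely with an index count. It proves a small lemma relating discriminants and order ideals, computes $d(\mathfrak{o}_E\otimes_{\mathfrak{o}_F}\mathfrak{o}_E)=d(E/F)^{2n}$ and each $d({}^gEE/F)$, and uses $\sum_{[g]}f({}^gEE/F)=fn$ to show that
\[
\mathrm{ord}_{\mathfrak{o}_F}\Bigl(\bigl(\textstyle\bigoplus_{[g]}\mathfrak{o}_{{}^gEE}\bigr)\big/(\mathfrak{o}_E\otimes\mathfrak{o}_E)\Bigr)=q^{f^2e(e-1)/2}.
\]
A direct inspection of the definition shows $\mathfrak{o}_{E\otimes E}/(\mathfrak{o}_E\otimes\mathfrak{o}_E)$ has the same order. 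Since one lattice already sits inside the other, equality follows. This index comparison is the missing idea in your plan.
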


\begin{proof}
The $\mathfrak{o}_F$-morphism $\mathfrak{o}_{E \otimes E} \rightarrow \mathfrak{A}$ restricted from (\ref{isomorphism tensor to endomorphism}) of Proposition
\ref{isom F spaces E modules} is clearly injective and $E^\times$-invariant. To show the surjectivity,  we choose an $\mathfrak{o}_F$-basis $\{ w_1, \dots, w_n\}$ of $\mathfrak{o}_E$ such that
\begin{equation*}
v_E(w_{jf+1}) = \dots = v_E(w_{(j+1)f}) = j\text{ for each }j=0,\dots,e-1.
\end{equation*}
We choose another $\mathfrak{o}_F$-basis $\{ w^*_i\}$ of $\mathfrak{o}_E$ dual to $\{w_i\}$ in the sense that
\begin{equation*}
\mathrm{tr}_{E/F}(w^*_i w_j) = 0 \text{ for }i \neq j\text{ and }\mathrm{tr}_{E/F}(w^*_iw_i) =
\begin{cases}
1 &\text{ for }  i = 1, \dots, f,
\\
\varpi_F &\text{ for }  i = f+1, \dots, n.
\end{cases}
\end{equation*}
Then we can readily show that, under the isomorphism  $E \otimes E \rightarrow \mathrm{End}_F(E) \rightarrow \mathfrak{gl}_n (F)$, the element $\sum_{i,j}a_{ij}(w^*_i \otimes w_j)$ in $E\otimes E$ is mapped to the matrix $(A_{ij})$ where $A_{ij} = a_{ij} \mathrm{tr}(w^*_i w_i)$. We check the $F$-valuations of these entries. Suppose that $\lfloor i/f\rfloor=k$ and $\lfloor j/f\rfloor=\ell$.
\begin{enumerate}[(i)]
\item
If $k=1$, then we have $v_E(w^*_i) = v_E(w_i) =0 $ and so $a_{ij}\mathrm{tr}(w^*_iw_i) = a_{ij} \in \mathfrak{o}_F$.
\item If $\ell \geq k \gneq 1$, then $a_{ij} \in \mathfrak{o}_F \varpi^{-1}_F$, $v_E(w^*_i) = e + 1 - k$ and $v_E(w_j) = \ell -1$. Hence $a_{ij}\mathrm{tr}(w^*_iw_i) \in \mathfrak{o}_F$.
\item
When $k >\ell$, we have $a_{ij} \in \mathfrak{o}_F$ and $a_{ij} \mathrm{tr}(w^*_iw_i) = a_{ij} \varpi_F \in \mathfrak{p}_F$.
\end{enumerate}
We have just shown that $\mathfrak{o}_{E \otimes E} \rightarrow \mathfrak{A}$ is surjective and therefore isomorphic.

To deal with another isomorphism, we use the standard technique in chapter 4 of \cite{reiner-max-order}. Let $M\subseteq N$ be two free $\mathfrak{o}_F$-modules of the same rank. Suppose that the quotient $N/M$ is isomorphic to $\oplus_j\mathfrak{o}_F/\mathfrak{p}_F^{n_j}$ as $\mathfrak{o}_F$-modules. Define the order ideal $$\mathrm{ord}_{\mathfrak{o}_F}(N/M):=\prod_j\mathfrak{p}_F^{n_j}.$$ We can compute the order alternatively as follows. If $M=\oplus_j\mathfrak{o}_Fx_j$ and $N=\oplus_j\mathfrak{o}_Fy_j$ such that $x_i=\sum_ja_{ij}y_j$ for some $a_{ij}\in \mathfrak{o}_F$, then $\mathrm{ord}_{\mathfrak{o}_F}(N/M)={\mathfrak{o}_F}\det(a_{ij})$. We then consider the vector space $W=M\otimes_{\mathfrak{o}_F} F$. Suppose that we have a non-degenerate symmetric $F$-bilinear form $\mathrm{tr}:W\times W\rightarrow F$. We define the discriminant ideal of $M$ with respect to the form $\mathrm{tr}$ to be $d(M)=\det\mathrm{tr}(x_ix_j)\mathfrak{o}_F$.
\begin{lemma} \label{lemma about disc}
\begin{enumerate}[(i)]
\item We have the relation $d(M)=\mathfrak{o}_F(N/M)^2d(N)$. \label{item disc and order}
\item If $P$ and $Q$ are free $\mathfrak{o}_F$-modules, then
\begin{enumerate}
\item $d(P\oplus Q)=d(P)d(Q)$, and \label{item direct sum of disc}
\item $d(P\otimes_{\mathfrak{o}_F} Q)=d(P)^{\mathrm{rank}Q}d(Q)^{\mathrm{rank}P}$.  \label{item tensor prod of disc}
\end{enumerate}\label{item direct sum and tensor prod of disc}
\end{enumerate}
\end{lemma}
\begin{proof}
(\ref{item disc and order}) comes from the identity $\det\mathrm{tr}(x_ix_j)=\det(a_{ij})^2\det\mathrm{tr}(y_iy_j)$, (\ref{item direct sum of disc}) is easy, and (\ref{item tensor prod of disc}) is an elementary calculation of the determinant of tensor product of matrices.
\end{proof}
In the case when $E$ is a tame extension of $F$ and $M=\mathfrak{o}_E$, we denote the discriminant ideal $d(\mathfrak{o}_E)$ of $\mathfrak{o}_F$ by $d(E/F)$. This is known to be $\mathfrak{p}_F^{f(e-1)}$.

Now we show that the second $\mathfrak{o}_F$-morphism $\mathfrak{o}_{E \otimes E} \rightarrow \bigoplus_{[g]} \mathfrak{o}_{{}^gEE}$ is isomorphic. It is clearly injective and $E^\times$-invariant. To show that the map is surjective, we compare the sizes of $\bigoplus_{[g]} \mathfrak{o}_{{}^gEE}$ and the image of the sub-lattice $\mathfrak{o}_E \otimes_{\mathfrak{o}_F} \mathfrak{o}_E$ of $\mathfrak{o}_{E \otimes E}$. We apply Lemma \ref{lemma about disc} on
$$M=\mathfrak{o}_E \otimes_{\mathfrak{o}_F} \mathfrak{o}_E,\,N=\bigoplus_{[g]\in \Gamma_E \backslash \Gamma_F / \Gamma_E} \mathfrak{o}_{{}^gEE}.$$
First notice that $d(M)=d(E/F)^n$ by (\ref{item tensor prod of disc}) of Lemma \ref{lemma about disc}. Let $m$ be the $F$-valuation of the ideal $$d(E/F)^{2n} \prod_{[g]\in \Gamma_E\backslash \Gamma_F/\Gamma_E}d({}^gEE/F)^{-1}.$$ In the tame case, we can compute
$$m=2nf(e-1) - (e-1) \sum_{[g]\in \Gamma_E \backslash \Gamma_F / \Gamma_E}f({{}^gEE/F}).$$
The sum appearing on the right side is equal to $fn$, since we know that $$\sum_{[g]\in \Gamma_E \backslash \Gamma_F / \Gamma_E}|{}^gEE/F |=\dim_F(E\otimes _FE)= n^2$$ and each ${}^gE E/F$ has the same ramification degree $e$. Therefore, by (\ref{item disc and order}) of Lemma \ref{lemma about disc}, the order of $$\left(\bigoplus_{[g]\in \Gamma_E \backslash \Gamma_F / \Gamma_E}\mathfrak{o}_{{}^gEE}\right)/(\mathfrak{o}_E\otimes_{\mathfrak{o}_F}\mathfrak{o}_E)$$ is $q^{m/2}=q^{f^2e(e-1)/2}$. Using the expression of (\ref{isom o-F-lattices}), we can check that the order of $$\mathfrak{o}_{E\otimes E} / (\mathfrak{o}_E \otimes_{\mathfrak{o}_F} \mathfrak{o}_E)$$ is also $q^{m/2}$. Therefore the morphism $\mathfrak{o}_{E \otimes E} \rightarrow \bigoplus_{[g]} \mathfrak{o}_{{}^gEE}$ is surjective and hence isomorphic.
\end{proof}

Let $\mathfrak{P_A}$ be the Jacobson radical of $\mathfrak{A}$. Explicitly,
\begin{equation}\label{hereditary order PA}
  \mathfrak{P_A} = \{ X \in \mathrm{End}_F(E) | X\mathfrak{p}_E^k \subseteq \mathfrak{p}_E^{k+1} \text{ for all } k\in \mathbb{Z} \}.
\end{equation}
By arguments similar to those in the proof of Proposition \ref{prop isom of lattices}, we can show that the following $\mathfrak{o}_F$-sublattices of those lattices in (\ref{isom o-F-lattices}),
\begin{equation*}
\begin{split}
&\mathfrak{P}_{E \otimes E} := \mathfrak{p}_E \otimes_{\mathfrak{o}_F} \mathfrak{p}_E \subseteq \mathfrak{o}_{E \otimes E},
\\
&\mathfrak{P}_{\mathfrak{A}}  \subseteq \mathfrak{A}\text{, and}
\\
&\bigoplus_{[g]\in \Gamma_E \backslash \Gamma_F / \Gamma_E}\mathfrak{p}_{{}^gEE }\subseteq \bigoplus_{[g]\in \Gamma_E \backslash \Gamma_F / \Gamma_E}\mathfrak{o}_{{}^gEE},
\end{split}
\end{equation*}
are all isomorphic. We therefore have a $\mathbf{k}_F$-isomorphism
\begin{equation}\label{residual root space decomp}
\mathfrak{A} / \mathfrak{P}_{\mathfrak{A}} \cong \bigoplus_{[g]\in \Gamma_E \backslash \Gamma_F / \Gamma_E} \mathfrak{o}_{{}^gEE }/ \mathfrak{p}_{{}^gEE} = \bigoplus_{[g]\in\Gamma_E \backslash \Gamma_F / \Gamma_E}\mathbf{k}_{{}^gEE},
\end{equation}
which is moreover $E^\times$-equivalent. The $E^\times$-conjugate actions on both sides factor through the finite group $$\Psi_{E/F}:=E^\times / F^\times U^1_E.$$
We call the decomposition of the $\mathbf{k}_F\Psi_{E/F}$-module $\mathfrak{A} / \mathfrak{P}_{\mathfrak{A}}$ in (\ref{residual root space decomp}) the \emph{residual root space decomposition}.

Let us describe the action of $\Psi_{E/F}$ on $\mathfrak{A}/\mathfrak{P}_{\mathfrak{A}}$ more precisely. Write $\mathfrak{m} = \mathfrak{gl}_f(\mathbf{k}_F)$. Since $\mathfrak{A}/\mathfrak{P}_{\mathfrak{A}}$ is isomorphic to $\mathfrak{m}^e$, we regard $\mathfrak{m}^e$ as being embedded into diagonal blocks in $\mathfrak{gl}_n(\mathbf{k}_F)$. We first consider $\mathfrak{m}$ as a $\mathbf{k}_F \mu$-module. By embedding $\mathbf{k}_E \hookrightarrow \mathfrak{gl}_f(\mathbf{k}_F)$ (the choice of such embedding is irrelevant), we have from section 7.3 of \cite{BH-ET3} that
\begin{equation*}
\mathfrak{m} \cong \bigoplus_{i=0}^{f-1} \mathfrak{m}_i
\end{equation*}
where $\mathfrak{m}_i \cong \mathbf{k}_E$ as a $\mathbf{k}_F$-vector space and $\zeta \in \mu_E$ acts by $v \mapsto (\zeta^{q^i-1})^{-1}v$ for all $v \in \mathfrak{m}_i$. Each of the characters $\zeta\mapsto (\zeta^{q^i-1})^{-1}$, $i=0,\dots,f-1$,
is trivial on $\mu_F$, and hence is a character of $$\mu:=\mu_E/\mu_F.$$ The $\Psi_{E/F}$-module we are interested in is $$U=\text{Ind}_\mu^{\Psi_{E/F}}\mathfrak{m}.$$ Clearly $U\cong\mathfrak{A}/\mathfrak{P}_{\mathfrak{A}}$ as a $\mathbf{k}_F$-vector space.

\begin{prop} \label{decomp of standard module}
\begin{enumerate}[(i)]
\item  The $\Psi_{E/F}$-module $U$ decomposes into submodules
\begin{equation*}
U \cong \bigoplus_{\begin{smallmatrix}i\in \mathbb{Z}/f \\k \in q^f \backslash (\mathbb{Z} /e)\end{smallmatrix} }  U_{ki},
\end{equation*}
such that, the $\Psi_{E/F}$-action on each component $U_{ki}$ is given by the multiplication of the following images: $\zeta\mapsto (\zeta^{q^i-1})^{-1}$ for all $\zeta\in \mu_E$ and $\varpi_E\mapsto (\zeta^k_e \zeta_{\phi^i})^{-1}.$ \label{decomp of standard module first}
\item  The decomposition of $U$ is equivalent to the residual root space decomposition for $\mathfrak{A} / \mathfrak{P}_{\mathfrak{A}}$. \label{decomp of standard module second}
    \end{enumerate}
\end{prop}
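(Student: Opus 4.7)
The plan is to prove (i) by analyzing the induced module $U=\mathrm{Ind}_\mu^{\Psi_{E/F}}\mathfrak{m}$ using the recalled decomposition of $\mathfrak{m}$ together with the abelian structure of $\Psi_{E/F}$, and then to deduce (ii) by matching the resulting character data against the residual root space decomposition via the bijection of Proposition \ref{explicit expression of double coset}.

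Since induction commutes with direct sums, I first write $U=\bigoplus_{i=0}^{f-1}U^{(i)}$, where $U^{(i)}:=\mathrm{Ind}_\mu^{\Psi_{E/F}}\mathfrak{m}_i$. Because $E^\times$ is abelian, so is $\Psi_{E/F}$, and the quotient $\Psi_{E/F}/\mu$ is cyclic of order $e$ generated by the image of $\varpi_E$, subject to the single relation $\varpi_E^e\equiv\zeta_{E/F}\pmod{F^\times U_E^1}$ coming from (\ref{e-th power of prime is also prime}). Passing to an algebraic closure $\bar{\mathbf{k}}_F$, the $\bar{\mathbf{k}}_F$-irreducible summands of $U^{(i)}\otimes_{\mathbf{k}_F}\bar{\mathbf{k}}_F$ are characters of $\Psi_{E/F}$ whose restriction to $\mu$ is a Frobenius conjugate of $\zeta\mapsto(\zeta^{q^i-1})^{-1}$; each such character is determined by its value at $\varpi_E$, which must be an $e$-th root of $\zeta_{E/F}^{1-q^i}$. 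Using the defining relation $\zeta_{\phi^i}^e=\zeta_{E/F}^{q^i-1}$ from section \ref{section Galois groups}, these $e$-th roots are exactly $(\zeta_e^k\zeta_{\phi^i})^{-1}$ for $k\in\mathbb{Z}/e$.

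To descend back to $\mathbf{k}_F$, I group these $e$ characters into Galois orbits under the $\mathbf{k}_F$-Frobenius $x\mapsto x^q$. A bookkeeping check shows that its action on the $\mu$-part already factors through the $f$th iterate (essentially because $\mathbf{k}_E$ is the field of definition of all $\mu$-characters appearing in $\mathfrak{m}$), so the Frobenius orbits on the $\varpi_E$-eigenvalue data are precisely the $q^f$-orbits on the exponent $k\in\mathbb{Z}/e$. Each such orbit assembles into one $\mathbf{k}_F$-irreducible $\Psi_{E/F}$-submodule $U_{ki}$ of $U^{(i)}$, and the dimension count $\sum_{k,i}\dim_{\mathbf{k}_F}U_{ki}=ef^2=\dim_{\mathbf{k}_F}U$ confirms the collection $\{U_{ki}\}$ exhausts $U$, establishing (i).

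For (ii), the index set of (i) has the same cardinality as $\Gamma_E\backslash\Gamma_F/\Gamma_E$ by Proposition \ref{explicit expression of double coset}; under the natural bijection $(k,i)\mapsto[\sigma^k\phi^i]$, I compare the $\Psi_{E/F}$-action on $U_{ki}$ with that on $\mathbf{k}_{{}^{\sigma^k\phi^i}EE}$. The third formula of (\ref{F-spaces E-modules}) shows that $t\in E^\times$ acts on $\mathbf{k}_{{}^gEE}$ by scalar multiplication with ${}^gt\cdot t^{-1}$; evaluating on $\zeta$ (using $\sigma|_{\mu_E}=1$ and $\phi|_{\mu_E}=(\cdot)^q$) and on $\varpi_E$ (using the defining relations of section \ref{section Galois groups}) yields the same characters as on $U_{ki}$, after accounting for the dualization convention under which $\mathfrak{m}_i$ was normalized. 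The main obstacle in this plan is the Galois-orbit analysis in the descent step of (i): pinning down that the orbit structure on $\mathbb{Z}/e$ is governed by $\langle q^f\rangle\subseteq(\mathbb{Z}/e)^\times$ rather than by $\langle q\rangle$ requires careful tracking of the interaction between the $\mu$-rationality of $\mathfrak{m}_i$ and the $\varpi_E$-action; once this is settled, the character matching in (ii) reduces to a direct verification.
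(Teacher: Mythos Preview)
Your proposal is correct and follows essentially the same strategy as the paper: decompose $U=\bigoplus_i\mathrm{Ind}_\mu^{\Psi_{E/F}}\mathfrak{m}_i$, diagonalize the $\varpi_E$-action on each piece into eigenvalues $(\zeta_e^k\zeta_{\phi^i})^{-1}$, group into Galois orbits, and for (ii) match the resulting characters against those computed directly on $\mathbf{k}_{{}^{\sigma^k\phi^i}EE}$. The one place where the paper is slightly more efficient is precisely at what you flag as the main obstacle: rather than passing to $\bar{\mathbf{k}}_F$ and then arguing that the relevant Frobenius on the $\varpi_E$-eigenvalue data is $x\mapsto x^{q^f}$, the paper observes that $\mathfrak{m}_i\cong\mathbf{k}_E$ as a $\mathbf{k}_F$-vector space, so each $U_i$ is naturally a $\mathbf{k}_E$-vector space of dimension $e$ on which $\varpi_E$ acts $\mathbf{k}_E$-linearly by an explicit companion-type matrix; the simple $\mathbf{k}_F\Psi_{E/F}$-submodules are then the $\Gamma_{\mathbf{k}_E}$-orbits of eigenspaces, and the $q^f$-orbit structure on $\mathbb{Z}/e$ falls out immediately without any bookkeeping.
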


\begin{proof}
\begin{enumerate}[(i)]
\item
 We have the decomposition $U = \bigoplus_{i \in \mathbb{Z}/f}U_i$ for $U_i = \text{Ind}^{\Psi_{E/F}}_{\mu}\mathfrak{m}_i$. Each $\mathfrak{m}_i$ is isomorphic to $\mathbf{k}_E$, hence $U_i$ is an $e$-dimensional $\mathbf{k}_E$-vector space. By (\ref{e-th power of prime is also prime}), we can choose a $\mathbf{k}_E$-basis for $U_i$ such that $\varpi_E$ acts as conjugation of the matrix  $$\left( \begin{smallmatrix}
&1&&\\&&\ddots&\\&&&1\\\zeta_{E/F}&&&
\end{smallmatrix} \right).$$
 The eigenvalues of such conjugation is $(\zeta_e^k \zeta_{\phi^i})^{-1}$ for some fixed $e$th root $\zeta_{\phi^i}$ of $\zeta_{E/F}^{q^i-1}$ and some $k=0,\dots,e-1$. Hence the subspaces with eigenvalues $(\zeta_e^k\zeta_{\phi^i})^{-1}$ in the same $\Gamma_{\mathbf{k}_E}$-orbit, which are those $(\zeta_e^k\zeta_{\phi^i})^{-1}$ with $k\in \mathbb{Z}/e$ in the same $q^f$-orbit, form a simple $\Psi_{E/F}$-module.

\item If we write $g=\sigma^k\phi^i$ as in Proposition \ref{explicit expression of double coset}.(\ref{explicit expression of double coset galois set}), then the action of $E^\times$ on $\mathbf{k}_{{}^gEE}$, a component of $\mathfrak{A/P_A}$, is induced from the conjugate action on ${}^gEE$. The actions of $\zeta\in\mu_E$ and $\varpi_E$ are given respectively by the multiplications of
$\zeta({}^{ \sigma^k \phi^i}\zeta)^{-1} =(\zeta^{q^i-1})^{-1}$ and $\varpi_E({}^{ \sigma^k \phi^i} \varpi_E)^{-1}  = (\zeta^k_e \zeta_{\phi^i})^{-1}$. These are just the actions on $U_{ki}$ defined in (\ref{decomp of standard module first}).
\end{enumerate}
\end{proof}

The $\mathbf{k}_F \Psi_{E/F}$-module $U$ is called the \emph{standard module}. We usually denote the submodule $U_{ki}$ by $U_{[\sigma^k\phi^i]}$. This notation is well-defined, which means that the finite module $U_{[g]}$ is independent of the coset representative of $[g]$, because ${}^gEE$ and ${}^hEE$ have the same residue field if $[g]=[h]$. For each subset $\mathcal{D}\subseteq (W_E\backslash W_F/W_E)'$, we write $$U_\mathcal{D}=\bigoplus_{[g]\in \mathcal{D}}U_{[g]}. $$ In particular, for every intermediate field extensions $F\subseteq K\subseteq L\subseteq E$, if $$\mathcal{D}=(W_E\backslash W_K/W_E)-(W_E\backslash W_L/W_E),$$ then we denote $U_\mathcal{D}$ by $U_{L/K}$. If $V$ is a submodule of $U$, then we write $V_{\mathcal{D}}=U_\mathcal{D}\cap V$.

\section{Symplectic modules}\label{section symplectic modules}

In this section we introduce a symplectic structure on certain submodules of the standard module $U=\mathfrak{A}/\mathfrak{P}_{\mathfrak{A}}$. We first give a brief summary on finite symplectic modules, whose details are referred to chapter 8 of \cite{BF} and chapter 3 of \cite{BH-ET3}. We consider a finite cyclic group $\Gamma$ whose order is not divisible by a prime $p$. We call a finite $\mathbb{F}_p\Gamma$-module \emph{symplectic} if there is a non-degenerate alternating $\Gamma$-invariant bilinear form $h:V\times V\rightarrow\mathbb{F}_p$. The $\Gamma$-invariant condition means that
\begin{equation*}
h(\gamma v_1,\gamma v_2)=h(v_1,v_2)\text{, for all }\gamma\in \Gamma,\,v_1,v_2\in V.
\end{equation*}
Let $V_{\lambda}=\mathbb{F}_p[\lambda(\Gamma)]$ be the simple $\mathbb{F}_p\Gamma$-module defined by the character $\lambda\in \mathrm{Hom}(\Gamma,\bar{\mathbb{F}}_p^\times)$. Its $\mathbb{F}_p$-linear dual is isomorphic to $V_{\lambda^{-1}}$. We usually regard $V_{\lambda}$ as a field extension of $\mathbb{F}_p$.
\begin{prop} \label{hyper and aniso and properties}
\begin{enumerate}[(i)]
\item Any indecomposable symplectic $\mathbb{F}_p\Gamma$-module is isomorphic to one of the following two kinds.
\begin{enumerate}
\item A \emph{hyperbolic} module is of the form $V_{\pm\lambda}\cong V_{\lambda}\oplus V_{\lambda^{-1}}$ such that either $\lambda^2=1$ or $V_{\lambda}\ncong V_{\lambda^{-1}}$.
\item An \emph{anisotropic} module is of the form $V_\lambda$ with $\lambda^2\neq1$ and $V_{\lambda}\cong V_{\lambda^{-1}}$.
\end{enumerate} \label{def hyper and aniso}
\item If $V_\lambda$ is anisotropic, then $ |\mathbb{F}_p[\lambda(\Gamma)] / \mathbb{F}_p|$ is even and $\lambda(\Gamma) \subseteq \ker( N_{\mathbb{F}_p[\lambda(\Gamma)] / \mathbb{F}_p[\lambda(\Gamma)]_\pm})$ the kernel of the norm map of the quadratic extension $\mathbb{F}_p[\lambda(\Gamma)] / \mathbb{F}_p[\lambda(\Gamma)]_\pm$. \label{even deg from def of aniso}
\item The $\Gamma$-isometry class of a symplectic $\mathbb{F}_p\Gamma$-module $(V,h)$ is determined by the underlying $\mathbb{F}_p\Gamma$-module $V$. \label{symplectic determined by underlying module}
\end{enumerate}
\end{prop}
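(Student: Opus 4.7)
The plan is to exploit the semisimplicity of $\mathbb{F}_p\Gamma$ (valid since $p\nmid|\Gamma|$ by Maschke's theorem) and the compatibility of $h$ with the resulting decomposition. First I would write $V=\bigoplus_{[\lambda]}V_\lambda^{\oplus m_\lambda}$ as a direct sum of isotypic components indexed by Galois orbits of characters of $\Gamma$. The $\Gamma$-invariance of $h$ together with the duality $V_\lambda^{\ast}\cong V_{\lambda^{-1}}$ forces the restriction of $h$ to $V_\lambda^{\oplus m_\lambda}\times V_\mu^{\oplus m_\mu}$ to vanish unless $V_\mu\cong V_{\lambda^{-1}}$, so $m_\lambda=m_{\lambda^{-1}}$ and $(V,h)$ decomposes orthogonally into blocks associated to unordered pairs $\{[\lambda],[\lambda^{-1}]\}$.

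For (i), I would treat three cases for each such block. When $V_\lambda\not\cong V_{\lambda^{-1}}$, a Witt-type diagonalization of the perfect pairing $V_\lambda^{\oplus m}\times V_{\lambda^{-1}}^{\oplus m}\to\mathbb{F}_p$ over the field $\mathrm{End}_\Gamma(V_\lambda)\cong\mathbb{F}_p[\lambda(\Gamma)]$ splits the block into $m$ hyperbolic pairs $V_\lambda\oplus V_{\lambda^{-1}}$. When $\lambda^2=1$, the module $V_\lambda$ is one-dimensional over $\mathbb{F}_p$ and a standard symplectic decomposition splits $V_\lambda^{\oplus m}$ into hyperbolic planes $V_\lambda\oplus V_\lambda$. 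The remaining case $V_\lambda\cong V_{\lambda^{-1}}$ with $\lambda^2\neq 1$ requires the analysis of (ii) first.

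For (ii), set $F=\mathbb{F}_p[\lambda(\Gamma)]$. The hypothesis $V_\lambda\cong V_{\lambda^{-1}}$ means $\lambda^{-1}$ is a Galois conjugate of $\lambda$, so there is a unique $\sigma\in\mathrm{Gal}(F/\mathbb{F}_p)$ with $\sigma\circ\lambda=\lambda^{-1}$. Squaring yields $\sigma^2=\mathrm{id}$, and $\lambda^2\neq 1$ rules out $\sigma=\mathrm{id}$; since $\mathrm{Gal}(F/\mathbb{F}_p)$ is cyclic, the existence of an element of order exactly $2$ forces $[F:\mathbb{F}_p]$ to be even. Setting $F_\pm=F^{\langle\sigma\rangle}$, the norm map is $N_{F/F_\pm}(x)=x\sigma(x)$, and $N_{F/F_\pm}(\lambda(\gamma))=\lambda(\gamma)\lambda(\gamma)^{-1}=1$, yielding the desired containment.

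Returning to the remaining case of (i) and proceeding to (iii): using the involution $\sigma$ I would parameterize every non-degenerate $\Gamma$-invariant $\mathbb{F}_p$-bilinear form on $V_\lambda=F$ as $h_c(x,y)=\mathrm{tr}_{F/\mathbb{F}_p}(cx\sigma(y))$ for $c\in F^\times$, with the alternating condition translating into $\sigma(c)=-c$. Existence of such $c$ shows that $V_\lambda$ itself is anisotropic in the sense of the proposition, and a Gram--Schmidt argument then splits $V_\lambda^{\oplus m_\lambda}$ orthogonally into anisotropic copies of $V_\lambda$, completing (i). For (iii), the multiplicities of hyperbolic and anisotropic summands are forced by the $\mathbb{F}_p\Gamma$-module structure, so only rigidity of the form on each indecomposable is at issue; for anisotropic $V_\lambda$, multiplication by $u\in F^\times$ is a $\Gamma$-isometry carrying $h_c$ to $h_{c\cdot N_{F/F_\pm}(u)}$, so isometry classes form a torsor under $\{c:\sigma(c)=-c\}/N_{F/F_\pm}(F^\times)$, which is trivial by surjectivity of the norm between finite fields. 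I expect the main obstacle to be the clean handling of the $p=2$ dichotomy, where the condition $\sigma(c)=-c$ collapses to $c\in F_\pm$ and the alternating/symmetric distinction requires a reformulation using $h(x,x)$ instead of the skew-symmetric identity.
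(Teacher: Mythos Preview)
Your argument is correct and essentially complete. The paper does not prove this proposition at all: its proof reads in full ``All the proofs can be found in chapter 8 of \cite{BF} or in chapter 3 of \cite{BH-ET3}.'' So there is nothing to compare against in the paper itself; you have supplied a self-contained proof where the paper only gives a citation.

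A few minor remarks on execution. Your treatment of (ii) is clean and is exactly the standard one. For the Gram--Schmidt step in the anisotropic case of (i) and (iii), it is worth making explicit that the $\mathbb{F}_p$-valued alternating $\Gamma$-invariant form on $V_\lambda^{\oplus m}$ lifts uniquely to an $F$-valued $\sigma$-skew-Hermitian form $H$ with $h=\mathrm{tr}_{F/\mathbb{F}_p}\circ H$, after which you are invoking the classification of (skew-)Hermitian forms over finite fields; this is what guarantees both diagonalizability and uniqueness up to isometry. Your flagged obstacle at $p=2$ is in fact harmless: when $p=2$ the group $\Gamma$ has odd order, so $\lambda^2=1$ forces $\lambda=1$; and in the anisotropic case the skew-Hermitian condition $\sigma(c)=-c$ becomes $c\in F_\pm$, while $h_c(x,x)=\mathrm{tr}_{F/\mathbb{F}_p}(cN_{F/F_\pm}(x))=\mathrm{tr}_{F_\pm/\mathbb{F}_p}(\mathrm{tr}_{F/F_\pm}(cN(x)))=0$ automatically since $\mathrm{tr}_{F/F_\pm}$ of an element of $F_\pm$ vanishes in characteristic $2$. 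So no separate reformulation is needed.
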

\begin{proof}
All the proofs can be found in chapter 8 of \cite{BF} or in chapter 3 of \cite{BH-ET3}.
\end{proof}

We also call a symplectic $\mathbb{F}_p \Gamma$-module hyperbolic (resp. anisotropic) if it is a direct sum of hyperbolic (resp. anisotropic) indecomposable submodules. A special case is that if $V$ is anisotropic, then $V \oplus V$ is hyperbolic. By slightly abusing our terminology, we treat this as a special case of hyperbolic module and call it an \emph{even anisotropic} module. %Therefore by Proposition \ref{hyper and aniso and properties} (\ref{symplectic determined by underlying module}), given a finite {symplectic} module whose $\Gamma$-action is known, we do not have to know the alternating form exactly.

For each symplectic $\mathbb{F}_p\Gamma$-module $V$, we attach a sign $t^0_\Gamma(V)\in \{\pm1\}$ and a character $t^1_\Gamma(V):\Gamma\rightarrow \{\pm1\}$. We call these \emph{t-factors} of $V$. We choose a generator $\gamma$ of $\Gamma$ and set $t_\Gamma(V)=t^0_\Gamma(V)t^1_\Gamma(V)(\gamma)$. We give an algorithm in \cite{BH-ET3} on computing the t-factors.

\begin{enumerate}[(i)]
\item If $\Gamma$ acts on $V$ trivially, then $$t^0_\Gamma(V)=1\text{ and }t^1_\Gamma(V)\equiv1.$$
\item Let $V$ be an indecomposable symplectic $\mathbb{F}_p\Gamma$-module.
\begin{enumerate}
\item If $V=V_\lambda\oplus V_{\lambda^{-1}}$ is hyperbolic, then $$t^0_\Gamma(V)=1\text{ and }t^1_\Gamma(V)=\mathrm{sgn}_{\lambda(\Gamma)}(V_\lambda).$$
    Here $\mathrm{sgn}_{\lambda(\Gamma)}(V_\lambda):\Gamma\rightarrow\{\pm1\}$ is the character such that the image $\gamma\mapsto \mathrm{sgn}_{\lambda(\gamma)}(V_\lambda)$ is the sign of the multiplicative action of $\lambda(\gamma)$ on $V_\lambda$.
\item If $V=V_\lambda$ is anisotropic, then $$t^0_\Gamma(V)=-1\text{ and }t^1_\Gamma(V)(\gamma)=\left(\frac{\gamma}{\mathcal{K}}\right)\text{ for any }\gamma\in \Gamma.$$
    Here $\mathcal{K}=\ker(N_{\mathbb{F}_p[\lambda(\Gamma)]/\mathbb{F}_p[\lambda(\Gamma)]_\pm})$  and $\left(-\right)$ is the Jacobi symbol: for every finite cyclic group $H$, $$\left(\frac{x}{H}\right)=\begin{cases}
1 & \text{if }x\in H^2, \\
 -1 & \text{otherwise}.
\end{cases}$$
\end{enumerate}
\item If $V$ decomposes into an orthogonal sum $V_1\oplus\cdots\oplus V_m$ of indecomposable symplectic $\mathbb{F}_p\Gamma$-modules, then $$t^i_\Gamma(V)=t^i_\Gamma(V_1)\cdots t^i_\Gamma(V_m)\text{ for }i=0,1.$$
\end{enumerate}
If $p=2$, then the order of $\Gamma$ is odd. In this case $t^1_\Gamma(V)$ is always trivial because all sign characters and Jacobi symbols are trivial.

\begin{rmk}
If $V=V_{\lambda}$ is anisotropic indecomposable, then $V\oplus V$ is hyperbolic, or precisely even anisotropic. The t-factors are the same whether we consider $V\oplus V$ as hyperbolic or anisotropic. It is clear that $t^0_\Gamma (V\oplus V) = 1$ in both cases, while $t^1_\Gamma (V \oplus V)=\mathrm{sgn}_{\lambda(\Gamma)}V $ in the hyperbolic case and $t^1_\Gamma (V \oplus V) =t^1_\Gamma (V )^2= 1$ in the anisotropic case. Indeed $\mathrm{sgn}_{\lambda(\Gamma)}V \equiv 1$. It is clear for $p=2$. If $p$ is odd, then by Proposition \ref{hyper and aniso and properties}.(\ref{even deg from def of aniso}) we have that $s=|\mathbb{F}_p[\lambda(\Gamma)]/\mathbb{F}_p| $ is even and $\lambda(\Gamma) \subseteq \mu_{p^{s/2}+1}$. Therefore $|\mathbb{F}_p[\lambda(\Gamma)]^\times / \mu_{p^{s/2}+1}| = p^{s/2} - 1$ is even and $\mathrm{sgn}_{\lambda(\Gamma)} V=(\mathrm{sgn}_{\lambda(\Gamma)} \mu_{p^{s/2} + 1})^{p^{s/2} - 1}=1$.
\qed\end{rmk}

Suppose that $\Gamma$ is one of the cyclic subgroups $$\mu:=\mu_E/\mu_F\text{ and }\varpi:=\left<\varpi_E\right>/\left<\varpi_F\right>$$ of $\Psi_{E/F}$. We study the symplectic $\mathbb{F}_p\Gamma$-submodules of the standard $\mathbf{k}_F\Psi_{E/F}$-module $U$ and compute the t-factors of them. Recall that $[g] \in (\Gamma_E \backslash \Gamma_F / \Gamma_E)'$ is {symmetric} if $[g] = [g^{-1}]$ and is {asymmetric} otherwise.

If $[g]$ is asymmetric, then $U_{\pm [g]} = U_{[g]} \oplus U_{[g^{-1}]}$ is a hyperbolic $\mathbb{F}_p \Gamma$-module. Write $[g]=[\sigma^k\phi^i]$ using the description in Proposition \ref{explicit expression of double coset}. If $\Gamma=\mu$, then we have
    $$t^0_{\mu}(U_{\pm[\sigma^k\phi^i]})=1\text{ and }t^1_{\mu}(U_{\pm[\sigma^k\phi^i]}) :\zeta\mapsto \mathrm{sgn}_{\zeta^{q^{i}-1}}(U_{[\sigma^k\phi^i]})\text{ for all }\zeta\in \mu_E.$$
In particular, if $i=f/2$, then both $ U_{[\sigma^k\phi^{f/2}]}$ and $ U_{[(\sigma^k\phi^{f/2})^{-1}]}$ contain the $\mathbb{F}_p\mu$-module $\mathfrak{m}_{f/2}$. This module is anisotropic indecomposable by Proposition 19 of \cite{BH-ET3}. Hence $U_{\pm[\sigma^k\phi^{f/2}]}$ is even anisotropic, and so
     $$t^0_{\mu}(U_{\pm[\sigma^k\phi^{f/2}]}) = 1\text{ and }t^1_{\mu}(U_{\pm[\sigma^k\phi^{f/2}]}) \equiv 1.$$
If $\Gamma=\varpi$, then we have similarly
    $$t^0_{\varpi}(U_{\pm[\sigma^k\phi^i]})=1\text{ and }t^1_{\varpi}(U_{\pm[\sigma^k\phi^i]})(\varpi_E) = \mathrm{sgn}_{\zeta^k_e \zeta_{\phi^i}}(U_{[\sigma^k\phi^i]}).$$

Now we assume that $[g]$ is symmetric. We first give some properties of the submodule $U_{[g]}$. Let $t=t_k$ be the minimal solution of Proposition \ref{properties of symmetric [g]}.
\begin{lemma}\label{degree condition for symmetric double coset}
 Assume $[g]\neq[\sigma^{e/2}]$, then
$|U_{[g]}/\mathbf{k}_E|$ equals $2t_k$ in case $i=0$, and equals $2t_k+1$ in case $i=f/2$.
\end{lemma}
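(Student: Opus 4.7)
The plan is to identify $|U_{[g]}/\mathbf{k}_E|$ with $s_k$, the size of the $q^f$-orbit of $k$ in $\mathbb{Z}/e$, and then to deduce the two formulae from the minimality of $t_k$ and the structure of $q^f$-orbits.

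First, by Proposition~\ref{decomp of standard module}.(\ref{decomp of standard module first}) the simple $\Psi_{E/F}$-submodule $U_{[\sigma^k\phi^i]}$ is the $\mathbf{k}_E$-span of the $\Gamma_{\mathbf{k}_E}=\langle\phi^f\rangle$-orbit of a single $\varpi_E$-eigenspace, the eigenvalue being $(\zeta_e^k\zeta_{\phi^i})^{-1}$. Since $\zeta_{\phi^i}\in\mu_E$ is fixed by $\phi^f$, this Frobenius orbit of eigenvalues is indexed by the $q^f$-orbit of $k$ in $\mathbb{Z}/e$; each eigenspace is one-dimensional over $\mathbf{k}_E$, giving $|U_{[g]}/\mathbf{k}_E|=s_k$.

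In the ramified case $i=0$, squaring the defining relation $q^{ft_k}k\equiv -k\pmod e$ yields $q^{2ft_k}k\equiv k\pmod e$, so $s_k\mid 2t_k$. Under the hypothesis $[g]\neq[\sigma^{e/2}]$ we have $-k\not\equiv k\pmod e$, ruling out $s_k\mid t_k$ (which would force $q^{ft_k}k\equiv k\equiv -k$); combined with the minimality of $t_k$ this forces $s_k=2t_k$. In the unramified case $i=f/2$, squaring $q^{(f/2)(2t_k+1)}k\equiv -k\pmod e$ gives $s_k\mid 2t_k+1$, hence $s_k$ is odd and so is the quotient $m=(2t_k+1)/s_k$. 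The map $q^{(f/2)s_k}$ is an involution on the set $\{k,q^{(f/2)s_k}k\}$ (since $q^{fs_k}k\equiv k$), so it interchanges $k$ with some $k'\in\mathbb{Z}/e$; iterating this involution $m$ times with $m$ odd gives $q^{(f/2)(2t_k+1)}k=k'$, whence $k'\equiv -k\pmod e$. Consequently $t'=(s_k-1)/2$ is also a solution of Proposition~\ref{properties of symmetric [g]}, and by the minimality of $t_k$ we obtain $t_k\leq t'$, i.e.\ $s_k=2t_k+1$.

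I expect the main technical content to be the parity bookkeeping in the unramified case---tracking that both $s_k$ and the quotient $m$ are forced to be odd, and verifying that iterating $q^{(f/2)s_k}$ an odd number of times produces $-k$ rather than $+k$. The ramified case and the identification $|U_{[g]}/\mathbf{k}_E|=s_k$ are straightforward divisibility and orbit-counting arguments in the cyclic group $\mathbb{Z}/e$.
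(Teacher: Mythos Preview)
Your proof is correct and follows the same route as the paper: identify $|U_{[g]}/\mathbf{k}_E|$ with the size $s_k$ of the $q^f$-orbit of $k$ in $\mathbb{Z}/e$ (the paper phrases this as the degree of $\mathbf{k}_E[\zeta_e^k]/\mathbf{k}_E$, i.e.\ the minimal $s$ with $e\mid(q^{fs}-1)k$), then deduce $s_k=2t_k$ or $2t_k+1$ from the symmetry condition. The paper's proof simply asserts this last deduction, whereas you carry out the divisibility and minimality bookkeeping explicitly; your argument is a faithful expansion of what the paper leaves to the reader.
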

\begin{proof}
Since $U_{[g]}$ is the field extension of $\mathbf{k}_E$ generated by $\zeta^k_e$, the degree of $U_{[g]}/\mathbf{k}_E$ is equal to the minimal solution $s$ of $e|(q^{fs}-1)k$. Therefore $s$ must be the number indicated above.
\end{proof}

We denote by $U_{\pm [g]}$ the subfield of $U_{[g]}$ such that $|U_{[g]}/U_{\pm [g]}|=2$. Notice that $U_{\pm [g]}$ contains $\mathbf{k}_E$ as a sub-extension if and only if $[g]=[\sigma^{k}]$, with $k\neq 0$ and $e/2$. In the exception case when $[g] = [\sigma^{e/2}]$, the corresponding root $\lambda$ satisfies $\lambda^2=1$, the module $U_{[\sigma^{e/2}]} \cong \mathbf{k}_E$ as a $\mathbf{k}_F$-vector space, and the minimal solution in Proposition \ref{properties of symmetric [g]} is $t_{e/2} = 0$. As we will see in Proposition \ref{trivialness of some V[g]}, not all submodules of $U$ admit symplectic structures. This applies in particular to $U_{[\sigma^{e/2}]}$.

 Now we compute the t-factors of $U_{[g]}$ for symmetric $[g]$. Suppose that $[g]=[\sigma^{k}]$, with $k \neq 0$ or $e/2$. Since $\mu_E$ acts on $U_{[\sigma^{k}]}$ trivially, we have
\begin{equation*}
t^0_{\mu}(U_{[\sigma^k]})=1\text{ and }t^1_{\mu}(U_{[\sigma^k]}) \equiv1.
\end{equation*}
To compute $t^i_\varpi(U_{[\sigma^k]})$, $i=0,1$, we consider $U_{[\sigma^k]}$ as an $\mathbb{F}_p\varpi$-module.
Since $\left[\begin{smallmatrix}
1\\  \sigma^k
\end{smallmatrix}\right](\varpi_E)=\zeta_e^k$, we have that $U_{[\sigma^k]}$ isomorphic to $\mathbf{k}_E[\zeta^k_e]$ as a $\mathbf{k}_F$-vector space. Moreover, the simple submodule is anisotropic and is isomorphic to $\mathbb{F}_p[\zeta^k_e]$ as an $\mathbb{F}_p$-vector space. We have the following property about its multiplicity.
\begin{lemma}\label{order 2 trick}
The degree $r=r_{[\sigma^k]} = |\mathbf{k}_E[\zeta^k_e]/\mathbb{F}_p[\zeta^k_e]|$ is odd.
\end{lemma}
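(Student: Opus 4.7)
The plan is to convert this degree question into a 2-adic valuation computation, and then to exploit the symmetry of $[\sigma^k]$ to pin down $v_2(s)$ exactly, where $s$ is the degree of $\mathbb{F}_p[\zeta_e^k]$ over $\mathbb{F}_p$.

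First, I would express $r$ as a ratio of explicit finite-field degrees. Let $d = e/\gcd(e,k)$ be the multiplicative order of $\zeta_e^k$, and write $q = p^{f_0}$ with $f_0 = |\mathbf{k}_F/\mathbb{F}_p|$. Then $\mathbb{F}_p[\zeta_e^k] = \mathbb{F}_{p^s}$ with $s = \mathrm{ord}(p, d)$, while Lemma \ref{degree condition for symmetric double coset} (applied in the case $i = 0$) gives $|\mathbf{k}_E[\zeta_e^k]/\mathbf{k}_E| = 2 t_k$, so $\mathbf{k}_E[\zeta_e^k] = \mathbb{F}_{p^{2 f f_0 t_k}}$. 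The tower law then yields
\[
r = \frac{2 f f_0 t_k}{s}.
\]

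Next, I would extract the content of the symmetry of $[\sigma^k]$. By Proposition \ref{properties of symmetric [g]} with $i = 0$, we have $e \mid (q^{f t_k}+1)k$ and hence $d \mid p^{f f_0 t_k} + 1$, i.e.\ $p^{f f_0 t_k} \equiv -1 \pmod d$. Since $k \neq 0$ and $k \neq e/2$ the integer $d$ is at least $3$, so $-1 \not\equiv 1 \pmod d$; thus $s \mid 2 f f_0 t_k$ while $s \nmid f f_0 t_k$.

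Finally, I would finish by comparing 2-adic valuations. Writing $s = 2^a s'$ and $f f_0 t_k = 2^b m$ with $s', m$ odd, the divisibility $s \mid 2^{b+1} m$ forces $s' \mid m$ and $a \leq b+1$, while the non-divisibility $s \nmid 2^b m$ then forces $a > b$. Hence $a = b+1$, i.e.\ $v_2(s) = v_2(f f_0 t_k) + 1$, so
\[
v_2(r) = v_2(2 f f_0 t_k) - v_2(s) = (1 + v_2(f f_0 t_k)) - (v_2(f f_0 t_k) + 1) = 0,
\]
and $r$ is odd. The only mildly delicate point is ensuring that $d \geq 3$ so that $-1 \not\equiv 1 \pmod d$; this is exactly what the hypotheses $k \neq 0$ and $k \neq e/2$ guarantee, and no subtler obstacle appears.
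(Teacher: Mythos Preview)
Your proof is correct, but it takes a genuinely different route from the paper's argument. The paper argues by contradiction and via an intersection of cyclic groups: it observes that $\zeta_e^k$ lies in the norm kernel $\ker(N_{\mathbf{k}_E[\zeta_e^k]/\mathbf{k}_E[\zeta_e^k]_\pm}) = \mu_{q^{ft_k}+1}$; if $r$ were even, then writing $Q = \#\mathbb{F}_p[\zeta_e^k]$ one has $q^{ft_k} = Q^{r/2}$, whence $\zeta_e^k \in \mu_{Q-1}\cap\mu_{Q^{r/2}+1}\subseteq\{\pm 1\}$ (since $\gcd(Q-1,Q^{r/2}+1)\mid 2$), forcing $k=0$ or $e/2$. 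Your argument instead goes directly through the multiplicative order $s=\mathrm{ord}(p,d)$ and a $2$-adic valuation comparison: the symmetry condition $p^{ff_0t_k}\equiv -1\pmod d$ pins down $v_2(s)=v_2(2ff_0t_k)$, so $r=2ff_0t_k/s$ is odd. The paper's intersection trick is slightly more conceptual and explains the lemma's name, while your valuation argument is more mechanical but perhaps easier to recycle in the later analogous situations (e.g.\ the unramified symmetric case handled just after, and in Lemma~\ref{lemma determine a character, sym-unram case}).
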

\begin{proof}
We see that $\left[\begin{smallmatrix}
1\\  \sigma^k
\end{smallmatrix}\right]( \varpi) $ is contained in $ \ker(N_{\mathbf{k}_E[\zeta^k_e]/\mathbf{k}_E[\zeta^k_e]_\pm})$, which is the group of $(q^{ft_k}+1)$-roots of unity in $\mathbf{k}_E[\zeta^k_e]$. Suppose that $r$ is even. If $\mathbb{F}_p[\zeta_e^k]$ has $Q$ elements, then $\mathbf{k}_E[\zeta^k_e]$ has $Q^r=q^{2ft_k}$ elements. We have
\begin{equation*}
\left[\begin{smallmatrix}
1\\  \sigma^k
\end{smallmatrix}\right](\varpi)\subseteq \mathbb{F}_p[\zeta_e^k]^\times\cap \ker(N_{\mathbf{k}_E[\zeta^k_e]/\mathbf{k}_E[\zeta^k_e]_\pm})=\mu_{Q-1}\cap \mu_{Q^{r/2}+1}\subseteq\{\pm1\},
\end{equation*}
forcing $k=0$ or $e/2$. This is a contradiction.
\end{proof}
By Lemma \ref{order 2 trick}, we have
\begin{equation*}
t^0_\varpi(U_{[\sigma^k]}) = (-1)^r= -1
\text{ and }
t^1_\varpi(U_{[\sigma^k]}):\varpi_E\mapsto\left(\frac{\zeta_e^k}{\mu_{p^s+1}}\right)^{r}=\left(\frac{\zeta_e^k}{\mu_{p^s+1}}\right),
\end{equation*}
where $|\mathbb{F}_p[\zeta^k_e]/\mathbb{F}_p|=2s$ and $$\mu_{p^s+1}=\ker(N_{\mathbb{F}_p[\zeta^k_e]/\mathbb{F}_p[\zeta^k_e]_\pm}) $$ for the quadratic extension $\mathbb{F}_p[\zeta^k_e]/\mathbb{F}_p[\zeta^k_e]_\pm$.

Now suppose $[g]=[\sigma^{k}\phi^{f/2}]$. We first consider $U_{[\sigma^k \phi^{f/2}]}$ as an $\mathbb{F}_p\mu$-module. Notice that $$\left[\begin{smallmatrix}
1\\  \sigma^k\phi^{f/2}
\end{smallmatrix}\right](\mu_E) = \ker (N_{\mathbf{k}_E/\mathbf{k}_{E\pm}})=\mu_{q^{f/2}+1}\text{ and }\mathbb{F}_p\left[\left[\begin{smallmatrix}
1\\  \sigma^k\phi^{f/2}
\end{smallmatrix}\right] (\mu_E)\right]=\mathbf{k}_E.$$ Since each simple $\mathbb{F}_p\mu$-submodule in $U_{[\sigma^k \phi^{f/2}]}$ is anisotropic, we have
\begin{equation*}
t^0_{\mu}(U_{[\sigma^k \phi^{f/2}]})=(-1)^{2t_k+1}=-1
\end{equation*}
and
\begin{equation*}
t^1_\mu(U_{[\sigma^k \phi^{f/2}]}):\zeta\mapsto\left(\frac{\zeta^{{q^{f/2}-1}}}{\mu_{q^{f/2}+1}}\right)^{2t_k+1}= \left(\frac{\zeta^{{q^{f/2}-1}}}{\mu_{q^{f/2}+1}}\right)\text{ for all }\zeta\in \mu_E.
\end{equation*}
Notice that the latter is the unique quadratic character of $\mu_E$. We then regard $U_{[\sigma^k \phi^{f/2}]}$ as an $\mathbb{F}_p\varpi$-module. The action of $\varpi_E$ is the multiplication of $\left[\begin{smallmatrix}
1\\  \sigma^k\phi^{f/2}
\end{smallmatrix}\right](\varpi_E)=(\zeta_e^k\zeta_{\phi^{f/2}})^{-1}$. We distinguish this value between the following cases.
\begin{enumerate}[(i)]
\item If $\zeta_e^k\zeta_{\phi^{f/2}}=1$, then $\varpi_E$ acts trivially and hence $$t^0_{\varpi}(U_{[\sigma^k \phi^{f/2}]}) = 1\text{ and }t^1_{\varpi}(U_{[\sigma^k \phi^{f/2}]}) \equiv 1.$$
\item If $\zeta_e^k\zeta_{\phi^{f/2}}=-1$, then $U_{[\sigma^k \phi^{f/2}]}\cong\mathbf{k}_E$ and $\mathbb{F}_p[\zeta_e^k\zeta_{\phi^{f/2}}]=\mathbb{F}_p$. Since $|\mathbf{k}_E/\mathbb{F}_p|$ is even, the module $U_{[\sigma^k \phi^{f/2}]}$ is even anisotropic. We have
$$t^0_{\varpi}(U_{[\sigma^k \phi^{f/2}]})=1\text{ and }t^1_{\varpi}(U_{[\sigma^k \phi^{f/2}]})(\varpi_E)=\mathrm{sgn}_{-1}(U_{\pm[\sigma^k \phi^{f/2}]})=(-1)^{\frac{1}{2}(q^{f/2}-1)}.$$
\item If $\zeta_e^k\zeta_{\phi^{f/2}}\neq\pm1$, then similar to Lemma \ref{order 2 trick} we have that $|U_{[\sigma^k\phi^{f/2}]}/\mathbb{F}_p[\zeta_e^k\zeta_{\phi^{f/2}}]|$ is odd. We have $$t^0_{\varpi}(U_{[\sigma^k\phi^{f/2}]}) = -1\text{ and }t^1_{\varpi}(U_{[\sigma^k\phi^{f/2}]}):\varpi_E\mapsto \left(\frac{\zeta_e^k\zeta_{\phi^{f/2}}}{\mu_{p^s+1}}\right),$$ where $|\mathbb{F}_p[\zeta_e^k\zeta_{\phi^{f/2}}]/\mathbb{F}_p|=2s$ and $$\mu_{p^s+1}=\ker(N_{\mathbb{F}_p[\zeta_e^k\zeta_{\phi^{f/2}}]/\mathbb{F}_p[\zeta_e^k\zeta_{\phi^{f/2}}]_\pm}) $$ for the quadratic extension $\mathbb{F}_p[\zeta_e^k\zeta_{\phi^{f/2}}]/\mathbb{F}_p[\zeta_e^k\zeta_{\phi^{f/2}}]_\pm$.
\end{enumerate}
We summarize the above values of t-factors in the following
\begin{prop} \label{summary of t-factors}
\begin{enumerate}[(i)]
  \item If $\Gamma=\mu$, then the t-factors are as follows.
  \begin{enumerate}
\item If $[g]=[\sigma^k\phi^i]$ is asymmetric, then $t^0_{\mu}(U_{\pm[g]})=1$ and\\ $t^1_{\mu}  (U_{\pm[g]}) :\zeta\mapsto    \mathrm{sgn}_{\zeta^{q^{i}-1}}(U_{[g]})$.
    \item If $[g]=[\sigma^k]$ is symmetric, then $t^0_{\mu}(U_{[g]})=1$ and $t^1_{\mu}(U_{[g]}) \equiv1.$
    \item If $[g]=[\sigma^k\phi^{f/2}]$ is symmetric, then $t^0_{\mu}(U_{[g]})=-1$ and $t^1_\mu(U_{[g]})$ is quadratic.
  \end{enumerate}
  \item If $\Gamma=\varpi$, then the t-factors are as follows.
  \begin{enumerate}
    \item If $[g]=[\sigma^k\phi^i]$ is asymmetric, then $t^0_{\varpi}(U_{\pm[g]})=1$ and $$t^1_{\varpi}(U_{\pm[g]})(\varpi_E) = \mathrm{sgn}_{\zeta^k_e \zeta_{\phi^i}}(U_{[g]}).$$
    \item If $[g]=[\sigma^k]$ is symmetric, then $
t^0_\varpi(U_{[g]})= -1$ and $$
t^1_\varpi(U_{[g]}):\varpi_E\mapsto\left(\frac{\zeta_e^k}{\ker(N_{\mathbb{F}_p[\zeta^k_e]/\mathbb{F}_p[\zeta^k_e]_\pm})}\right).$$
    \item If $[g]=[\sigma^k\phi^{f/2}]$ is symmetric,
        \begin{enumerate}[(I)]
\item if $\zeta_e^k\zeta_{\phi^{f/2}}=1$, then $t^0_{\varpi}(U_{[g]}) = 1$ and $t^1_{\varpi}(U_{[g]}) \equiv 1$;
\item if $\zeta_e^k\zeta_{\phi^{f/2}}=-1$, then $t^0_{\varpi}(U_{[g]})=1$ and $t^1_{\varpi}(U_{[g]})(\varpi_E)=(-1)^{\frac{1}{2}(q^{f/2}-1)}$;
\item if $\zeta_e^k\zeta_{\phi^{f/2}}\neq\pm1$, then $t^0_{\varpi}(U_{[g]}) = -1$ and $$t^1_{\varpi}(U_{[g]}):\varpi_E\mapsto \left(\frac{\zeta_e^k\zeta_{\phi^{f/2}}}{\ker(N_{\mathbb{F}_p[\zeta_e^k\zeta_{\phi^{f/2}}]/\mathbb{F}_p[\zeta_e^k\zeta_{\phi^{f/2}}]_\pm})}\right).$$
\end{enumerate}
  \end{enumerate}
\end{enumerate}\qed
\end{prop}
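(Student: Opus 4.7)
The plan is to apply the algorithm for t-factors recalled in the text to the residual root space decomposition of Proposition \ref{decomp of standard module}, treating each type of double coset $[g]$ separately. The essential observation is that the contribution of $U_{[g]}$ (or $U_{\pm[g]}$ for asymmetric $[g]$) is already an indecomposable symplectic or hyperbolic piece, so once its isomorphism type (hyperbolic, anisotropic, or even anisotropic) is identified, the formulas for $t^0_\Gamma$ and $t^1_\Gamma$ apply directly.

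First I would dispose of the asymmetric cases: when $[g]=[\sigma^k\phi^i]$ is asymmetric, $U_{\pm[g]}=U_{[g]}\oplus U_{[g^{-1}]}$ is hyperbolic, and the t-factor formula in the hyperbolic case gives $t^0_\Gamma=1$ together with $t^1_\Gamma$ being the sign character of multiplication by the relevant image of the generator, which is $\zeta^{q^i-1}$ for $\Gamma=\mu$ and $\zeta_e^k\zeta_{\phi^i}$ for $\Gamma=\varpi$. The only delicate subcase is $\Gamma=\mu$ with $i=f/2$: here each $U_{[g]}$ already contains the anisotropic $\mathbb{F}_p\mu$-submodule $\mathfrak{m}_{f/2}$ (by Proposition 19 of \cite{BH-ET3}), so $U_{\pm[g]}$ is even anisotropic and both t-factors are trivial.

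Next I would handle the symmetric cases $[g]=[\sigma^k]$ or $[\sigma^k\phi^{f/2}]$ (excluding $[\sigma^{e/2}]$, which carries no symplectic form, by Proposition \ref{trivialness of some V[g]}). For $\Gamma=\mu$ and $[g]=[\sigma^k]$, $\mu$ acts trivially so both factors are trivial. For $\Gamma=\mu$ and $[g]=[\sigma^k\phi^{f/2}]$, the image $\left[\begin{smallmatrix}1\\ \sigma^k\phi^{f/2}\end{smallmatrix}\right](\mu_E)$ equals $\ker N_{\mathbf{k}_E/\mathbf{k}_{E\pm}}$ and generates $\mathbf{k}_E$, so $U_{[g]}$ is an anisotropic $\mathbb{F}_p\mu$-module of multiplicity $2t_k+1$ over $\mathbf{k}_E$ by Lemma \ref{degree condition for symmetric double coset}; the oddness of this multiplicity makes $t^0_\mu=-1$ and reduces $t^1_\mu$ to a single copy of the Jacobi symbol, giving the unique quadratic character of $\mu_E$. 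For $\Gamma=\varpi$ and symmetric $[g]$, I would analyze according to the value of the eigenvalue $\zeta_e^k\zeta_{\phi^i}$: if it is $1$ the action is trivial; if it is $-1$ the module is even anisotropic and $t^1_\varpi(\varpi_E)=\mathrm{sgn}_{-1}$, which evaluates to $(-1)^{(q^{f/2}-1)/2}$; if it is neither, the module is anisotropic and one needs the analogue of Lemma \ref{order 2 trick} to show that $|U_{[g]}/\mathbb{F}_p[\zeta_e^k\zeta_{\phi^i}]|$ is odd, whereupon the Jacobi symbol formula applies unchanged.

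The main obstacle is precisely the parity argument in this last step: to conclude that $t^1_\varpi$ is a single unsquared Jacobi symbol rather than an even power of it (which would trivialize it), one must rule out the possibility that $\left[\begin{smallmatrix}1\\g\end{smallmatrix}\right](\varpi_E)$ already lies in the small subfield's norm-kernel for parity reasons. The argument I would imitate is that of Lemma \ref{order 2 trick}: assuming the relevant degree is even forces $\left[\begin{smallmatrix}1\\g\end{smallmatrix}\right](\varpi_E)\in \mu_{Q-1}\cap\mu_{Q^{r/2}+1}\subseteq\{\pm1\}$, contradicting the defining condition of the subcase. Once this parity is secured, all the stated formulas fall out of the algorithm.
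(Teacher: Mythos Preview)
Your proposal is correct and follows essentially the same route as the paper: the proposition is simply a summary of the case-by-case computations carried out in the preceding discussion, and you have reproduced exactly that case analysis, invoking Lemma~\ref{degree condition for symmetric double coset} for the degrees, Lemma~\ref{order 2 trick} (and its analogue) for the crucial oddness of the multiplicity, and the hyperbolic/anisotropic algorithm for the values. One small imprecision in your overview: $U_{[g]}$ is generally not an \emph{indecomposable} symplectic $\mathbb{F}_p\Gamma$-module but rather a direct sum of $r$ copies of one, which is why the oddness of $r$ is needed; your detailed argument handles this correctly, so the only fix is to the wording of that first sentence.
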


\section{Complementary modules}\label{section comp mod}

This section involves quadratic Gauss sums. Many standard textbooks on number theory, e.g. \cite{Lang-ANT}, can deduce the results here. Some of the details can be found in section 4 of \cite{BH-ET2}.

Let $W$ be a finite dimensional $\mathbf{k}_F$-vector space, $Q:W\rightarrow\mathbf{k}_F$ be a quadratic form, and $\psi_F$ be a non-trivial character of $\mathbf{k}_F$. We define the Gauss sum $$\mathfrak{g}(Q,\psi_F)=\sum_{x\in W}\psi_F(Q(x,x)).$$ The simplest example is when $W=\mathbf{k}_F$ and $Q(x,x)=x^2$. In this case we denote the Gauss sum by $\mathfrak{g}(\psi_F)$. Suppose that $Q$ is non-degenerate, in the sense that $\det Q\neq 0$, then clearly
\begin{equation}\label{gauss sum reduction}
\mathfrak{g}(Q,\psi_F)=\left(\frac{\det Q}{\mathbf{k}_F^\times}\right)\mathfrak{g}(\psi_F)^{\dim_{\mathbf{k}_F} W}.
\end{equation}
We also define the normalized Gauss sums $\mathfrak{n}(Q,\psi_F)=(\#W)^{-1/2}\mathfrak{g}(Q,\psi_F)$ and $\mathfrak{n}(\psi_F)=q^{-1/2}\mathfrak{g}(\psi_F)$. The equation (\ref{gauss sum reduction}) is still true if we replace $\mathfrak{g}$ by $\mathfrak{n}$. We can easily show that the Gauss sum is a convoluted sum $$\mathfrak{g}(\psi_F)=\sum_{x\in\mathbf{k}_F}\left(\frac{x}{\mathbf{k}_F^\times}\right)\psi_F(x).$$
Using this and the technique similar to \cite{Lang-ANT} IV, \S3, $\textbf{GS2}$, we can deduce that $$\mathfrak{n}(Q,\psi_F)^2=\left(\frac{-1}{q}\right),$$ which implies that the normalized sum $\mathfrak{n}(Q,\psi_F)$ is a 4th root of unity.

We now assume that $W$ is a $\mathbf{k}_F\Psi_{E/F}$-submodule of the standard module $U$. Let $Q$ be a non-degenerate $\Psi_{E/F}$-invariant bilinear form of $W$. Let $Q_{[g]}$ be the restriction of $Q$ on $\mathbf{W}_{[g]} = W_{[g]}\oplus W_{[g^{-1}]}$ if $[g]$ is asymmetric, and on $\mathbf{W}_{[g]} = {W}_{[g]} $ if $[g]$ is symmetric. We assume that each $Q_{[g]}$ is non-degenerate if $\mathbf{W}_{[g]}$ is non-trivial.
\begin{prop}\label{n-factor of comp-mod}
If ${[g]}\neq[\sigma^{e/2}]$ and $\mathbf{W}_{[g]}$ is non-trivial, then the following holds.
  \begin{enumerate}[(i)]
     \item The sum $\mathfrak{n}(Q_{[g]},\psi_F)$ is equal to $1$ if $[g]$ is asymmetric, and is equal to $=-1$ if $[g]$ is symmetric. \label{n-factor of comp-mod first}
    \item The sum $\mathfrak{n}(Q_{[g]},\psi_F)$ depends only on the symmetry of $[g]$ and is independent of the quadratic form $Q$. \label{n-factor of comp-mod second}
  \end{enumerate}
\end{prop}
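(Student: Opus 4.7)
The plan is to separate the two cases on the symmetry of $[g]$ and reduce each Gauss sum to an explicit shape that is manifestly independent of the particular invariant form $Q$.

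First I would handle the asymmetric case. Here $\mathbf{W}_{[g]} = W_{[g]}\oplus W_{[g^{-1}]}$, and on $W_{[g]}$ (resp.\ $W_{[g^{-1}]}$) the group $\Psi_{E/F}$ acts through the character $\left[\begin{smallmatrix}1\\g\end{smallmatrix}\right]$ (resp.\ its inverse). The $\Psi_{E/F}$-invariance of $Q_{[g]}$ forces $Q_{[g]}(x,x') = \chi(\psi)^2 Q_{[g]}(x,x')$ for $x,x' \in W_{[g]}$ and every $\psi$; since $[g]$ is asymmetric the character $\left[\begin{smallmatrix}1\\g\end{smallmatrix}\right]^2$ is non-trivial, so $Q_{[g]}$ vanishes identically on $W_{[g]}\times W_{[g]}$, and likewise on $W_{[g^{-1}]}\times W_{[g^{-1}]}$. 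Thus the only surviving pairing is the duality $W_{[g]} \times W_{[g^{-1}]} \to \mathbf{k}_F$, which is non-degenerate by hypothesis. Writing $x = x_1+x_2 \in W_{[g]}\oplus W_{[g^{-1}]}$, we have $Q_{[g]}(x,x) = 2\langle x_1,x_2\rangle$, so
\begin{equation*}
\mathfrak{g}(Q_{[g]},\psi_F) = \sum_{x_1\in W_{[g]}}\sum_{x_2\in W_{[g^{-1}]}}\psi_F(2\langle x_1,x_2\rangle) = \# W_{[g^{-1}]}
\end{equation*}
by orthogonality on $x_2$. Since $\#\mathbf{W}_{[g]} = (\#W_{[g]})^2$, normalizing yields $\mathfrak{n}(Q_{[g]},\psi_F) = 1$, independently of $Q$.

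For the symmetric case $[g]\neq[\sigma^{e/2}]$, I would identify $W_{[g]}$ as a vector space with the residue field $\mathbf{k}_L$ of $L=E E^g$, which contains the index-two subfield $\mathbf{k}_{L_{\pm}}$. The image of $\Psi_{E/F}$ in $\mathbf{k}_L^\times$ lies in $\ker(N_{\mathbf{k}_L/\mathbf{k}_{L_\pm}})$, and by a standard classification of bilinear forms invariant under multiplication by this kernel, any such $Q_{[g]}$ has the form $Q_{[g]}(x,y) = \mathrm{tr}_{\mathbf{k}_L/\mathbf{k}_F}(c\, x\bar y)$ for a unique $c\in\mathbf{k}_{L_\pm}^\times$, where $\bar{\ }$ denotes the non-trivial automorphism of $\mathbf{k}_L/\mathbf{k}_{L_\pm}$. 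Non-degeneracy corresponds to $c\neq 0$. Then $Q_{[g]}(x,x) = \mathrm{tr}_{\mathbf{k}_L/\mathbf{k}_F}(c\,N_{\mathbf{k}_L/\mathbf{k}_{L_\pm}}(x)) = \mathrm{tr}_{\mathbf{k}_{L_\pm}/\mathbf{k}_F}(2c\,N(x))$, so setting $\psi_{L_\pm} = \psi_F\circ\mathrm{tr}_{\mathbf{k}_{L_\pm}/\mathbf{k}_F}$ and $q_0 = \#\mathbf{k}_{L_\pm}$,
\begin{equation*}
\mathfrak{g}(Q_{[g]},\psi_F) = \sum_{x\in\mathbf{k}_L}\psi_{L_\pm}(2c\,N(x)) = 1 + (q_0+1)\sum_{y\in\mathbf{k}_{L_\pm}^\times}\psi_{L_\pm}(2cy) = 1 - (q_0+1) = -q_0,
\end{equation*}
using that $N:\mathbf{k}_L^\times\to\mathbf{k}_{L_\pm}^\times$ is surjective with kernel of order $q_0+1$, and that the full character sum of a non-trivial additive character on $\mathbf{k}_{L_\pm}$ vanishes. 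Since $\#\mathbf{W}_{[g]} = q_0^2$, we obtain $\mathfrak{n}(Q_{[g]},\psi_F) = -1$, again independently of $c$.

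Assertion (ii) is then immediate from the two computations: in both cases the normalized sum takes a fixed value depending only on the symmetry of $[g]$. The main obstacle I anticipate is justifying the claimed shape $Q_{[g]}(x,y) = \mathrm{tr}_{\mathbf{k}_L/\mathbf{k}_F}(c\,x\bar y)$ for all $\Psi_{E/F}$-invariant non-degenerate forms on $W_{[g]}$; this requires showing that the space of $\ker(N_{\mathbf{k}_L/\mathbf{k}_{L_\pm}})$-invariant $\mathbf{k}_F$-bilinear forms is one-dimensional over $\mathbf{k}_{L_\pm}$, parametrized by the scalar $c$. The excluded case $[g]=[\sigma^{e/2}]$ corresponds exactly to the failure of this structure, since there $\lambda^2 = 1$ and $\mathbf{k}_{L_\pm} = \mathbf{k}_L$, so the norm-one subgroup is trivial and the invariance condition imposes no constraint on $Q_{[g]}$. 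Minor care will also be needed in characteristic two, where $Q(x,x) = 2Q(x_1,x_2)$ becomes $0$; in that setting I would pass to the associated quadratic form from the start of the argument rather than symmetrize the bilinear form.
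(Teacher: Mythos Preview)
Your asymmetric case is clean and correct (in odd residue characteristic): the invariance argument forces $Q_{[g]}$ to be hyperbolic with respect to the decomposition $W_{[g]}\oplus W_{[g^{-1}]}$, and the direct Gauss-sum computation is more elementary than the paper's route through discriminants.

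In the symmetric case your argument has a genuine gap. You correctly note that the image of $\Psi_{E/F}$ in $\mathbf{k}_L^\times$ \emph{lies in} $\ker(N_{\mathbf{k}_L/\mathbf{k}_{L_\pm}})$, but then you classify the forms invariant under the full kernel rather than under this image. These differ. For example, when $[g]=[\sigma^k]$ is ramified symmetric, $\mu$ acts trivially and the image of $\Psi_{E/F}$ is the cyclic group $\langle\zeta_e^k\rangle$; this is typically a proper subgroup of $\ker(N)$, and then $U_{[g]}=\mathbf{k}_E[\zeta_e^k]$ is isomorphic to several copies of the simple $\mathbf{k}_F\Psi_{E/F}$-module $\mathbf{k}_F[\zeta_e^k]$ (concretely: take $f=3$, $e>2$ with $q\equiv -1\bmod e$, so the multiplicity is $3$). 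On such an isotypic module the $\Psi_{E/F}$-invariant symmetric forms are parametrized by Hermitian matrices over $\mathrm{End}(\mathbf{k}_F[\zeta_e^k])$, not by a single scalar $c\in\mathbf{k}_{L_\pm}$, and most of them are \emph{not} of the shape $\mathrm{tr}_{\mathbf{k}_L/\mathbf{k}_F}(c\,x\bar y)$. Your explicit summation therefore establishes~(i) only for a special $Q$ and does not give~(ii).

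The paper sidesteps this by never classifying the forms. It uses the reduction $\mathfrak{n}(Q_{[g]},\psi_F)=\bigl(\tfrac{\det Q_{[g]}}{\mathbf{k}_F^\times}\bigr)\mathfrak{n}(\psi_F)^{\dim\mathbf{W}_{[g]}}$ and invokes Proposition~4.4 of \cite{BH-ET2} to evaluate the Legendre symbol of $\det Q$ on each indecomposable $\mathbf{k}_F\Psi_{E/F}$-summand. The crucial extra input is that the simple summand occurs in $U_{[g]}$ with \emph{odd} multiplicity (via Lemma~\ref{order 2 trick}), so the sign $-\bigl(\tfrac{-1}{q}\bigr)^{\dim/2}$ survives to $\mathbf{W}_{[g]}$, independently of which invariant $Q$ is chosen. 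If you want to rescue your direct approach you must supply precisely this: show that any two non-degenerate $\Psi_{E/F}$-invariant symmetric forms on $U_{[g]}$ have the same discriminant modulo squares. That amounts to the uniqueness of non-degenerate Hermitian forms of given rank over a finite field together with the odd-multiplicity fact, i.e.\ the same content the paper uses.
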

In the exceptional case $[g]=[\sigma^{e/2}]$, the sum  $\mathfrak{n}(Q_{[g]},\psi_F)$ is an arbitrary 4th root of unity.
\begin{proof}(of Proposition \ref{n-factor of comp-mod})
 We recall, from Proposition 4.4 of \cite{BH-ET2}, that if $\Psi_{E/F}$ is cyclic and $W$ is a non-trivial indecomposable $\mathbf{k}_F\Psi_{E/F}$-submodule of $U_{[g]}$, then we have
\begin{equation}\label{jacobi symbol depend on type only}
\left(\frac{\det Q|_W}{\mathbf{k}_F^\times}\right)=
\begin{cases}
\left(\frac{-1}{q}\right)^{\dim_{\mathbf{k}_F} W } & \text{if }{[g]} \text{ is asymmetric},\\
-\left(\frac{-1}{q}\right)^{\dim_{\mathbf{k}_F} W/2 } & \text{if }{[g]} \text{ is symmetric}.
\end{cases}
\end{equation}
The formula (\ref{jacobi symbol depend on type only}) generalizes to the case when $\Psi_{E/F}$ is not necessarily cyclic and when $W$ is replaced by $\mathbf{W}_{[g]}$. It is clear if $[g]$ is asymmetric. If $[g]$ is symmetric, it suffices to claim that $U_{[g]}$ contains an indecomposable $\mathbf{k}_F\Psi_{E/F}$-component of odd degree. When $[g]=[\sigma^k ]$, $k\neq 0$ or $e/2$, the indecomposable $\mathbf{k}_F\Psi_{E/F}$-component is $\mathbf{k}_F[\zeta_e^k]$ as a field extension of $\mathbf{k}_F$. Hence the claim is true by Lemma \ref{order 2 trick}. When $[g]=[\sigma^k\phi^{f/2} ]$, we can show that $U_{[g]}=\mathbf{k}_E[\zeta_e^k\zeta_{\phi^{f/2}}]$ is the indecomposable $\mathbf{k}_F\Psi_{E/F}$-component. Hence the claim is clearly true. Therefore the generalization of (\ref{jacobi symbol depend on type only}) implies that the discriminant of $Q_{[g]}$ mod $\mathbf{k}_F^{\times2}$ is determined by the underlying $\mathbf{k}_F\Psi_{E/F}$-module structure. Moreover, using (\ref{gauss sum reduction}), we have
\begin{equation*}
\mathfrak{n}(Q_{[g]},\psi_F)=\left(\frac{\det Q_{[g]}}{\mathbf{k}_F^\times}\right)\mathfrak{n}(\psi_F)^{\dim_{\mathbf{k}_F} \mathbf{W}_{[g]}}=\pm \left(\frac{-1}{q}\right)^{\dim_{\mathbf{k}_F} \mathbf{W}_{[g]}/2}\left(\frac{-1}{q}\right)^{\dim_{\mathbf{k}_F} \mathbf{W}_{[g]}/2}=\pm1,
\end{equation*}
where the sign is determined by the symmetry of ${[g]}$ as in (\ref{jacobi symbol depend on type only}).
\end{proof}

Given a symplectic submodule $V$ of $U$ without trivial components, let $W$ be the \emph{complementary module} of $V$, in the sense that $$W\oplus V=U'=\bigoplus_{[g]\in (W_E\backslash W_{F} / W_E)'}U_{[g]}.$$
Suppose that $Q$ is a non-degenerate $\Psi_{E/F}$-invariant quadratic form on $W$ such that each $Q_{[g]}$ is also non-degenerate on $\mathbf{W}_{[g]}$. We define our extra t-factor to be $$t(\mathbf{W}_{[g]})=\mathfrak{n}(Q_{[g]},\psi_F).$$ In particular, we have the following.
\begin{enumerate}[(i)]
  \item If $W_{[g]}$ is trivial , then $t(W_{[g]})=1$.
  \item If $[g]$ is asymmetric, then $t(\mathbf{W}_{[g]})=1$.
        \item If $[g]\neq[\sigma^{e/2}]$ is symmetric and $W_{[g]}$ is non-trivial, then $t(W_{[g]})=-1$.
\end{enumerate}
Again if $[g]\neq[\sigma^{e/2}]$, the t-factor $t(\mathbf{W}_{[g]})$ is independent of the quadratic form $Q$ above.

\chapter{Essentially tame supercuspidal representations}\label{chapter Tame supercuspidal representations}

In section \ref{section admissible characters revisited}, we recall the admissible characters defined in section \ref{section admissible characters} and extract more structural information from them. With such information we can describe, in section \ref{Supercuspidal Representations}, the bijection in Proposition \ref{prop bijection of P and A} on extending from an admissible character in several steps to an essentially tame supercuspidal representation of $G(F)$. In between the steps we construct a finite symplectic $\mathbf{k}_F \Psi_{E/F}$-module, of which we can apply the decomposition in Proposition \ref{decomp of standard module} and attach t-factors on the isotypic components. Finally, in section \ref{section explicit rectifiers}, we restate the values of the rectifiers defined by Bushnell-Henniart in terms of t-factors.

\section{Admissible characters revisited}\label{section admissible characters revisited}

We recall more information about admissible characters, a notion defined in section \ref{section admissible characters}. The details can be found in \cite{Moy} and also section 1 of \cite{BH-ET2}. We further assume that our additive character $\psi_F$ of $F$ is of level 0, which means that $\psi_F$ is trivial on $\mathfrak{p}_F$ but not on $\mathfrak{o}_F$. For any tamely ramified extension $E/F$, we write $\psi_E=\psi_F\circ\mathrm{tr}_{E/F}$. Given a character $\xi$ of $E^\times$, the $E$-level $r_E(\xi)$ of $\xi$ is the smallest integer $r$ such that $\xi|_{U_E^{r+1}}\equiv 1$. Notice that $\xi$ is {tamely ramified} if $r_E(\xi)=0$. Suppose that $\xi$ is admissible over $F$, then it admits a \emph{Howe-factorization} (see Lemma 2.2.4 of \cite{Moy}) of the form
\begin{equation}\label{Howe factorization}
\xi=  \xi_{-1}(\xi_0 \circ N_{E/E_0})\cdots(\xi_d \circ N_{E/E_d})(\xi_{d+1}\circ N_{E/F}).
\end{equation}
We need to specify the notations in (\ref{Howe factorization}). \begin{enumerate}[(i)]
\item  We have a decreasing sequence of fields
\begin{equation}\label{decreasing subfield}
E=E_{-1}\supseteq E_0 \supsetneq E_1 \cdots \supsetneq E_d \supsetneq E_{d+1}=F.
 \end{equation}
 Each $\xi_i$ is a character of $E_i^\times$ and $\xi_{d+1}$ is a character of $F^\times$.
 \item Let $r_i$ be the $E$-level of $\xi_i$, defined as the $E$-level of $\xi_i \circ N_{E/E_i}$, and $r_{d+1}$ be the $E$-level of $\xi$. We assume that $\xi_{d+1}$ is trivial if $r_{d+1}=r_d$.  We call the $E$-levels
$r_0<\dots<r_d$
the \emph{jumps} of $\xi$.
 \item If $E_0=E$, then we replace $(\xi_0 \circ N_{E/E_0})\xi_{-1}$ by $\xi_0$. If $E_0\subsetneq E$, then we assume that $\xi_{-1}$ is tame and $E/E_0$ is unramified.
\end{enumerate}

We define the wild component of $\xi$ to be $\Xi_0 \circ N_{E/E_0}$ where $$\Xi_0 = (\xi_{d+1} \circ N_{E_0/F}) \cdots (\xi_1 \circ N_{E_0/E_1}) \xi_0.$$ We also define the tame component of $\xi$ to be $\xi_{-1} = \xi (\Xi_0 \circ N_{E/E_0})^{-1}$.

\begin{prop}\label{jump proposition}
For $i = 0, \dots, d+1$, we have the following.
\begin{enumerate}[(i)]
\item If $s_i$ is the $E_i$-level of $\xi_i$, then $s_i e(E/E_i) = r_i$. \label{jump is multiple}
\item There is a unique $\alpha_i \in \langle \varpi_{E_i}\rangle \times \mu_{E_i}$ such that $v_{E_i}(\alpha_i) = -s_i$ and $$\xi_i|_{U^{s_i}_{E_i}}(1+x) = \psi_{E_i}(\alpha_i x).$$ \label{additive char}
\newcounter{enumi_saved}
\setcounter{enumi_saved}{\value{enumi}}
 \end{enumerate}For $i = 0, \dots, d$, we have the following.
 \begin{enumerate}[(i)]
 \setcounter{enumi}{\value{enumi_saved}}
\item Each character $\xi_i$ is generic over $E_{i+1}$, in the sense that $E_{i+1}[\alpha_i] = E_i$. \label{generic}
\item We have the relation $\gcd(r_i, e(E/E_{i+1})) = e(E/E_i)$. \label{gcd-level-ramification}
\end{enumerate}
\end{prop}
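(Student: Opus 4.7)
The plan is to establish the four items in the stated order, treating (i) and (ii) as calculations about the norm map and the self-duality of residue filtrations, then using the admissibility of $\xi$ to force the genericity in (iii), and finally deducing the arithmetic constraint (iv) as a corollary of (iii). The main technical tool throughout is the behaviour of the norm map on unit filtrations of a tame extension: for $E/E_i$ tame one has $N_{E/E_i}(U^{k}_E) = U^{\lceil k/e(E/E_i)\rceil}_{E_i}$ for $k\geq 1$, and the restriction of $N_{E/E_i}$ to $\mathfrak{o}_E^\times$ is surjective onto $\mathfrak{o}_{E_i}^\times$. The hard part will be (iii); (i), (ii), (iv) are essentially bookkeeping, but (iii) requires properly tracing how admissibility of the original $\xi$ cascades down through the Howe factorization.

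For (i), I would apply the norm formula directly: since $\xi_i$ is trivial on $U^{s_i+1}_{E_i}$ but not on $U^{s_i}_{E_i}$, the character $\xi_i \circ N_{E/E_i}$ is trivial on $U^{k}_E$ precisely when $N_{E/E_i}(U^k_E)\subseteq U^{s_i+1}_{E_i}$, i.e.\ when $\lceil k/e(E/E_i)\rceil \geq s_i+1$, which is $k \geq s_i e(E/E_i)+1$. Hence $r_i = s_i e(E/E_i)$. For (ii), the quotient $U^{s_i}_{E_i}/U^{s_i+1}_{E_i}$ is isomorphic to $\mathfrak{p}_{E_i}^{s_i}/\mathfrak{p}_{E_i}^{s_i+1}$ via $1+x\mapsto x$, and the trace-form pairing $\mathfrak{p}_{E_i}^{-s_i}/\mathfrak{p}_{E_i}^{-s_i+1}\times \mathfrak{p}_{E_i}^{s_i}/\mathfrak{p}_{E_i}^{s_i+1}\to \mathbf{k}_{E_i}$ together with the level-$0$ assumption on $\psi_F$ identifies characters of the left side with elements of the right side. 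Choosing the unique representative of the resulting class in $\langle\varpi_{E_i}\rangle \times \mu_{E_i}$ gives the element $\alpha_i$.

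The genericity (iii) is where the content of the Howe factorization lies. Assume toward contradiction that $E_{i+1}[\alpha_i]\subsetneq E_i$, so $\alpha_i$ lies in some intermediate field $K$ with $E_{i+1}\subseteq K\subsetneq E_i$. Then the character $\xi_i|_{U^{s_i}_{E_i}}$, being represented by $\alpha_i$ via (ii), factors through $N_{E_i/K}$ on the top filtration layer. Combining with the factors $\xi_{i+1}\circ N_{E_i/E_{i+1}},\dots,\xi_{d+1}\circ N_{E_i/F}$, which already factor through $K$, I would then assemble the wild component $\Xi_0\circ N_{E/E_0}$ into a character of the form $\eta\circ N_{E/KE_0}$ on $U^1_E$ with $E\supsetneq KE_0$ properly containing $E_{i+1}E_0$; the assumption $E/E_0$ unramified (when $E_0\subsetneq E$) and condition (ii) of admissibility then force $E/KE_0$ unramified, while tracking ramification degrees through the sequence \eqref{decreasing subfield} yields a contradiction with the step $E_{i+1}\subsetneq E_i$. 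The delicate point is that one must separate the ramified and unramified layers of the factorization and invoke the two clauses of admissibility (the one about $\xi$ itself versus the one about $\xi|_{U^1_E}$) against the appropriate index $i$.

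Given (i) and (iii), item (iv) is purely arithmetic. Write $e(E/E_{i+1}) = e(E/E_i)\cdot e(E_i/E_{i+1})$. Then $\gcd(r_i,e(E/E_{i+1})) = e(E/E_i)\cdot \gcd(s_i, e(E_i/E_{i+1}))$, so the claim reduces to $\gcd(s_i,e(E_i/E_{i+1}))=1$. If a prime $\ell$ divided both, then $\alpha_i = \zeta \varpi_{E_i}^{-s_i}$ with $\zeta\in\mu_{E_i}$ would satisfy $\alpha_i^{e(E_i/E_{i+1})/\ell}\in \mu_{E_i}\cdot \varpi_{E_{i+1}}^{-s_i/\ell}$ (after rescaling with roots of unity absorbed by $\mu_{E_i}$), which would place $\alpha_i$ in a proper subfield $K$ of $E_i$ over $E_{i+1}$ of ramification index $e(E_i/E_{i+1})/\ell < e(E_i/E_{i+1})$, contradicting (iii). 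I expect the main obstacle to be a careful accounting in (iii) of how the intermediate field $K$ interacts with $E_0$ and with the unramified stratum $E/E_0$, so that one applies the correct clause of admissibility; once this is isolated, the other three statements fall out mechanically.
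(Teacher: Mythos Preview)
Your approach is essentially the same as the paper's, which is extremely terse: it cites \cite{Moy}, section~2.2, for (ii) and (iii), says (i) is an elementary norm computation, and for (iv) argues exactly as you do, reducing to $\gcd(s_i,e(E_i/E_{i+1}))=1$ via (iii) and then using (i). Your sketches for (i) and (ii) are correct expansions of what the citations cover, and your strategy for (iii) is the right one (this is precisely how Moy's inductive construction of the Howe factorization extracts genericity from admissibility), though as you note the bookkeeping with $E_0$ and the unramified layer needs care.

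One small gap in your argument for (iv): computing that the \emph{power} $\alpha_i^{e(E_i/E_{i+1})/\ell}$ lies in $\mu_{E_i}\cdot\langle\varpi_{E_{i+1}}\rangle$ does not by itself place $\alpha_i$ in a proper subfield. The clean way to finish is to exhibit the subfield directly: let $L$ be the maximal unramified subextension of $E_i/E_{i+1}$, so $\mu_{E_i}\subseteq L$ and $E_i/L$ is totally tamely ramified of degree $e(E_i/E_{i+1})$; then $M=L[\varpi_{E_i}^{\ell}]$ is a proper subfield of $E_i$ with $e(E_i/M)=\ell$, and since $\ell\mid s_i$ one has $\alpha_i=\zeta\varpi_{E_i}^{-s_i}=\zeta(\varpi_{E_i}^{\ell})^{-s_i/\ell}\in M$, contradicting (iii). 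This is what the paper's one-line ``$E_{i+1}[\alpha_i]=E_i$ implies $\gcd(v_{E_i}(\alpha_i),e(E_i/E_{i+1}))=1$'' is implicitly invoking.
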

\begin{proof}
 (\ref{jump is multiple}) comes from an elementary calculation of the image of $N_{E/E_i}$. (\ref{additive char}) and (\ref{generic}) can be found in section 2.2 of \cite{Moy}. For (\ref{gcd-level-ramification}), that $E_{i+1}[\alpha_i] = E_i$ in (\ref{generic}) implies that $$\gcd(v_{E_i}(\alpha_i), e(E_i/E_{i+1})) = 1.$$ Then (\ref{additive char}) and (\ref{jump is multiple}) imply the desired result.
\end{proof}

We define the \emph{jump data} of an admissible character $\xi$ as a collection consisting of the sequence of subfields $E\supseteq E_0 \supsetneq E_1 \supsetneq\cdots \supsetneq E_d \supsetneq F$ as above and the increasing sequence $r_0<\dots<r_d$ of positive integers, with $r_0\geq 1$, as the jumps of $\xi$. We can define a $W_F$-action on the sequence of subfields as $$g:\{E\supseteq E_0 \supsetneq E_1 \supsetneq\cdots \supsetneq E_d \supsetneq F\}\mapsto \{{}^gE\supseteq {}^gE_0 \supsetneq {}^gE_1 \supsetneq\cdots \supsetneq {}^gE_d \supsetneq F\}$$ for all $g\in W_F$. The jumps of ${}^g\xi$ are clearly the same as those of $\xi$. Hence we can define the jump data of the equivalence class $(E/F,\xi)\in P_n(F)$ in the obvious sense.

For each $\xi_i$, where $i=0,\dots,d+1$, there is $\beta_i\in E_i\cap \mathfrak{p}_E^{-r_i}$ such that
\begin{equation*}
\phi_i\circ N_{E/E_i}(1+x)= \psi_F(\text{tr}_{E/F}(\beta_ix))\text{ for all }x\in \mathfrak{p}_E^{\lfloor{r_i}/{2}\rfloor+1}.
\end{equation*}
Such $\beta_i$ can be chosen mod $\mathfrak{p}_E^{1-\left(\lfloor{r_i}/{2}\rfloor+1\right)}$. If we write
\begin{equation*}
\beta_i=\zeta_i\varpi_E^{-r_i}u_i\text{ for some }\zeta_i\in \mu_E\text{ and }u_i\in U^1_E,
\end{equation*}
then the `first term' $\zeta_i\varpi_E^{-r_i}$ of $\beta_i$ is equal to $\alpha_i$ defined in Proposition \ref{jump proposition}.(\ref{additive char}). We write
\begin{equation}\label{b is sum of b_i}
\beta=\beta(\xi)=\beta_{d+1}+\cdots+\beta_0.
\end{equation}

\section{Essentially tame supercuspidal representations}\label{Supercuspidal Representations}

We briefly describe how to parameterize essentially tame supercuspidal representations by admissible characters through the bijection $\Pi_n:P_n(F)\rightarrow\mathcal{A}^\mathrm{et}_n(F)$ defined in Proposition \ref{prop bijection of P and A}. The details can be found in \cite{Moy}, \cite{BK}, and are also summarized in \cite{BH-ET1}. We first recall certain subgroups of $G(F)$. Let $E\supseteq E_0 \supsetneq E_1 \cdots \supsetneq E_d \supsetneq F$ be a decreasing sequence of fields. Write $B_i = \mathrm{End}_{E_i}(E)$ and define
\begin{equation}\label{hereditary orders}
\begin{split}
&\mathfrak{B}_i = \{ x \in B_i | x\mathfrak{p}_E^k \subseteq \mathfrak{p}_E^k \text{ for all } k\in \mathbb{Z} \}\text{ and}
\\
&\mathfrak{P}_{\mathfrak{B}_i}=\{ x \in B_i | x \mathfrak{p}_E^k \subseteq \mathfrak{p}_E^{k+1} \text{ for all } k \in \mathbb{Z}\},
\end{split}
\end{equation}
namely the hereditary order in $B_i$ and its radical as those defined in (\ref{hereditary order A}) and (\ref{hereditary order PA}). We then define subgroups of $B_i^\times$
\begin{equation}\label{unit groups}
\begin{split}
U_{\mathfrak{B}_i}=\{ x \in B_i^\times | x \mathfrak{p}_E^k = \mathfrak{p}_E^k \text{ for all } k\in \mathbb{Z}\}\text{ and } U_{\mathfrak{B}_i}^j = 1 + \mathfrak{P}_{\mathfrak{B}_i}^j\text{ for }j>0.
\end{split}
\end{equation}
If $E/E_0$ is unramified, then we can replace all $\mathfrak{p}_E$ by $\mathfrak{p}_{E_0}$ in (\ref{hereditary order A}), (\ref{hereditary order PA}), (\ref{hereditary orders}) and (\ref{unit groups}). If we write $A=\mathrm{End}_F(E)$, then we certainly have $U_\mathfrak{A}$ and $U^j_\mathfrak{A}$ defined as in (\ref{unit groups}) with $B_i$ replaced by $A$. The multiplication of $E$ identifies $E$ as a subspace of $A$. Choose an isomorphism of $A^\times\cong G(F)$ such that $E^\times$ embeds into $G(F)$ by an $F$-regular morphism. Then $\mathfrak{A}^\times$, $U_\mathfrak{A}$, $U^j_\mathfrak{A}$, $\mathfrak{B}_i^\times$, $U_{\mathfrak{B}_i}$ and $ U_{\mathfrak{B}_i}^j$ embed into $G(F)$ accordingly.

Suppose that the fields $E_i$, $i=0,\dots,d$, are defined by the Howe-factorization of $\xi\in P(E/F)$ as in (\ref{Howe factorization}) with jumps $\{r_0,\dots,r_d\}$. We define two numbers $j_i$ and $h_i$ by
\begin{equation}\label{j_i and h_i}
j_i=\lfloor\frac{r_i+1}{2}\rfloor\leq h_i=\lfloor\frac{r_i}{2}\rfloor+1.
\end{equation}
 We construct the subgroups
\begin{equation}\label{group H(xi) J(xi) and bold-J(xi)}
\begin{split}
&H^1(\xi)= U_{\mathfrak{B}_{0}}^1 U_{\mathfrak{B}_{1}}^{h_{0}} \cdots U_{\mathfrak{B}_{d}}^{h_{d-1}} U_{\mathfrak{A}}^{h_{d}},
\\
&J^1(\xi)=U_{\mathfrak{B}_{0}}^1 U_{\mathfrak{B}_{1}}^{j_{0}} \cdots U_{\mathfrak{B}_{d}}^{j_{d-1}} U_{\mathfrak{A}}^{j_{d}}\subseteq J(\xi)=U_{\mathfrak{B}_{0}} U_{\mathfrak{B}_{1}}^{j_{0}} \cdots U_{\mathfrak{B}_{d}}^{j_{d-1}} U_{\mathfrak{A}}^{j_{d}}\text{, and}
\\
&\mathbf{J}(\xi)=E^\times J(\xi)=E_0^\times J(\xi).
\end{split}
\end{equation}
We abbreviate these groups by $H^1,\, J^1,\, J$ and $\mathbf{J}$ if the admissible character $\xi$ is fixed. Notice that $H^1,\, J^1,J$ are compact subgroups and  $\mathbf{J}$ is a compact-mod-center subgroup.

Now we briefly describe the construction by Bushnell and Kutzko \cite{BK} and in \cite{BH-ET1} of the supercuspidal representation $\pi_\xi$ from an admissible character $\xi$ in five steps.
\begin{enumerate}[(i)]
\item From the Howe factorization (\ref{Howe factorization}) of $\xi$ we can define a character $\theta=\theta(\xi)$ on $H^1$. This character depends only on the wild component $\Xi_0\circ N_{E/E_0}$ of $\xi$. In fact, according to the definition of simple characters in (3.2) of \cite{BK}, there can be a number of such characters associated to $\xi$. There is a canonical one $\theta(\xi)$ constructed in section 3.3 of \cite{Moy}, called the simple character of $\xi$ in this thesis.
\item By classical theory of Heisenberg representation (see (5.1) of \cite{BK}), we can extend $\theta$ to a unique representation $\eta$ of $J^1$ using the symplectic structure of $V=J^1/H^1$ defined by $\theta$. \label{Heisenberg extension}
    \item There is a unique extension $\Lambda_0=\Lambda(\Xi_0\circ N_{E/E_0})$ of $\eta$ on $\mathbf{J}$ satisfying the conditions in Lemma 1 and 2 of section 2.3 of \cite{BH-ET1}. The restriction $\Lambda_0|_J$ is called a $\beta$-extension of $\eta$, in the sense of (5.2.1) of \cite{BK}.\label{beta-extension}
       \item  We still need another representation $\Lambda(\xi_{-1})$ on $\mathbf{J}$, defined by the tame component $\xi_{-1}$ of $\xi$. Suppose that $\xi_{-1}|_{U_E}$ is the inflation of a character $\bar{\xi}_{-1}$ of $\mathbf{k}_E$. We first apply Green's parametrization \cite{Green} to obtain a unique (up to isomorphism) irreducible cuspidal representation $\bar{\lambda}_{-1}$ of $\mathrm{GL}_{|E/E_0|}(\mathbf{k}_{E_0}) = J / J^1$, then inflate $\bar{\lambda}_{-1}$ to a representation $\lambda_{-1}$ on $J$, and finally multiply $\xi(\varpi_E)$ to obtain $\Lambda(\xi_{-1})$ on $\mathbf{J} = \left<\varpi_E\right>J$.
           \item We form the extended maximal type $\Lambda_\xi=\Lambda(\xi_{-1})\otimes\Lambda_0$. The supercuspidal is then defined by
$\pi_\xi=\mathrm{cInd}_\mathbf{J}^G\Lambda_\xi.$
\end{enumerate}

\begin{rmk}
The wild component $\xi_w$ and tame component $\xi_t$ of $\xi$ are defined alternatively in \cite{BH-ET1}. We briefly explain that they extend to the same representation $\Lambda_\xi$ on $\mathbf{J}$. By construction we have $\xi = (\Xi_0 \circ N_{E/E_0}) \xi_{-1} = \xi_w \xi_t$. On the wild part, since $\xi_w|_{U^1_E} = (\Xi_0 \circ N_{E/E_0})|_{U^1_E}$, they induce the same simple character $\theta = \theta(\xi)$. Therefore we have an isomorphism of the $\beta$-extensions $\Lambda(\xi_w)=\Lambda_0\otimes \alpha$, where $\alpha$ is a tamely ramified character on $E^\times U_{\mathfrak{B}_0} / U^1_{\mathfrak{B}_0}$ such that $$\alpha|_{U_{\mathfrak{B}_0}} = \Xi_0|_{\mu_{E_0}} \circ \mathrm{det}_{\mathbf{k}_{E_0}} \circ (\mathrm{proj}^J_{J/J^1})\text{ and }\alpha(\varpi_E) = \xi^{-1}_w(\varpi_E)(\Xi_0 \circ N_{E/E_0})(\varpi_{E}).$$ (Compare this to (5.2.2) of \cite{BK} concerning $\beta$-extensions.) Here $\mathrm{proj}^J_{J/J_1}$ is the natural projection $J \rightarrow J/J^1 \cong \mathrm{GL}_{|E/E_0|}(\mathbf{k}_{E_0})$. On the tame part, it can be checked that $\Lambda(\xi_t) = \Lambda(\xi_{-1})\otimes \alpha^{-1}$. Indeed, $\xi_w$ is trivial on $\mu_E$ by construction in \cite{BH-ET1}, which implies that the Green's representations $\bar{\lambda_t}$ and $\bar{\lambda}_{-1}\otimes(\Xi_0 \circ \det_{\mathbf{k}_{E_0}})$ on $\mathrm{GL}_{|E/E_0|}(\mathbf{k}_{E_0})$ are isomorphic. Therefore $\Lambda(\xi_t) \otimes \Lambda(\xi_w) = \Lambda(\xi_{-1}) \otimes \Lambda_0=\Lambda_\xi$ as desired. With the Howe factorization of $\xi$ in hand, it is more natural to define our wild and tame component of $\xi$ as in the five steps in the preceding paragraph.
\qed\end{rmk}

We analyze the group extensions in step (\ref{Heisenberg extension}) and (\ref{beta-extension}) of the five steps of constructing supercuspidal representations. Since the group $\mathbf{J}$ normalizes $J^1$ and $H^1$, it acts on the quotient group $V=V(\xi) = J^1/H^1$. Such action clearly commutes with the scalars in $\mathbb{F}_p$ if we regard $V$ as an $\mathbb{F}_p$-vector space. We have a direct sum
\begin{equation}\label{coarse decomp of V}
  V=V_{E_0/E_1} \oplus \cdots \oplus V_{E_d/F},
\end{equation} where
$V_{E_i/E_{i+1}} =U^{j_i}_{\mathfrak{B}_{i+1}} / U^{j_i}_{\mathfrak{B}_i} U^{h_i}_{\mathfrak{B}_{i+1}}$. By the definitions in (\ref{j_i and h_i}), the module $V_i$ is non-trivial if and only if the jump $r_i$ is even, in which case we have $V_{E_i/E_{i+1}}\cong \mathfrak{B}_{i+1} / \mathfrak{B}_i + \mathfrak{P}_{\mathfrak{B}_{i+1}}$. We call this sum (\ref{coarse decomp of V}) the \emph{coarse decomposition} of $V$.
\begin{prop}\label{some facts in BK}
Let $H^1, J^1, \mathbf{J}, V, V_{E_i/E_{i+1}}$, and $\theta$ be those previously described.
\begin{enumerate}[(i)]
\item The commutator subgroup $[J^1, J^1]$ lies in $H^1$. \label{some facts in BK about commutator}
\item The group $\mathbf{J}$ normalizes each component $V_{E_i/E_{i+1}}$ and the simple character $\theta$. \label{some facts in BK about J normalizing}
\item The simple character $\theta$ induces a non-degenerate alternating $\mathbb{F}_p$-bilinear form $h_\theta:V \times V \rightarrow \mathbb{C}$ such that the coarse decomposition is an orthogonal sum. \label{some facts in BK about alt bilinear form}
\end{enumerate}
\end{prop}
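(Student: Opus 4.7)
The plan is to derive each of the three statements from the standard theory of simple types as developed in \cite{BK}, with the role of the tame parameter $\xi$ entering only through the jump data and the resulting exponents $(j_i, h_i)$.

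For (\ref{some facts in BK about commutator}), I would first reduce to the filtration inequality $2j_i \geq h_i$ (immediate from $j_i = \lfloor (r_i+1)/2\rfloor$ and $h_i=\lfloor r_i/2\rfloor+1$). Using the commutator estimates for the filtration $U^k_{\mathfrak{B}_i}$ in a hereditary order -- namely $[U^k_{\mathfrak{B}_i}, U^\ell_{\mathfrak{B}_j}]\subseteq U^{k+\ell}_{\mathfrak{B}_{\min(i,j)}}$ -- together with the compatibility $U^k_{\mathfrak{B}_i}\subseteq U^k_{\mathfrak{B}_j}$ for $i\geq j$ (see (3.1) of \cite{BK}), I would expand $[J^1,J^1]$ using the factorization of $J^1$ in (\ref{group H(xi) J(xi) and bold-J(xi)}) and check termwise that each pairwise commutator lies in one of the factors of $H^1$. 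This is essentially Proposition 3.1.15 of \cite{BK} applied to our Howe factorization.

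For (\ref{some facts in BK about J normalizing}), the normalization of $\theta$ by $\mathbf{J}$ is built into the construction of simple characters: by the intertwining and normalizer results in (3.3) of \cite{BK}, together with the canonical choice in section 3.3 of \cite{Moy}, one has $\mathbf{J}\subseteq \mathrm{Norm}_G(\theta)$. For the components $V_{E_i/E_{i+1}}$, I would note that each is a quotient of a subgroup described purely in terms of the hereditary orders $\mathfrak{B}_i,\mathfrak{B}_{i+1}$ attached to the chain of subfields $E_i\supsetneq E_{i+1}$; since conjugation by $E^\times$ preserves each $\mathfrak{B}_i$ (because $E^\times\subseteq B_i^\times$ normalizes $\mathfrak{B}_i$), and $J$ preserves each $\mathfrak{B}_i$ on the nose, the full group $\mathbf{J}=E^\times J$ preserves each factor.

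For (\ref{some facts in BK about alt bilinear form}), the form is defined by $h_\theta(\bar{x},\bar{y})=\theta([x,y])$ for lifts $x,y\in J^1$; this is well defined by (\ref{some facts in BK about commutator}) and the fact that $\theta$ is a character trivial on $[J^1,J^1]\cap H^1$ up to the level jumps, alternating is immediate, $\mathbb{F}_p$-bilinearity follows from $J^1/H^1$ being a $p$-group of exponent $p$ (again from the filtration estimates), and non-degeneracy is the content of (3.4.1) of \cite{BK}. The orthogonality of the coarse decomposition is the key calculation: for $i\neq j$ one has $[U^{j_i}_{\mathfrak{B}_{i+1}}, U^{j_j}_{\mathfrak{B}_{j+1}}]\subseteq U^{j_i+j_j}_{\mathfrak{B}_{\min(i,j)+1}}$, and I would show, using the generic nature of the $\beta_i$ (Proposition \ref{jump proposition}.(\ref{generic})), that $\theta$ factors through the norm in such a way that it vanishes on the relevant filtration step across distinct layers.

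The main obstacle is the orthogonality in (\ref{some facts in BK about alt bilinear form}): establishing that the form vanishes on $V_{E_i/E_{i+1}}\times V_{E_j/E_{j+1}}$ for $i\neq j$ requires careful tracking of the values of $\theta$ on commutators via the $\beta_i$-data, and ultimately uses the \emph{tameness} of $E/E_0$ together with the genericity conditions. Everything else in the proof is essentially bookkeeping within the filtration structure of \cite{BK}.
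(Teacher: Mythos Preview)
Your outline is essentially the paper's proof, which consists almost entirely of citations: (i) is (3.1.15) of \cite{BK}, the normalization of $\theta$ in (ii) is (3.2.3) of \cite{BK}, normalization of each $V_{E_i/E_{i+1}}$ is ``clear by definition'' (your argument via $E^\times\subseteq B_i^\times$ is exactly this), and the existence of $h_\theta$ in (iii) is (3.4) of \cite{BK}.

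Two remarks. First, your filtration inclusions run the wrong way: since $E_i\supsetneq E_{i+1}$ means $B_i\subsetneq B_{i+1}$, one has $U^k_{\mathfrak{B}_i}\subseteq U^k_{\mathfrak{B}_j}$ for $i\leq j$ (not $i\geq j$), and correspondingly the commutator lands in $U^{k+\ell}_{\mathfrak{B}_{\max(i,j)}}$ rather than $\min$. This does not break the argument for (i) but you should straighten it out before writing details. Second, for the orthogonality of the coarse decomposition in (iii) --- which you correctly identify as the only nontrivial point --- the paper does \emph{not} carry out the direct commutator calculation you sketch; it simply cites 6.3 of \cite{BH-LTL3}. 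Your proposed direct route (tracking $\theta$ on cross-commutators via the $\beta_i$ and genericity) is exactly what underlies that reference, so either citing it or reproducing the calculation is acceptable; just be aware that the index bookkeeping is more delicate than your sketch suggests, and tameness of $E/E_0$ is not actually needed for this particular orthogonality statement.
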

\begin{proof}
Some of the proofs can be found in \cite{BK} chapter 3, for example (\ref{some facts in BK about commutator}) is in (3.1.15) of \cite{BK}, the existence of the form in (\ref{some facts in BK about alt bilinear form}) is in (3.4) of \cite{BK}, and that $\mathbf{J}$ normalizes $\theta$ in (\ref{some facts in BK about J normalizing}) is from (3.2.3) of \cite{BK}. That $\mathbf{J}$ normalizes each $V_{E_i/E_{i+1}}$ in (\ref{some facts in BK about J normalizing}) is clear by definition. That the coarse decomposition is orthogonal in (\ref{some facts in BK about alt bilinear form}) is from 6.3 of \cite{BH-LTL3}.
\end{proof}

 What are we interested in is the conjugate action of $E^\times$ on $V$ restricted form $\mathbf{J}$. By Proposition \ref{some facts in BK}.(\ref{some facts in BK about J normalizing}), The action of $\mathbf{J}$, and hence that of $E^\times$, preserves the symplectic structure defined by $\theta$. By Proposition \ref{some facts in BK}.(\ref{some facts in BK about commutator}), the subgroup $J^1$ of $\mathbf{J}$ acts trivially on $V$, so the $E^\times$-action factors through $E^\times/F^\times(E^\times\cap  J^1) \cong \Psi_{E/F}$. Hence $V$ is moreover a finite symplectic $\mathbb{F}_p\Gamma$-module for each cyclic subgroup $\Gamma$ of $\Psi_{E/F}$, and the form $h_\theta$ in Proposition \ref{some facts in BK}.(\ref{some facts in BK about alt bilinear form}) is $\Psi_{E/F}$-invariant. By construction the $\mathbf{k}_F\Psi_{E/F}$-module $V$ is always a submodule of the standard one $U=\mathfrak{A/P_A}$. We denote the $U_{[g]}$-isotypic component in $V$ by $V_{[g]}$, and call the decomposition $$V=\bigoplus_{[g]\in (\Gamma_E\backslash \Gamma_F / \Gamma_E)'}V_{[g]}$$ the \emph{complete decomposition} of $V$.

\begin{prop}\label{complete decomp orthogonal}
The complete decomposition of $V$ is an orthogonal sum with respect to the alternating form $h_\theta$.
\end{prop}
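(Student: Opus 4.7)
The plan is to establish orthogonality of the complete decomposition by combining the $\Psi_{E/F}$-invariance of $h_\theta$ with Schur's lemma applied to the isotypic components $V_{[g]}$. In particular, I will show that $h_\theta(V_{[g]}, V_{[h]}) = 0$ whenever $[h] \neq [g^{-1}]$, which immediately yields that the symplectic blocks $\mathbf{V}_\lambda$ (taken to be $V_{[g]}$ in the symmetric case and $V_{[g]} \oplus V_{[g^{-1}]}$ in the asymmetric case) are pairwise orthogonal.

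First I would record the setup. Since $E/F$ is tamely ramified, the order $|\Psi_{E/F}|$ divides $(q^f - 1)e$ and is therefore coprime to $p$. Consequently $V$ is a semisimple $\mathbb{F}_p \Psi_{E/F}$-module with canonical isotypic decomposition $V = \bigoplus_{[g]} V_{[g]}$, where each $V_{[g]}$ is the $U_{[g]}$-isotypic component of $V$ viewed inside the standard module $U$ via the embedding alluded to in the Interlude. Proposition \ref{some facts in BK}.(\ref{some facts in BK about J normalizing}) tells us that $\mathbf{J}$ normalizes $\theta$, so $h_\theta$ is $\Psi_{E/F}$-invariant. Equivalently, the adjoint $\varphi \colon V \to V^{\vee}$ is $\mathbb{F}_p \Psi_{E/F}$-equivariant, where $V^{\vee} = \mathrm{Hom}_{\mathbb{F}_p}(V, \mathbb{F}_p)$ is equipped with the contragredient $\Psi_{E/F}$-action.

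The heart of the argument is the duality identification
\begin{equation*}
U_{[g]}^{\vee} \cong U_{[g^{-1}]} \quad \text{as }\mathbb{F}_p\Psi_{E/F}\text{-modules}.
\end{equation*}
Writing $[g] = [\sigma^k \phi^i]$, Proposition \ref{decomp of standard module} gives the action of $\Psi_{E/F}$ on $U_{[g]}$ explicitly by $\zeta \mapsto (\zeta^{q^i - 1})^{-1}$ and $\varpi_E \mapsto (\zeta_e^k \zeta_{\phi^i})^{-1}$. Inverting these characters produces the action on $U_{[g^{-1}]}$: using the formula $(\sigma^k \phi^i)^{-1} = \sigma^{-k \bar{q}^i} \phi^{-i}$ from the proof of Proposition \ref{properties of symmetric [g]}, and remembering that $\mathbf{k}_{{}^g\!E E} = \mathbf{k}_{{}^{g^{-1}}\!E E}$ so that $U_{[g]}$ and $U_{[g^{-1}]}$ live in the same residue field, one verifies that the reciprocal characters match. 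Equivalently, one can argue intrinsically: the $E^\times$-action on the component ${}^g E E$ of $\mathfrak{A}/\mathfrak{P}_{\mathfrak{A}}$ is multiplication by ${}^g t \, t^{-1}$, whose inverse ${}^{g^{-1}} t \, t^{-1}$ (after substituting $t \mapsto {}^{g^{-1}} t$, which is a $\Gamma_F$-conjugate and hence produces a Galois-conjugate simple module) realizes the action on ${}^{g^{-1}} E E$.

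Granted the duality, I would conclude by Schur's lemma applied over the field $\mathbf{k}_F \Psi_{E/F}$: if $[h] \neq [g^{-1}]$, then $V_{[g]}$ and $V_{[h]}^{\vee}$ are isotypic for non-isomorphic simple $\mathbb{F}_p \Psi_{E/F}$-modules, so $\mathrm{Hom}_{\mathbb{F}_p \Psi_{E/F}}(V_{[g]}, V_{[h]}^{\vee}) = 0$. The restriction of $\varphi$ to a map $V_{[g]} \to V_{[h]}^{\vee}$ must therefore vanish, proving $h_\theta(V_{[g]}, V_{[h]}) = 0$. The main obstacle I anticipate is making the duality $U_{[g]}^{\vee} \cong U_{[g^{-1}]}$ rigorous, particularly because each $U_{[g]}$ is simple only over $\mathbf{k}_F \Psi_{E/F}$ and not over $\mathbf{k}_F$, so one must be careful that the $\mathbb{F}_p$-linear dual (with contragredient action) really does match the module $U_{[g^{-1}]}$ produced by Proposition \ref{decomp of standard module} rather than merely a Galois twist of it; this is handled by the character comparison above, noting that the ambient residue field is the same on both sides.
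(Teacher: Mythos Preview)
Your proof is correct and follows essentially the same approach as the paper: both arguments exploit the $\Psi_{E/F}$-invariance of $h_\theta$ to show that $V_{[g]}$ can pair non-trivially only with $V_{[g^{-1}]}$. The paper's version extends scalars to $\bar{\mathbb{F}}_p$ and argues directly with eigenvectors (if $v$ has eigenvalue $\lambda(t)$ and $w$ has eigenvalue $\mu(t)$, then invariance forces $\lambda(t)\mu(t)=1$ whenever $\tilde h(v,w)\neq 0$), whereas you package the same idea as Schur's lemma applied to the equivariant adjoint $V\to V^\vee$ together with the duality $U_{[g]}^\vee\cong U_{[g^{-1}]}$; these are two standard phrasings of one argument.
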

\begin{proof}
Recall the bijection $\Gamma_F \backslash \Phi \rightarrow (\Gamma_E\backslash \Gamma_F / \Gamma_E)'$ in Proposition \ref{orbit of roots as double coset} and write $V_{[g]}$ as $V_{[\lambda]}$ for suitable $[\lambda]\in \Gamma_F \backslash \Phi$. For every $\lambda$ and $\mu\in \Phi$ such that $\lambda\neq \mu$ or $\mu^{-1}$, let $v\in V_{[\lambda]}$ and $w\in V_{[\mu]}$. We can assume that $v$ and $w$ are respectively contained in certain $\Psi_{E/F}$-eigenspaces of $ V_{[\lambda]}\otimes_{\mathbb{F}_p}\bar{\mathbb{F}}$ and $V_{[\mu]}\otimes_{\mathbb{F}_p}\bar{\mathbb{F}}$. There is a unique $\Psi_{E/F}$-invariant alternating bilinear form $\tilde{h}$ of $V\otimes_{\mathbb{F}_p}\bar{\mathbb{F}}$ extending $h=h_\theta$. Therefore $\tilde{h}(v,w)=\tilde{h}({}^tv,{}^tw)=\lambda(t)\mu(t)\tilde{h}(v,w)$
for all $t\in \Psi_{E/F}$. The fact that $\lambda\neq \mu$ or $\mu^{-1}$ implies that $h(v,w)=\tilde{h}(v,w)=0$.
\end{proof}
For our purpose it is not necessary to know the form $h_\theta$ exactly. Indeed by Proposition \ref{hyper and aniso and properties}.(\ref{symplectic determined by underlying module}) the symplectic structure of $V$ is determined by its underlying $\mathbb{F}_p\Psi_{E/F}$-module structure. We conclude this section by proving a promised fact, that not all components of $U$ appear in the symplectic module $V$.

\begin{prop}\label{trivialness of some V[g]}
Let $E/F$ be a tame extension and $\xi$ runs through all admissible characters in $P(E/F)$.
\begin{enumerate}[(i)]
\item If $[g]\in \Gamma_E\backslash \Gamma_{E_0}/\Gamma_E=\Gamma_{E_0}/\Gamma_E$, then $V_{[g]}$ is always trivial.
\item If $e=e(E/F)$ is even, then $V_{[
\sigma^{e/2}]}$ is always trivial.
\end{enumerate}
\end{prop}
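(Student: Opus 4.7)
The plan is to combine the coarse decomposition $V = \bigoplus_{i=0}^{d} V_{E_i/E_{i+1}}$ of (\ref{coarse decomp of V}) with the residual root space decomposition of Proposition \ref{decomp of standard module}, in order to pin down exactly which double cosets $[g]$ can contribute to $V$. First I would identify each summand $V_{E_i/E_{i+1}}$ with a concrete subquotient of the standard module: when $r_i$ is even, the $\log$ map gives $V_{E_i/E_{i+1}} \cong \mathfrak{P}^{r_i/2}_{\mathfrak{B}_{i+1}}/(\mathfrak{P}^{r_i/2}_{\mathfrak{B}_i}+\mathfrak{P}^{r_i/2+1}_{\mathfrak{B}_{i+1}})$, and then multiplication by $\varpi_E^{-r_i/2}$ (central in $E$, hence $\Psi_{E/F}$-equivariant for the conjugation action, and realising $\mathfrak{P}^{r_i/2}_{\mathfrak{B}_{i+1}} = \varpi_E^{r_i/2}\mathfrak{B}_{i+1}$) carries this to $\mathfrak{B}_{i+1}/(\mathfrak{B}_i+\mathfrak{P}_{\mathfrak{B}_{i+1}})$. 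Applying the argument of Proposition \ref{rational root space decomp} to $B_j = \mathrm{End}_{E_j}(E)$ in place of $A$ gives $\mathfrak{B}_j/\mathfrak{P}_{\mathfrak{B}_j} \cong \bigoplus_{[g] \in \Gamma_E\backslash\Gamma_{E_j}/\Gamma_E} U_{[g]}$, so the $\mathbf{k}_F\Psi_{E/F}$-components of $V_{E_i/E_{i+1}}$ are precisely those $U_{[g]}$ with $[g] \in (\Gamma_E\backslash\Gamma_{E_{i+1}}/\Gamma_E) \setminus (\Gamma_E\backslash\Gamma_{E_i}/\Gamma_E)$.

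Granted this identification, part (i) drops out immediately. Because $E_0 \supseteq E_1 \supseteq \cdots \supseteq E_d$ forces $\Gamma_{E_0} \subseteq \Gamma_{E_i}$ for every $i \geq 0$, any $[g] \in \Gamma_E\backslash\Gamma_{E_0}/\Gamma_E$ already lies in $\Gamma_E\backslash\Gamma_{E_i}/\Gamma_E$ for each $i$ and therefore contributes to none of the differences above, so $V_{[g]} = 0$.

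For part (ii) I would argue by contradiction. Suppose $V_{[\sigma^{e/2}]} \neq 0$; then some $i \in \{0,\dots,d\}$ satisfies $r_i$ even and $[\sigma^{e/2}] \in (\Gamma_E\backslash\Gamma_{E_{i+1}}/\Gamma_E) \setminus (\Gamma_E\backslash\Gamma_{E_i}/\Gamma_E)$. Since $\sigma$ acts trivially on $\mu_L$ and multiplies $\varpi_E$ by $\zeta_e$, one has $\langle\sigma\rangle \cap \Gamma_{L/E_j} = \langle\sigma^{e_j}\rangle$, so $\sigma^{e/2} \in \Gamma_{E_j}$ precisely when $e(E/E_j)$ is even. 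The hypothesis then becomes $e(E/E_{i+1})$ even and $e(E/E_i)$ odd, which forces $e(E_i/E_{i+1}) = e(E/E_{i+1})/e(E/E_i)$ to be even. But Proposition \ref{jump proposition}.(\ref{gcd-level-ramification}) gives $\gcd(s_i, e(E_i/E_{i+1})) = 1$ for $s_i = r_i/e(E/E_i)$, so $s_i$ is odd; combined with $e(E/E_i)$ odd this makes $r_i = s_i\cdot e(E/E_i)$ odd, contradicting the parity of $r_i$. Hence no such $i$ exists and $V_{[\sigma^{e/2}]} = 0$.

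The main technical obstacle lies in the first step: one must check carefully that passage to residues together with the twist by $\varpi_E^{-r_i/2}$ faithfully transports the conjugation action of $\Psi_{E/F}$ onto the action described in Proposition \ref{decomp of standard module}, and that $\mathfrak{P}_{\mathfrak{B}_i} = \mathfrak{B}_i \cap \mathfrak{P}_{\mathfrak{B}_{i+1}}$ so that the image of $\mathfrak{B}_i$ in the residue really is $\mathfrak{B}_i/\mathfrak{P}_{\mathfrak{B}_i}$. Both are essentially bookkeeping with hereditary orders, but they are the only substantive ingredients; once secured, the parity computation for part (ii) and the containment argument for part (i) finish the proof quickly.
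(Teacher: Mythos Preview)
Your proposal is correct and follows essentially the same route as the paper. The paper's proof is terser: for (i) it simply says the claim is clear from the definition of $J^1(\xi)$ and $H^1(\xi)$, and for (ii) it locates the unique index $j$ with $e(E/E_j)$ odd and $e(E/E_{j+1})$ even, invokes Proposition~\ref{jump proposition}.(\ref{gcd-level-ramification}) to force $r_j$ odd, and concludes $V_{E_j/E_{j+1}}=0$. Your contradiction argument and the direct argument are the same parity computation, and your extra work identifying $V_{E_i/E_{i+1}}$ with $\bigoplus_{[g]\in(\Gamma_E\backslash\Gamma_{E_{i+1}}/\Gamma_E)\setminus(\Gamma_E\backslash\Gamma_{E_i}/\Gamma_E)}U_{[g]}$ just makes explicit what the paper leaves implicit. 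One small notational wobble: the inertia subgroup $\langle\sigma\rangle\cap\Gamma_{L/E_j}$ is $\langle\sigma^{e/e(E/E_j)}\rangle$ (it has order $e(E/E_j)$), not $\langle\sigma^{e(E/E_j)}\rangle$; your stated conclusion that $\sigma^{e/2}\in\Gamma_{E_j}$ iff $e(E/E_j)$ is even is nonetheless correct.
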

\begin{proof}
 The first statement is clear from the definition of $J^1(\xi)$ and $H^1(\xi)$ in (\ref{group H(xi) J(xi) and bold-J(xi)}). We prove the second statement. Let $E_j\supsetneq E_{j+1}$ be the intermediate subfields in (\ref{decreasing subfield}) such that $e(E/E_{j+1})$ is even and $e(E/E_j)$ is odd. By Proposition \ref{jump proposition}.(\ref{gcd-level-ramification}), the jump $r_j$ must be odd, and so $V_{E_j/E_{j+1}}$ is trivial. Since $\sigma^{e/2} \in \Gamma_{E_{j+1}} - \Gamma_{E_j}$, the component $V_{[\sigma^{e/2}]}$ is contained in $V_{E_j/E_{j+1}}$ and hence is also trivial.
\end{proof}

\section{Explicit values of rectifiers}\label{section explicit rectifiers}

 For each admissible character $\xi$ we give the values of the rectifier ${}_F\mu_\xi$ following \cite{BH-ET1}, \cite{BH-ET2}, \cite{BH-ET3}. Recall that each rectifier ${}_F\mu_\xi$ admits a factorization as in (\ref{product of rectifier}) in terms of $\nu$-rectifiers. Since each factor is tamely ramified, it is enough to give its values on $\mu_E$ and at $\varpi_E$.

As in section \ref{section autom ind}, we would distinguish between the following cases:
\begin{enumerate}[(I)]
  \item $E/K_l$ is totally ramified of odd degree, \label{section supercuspidal three cases totally-ram odd}
  \item each $K_i/K_{i-1}$, $i=1,\dots,l$, is totally ramified quadratic, and \label{section supercuspidal three cases totally-ram quad}
  \item $K_0/F$ is unramified. \label{section supercuspidal three cases unram}
\end{enumerate}
In case (\ref{section supercuspidal three cases totally-ram odd}), it is easy to describe the rectifier. By Theorem 4.4 of \cite{BH-ET1} we have
\begin{equation}\label{explicit rectifier ram-odd case}
  {}_{E/K_l}\mu_\xi|_{\mu_E}\equiv 1 \text{ and } {}_{E/K_l}\mu_\xi(\varpi_E)=\left(\frac{q^f}{e(E/K_l)}\right).
\end{equation}
We consider case (\ref{section supercuspidal three cases totally-ram quad}). For any field extension $K$, we denote by $$\mathfrak{G}_K=\mathfrak{G}_K(\varpi_K,\psi_K),\,\mathfrak{K}_K=\mathfrak{K}_K(\alpha_0,\varpi_K,\psi_K)\text{ and }\mathfrak{K}_K^\kappa=\mathfrak{K}^\kappa_K(\alpha_0,\varpi_K,\psi_K)$$ those various Gauss sums and Kloostermann sums in chapter 8 and 9 of \cite{BH-ET2}. By Theorem 3.1, Theorem 3.2, and Theorem 6.6 of \cite{BH-ET2}, we have
\begin{equation}\label{explicit rectifier ram-even odd case}
  \begin{split}
   & {}_{K_l/K_{l-1}}\mu_\xi|_{\mu_E}=\left(\frac{}{\mu_E}\right) \text{ and } \\ & {}_{K_l/K_{l-1}}\mu_\xi(\varpi_E)=t_\varpi(V_{K_{l}/K_{l-1}})\mathrm{sgn}((\mathfrak{G}_{K_{l-1}}/\mathfrak{G}_{K_l})(\mathfrak{K}^\kappa_{K_{l-1}}/\mathfrak{K}_{K_{l-1}})),
  \end{split}
\end{equation}
and for $j=l-1,\dots,1$,
\begin{equation}
  \label{explicit rectifier ram-even even case}
\begin{split}
  & {}_{K_j/K_{j-1}}\mu_\xi|_{\mu_E}\equiv 1
\text{ and } \\
&{}_{K_j/K_{j-1}}\mu_\xi(\varpi_E)=t_\varpi(V_{K_{j}/K_{j-1}})\mathrm{sgn}((\mathfrak{G}_{K_{j-1}}/\mathfrak{G}_{K_j})(\mathfrak{K}^\kappa_{K_{j-1}}/\mathfrak{K}_{K_{j-1}})).
\end{split}
\end{equation}
Here $\mathrm{sgn}(x)=x/|x|^{-1}$ for all $x\in\mathbb{C}^\times$. We need more notations to express the explicit values of these signs. For convenience we assume that $E/F$ is totally ramified and $K/F$ is a quadratic sub-extension of $E$. Suppose that the character $\xi$ is admissible over $F$ with jump data $E= E_0 \supsetneq E_1 \supsetneq\cdots \supsetneq E_d \supsetneq F$ and $1\leq r_0<\dots<r_d$. We define the following indexes.
 \begin{enumerate}[(i)]
\item $S$ is the index such that $r_S$ is the largest odd jump.
\item $T$ is the minimal index such that $|E_{T+1}/F|$ is odd.
\end{enumerate}
\begin{lemma} \label{about jump S and T}
  \begin{enumerate}[(i)]
    \item $S\leq T$, and $S=T$ if and only if $r_T$ is odd.
    \item $|E_i/E_{i+1}|$ is odd for $i<S$, and both $|E_S/E_{S+1}|$ and $r_{S+1}$ are even.
    \item $|E_T/E_{T+1}|$ is even, and for $i>T$ we have $|E_i/E_{i+1}|$ is odd and $r_{i}$ is even.
  \end{enumerate}
\end{lemma}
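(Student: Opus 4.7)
The plan is to track $2$-adic valuations along the Howe-factorization chain using Proposition~\ref{jump proposition}.(iv). Writing $n_i = |E/E_i|$ and $m_i = |E_i/F|$, so that $n_i m_i = |E/F|$ is even by hypothesis (the quadratic subextension $K/F$), Proposition~\ref{jump proposition}.(iv) becomes $\gcd(r_i, n_{i+1}) = n_i$ (using that $E/F$ is totally ramified). Taking $v_2$ yields the key identity
\[
v_2(n_i) = \min\bigl(v_2(r_i),\, v_2(n_{i+1})\bigr),
\]
which I will use repeatedly. In this language, $T$ is the minimal index with $v_2(m_{T+1}) = 0$, and $S$ is the maximal index with $v_2(r_S) = 0$.

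The routine parts of (ii) and (iii) proceed as follows. For (iii): by minimality of $T$, $v_2(m_T) \geq 1$ and $v_2(m_{T+1}) = 0$, so $|E_T/E_{T+1}| = m_T/m_{T+1}$ is even; for $i > T$, both $m_i$ and $m_{i+1}$ divide the odd integer $m_{T+1}$, hence $|E_i/E_{i+1}|$ is odd, while $n_i = |E/F|/m_i$ is even, and since $n_i \mid r_i$ (immediate from the key identity), $r_i$ is even. For the first half of (ii): $r_S$ odd forces $v_2(n_S) = 0$ via the key identity; since $n_i \mid n_S$ for $i \leq S$, every such $n_i$ is odd, giving $|E_i/E_{i+1}| = n_{i+1}/n_i$ odd for $i < S$; the evenness of $r_{S+1}$ is the very definition of $S$. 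For (i): the sequence $v_2(n_i)$ is non-decreasing in $i$ (since $n_i \mid n_{i+1}$), while $v_2(n_S) = 0$ and $v_2(n_{T+1}) = v_2(|E/F|) \geq 1$, so $S \leq T$; the equivalence $S = T \iff r_T$ is odd then reads off directly from the definitions.

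The main obstacle is the evenness of $|E_S/E_{S+1}|$, for which the key identity at index $S$ alone is insufficient (it permits any $v_2(n_{S+1}) \geq 0$). The plan is to argue by contradiction, using crucially the hypothesis that $|E/F|$ is even. Suppose $v_2(n_{S+1}) = 0$. For every $j$ with $S < j \leq d$, the jump $r_j$ is even by the maximality of $S$, so $v_2(r_j) \geq 1$; thus whenever $v_2(n_j) = 0$, the key identity reads $0 = \min\bigl(v_2(r_j),\, v_2(n_{j+1})\bigr)$, which with $v_2(r_j) \geq 1$ forces $v_2(n_{j+1}) = 0$. Propagating this from $j = S+1$ through $j = d$ yields $v_2(n_{d+1}) = 0$, i.e., $|E/F|$ odd, contradicting the hypothesis. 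Hence $v_2(n_{S+1}) \geq 1$, and since $v_2(n_S) = 0$, the ratio $|E_S/E_{S+1}| = n_{S+1}/n_S$ is even.
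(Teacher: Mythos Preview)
Your proof is correct and follows essentially the same route as the paper's: both rely on Proposition~\ref{jump proposition}(i) and (iv) to track oddness/evenness of the $|E/E_i|$ along the chain, and both prove the evenness of $|E_S/E_{S+1}|$ by the same contradiction (if it were odd, propagate oddness of $|E/E_j|$ forward using $\gcd(r_j,|E/E_{j+1}|)=|E/E_j|$ with $r_j$ even, until $|E/F|$ is forced odd). Your systematic use of $v_2$ and the identity $v_2(n_i)=\min\bigl(v_2(r_i),v_2(n_{i+1})\bigr)$ is just a cleaner bookkeeping device for the same divisibility facts the paper invokes verbally.
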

\begin{proof}
We make use of the results in Proposition \ref{jump proposition}.
  \begin{enumerate}[(i)]
    \item If $S>T$, then the odd $r_S$ is divisible by $|E/E_S|$. In turn, $r_S$ is divisible by $|E/E_{T+1}|$, which is even by definition. Therefore $S\leq T$. It is clear that $r_T$ is odd if $S=T$. Conversely if $r_T$ is odd, then $|E/E_T|$ is also odd. This forces $S=T$ by definition.
    \item If $i<S$, then $|E_i/E_{i+1}|$ divides $r_S$ and hence is odd. That $r_{S+1}$ is even is by definition. If $|E_S/E_{S+1}|$ is odd, then, by applying Proposition \ref{jump proposition}.(\ref{gcd-level-ramification}) that $\text{gcd}(r_i, |E/E_{i+1}|) = |E/E_i|$ for $i \geq S+1$, we show that all $|E_i/E_{i+1}|$ are odd for $i \geq S$. Hence $|E_S/E_{S+1}|$ must be even.
    \item Again by definition and divisibility in Proposition \ref{jump proposition}, similar to above.
  \end{enumerate}
\end{proof}
In \cite{BH-ET2}, the symbols $r_S,\,r_T,\,|E_{S+1}/F|$, and $ \zeta_S$ are denoted by $i^+,\,i_+ ,\, d^+$, and $\zeta(\varpi)=\zeta(\varpi, \xi)$ respectively. We would follow their notations and state the values of the signs of those quotient-sums in (\ref{explicit rectifier ram-even odd case}) and (\ref{explicit rectifier ram-even even case}). (Notice that there is a shift of indexes from ours to those in \cite{BH-ET2}: our $r_0$ is denoted by $r_1$ in \cite{BH-ET2}.)
\begin{enumerate}[(i)]
 \item  If the first jump $r_0$ is $>1$, then $\mathfrak{K}^\kappa_F/\mathfrak{K}_F = 1$ by convention and
$$ \mathrm{sgn}(\mathfrak{G}_F/\mathfrak{G}_K )= \begin{cases}
\left( \frac{-1}{q}\right)^{\frac{i^+-1}{2}} \left( \frac{d^+}{q} \right) \left( \frac{\zeta(\varpi)}{q} \right) \mathfrak{n}(\psi_F)^{e/2d^+} & \text{if }e/2\text{ is odd,} \\
 \left( \frac{-1}{q}\right)^{ei_+/4}& \text{if }e/2\text{ is even,}
\end{cases}$$
by Corollary 8.3 and Proposition 8.4 of \cite{BH-ET2}.\label{explicit signs first jump >1}

\item When $r_0=1$, there are three possible cases, namely $ i_+ = i^+=r_0=1$, $i_+ > i^+ = r_0 = 1$, and $i_+ \geq i^+ >r_0 =1$.
  \begin{enumerate}
    \item When $ i_+ = i^+=r_0=1$, we have $\mathfrak{G}_F=\mathfrak{G}_K=1$, by Lemma 8.1.(1) of \cite{BH-ET2}, and
$$ \mathrm{sgn}(\mathfrak{K}^\kappa_F/\mathfrak{K}_F )= \begin{cases}
\left( \frac{d^+}{q} \right) \left( \frac{\zeta(\varpi)}{q} \right) \mathfrak{n}(\psi_F)^{e/2d^+}&\text{if } e/2\text{ is odd,} \\
\left( \frac{-1}{q}\right)^{e/4} &\text{if }e/2\text{ is even,}
\end{cases}$$
both by section 9.3 of \cite{BH-ET2}. \label{explicit signs first jump =1 first}
\item When $i_+ > i^+ = r_0 = 1$, we must have that $i_+$ and $e/2$ are even. Indeed that $i_+$ being even is from Lemma 1.2.(1) of \cite{BH-ET2}. Assume that $e/2$ is odd. The assumption $i^+=1$ means that the set of jumps is equal to $\{1,r_1,\dots,r_d\}$ such that $r_i$ are all even for $i>0$. The condition (\ref{gcd-level-ramification}) of Proposition \ref{jump proposition} implies that $e(E/E_i)$ and $e(E/E_{i+1})$ have the same parity for all $i$. Since $e(E/E_{d+1})=e(E/F)$ is even, we have that $e(E/E_1)$ is also even. Hence $e(E_1/F)$ divides $e/2$ and is odd. This implies that $i_+=1$, which is a contradiction. We have $$ \mathfrak{G}_F=\mathfrak{G}_K=1\text{ and }\mathfrak{K}^\kappa_F=\mathfrak{K}_F  $$ by Lemma 8.1.(1) and Proposition 8.1 of \cite{BH-ET2} respectively. \label{explicit signs first jump =1 second}
\item When $i_+ \geq i^+ >r_0 =1$, we have
$$ \mathrm{sgn}(\mathfrak{G}_F/\mathfrak{G}_K) = \begin{cases}
\left( \frac{-1}{q}\right)^{\frac{i^+-1}{2}} \left( \frac{d^+}{q} \right) \left( \frac{\zeta(\varpi)}{q} \right) \mathfrak{n}(\psi_F)^{e/2d^+} & \text{if }e/2\text{ is odd, } \\
 \left( \frac{-1}{q}\right)^{ei_+/4}& \text{if }e/2\text{ is even, }
\end{cases}$$
by Corollary 8.3 and Proposition 8.4 of \cite{BH-ET2}, and $\mathfrak{K}^\kappa_F/\mathfrak{K}_F  = 1$ by Lemma 8.1(3) of \cite{BH-ET2}.\label{explicit signs first jump =1 third}
  \end{enumerate}\label{explicit signs first jump =1}
\end{enumerate}

Therefore in all cases we have
\begin{prop}\label{explicit rectifier quotient of sums ram-even case}
  $$\mathrm{sgn}((\mathfrak{G}_F/\mathfrak{G}_K)(\mathfrak{K}^\kappa_F/\mathfrak{K}_F )) = \begin{cases}
\left( \frac{-1}{q}\right)^{\frac{i^+-1}{2}} \left( \frac{d^+}{q} \right) \left( \frac{\zeta(\varpi)}{q} \right) \mathfrak{n}(\psi_F)^{e/2d^+} & \text{if }e/2\text{ is odd,} \\
 \left( \frac{-1}{q}\right)^{ei_+/4}& \text{if }e/2\text{ is even.}
\end{cases}$$
\end{prop}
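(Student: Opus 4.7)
The plan is to prove Proposition \ref{explicit rectifier quotient of sums ram-even case} by directly consolidating the four cases enumerated immediately before its statement, namely (i), (ii)(a), (ii)(b), and (ii)(c). Each case fixes one of the two sign factors to be trivial and supplies the explicit value of the other, so the proof amounts to multiplying the two and checking that the resulting expression agrees termwise with the formula in the proposition, under the dichotomy $e/2$ odd versus $e/2$ even.

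First I would dispatch cases (i) and (ii)(c), since in both of them $\mathfrak{K}^\kappa_F/\mathfrak{K}_F = 1$ and the value of $\mathrm{sgn}(\mathfrak{G}_F/\mathfrak{G}_K)$ is already in the shape claimed by the proposition; no further manipulation is needed. Case (ii)(a) is symmetric in spirit: $\mathfrak{G}_F/\mathfrak{G}_K = 1$ and $\mathrm{sgn}(\mathfrak{K}^\kappa_F/\mathfrak{K}_F)$ is given. Here I would note that the stated value coincides with the proposition's formula when $e/2$ is odd (with $i^+ = 1$, so the exponent $(i^+-1)/2 = 0$ makes the factor $\left(\frac{-1}{q}\right)^{(i^+-1)/2}$ disappear), and when $e/2$ is even one uses $i_+ = 1$ so that $\left(\frac{-1}{q}\right)^{ei_+/4} = \left(\frac{-1}{q}\right)^{e/4}$, matching exactly.

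The remaining check is case (ii)(b), where both quotient-sums equal $1$ and one must verify that the right-hand side of the proposition also equals $1$. Recall from the discussion preceding the proposition that this case forces both $i_+$ and $e/2$ to be even. When $e/2$ is odd the case does not occur, so nothing is to check there. When $e/2$ is even, the proposed value is $\left(\frac{-1}{q}\right)^{ei_+/4} = \left(\frac{-1}{q}\right)^{(e/2)(i_+/2)}$; since $e/2$ is even, the exponent is even and the factor is $1$, as required. This is the only arithmetic point that needs attention, and it is the main (mild) obstacle in the argument.

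Finally, I would record that the formula in the proposition is a uniform expression of the four individual cases, and that the indexing conventions $i^+ = r_S$, $i_+ = r_T$, $d^+ = |E_{S+1}/F|$, and $\zeta(\varpi) = \zeta_S$ are exactly those fixed before the enumeration, so no translation between notations is needed. The conclusion then follows by collecting the four verifications.
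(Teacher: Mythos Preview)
Your proposal is correct and follows exactly the approach the paper takes: the proposition is stated in the paper immediately after the phrase ``Therefore in all cases we have'', with no separate proof environment, so the intended argument is precisely the consolidation of cases (i), (ii)(a), (ii)(b), (ii)(c) that you spell out. Your explicit checks---in particular the observation that in case (ii)(b) both $e/2$ and $i_+$ are even so that $ei_+/4$ is even---are the details the paper leaves implicit.
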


We consider case (\ref{section supercuspidal three cases unram}) when $K_0/F$ is unramified and $E/K_0$ is totally ramified. Let $V^\varpi$ be the subspace of fixed points of the subgroup $\varpi$ of $\Psi_{E/F}$. We have, by the Main Theorem 5.2 of \cite{BH-ET3},
\begin{equation}\label{explicit rectifier unram case}
\begin{split}
&{}_{K_0/F}\mu_\xi|_{\mu_E}\equiv t^1_\mu(V_{K_0/F})
\text{ and }\\
&{}_{K_0/F}\mu_\xi(\varpi_E)=(-1)^{e(f-1)}t^0_\mu(V/V^\varpi)t_\varpi(V_{K_0/F}).
\end{split}
\end{equation}
We refer the reader to chapter 7 and 8 of \cite{BH-ET3} for the explicit values of the above t-factors. We would comment below on the factor $t^0_\mu(V/V^\varpi)=t^0_\mu(V)t^0_\mu(V^\varpi)$, which is needed in section \ref{section case unram}.

We first analyze the sign $t^0_\mu(V)=t^0_\mu(V_{K_0/F})$ by checking those anisotropic $\mathbb{F}_p\mu$-submodule $V_{[g]}$ in $V_{K_0/F}$. They are those of the form $V_{[\sigma^k\phi^{f/2}]}$ such that the corresponding jump is even. In this case, by Proposition \ref{summary of t-factors}, we have $t_\mu^0(V_{[\sigma^k\phi^{f/2}]})=-1$. Notice that if the complementary $W_{[\sigma^k\phi^{f/2}]}$ is symmetric non-trivial, then $t(W_{[\sigma^k\phi^{f/2}]})=-1$. Since either $V_{[\sigma^k\phi^{f/2}]}$ or $W_{[\sigma^k\phi^{f/2}]}$ is non-trivial but not both, we have
\begin{equation}\label{sym-unram t(V) is -t(W)}
t(W_{[\sigma^k\phi^{f/2}]})=- t_\mu^0(V_{[\sigma^k\phi^{f/2}]})\text{ if }[\sigma^k\phi^{f/2}]\text{ is symmetric.}
\end{equation}

Indeed we have computed the sign $t^0_\mu(V)=t^0_\mu(V_{K_0/F})$ somewhere else.
\begin{prop}\label{the sign t0mu computed in ET3}
The sign $t^0_\mu(V_{K_0/F})$ is $-1$ if and only if there exists an even jump $r_R$ such that
\begin{equation}\label{parity of degrees in sign t0mu}
  f(E/E_R)\text{ is odd, }f(E/E_{R+1})\text{ is even and }e(E/E_{R+1})\text{ is odd.}
\end{equation}
\end{prop}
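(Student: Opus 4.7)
The plan is to evaluate $t^0_\mu(V_{K_0/F})$ as a product over the complete decomposition (Proposition \ref{complete decomp orthogonal}) and to identify exactly which isotypic components can contribute a $-1$. By Proposition \ref{summary of t-factors}, asymmetric components $V_{\pm[g]}$ contribute $+1$, symmetric ramified components $V_{[\sigma^k]}$ also contribute $+1$ (and in any case lie in $W_E\backslash W_{K_0}/W_E$, so they are absent from $V_{K_0/F}$), and only the symmetric unramified components $V_{[\sigma^k\phi^{f/2}]}$ contribute $-1$ (when non-trivial). Writing $N$ for the number of non-trivial symmetric unramified components, we reduce to showing $N$ is odd iff the stated combinatorial condition on the jump data holds.

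Next I would count $N$ modulo $2$ via the coarse decomposition $V=\bigoplus_R V_{E_R/E_{R+1}}$, using that $V_{E_R/E_{R+1}}$ is non-trivial iff $r_R$ is even, and its isotypic support is indexed by $(W_E\backslash W_{E_{R+1}}/W_E)-(W_E\backslash W_{E_R}/W_E)$. Applying the direct analogue of Proposition \ref{parity of double coset with i=f/2} to each extension $E/E_R$ (the argument carries over verbatim, and the relevant elements $\phi^{f/2}$ are consistent across levels since $\phi^{f/2}$ is a common symbol in $\Gamma_{L/F}$), the number of symmetric unramified double cosets in $W_E\backslash W_{E_R}/W_E$ has parity $e(E/E_R)(f(E/E_R)-1)$. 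Setting $e_R=e(E/E_R)$ and $f_R=f(E/E_R)$, this gives
\begin{equation*}
N \equiv \sum_{R:\, r_R \text{ even}} \bigl( e_{R+1}(f_{R+1}-1) - e_R(f_R-1) \bigr) \pmod 2.
\end{equation*}

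The key arithmetic step is to simplify each summand using Proposition \ref{jump proposition}.(\ref{gcd-level-ramification}), namely $e_R=\gcd(r_R, e_{R+1})$. If $f_R$ and $f_{R+1}$ are both of the same parity (either both odd, or both even) the $R$-term is even: in the all-odd case both factors $f_*-1$ are even, and in the all-even case the term is $e_{R+1}+e_R\pmod 2$ where $e_R$ and $e_{R+1}$ share parity by divisibility. The only surviving case is $f_R$ odd and $f_{R+1}$ even, where the $R$-term reduces to $e_{R+1}\pmod 2$. Under $r_R$ even, if $e_{R+1}$ were even then $2\mid\gcd(r_R,e_{R+1})=e_R$, so $e_R$ would be even and the contribution still $0$; hence the $R$-term is non-zero mod $2$ exactly when additionally $e_{R+1}$ is odd (which forces $e_R$ odd). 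This yields the four conditions $r_R$ even, $f_R$ odd, $f_{R+1}$ even, $e_{R+1}$ odd.

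Finally I would observe that there is at most one such index $R$: since $f_R\mid f_{R+1}$, once $f_{R+1}$ is even every subsequent $f_{R'}$ is even, so the transition from odd $f_R$ to even $f_{R+1}$ happens at most once in the sequence $f_0\mid f_1\mid\cdots\mid f_{d+1}$. Hence $N\equiv 1 \pmod 2$ iff such an $R$ exists, proving the proposition. The main obstacle I anticipate is book-keeping the telescoping count and confirming that the ``symmetric unramified'' elements counted at each intermediate level $E_R$ really are the restrictions of the same elements $[\sigma^k\phi^{f/2}]$ in the ambient $W_E\backslash W_F/W_E$, which needs a brief verification using $\phi^{f_R^\ast}\in\Gamma_{L/E_R}$ for $f_R^\ast=f(E_R/F)$ so that $\phi_R^{f(E/E_R)/2}=\phi^{f/2}$.
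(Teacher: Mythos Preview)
Your argument is correct. The paper does not actually prove this proposition; it simply cites Corollary~8 of section~8.3 in \cite{BH-ET3}. Your approach is therefore genuinely different: you give a self-contained proof using only the machinery already set up in the present paper --- the complete decomposition (Proposition~\ref{complete decomp orthogonal}), the t-factor table (Proposition~\ref{summary of t-factors}), the parity count (Proposition~\ref{parity of double coset with i=f/2}), and the relation $e_R=\gcd(r_R,e_{R+1})$ from Proposition~\ref{jump proposition}.(\ref{gcd-level-ramification}). This buys independence from the external reference, at the cost of redoing combinatorics that Bushnell--Henniart carry out in their own framework.

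Two small remarks. First, in your case where both $f_R$ and $f_{R+1}$ are even, the claim that $e_R$ and $e_{R+1}$ share parity does not follow from $e_R\mid e_{R+1}$ alone (an odd number can divide an even one); it needs the gcd formula together with $r_R$ even, exactly the reasoning you spell out in the next case. Second, the compatibility you flag at the end --- that the symmetric unramified cosets for $E/E_R$ coincide with those $[\sigma^k\phi^{f/2}]$ in the ambient $W_E\backslash W_F/W_E$ that lie in $W_{E_R}$ --- is indeed the one point requiring care, and your computation $\phi_R^{f_R/2}=\phi^{f/2}$ settles it once one observes that $\Gamma_E$-double cosets and their symmetry are literally the same subsets whether viewed inside $W_{E_R}$ or inside $W_F$.
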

\begin{proof}
  We refer to Corollary 8 of 8.3 in \cite{BH-ET3}.
\end{proof}
We restate the result as follows. If $f$ is even, we denote by $R$ the index such that $f(E/E_R) $ is odd and $f(E/E_{R+1})$ is even. We write $f_0=f(E/E_0)=|E/E_0|$ and recall the jump $r_S$ defined right before Lemma \ref{about jump S and T}.
\begin{prop} \label{the sign t0mu computed here}
\begin{enumerate}[(i)]
  \item If $f_0$ is even, or if $f_0$ is odd, $e$ is even, and $S\leq R$, then $t^0_\mu (V_{K_0/F}) = 1$.
  \item If $f_0$ and $e$ are odd, or if $f_0$ is odd, $e$ is even, and $S> R$, then  $t^0_\mu (V_{K_0/F}) = (-1)^{r_R +1}$.
\end{enumerate}
\end{prop}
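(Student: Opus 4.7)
My plan is to obtain this refinement of Proposition \ref{the sign t0mu computed in ET3} by pinning down the existential index $R$ in that earlier result and then reading off the conditions case by case. The key structural observation is that the two sequences
\[
f_i := f(E/E_i), \qquad e_i := e(E/E_i),
\]
are both non-decreasing in divisibility as $i$ runs from $-1$ to $d+1$ (with $f_{-1}=e_{-1}=1$ and $e_0 = 1$ since $E/E_0$ is unramified), so each has at most one parity transition from odd to even. The condition ``$f(E/E_R)$ is odd and $f(E/E_{R+1})$ is even'' in Proposition \ref{the sign t0mu computed in ET3} therefore forces $R$ to be the unique $f$-transition index, which is precisely the $R$ of the present statement (when $f$ is even). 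So the existential in Proposition \ref{the sign t0mu computed in ET3} reduces to checking, at this one $R$, whether $r_R$ is even and whether $e(E/E_{R+1})$ is odd.

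With this in hand I dispatch the easy cases. First, if $f_0$ is even then $f_i$ is even for every $i \geq 0$, so no jump index $R \in\{0,\dots,d\}$ can satisfy $f(E/E_R)$ odd, and Proposition \ref{the sign t0mu computed in ET3} gives sign $+1$. Second, if $f_0$ is odd and $e$ is odd, then $e_{R+1}\mid e$ is automatically odd, and the remaining condition is just that $r_R$ be even; so the sign is $-1$ exactly when $r_R$ is even, which is encoded by $(-1)^{r_R+1}$. This settles (i) in the first case and (ii) in the first case.

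For the remaining alternative $f_0$ odd, $e$ even I use an adaptation of Lemma \ref{about jump S and T} via Proposition \ref{jump proposition}.(\ref{gcd-level-ramification}): from $e_i\mid r_i$ and $r_S$ odd we get $e_S$ odd, and in the same way $e_i$ remains odd throughout $i\le S$. I then split three ways on how $R$ compares with $S$. If $R<S$ (i.e.\ $S>R$), then $R+1\le S$ so $e_{R+1}\mid e_S$ is odd; the sign is therefore $-1$ iff $r_R$ is even, again yielding $(-1)^{r_R+1}$. If $R=S$, then $r_R=r_S$ is odd by definition of $S$, the even-jump condition fails, and the sign is $+1$. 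If $R>S$, then the step $E_S\to E_{S+1}$ has $f(E_S/E_{S+1})$ odd (otherwise $f$ would transition at $S$, contradicting $R>S$), while $|E_S/E_{S+1}|$ is even, forcing $e(E_S/E_{S+1})$ to be even; thus $e_{S+1}$ is even and, since $S<R$, also $e_{R+1}$ is even, so the last condition of Proposition \ref{the sign t0mu computed in ET3} fails and the sign is $+1$. The sub-cases $R\ge S$ together give sign $+1$ whenever $S\le R$, matching (i); the case $R<S$ gives the formula in (ii).

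The main technical obstacle will be justifying the parity claims on the $e_i$'s in the present mixed setting, since Lemma \ref{about jump S and T} as stated was phrased under the assumption that $E/F$ is totally ramified and uses the degrees $|E_i/E_{i+1}|$ rather than the ramification indices. To make the argument go through here I will have to rerun that lemma's proof using only the divisibility $e_i=\gcd(r_i,e_{i+1})$ from Proposition \ref{jump proposition}.(\ref{gcd-level-ramification}) and the definition of $S$, so as to secure the crucial facts ``$e_i$ is odd for $i\le S$'' and ``$e(E_S/E_{S+1})$ is even whenever $f(E_S/E_{S+1})$ is odd and $R>S$.''
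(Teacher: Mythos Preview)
Your approach is correct and essentially the same as the paper's: both reduce Proposition~\ref{the sign t0mu computed here} to Proposition~\ref{the sign t0mu computed in ET3} by identifying the unique $f$-parity transition index $R$ and then determining, via the divisibility relations of Proposition~\ref{jump proposition}, whether $e(E/E_{R+1})$ is odd and whether $r_R$ is even. The paper handles the case $f_0$ odd, $e$ even, $S\leq R$ in one stroke by observing that the $e$-parity transition occurs exactly at $S$ (so $e(E/E_{R+1})$ is automatically even), whereas you split this into $R=S$ and $R>S$ and in the latter pass through the auxiliary claim that $|E_S/E_{S+1}|$ is even; your planned use of Proposition~\ref{jump proposition}(\ref{gcd-level-ramification}) will in fact give $e(E/E_{S+1})$ even directly (since the $e$-transition index must coincide with $S$), so that detour is unnecessary but harmless.
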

\begin{proof}
  If $f_0$ is even, then all $f(E/E_i)$ are even for $i\geq 0$, and so the parity condition (\ref{parity of degrees in sign t0mu}) cannot occur. If $f_0$ is odd and $e$ is even, then the condition $S\leq R$ implies that $e(E/E_{R+1})$ must be even by Proposition \ref{jump proposition}.(\ref{jump is multiple}), and so again (\ref{parity of degrees in sign t0mu}) cannot occur. In the remaining cases, whether (\ref{parity of degrees in sign t0mu}) can occur depends only on the parity of the jump $r_R$. The result is then direct.
\end{proof}

We then analyze the sign $t^0_\mu(V^\varpi)=t^0_\mu(V^\varpi_{K_0/F})$. We check those anisotropic $\mathbb{F}_p\mu$-submodule $V_{[\sigma^k\phi^{f/2}]}$ with even jump and on which $\varpi$ acts trivially. We give some equivalent conditions for such $\sigma^k\phi^{f/2}$ exists. Recall from (\ref{e-th power of prime is also prime}) that $ \varpi_E^e=\zeta_{E/F}\varpi_F$ and from Proposition \ref{decomp of standard module} that $\varpi_E$ acts on $V_{[\sigma^k\phi^{f/2}]}$ by multiplying $(\zeta_e^k\zeta_{\phi^{f/2}})^{-1}$.

\begin{lemma}\label{exists root fixing varpi}
  The following are equivalent.
  \begin{enumerate}[(i)]
    \item There exists $\sigma^k \phi^{f/2} \in W_{F[\varpi_E]}$ for some $k$.  \label{exists root fixing varpi exist}
    \item  $\zeta_{\phi^{f/2}} $ is an $e$th root of unity. \label{exists root fixing varpi root}
    \item  $\zeta_{E/F} \in K_+$, where $K_0/K_+$ is quadratic unramified. \label{exists root fixing varpi quad}
    \item $f_\varpi =|E/F[\varpi_E]|$ is even. \label{exists root fixing varpi even}
  \end{enumerate}
\end{lemma}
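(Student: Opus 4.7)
The plan is to establish the chain of equivalences $(\ref{exists root fixing varpi exist}) \Leftrightarrow (\ref{exists root fixing varpi root}) \Leftrightarrow (\ref{exists root fixing varpi quad}) \Leftrightarrow (\ref{exists root fixing varpi even})$ by direct computation, using the explicit Galois action on $\varpi_E$ and $\zeta_{E/F}$ described in section \ref{section Galois groups}. Throughout the argument one sees that each statement is a rephrasing of the single condition $\zeta_{E/F}^{q^{f/2}-1}=1$.

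For $(\ref{exists root fixing varpi exist}) \Leftrightarrow (\ref{exists root fixing varpi root})$ I would first recall from the paragraph after (\ref{e-th power of prime is also prime}) that ${}^{\sigma^k\phi^{f/2}}\varpi_E = \zeta_e^k\zeta_{\phi^{f/2}}\varpi_E$. Hence $\sigma^k\phi^{f/2}$ lies in $W_{F[\varpi_E]}$, i.e., fixes $\varpi_E$, if and only if $\zeta_{\phi^{f/2}}=\zeta_e^{-k}$. Since $\zeta_e^{-k}$ ranges over all $e$th roots of unity as $k$ varies, existence of such $k$ is equivalent to $\zeta_{\phi^{f/2}}\in\mu_e$. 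For $(\ref{exists root fixing varpi root}) \Leftrightarrow (\ref{exists root fixing varpi quad})$, recall that $\zeta_{\phi^{f/2}}\in\mu_E$ is defined by $\zeta_{\phi^{f/2}}^e=\zeta_{E/F}^{q^{f/2}-1}$. Thus $\zeta_{\phi^{f/2}}$ is an $e$th root of unity precisely when $\zeta_{E/F}^{q^{f/2}-1}=1$. But $\phi^{f/2}$ generates $\Gamma_{K_0/K_+}$ and acts on $\mu_E=\mu_{K_0}$ by $\zeta\mapsto\zeta^{q^{f/2}}$, so $\zeta_{E/F}^{q^{f/2}-1}=1$ is the same as ${}^{\phi^{f/2}}\zeta_{E/F}=\zeta_{E/F}$, i.e., $\zeta_{E/F}\in K_+^\times$.

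For the remaining equivalence $(\ref{exists root fixing varpi quad}) \Leftrightarrow (\ref{exists root fixing varpi even})$ I will compute $f_\varpi$ via Galois descent. Since $\varpi_E$ has valuation $1$ in any subfield containing it, $E/F[\varpi_E]$ is unramified, and so $f_\varpi=[E:F[\varpi_E]]$ equals the residue degree $f(E/F[\varpi_E])$. Applying the same orbit analysis as above to the entire group $\Gamma_{L/F}=\langle\sigma\rangle\rtimes\langle\phi\rangle$, the stabilizer of $\varpi_E$ consists of those $\sigma^k\phi^i$ with $\zeta_e^k\zeta_{\phi^i}=1$; the latter forces $\zeta_{\phi^i}\in\mu_e$, equivalently $\zeta_{E/F}^{q^i-1}=1$, and then uniquely determines $k$ mod $e$. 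Letting $j_0$ be the least positive integer with $\zeta_{E/F}^{q^{j_0}-1}=1$, the stabilizer therefore has order $|\Gamma_{L/E}|\cdot f/j_0$, and a standard index count gives $f_\varpi=f/j_0$. Consequently $f_\varpi$ is even iff $j_0$ divides $f/2$ iff $\zeta_{E/F}^{q^{f/2}-1}=1$, which is $(\ref{exists root fixing varpi quad})$.

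The only place requiring care is the index computation in the last step, where I must make sure that the count of $i\in\mathbb{Z}/f_L$ satisfying the fixing condition gives exactly the index $f/j_0$ in $\mathbb{Z}/f$, and that it is this quotient which controls the residue degree. None of the steps is truly hard; the whole lemma is essentially a bookkeeping exercise in the Galois theory already set up in section \ref{section Galois groups}.
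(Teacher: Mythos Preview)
Your proof is correct and follows essentially the same route as the paper. The equivalences $(\ref{exists root fixing varpi exist})\Leftrightarrow(\ref{exists root fixing varpi root})\Leftrightarrow(\ref{exists root fixing varpi quad})$ are argued identically; for $(\ref{exists root fixing varpi quad})\Leftrightarrow(\ref{exists root fixing varpi even})$ the paper replaces your Galois-stabilizer count by the one-line observation $f(F[\varpi_E]/F)=f(F[\zeta_{E/F}]/F)=f/f_\varpi$, which is exactly your identity $f_\varpi=f/j_0$ once one notes that $j_0$ is the residue degree of $F[\zeta_{E/F}]/F$.
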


\begin{proof}(\ref{exists root fixing varpi exist}) is equivalent to (\ref{exists root fixing varpi root}) since ${}^{\sigma^k \phi^{f/2}}\varpi_E=\zeta^k_e \zeta_{\phi^{f/2}}\varpi_E$. To show that (\ref{exists root fixing varpi quad}) implies (\ref{exists root fixing varpi root}), we recall that $\zeta_{\phi^{f/2}}$ is an $e$th root of $\zeta^{q^{f/2}-1}_{E/F}$. If $\zeta_{E/F} \in K_+$, then $\zeta^{q^{f/2}-1}_{E/F}=1$ and $\zeta_{\phi^{f/2}}$ is an $e$th root of unity. The converse is similar. For the equivalence of (\ref{exists root fixing varpi quad}) and (\ref{exists root fixing varpi even}), we notice that $f(F[\varpi_E]/F) = f(F[\zeta_{E/F}]/F) = f/f_\varpi$. Hence that $F[\zeta_{E/F}] \subseteq K_+ $ is equivalent to that $ f_\varpi$ is even.
\end{proof}
\begin{rmk}
By (\ref{exists root fixing varpi root}), we know that such $[\sigma^k \phi^{f/2}]$ is unique if it exists. \qed
\end{rmk}

\chapter{Comparing character identities}\label{chapter comparing character}

We proof the first main result, Theorem \ref{auto-ind-char-HH-LS-equal}, that in the following cases:
\begin{enumerate}[(I)]
  \item $E=K$ and $K/F$ is cyclic totally ramified of odd degree \cite{BH-ET1},\label{AI-constant in totally-ram odd}
  \item $E/F$ is totally ramified and $K/F$ is quadratic \cite{BH-ET2}, and \label{AI-constant in totally-ram quad}
  \item $E/K$ is totally ramified and $K/F$ is unramified \cite{BH-ET3}, \label{AI-constant in unram}
\end{enumerate}
the automorphic induction identity and the spectral transfer relation identity are equal when they are both normalized under the same Whittaker data. We first explain the normalizations of both identities in section \ref{section whit normalization}. We then provide three lemmas and explain the constant that relates the two identities in section \ref{section 3 lemmas}. We finally compute the values of such constants in the cases (\ref{AI-constant in totally-ram odd})-(\ref{AI-constant in unram}) in section \ref{label first cases}-\ref{section case unram} respectively and deduce the main result.

\section{Whittaker normalizations}\label{section whit normalization}

Kottwitz and Shelstad defined a normalization of the transfer factor in section 5.3 of \cite{KS}, such that the normalized factor depends only on the Whittaker datum. We summarize this result as follows. We first make use of the $F$-Borel subgroup $\mathbf{B}$ in a chosen $F$-splitting $\mathbf{spl}_G$. Let $\mathbf{U}$ be the unipotent radical of $\mathbf{B}$, which is also defined over $F$. The simple roots $\{X_\alpha\}_{\alpha \in \Delta}$ in $\mathbf{spl}_G$ give rise to a morphism $$\mathbf{U} \rightarrow \bigoplus_{\alpha \in \Delta} \mathbb{G}_a$$ defined only over $\bar{F}$ in general. We compose this morphism with the summing-up morphism $$\bigoplus_{\alpha \in \Delta} \mathbb{G}_a \rightarrow \mathbb{G}_a,\,(c_\alpha)_{\alpha \in \Delta} \mapsto \sum_{\alpha \in \Delta} c_\alpha.$$ Then the composition $\mathbf{U}  \rightarrow \mathbb{G}_a$ is defined over $F$, and by restricting to $F$-points we get a character $\mathscr{U} := \mathbf{U}(F) \rightarrow F.$ We choose a non-trivial additive character $\psi_F:F \rightarrow \mathbb{C}^\times$ and get $$\psi: \mathscr{U} \rightarrow F \rightarrow \mathbb{C}^\times$$ by composing the morphisms. Here $\psi$ is non-degenerate, in the sense that it is non-trivial when restricted to the subgroup associated to the simple roots in $\Delta$. We call this pair $(\mathscr{U}, \psi)$ a \emph{Whittaker-datum} for $G(F)$.
 In the case when $\mathscr{U} = \mathscr{U}_0$ the upper triangular unipotent subgroup and $$\psi_0 :\left( \begin{smallmatrix} 1 &x_1 & &* \\ &1& \ddots & \\ & &\ddots & x_{n-1} \\ 0& & & 1 \end{smallmatrix}\right) \mapsto \sum_{i=1}^{n-1}x_i,$$ we call $(\mathscr{U}_0, \psi_0)$ the \emph{standard Whittaker datum} for $G(F)$.

Recall that $H=\mathrm{Res}_{K/F}\mathrm{GL}_m$ regarded as an endoscopic group of $G$. Let $\mathbf{T}_G$ be the split maximal torus contained in $G$ and $\mathbf{T}_H$ be the maximal torus $H$ that splits over $K$. We define $\epsilon_\mathrm{L}(V_{G/H})$ to be the local constant $\epsilon_\mathrm{L}(V_{G/H}, \psi_F)$ in (3.6) of \cite{Tate-NTB}, depending on the chosen additive character $\psi_F$ of $F$. In our case $V_{G/H}$ is the virtual $W_F$-module $$X^*(\mathbf{T}_G)-X^*(\mathbf{T}_H)\cong (1_{W_F})^{\oplus{n}}-(\mathrm{Ind}_{K/F}1_{W_K})^{\oplus{m}}$$ of degree 0. Hence
\begin{equation*}
\epsilon_\mathrm{L}(V_{G/H}, \psi_F) = \epsilon_\mathrm{L}(1_F, \psi_F)^n \epsilon_\mathrm{L}(\mathrm{Ind}_{K/F} 1_K, \psi_F)^{-m}.
\end{equation*}
Let $\lambda_{K/F} =\lambda_{K/F} (\psi_F)$ be the Langlands constant, defined in section 2.4 of \cite{Moy}. Again this constant depends on the chosen additive character $\psi_F$. Since $\epsilon_L$ is equal to 1 on trivial modules and satisfies the induction property $$ \epsilon_\mathrm{L}(\mathrm{Ind}_{K/F} \sigma, \psi_F)=\lambda_{K/F} \epsilon_\mathrm{L}(\sigma, \psi_K)$$ for every finite dimensional $W_K$-representation $\sigma$, we have
\begin{equation}\label{epsilon and langlands constant}
  \epsilon_\mathrm{L}(V_{G/H})=\lambda_{K/F}^{-m}.
\end{equation}
We need the values of $\lambda_{K/F}$ when $\psi_F$ is of level 0, which means when $\psi_F|{\mathfrak{p}_F}\equiv 1$ and $\psi_F|{\mathfrak{o}_F}\neq 1$. In cases (\ref{AI-constant in totally-ram odd})-(\ref{AI-constant in unram}), they are computed in section 2.5 of \cite{Moy}.
\begin{enumerate}[(I)]
  \item If $K/F$ is totally ramified of odd degree $e$ and $\#\mathbf{k}_F=q$, then $\lambda_{K/F}=\left(\frac{q}{e}\right)$.
  \item If $K/F$ is quadratic totally ramified, then $\lambda_{K/F}=\mathfrak{n}(\psi_F)$ the normalized Gauss sum defined in section \ref{section comp mod}.
  \item If $K/F$ is unramified of degree $f$, then $\lambda_{K/F}=(-1)^{f-1}$.
\end{enumerate}
\begin{prop}
  The product $\epsilon_\mathrm{L}(V_{G/H})\Delta_0$ depends only on the standard Whittaker datum $(\mathscr{U}_0,\psi_0)$ and is independent of the choices of $\psi_F$ and $\mathbf{spl}_G$ that giving rise to $(\mathscr{U}_0,\psi_0)$.
\end{prop}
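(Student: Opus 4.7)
The plan is to follow section 5.3 of \cite{KS} and establish the invariance by tracking how each factor changes under the allowable variations of $(\psi_F,\mathbf{spl}_G)$. The first step is to characterize these variations: two pairs $(\psi_F,\mathbf{spl}_G)$ and $(\psi_F',\mathbf{spl}_G')$ produce the same Whittaker datum $(\mathscr{U}_0,\psi_0)$ exactly when $\psi_F'=\psi_F(\lambda\,\cdot)$ for some $\lambda\in F^\times$ and the splitting $\mathbf{spl}_G'$ is obtained from $\mathbf{spl}_G$ by conjugating the simple root vectors $\{\mathbf{X}_\alpha\}$ by an element $t\in\mathbf{T}(F)$ whose values $\alpha(t)^{-1}$ on the simple roots compensate the rescaling $\lambda$. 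Fixing $\mathbf{B}$ and $\mathbf{T}$ is harmless, since any two $F$-Borel subgroups containing an $F$-split maximal torus are conjugate over $F$; this reduces the problem to verifying invariance under the above one-parameter family of compensating changes.

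Next I would track the factors. Among $\Delta_\mathrm{I},\Delta_\mathrm{II},\Delta_\mathrm{III_1},\Delta_\mathrm{III_2},\Delta_\mathrm{IV}$, only $\Delta_\mathrm{I}$ depends on $\mathbf{spl}_G$, by Lemma 3.2.A of \cite{LS}; its variation is by a Tate-Nakayama pairing of $s_T$ with a cocycle in $H^1(W_F,\hat T)$ built from the cocharacter of $t$. Separately, $\epsilon_\mathrm{L}(V_{G/H})=\lambda_{K/F}(\psi_F)^{-m}$ varies under $\psi_F\mapsto\psi_F(\lambda\,\cdot)$ by the standard transformation formula for the Langlands constant, itself expressible as a character of $F^\times$ attached to the extension $K/F$ evaluated at a suitable power of $\lambda$.

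The main obstacle is to verify that these two variations cancel exactly. The strategy is to rewrite both as characters of $F^\times$ evaluated at $\lambda$ and appeal to the argument of section 5.3 of \cite{KS}, where this cancellation is built into the very construction of the Whittaker normalization. In the three cases (I)-(III) of interest here, I would cross-check this directly by substituting the explicit values of $\lambda_{K/F}$ recorded just before the proposition and the explicit $a$-data of section \ref{section Trivializing the splitting invariant}, obtaining a concrete verification that avoids the full generality of \cite{KS}. Given how elementary the remaining bookkeeping is once the framework is set up, I would state the Proposition essentially as a consequence of \cite{KS} and sketch only the explicit check in the tamely ramified cases we need.
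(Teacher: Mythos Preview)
Your proposal is correct and follows exactly the approach the paper takes: the paper's proof is simply the one-line reference ``We refer the proof to section 5.3 of \cite{KS},'' and your outline is precisely an unpacking of that argument. If anything, you have written out more than the paper does; your final suggestion to state the Proposition as a consequence of \cite{KS} with only a brief explicit check is in fact what the paper does, minus the explicit check.
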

\begin{proof}
  We refer the proof to section 5.3 of \cite{KS}.
\end{proof}
We call the product $\epsilon_\mathrm{L}(V_{G/H})\Delta_0$ \emph{Whittaker-normalized} by $(\mathscr{U}_0,\psi_0)$, or standard Whittaker-normalized.

We then recall the Whittaker normalization of automorphic induction from section 3 of \cite{HL2010}. Suppose that $\pi \in \mathcal{A}^\mathrm{et}_{n}(F)$ and $\mathbb{V}$ is a vector space realizing $\pi$. It is well-known that any supercuspidal $\pi$ is generic, which means that there exists a $G(F)$-morphism $$(\pi, \mathbb{V}) \rightarrow \text{Ind}^{G(F)}_{\mathscr{U}} \psi.$$ Such morphism is unique up to scalar. We call this morphism a Whittaker model of $\pi$. Equivalently, there exists a $\mathscr{U}$-linear functional $\lambda = \lambda_\psi$ of $\mathbb{V}$, in the sense that $$\lambda(\pi(u)v) = \psi(u)\lambda(v)\text{, for all }u \in \mathscr{U}, v \in \mathbb{V}.$$
Suppose that $(\pi, \mathbb{V})$ is automorphically induced from $\rho\in \mathcal{A}^\mathrm{et}_{n/d}(K)$. Let $\Psi : \kappa \pi \rightarrow \pi$ be the intertwining operator. We call $\Psi$ \emph{Whittaker-normalized}, or more precisely $(\mathscr{U}, \psi)$-normalized, if we have $$\lambda \circ \Psi = \lambda.$$

\begin{rmk}
  Since all Whittaker data are $G(F)$-conjugate, the genericity of $\pi$ is independent of the choice of Whittaker datum. If we realize the Whittaker space $\text{Ind}^{G(F)}_{\mathscr{U}} \psi$ consisting of smooth functions $f: G(F) \rightarrow \mathbb{C}$ satisfying $$f(ux) = \psi(u)f(x)\text{, for all }u \in \mathscr{U}, x \in G(F),$$ then the Whittaker spaces $\text{Ind}^{G(F)}_{\mathscr{U}} \psi$ and $\text{Ind}^{G(F)}_{\mathscr{U}^g} \psi^g$, for some $g \in G(F)$, are isomorphic by the left-translation $$\text{Ind}^{G(F)}_{\mathscr{U}} \psi \rightarrow \text{Ind}^{G(F)}_{\mathscr{U}^g} \psi^g, f \mapsto(x \mapsto f(g(x))).$$ \qed
  \end{rmk}

\begin{prop}\label{constant indep of repres}
Suppose that for each $\rho\in \mathcal{A}^\mathrm{et}_{n/d}(K)$ with automorphic induction $\pi$, the intertwining operator $\Psi$ is $(\mathscr{U}_0, \psi_0)$-normalized. Then the constant $c(\rho, \kappa, \Psi)$ is independent of $\rho$ and also $\Psi$.
\end{prop}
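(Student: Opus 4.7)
The plan is to separate the two assertions: independence from $\Psi$ is essentially formal, whereas independence from $\rho$ carries the real content.

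For the $\Psi$-part, recall that the space of $G(F)$-intertwining operators $\kappa\pi\to\pi$ is one-dimensional, so $\Psi$ is unique up to a non-zero scalar. Because $\pi$ is supercuspidal, the Whittaker functional $\lambda_{\psi_0}$ on the space of $\pi$ is itself unique up to a scalar. Fixing a choice of $\lambda_{\psi_0}$, the condition $\lambda_{\psi_0}\circ\Psi = \lambda_{\psi_0}$ determines $\Psi$ uniquely. If $\lambda_{\psi_0}$ is replaced by $z\lambda_{\psi_0}$ for some $z\in\mathbb{C}^\times$, then $\Psi$ is forced to scale by $z$, the distribution $\Theta^{\kappa,\Psi}_\pi$ on the left side of (\ref{automorphic-induction-character-relation}) scales by $z$, and $c(\rho,\kappa,\Psi)$ on the right scales by $z$ as well; the ratio defining $c$ is unchanged. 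So once the Whittaker normalization is imposed, no residual dependence on $\Psi$ survives.

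For the $\rho$-part, I would invoke Section 3 of \cite{HL2010}, which supplies an intrinsic formula for the Whittaker-normalized automorphic induction constant purely in terms of the Langlands factor $\lambda_{K/F}(\psi_F)$ of (\ref{epsilon and langlands constant}) and a transfer-factor correction attached to the fixed transfer system $(\sigma,\kappa,e)$ used to define $\Delta^2$. Both ingredients are local data attached to the pair $(G,H)$, the cyclic extension $K/F$, the additive character $\psi_F$, and the standard Whittaker datum $(\mathscr{U}_0,\psi_0)$; none of them mentions $\rho$. The universality of $c(\rho,\kappa,\Psi)$ then follows directly by specializing that formula.

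The main obstacle is to justify the compatibility between the Whittaker functional on $\pi=A_{K/F}(\rho)$ and the implicit normalization of $\rho$ appearing on the right side of (\ref{automorphic-induction-character-relation}); this compatibility is what makes the stated formula of \cite{HL2010} $\rho$-independent rather than merely $\rho$-linear. In \cite{HL2010} this is handled by a global argument, embedding $\rho$ into a cuspidal automorphic representation and tracking Whittaker coefficients through the twisted trace formula. I would quote their compatibility statement rather than reprove it, and derive the proposition from it by the short manipulation sketched above.
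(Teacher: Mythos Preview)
Your approach is essentially the same as the paper's: both defer the substantive $\rho$-independence to Henniart--Lemaire \cite{HL2010}. The paper is even terser --- its entire proof is the single sentence ``We can deduce the statement by 4.11 Th\'eor\`eme of \cite{HL2010}'' --- whereas you add a separate discussion of $\Psi$-independence and some commentary on how the global argument in \cite{HL2010} works.

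One small glitch in your $\Psi$-paragraph: you write that replacing $\lambda_{\psi_0}$ by $z\lambda_{\psi_0}$ forces $\Psi$ to scale by $z$. This is not right. The normalization condition $\lambda\circ\Psi=\lambda$ is homogeneous of degree zero in $\lambda$: if $\Psi$ satisfies it for $\lambda$, then $(z\lambda)\circ\Psi=z(\lambda\circ\Psi)=z\lambda$, so the \emph{same} $\Psi$ satisfies it for $z\lambda$. Thus the Whittaker normalization pins down $\Psi$ outright, independent of the scaling of $\lambda_{\psi_0}$, and there is simply no free parameter left --- which is a cleaner statement than the cancellation-of-scalars argument you sketch. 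This does not affect the correctness of your conclusion, only the reasoning leading to it.
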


\begin{proof}
We can deduce the statement by 4.11 Th\'{e}or\`{e}me of \cite{HL2010}.
\end{proof}
We denote $c(\rho, \kappa, \Psi)$ by $c(\kappa, \psi_0)$ in Proposition \ref{constant indep of repres}. When $K/F$ is unramified of degree $d$, we can combine 3.10 Proposition and 4.11 Th\'{e}or\`{e}me in \cite{HL2010} and show the following facts.
\begin{enumerate}[(i)]
\item  When $\psi_F$ is unramified (i.e., $\psi_F|{\mathfrak{o}_F}\equiv 1$ and $\psi_F|{\mathfrak{p}^{-1}_F}\neq 1$), we have $c(\kappa, \psi_0) = 1$.
\item  When $\psi_F$ is of level 0 (i.e., $\psi_F|{\mathfrak{p}_F}\equiv 1$ and $\psi_F|{\mathfrak{o}_F}\neq 1$), we have $c(\kappa, \psi_0) = (-1)^{(d-1)n/d}$.
\end{enumerate}
Notice that the the word `niveau' in \cite{HL2010} is defined to be our `level' here plus 1.
\begin{prop}\label{constant equals epsilon proved in HH}
When $K/F$ is unramified of degree $d$ and $\psi_F$ is unramified or of level 0, we have
$
c(\kappa, \psi_0) = \epsilon_\mathrm{L}(V_{G/H}, \psi_F).
$
\end{prop}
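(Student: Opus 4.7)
The plan is to reduce the identity to a purely arithmetic comparison, using the two ingredients already tabulated in this section: the formula $\epsilon_{\mathrm{L}}(V_{G/H},\psi_F)=\lambda_{K/F}(\psi_F)^{-m}$ from (\ref{epsilon and langlands constant}), where $m=n/d$, together with the explicit value of $c(\kappa,\psi_0)$ extracted from 3.10 Proposition and 4.11 Th\'{e}or\`{e}me of \cite{HL2010}. Because both formulas on the right side of (\ref{epsilon and langlands constant}) depend on the level of the additive character $\psi_F$, the proof naturally splits into the two sub-cases considered in the statement.

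First, when $\psi_F$ is of level $0$ and $K/F$ is unramified of degree $d$, section \ref{section whit normalization} records that $\lambda_{K/F}(\psi_F)=(-1)^{d-1}$. Plugging this into (\ref{epsilon and langlands constant}) gives
\begin{equation*}
\epsilon_{\mathrm{L}}(V_{G/H},\psi_F)=\bigl((-1)^{d-1}\bigr)^{-m}=(-1)^{m(d-1)}=(-1)^{(d-1)n/d},
\end{equation*}
which is exactly the cited value of $c(\kappa,\psi_0)$.

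Second, when $\psi_F$ is unramified, the analogous computation of $\lambda_{K/F}(\psi_F)$ for an unramified extension yields $\lambda_{K/F}(\psi_F)=1$; this can be read off from the induction property of the $\epsilon$-factor applied to the trivial representation, or equivalently from the explicit tables in section 2.5 of \cite{Moy}. Hence $\epsilon_{\mathrm{L}}(V_{G/H},\psi_F)=1^{-m}=1$, again matching the value $c(\kappa,\psi_0)=1$ recorded above.

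There is no real obstacle here: the statement is a bookkeeping identity, and the only substantive input is the correct value of the Langlands $\lambda$-constant in the unramified case under each of the two normalizations of $\psi_F$. Once those two values are in hand, formula (\ref{epsilon and langlands constant}) converts them directly into the claimed equality. The interest of the proposition lies not in its proof but in the fact that it provides the template — namely $c=\epsilon_{\mathrm{L}}(V_{G/H})$ — that the subsequent sections \ref{label first cases}--\ref{section case unram} will verify in the remaining cases (\ref{AI-constant in totally-ram odd})--(\ref{AI-constant in totally-ram quad}) through the more delicate normalization constant $\kappa(x)$.
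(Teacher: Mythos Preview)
Your proof is correct and follows essentially the same approach as the paper: compute $\lambda_{K/F}(\psi_F)$ in each of the two cases (level $0$ and unramified), plug into formula (\ref{epsilon and langlands constant}) to get $\epsilon_{\mathrm{L}}(V_{G/H},\psi_F)$, and match against the values of $c(\kappa,\psi_0)$ already recorded from \cite{HL2010}. The paper's proof is just a terser version of yours, citing the same source (section 2.5 of \cite{Moy}) for the unramified Langlands constant.
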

\begin{proof}
When $\psi_F$ is of level 0, we have computed that $\lambda_{K/F}=(-1)^{d-1}$. If $\psi_F$ is unramified, then we can follow the computations in section 2.5 of \cite{Moy} and show that $\lambda_{K/F}=1$. We then deduce the assertion by formula (\ref{epsilon and langlands constant}).
\end{proof}
In this chapter we will prove the following
\begin{thm}\label{auto-ind-char-HH-LS-equal}
  In the cases (\ref{AI-constant in totally-ram odd})-(\ref{AI-constant in unram}), if we normalize the automorphic induction character (\ref{interlude autom ind formula}) by the standard Whittaker datum, then it is equal to the spectral transfer character (\ref{interlude spec transfer formula}), which is already standard Whittaker-normalized.
\end{thm}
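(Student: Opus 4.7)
The plan is to convert the desired equality of character identities into a single scalar identity and then verify it case-by-case using the normalization constants computed from the internal structure of the supercuspidal $\pi_\xi$. First I would recast both sides of the two identities so that the common factor $\sum_{g\in\Gamma_{K/F}}\Theta^g_\rho$ can be canceled. On the automorphic-induction side, after using $\Delta^1=\Delta_{\mathrm{IV}}$ and Proposition \ref{delta-2-equals-delta-II-III} (which gives a scalar $C=C(\sigma,\kappa,e,\{a_\lambda\})$ such that $\Delta^2=C\cdot\Delta_{\mathrm{I,II,III}}$ with our choices of embedding and $a$-data from section \ref{section Trivializing the splitting invariant}), the two identities differ by a single scalar on the nose. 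The remaining task is to compute and match the scalars $c_\theta$, $C$, $\kappa(x)$, and $\epsilon_{\mathrm{L}}(V_{G/H})$.

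Second, I would pin down the Whittaker datum that makes the normalization $\Psi$ in (\ref{automorphic-induction-character-relation}) agree with the one used implicitly in the Bushnell--Henniart proofs of (\ref{explicit rectifier ram-odd case})--(\ref{explicit rectifier unram case}). That datum is the one naturally attached to the extended maximal type $(\mathbf{J}(\xi),\Lambda_\xi)$ via the Whittaker model of the compact induction $\pi_\xi=\mathrm{cInd}_{\mathbf{J}(\xi)}^{G(F)}\Lambda_\xi$. Since all Whittaker data of $G(F)$ are $G(F)$-conjugate, there is $x\in G(F)$, depending on the jump data of $\xi$, carrying the standard datum $(\mathscr{U}_0,\psi_0)$ to this one. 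The functorial behavior of the $(\kappa,\Psi)$-twisted character under change of Whittaker normalization — together with the fact that $\kappa\pi\cong\pi$ is realized by an intertwiner that scales by $\kappa(\det x)$ under this conjugation — shows that if $\Psi$ is normalized by $(\mathscr{U}_0,\psi_0)$ instead, then the right-hand side of the automorphic-induction identity acquires the extra factor $\kappa(x)=\kappa(\det x)$. This is the content of the expected formula (\ref{rough normalized automorphic ind char}), and it will be formalized via the proposition alluded to as Proposition \ref{main prop on constants} in section \ref{section 3 lemmas}.

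Third, with those preparations, the theorem reduces to the scalar identity
\[
\kappa(x)\,c_\theta\,C \;=\; \epsilon_{\mathrm{L}}(V_{G/H})
\]
in each of the three cases (\ref{AI-constant in totally-ram odd})--(\ref{AI-constant in unram}). Here $\epsilon_{\mathrm{L}}(V_{G/H})=\lambda_{K/F}^{-m}$ has the explicit values recalled after (\ref{epsilon and langlands constant}); the constant $c_\theta$ is known from \cite{BH-ET1}, \cite{BH-ET2}, \cite{BH-ET3} and can be written in terms of the t-factors of chapter \ref{chapter finite cymplectic module}; and $C$ is pinned down by choosing the transfer system $(\sigma,\kappa,e)$ so that the sign accounting in the proof of Proposition \ref{delta-2-equals-delta-II-III} collapses. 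The remaining genuinely new quantity is $\kappa(x)$, which I will compute separately in the totally ramified odd case (section \ref{label first cases}), the quadratic totally ramified case, and the unramified case (section \ref{section case unram}), using in each case the explicit description of the basis \eqref{chosen basis} of $E/F$ that diagonalizes $T$ and the associated identification of $(\mathbf{J}(\xi),\Lambda_\xi)$ with its Heisenberg/$\beta$-extension/Green datum.

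The hard part will be the computation of $\kappa(x)$ in the quadratic totally ramified case and in the unramified case: in the former, $\kappa(x)$ depends delicately on the jump indices $i^+,i_+,d^+,\zeta(\varpi)$ of section \ref{section explicit rectifiers}, and the sign must match a subtle Gauss/Kloostermann sign appearing in $c_\theta$; in the latter, it is controlled by the t-factors $t^0_\mu(V/V^\varpi)$, $t_\varpi(V_{K_0/F})$ and the parity of $e(f-1)$, which I expect to bring in Proposition \ref{parity of double coset with i=f/2} and Lemma \ref{exists root fixing varpi}. Once these signs are seen to assemble correctly in each case, Theorem \ref{auto-ind-char-HH-LS-equal} will follow, and, as a bonus, the relation $\Delta_{\mathrm{LS}}=\kappa(x)\Delta_{\mathrm{HH}}$ between the two transfer factors will emerge.
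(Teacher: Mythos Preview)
Your proposal is essentially correct and follows the paper's approach: reduce to a scalar identity via Proposition \ref{main prop on constants}, then compute $\kappa(x)$ case by case and match against $c_\theta$ and $\epsilon_{\mathrm{L}}(V_{G/H})$. One tactical difference is worth flagging. Rather than isolating your abstract constant $C$ relating $\Delta^2$ and $\Delta_{\mathrm{I,II,III}}$, the paper evaluates the full identity (\ref{variant chi-constant-delta-epsilon-formula}) at a single well-chosen regular elliptic element: $\gamma=\varpi_E$ in cases (\ref{AI-constant in totally-ram odd}) and (\ref{AI-constant in totally-ram quad}), and $\gamma=\varpi_E u$ with $u\in U_E^1$ in case (\ref{AI-constant in unram}). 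This buys a lot, because Proposition \ref{transfer factor trivial when totally-ram} gives $\Delta_{\mathrm{I,II,III}}(\varpi_E)=1$ in the totally ramified cases, and in the unramified case $\Delta_{\mathrm{II,III_2}}(\varpi_Eu)$ reduces to a sign counted exactly by Proposition \ref{parity of double coset with i=f/2} and Lemma \ref{exists root fixing varpi}; you therefore never need to pin $C$ down on its own. Your plan to ``choose the transfer system so the sign accounting collapses'' would work in principle, but evaluating at $\varpi_E$ is cleaner and is what makes the case-by-case computations of $\kappa(x_{\mathfrak{ab}})=\kappa(\mathrm{Van}(\mathfrak{a})\mathrm{Van}(\mathfrak{b})^{-1})$ line up directly with the Gauss/Kloostermann signs of Proposition \ref{explicit rectifier quotient of sums ram-even case} and the $t^0_\mu$-signs of Proposition \ref{the sign t0mu computed here}.
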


 Using results in chapter \ref{endoscopy}, it suffices to show that
\begin{equation}\label{chi-c-delta=epsilon-delta}
  c(\kappa, \psi_0) \Delta^2(\gamma) = \epsilon_L(V_{G/H}) \Delta_\mathrm{II,III_2}(\gamma).
\end{equation} for all $\gamma$ lying in the elliptic torus $E^\times$ of $H(F)$. We would actually verify a variant (\ref{variant chi-constant-delta-epsilon-formula}) which express the difference of the constants $c(\kappa, \psi_0)$ and that $c_\theta$ in (\ref{interlude autom ind formula}). Since we already know that $\Delta^2 = \Delta_\mathrm{II,III_2}$ up to a constant (a sign), it is enough to verify this equality on a particular choice of $\gamma$. We will specify this $\gamma$ in the different cases (\ref{AI-constant in totally-ram odd})-(\ref{AI-constant in unram}) in the subsequent sections.

\section{Three Lemmas and the normalization constant}\label{section 3 lemmas}
We deduce a variant of (\ref{chi-c-delta=epsilon-delta}) by first establishing three lemmas. The lemmas concern about the relation between the internal structure of our supercuspidal $\pi$ and various Whittaker data. The idea is based on \cite{BH-ET3} and \cite{BH-WHIT}.

Let $(E/F, \xi) \in P_n(F)$ be the class of admissible character that corresponds to $\pi \in \mathcal{A}^\mathrm{et}_n(F)$ and $\mathbb{V}$ be a vector space realizing $\pi$. From section \ref{Supercuspidal Representations}, we know that there is a representation $(\mathbf{J}, \Lambda)$ compactly inducing $\pi$. Let $g \in G(F)$ be an element that intertwines the standard Whittaker datum $(\mathscr{U}_0, \psi_0)$ and $(\mathbf{J}, \Lambda)$. This is equivalent to say that $\Lambda|_{\mathbf{J} \cap {}^g\mathscr{U}_0}$ contains the character $\psi_0|_{\mathbf{J} \cap {}^g\mathscr{U}_0}$. In \cite{BH-ET3}, we call that $({}^g\mathscr{U}_0, {}^g\psi_0)$ is adopted to $(\mathbf{J}, \Lambda)$. By the uniqueness of Whittaker model and Frobenius reciprocity, there is a unique double coset in $\mathbf{J}\backslash G(F) /\mathscr{U}_0$ whose elements intertwine $(\mathscr{U}_0, \psi_0)$ and $(\mathbf{J}, \Lambda)$.

We now setup the first lemma. Suppose that $\kappa \pi \cong \pi$. Take a Whittaker datum $(\mathscr{U}, \psi)$ and a model $(\pi, \mathbb{V}) \hookrightarrow \mathrm{Ind}^{G(F)}_\mathscr{U} \psi$. Notice that the morphism is also a model for $(\kappa \pi, \mathbb{V})$. Define a bijective operator $\mathbf{\Psi}$ on $\mathrm{Ind}^{G(F)}_\mathscr{U} \psi$ by
\begin{equation*}
\mathbf{\Psi} f: g \mapsto \kappa(\mathrm{det}(g))f(g)\text{, for all }f \in \mathrm{Ind}^{G(F)}_\mathscr{U} \Psi\text{ and }g \in G(F).
\end{equation*}
 This operator satisfies
\begin{equation*}
\kappa(\det(g))(\mathbf{\Psi}\circ \rho(g)) = \rho(g) \circ \mathbf{\Psi}.
\end{equation*}

\begin{lemma}\label{lemma 1 of 3}
If the intertwining operator $\Psi: \kappa \pi \rightarrow \pi$ is $(\mathscr{U}, \psi)$-normalized, then the diagram
$$\xymatrix{
(\kappa \pi, \mathbb{V}) \ar[r]^\Psi \ar[d] &(\pi, \mathbb{V}) \ar[d]\\
{\mathrm{Ind}}^{G(F)}_\mathscr{U} \psi \ar[r]^{\mathbf{\Psi}} &{\mathrm{Ind}}^{G(F)}_\mathscr{U} \psi}
$$commutes.
\end{lemma}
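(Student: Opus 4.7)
The plan is to unwind the diagram by writing down the Whittaker models explicitly and then performing a one‑line calculation using only two ingredients: the intertwining property of $\Psi$ and its normalization $\lambda\circ\Psi=\lambda$.

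First I would fix the model $\iota:(\pi,\mathbb{V})\hookrightarrow\mathrm{Ind}_{\mathscr{U}}^{G(F)}\psi$ arising from the $(\mathscr{U},\psi)$-Whittaker functional $\lambda$, namely
\begin{equation*}
\iota(v)(g)=\lambda(\pi(g)v),\qquad v\in\mathbb{V},\, g\in G(F).
\end{equation*}
The key preliminary observation is that, since $\mathscr{U}$ is unipotent, $\det|_{\mathscr{U}}\equiv 1$ and hence $\kappa\circ\det|_{\mathscr{U}}\equiv 1$. Therefore $\lambda$ is simultaneously a $(\mathscr{U},\psi)$-Whittaker functional for $\kappa\pi$, and the associated model is
\begin{equation*}
\iota_\kappa(v)(g)=\lambda(\kappa\pi(g)v)=\kappa(\det g)\,\lambda(\pi(g)v)=(\mathbf{\Psi}\iota(v))(g).
\end{equation*}
This is precisely the statement that the downward arrow on the left side of the diagram is $\mathbf{\Psi}\circ\iota$, while the one on the right side is $\iota$.

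Next I would chase an element $v\in\mathbb{V}$ around the diagram. Going right then down gives $\iota(\Psi v)$, and using the intertwining identity $\pi(g)\Psi=\Psi\circ\kappa\pi(g)$ followed by the normalization $\lambda\circ\Psi=\lambda$,
\begin{equation*}
\iota(\Psi v)(g)=\lambda(\pi(g)\Psi v)=\lambda(\Psi\,\kappa\pi(g)v)=\lambda(\kappa\pi(g)v)=\kappa(\det g)\,\lambda(\pi(g)v).
\end{equation*}
Going down then right gives $\mathbf{\Psi}(\iota(v))(g)=\kappa(\det g)\,\iota(v)(g)=\kappa(\det g)\,\lambda(\pi(g)v)$. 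The two outputs agree, proving commutativity.

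There is essentially no obstacle: the lemma is a direct translation of the normalization $\lambda\circ\Psi=\lambda$ into the language of the Whittaker model, once one notices that $\kappa$ is trivial on $\mathscr{U}$ so that the two copies of $\mathrm{Ind}_\mathscr{U}^{G(F)}\psi$ in the diagram genuinely host models for both $\pi$ and $\kappa\pi$ via the same functional $\lambda$. The only conceptual point worth emphasizing in the write‑up is that this is the precise sense in which a ``Whittaker normalization'' of $\Psi$ is made unambiguous: the scalar ambiguity in $\Psi$ is pinned down exactly by requiring the diagram to commute with the natural operator $\mathbf{\Psi}$ on the induced space.
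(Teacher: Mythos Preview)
Your diagram chase in the second paragraph is exactly the straightforward computation the paper has in mind (its proof is literally ``This is straightforward, or we refer to section 1.5 of \cite{BH-ET3}''), and it is correct: with both vertical arrows taken to be the same linear map $\iota(v)(g)=\lambda(\pi(g)v)$, the identity $\iota(\Psi v)=\mathbf{\Psi}(\iota(v))$ follows from $\Psi\circ\kappa\pi(g)=\pi(g)\circ\Psi$ and $\lambda\circ\Psi=\lambda$ exactly as you wrote.

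One small expository inconsistency to clean up: in your first paragraph you identify the left vertical arrow with $\iota_\kappa=\mathbf{\Psi}\circ\iota$, but in the chase you (correctly) use $\iota$ itself as the left vertical. These cannot both be right, since with the left arrow equal to $\mathbf{\Psi}\circ\iota$ commutativity would read $\iota\circ\Psi=\mathbf{\Psi}^2\circ\iota$, which is false when $\kappa^2\neq 1$. The paper's convention---``the morphism is also a model for $(\kappa\pi,\mathbb{V})$''---is that the \emph{same} linear map $\iota$ serves on both sides, and your chase already conforms to this. Simply drop or rephrase the sentence asserting the left arrow is $\mathbf{\Psi}\circ\iota$; the observation that $\kappa\circ\det|_\mathscr{U}\equiv 1$ (so that $\lambda$ is a Whittaker functional for $\kappa\pi$ as well) is the only content needed from that paragraph.
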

\begin{proof}
This is straightforward, or we refer to section 1.5 of \cite{BH-ET3}.
\end{proof}

To set up the second lemma, we assume from now on that
\begin{equation*}
\mathbf{J} \subseteq \ker (\kappa \circ \mathrm{det}).
\end{equation*}
We can check easily that this condition is satisfied in our cases (\ref{AI-constant in totally-ram odd})-(\ref{AI-constant in unram}).
\begin{lemma}\label{lemma 2 of 3}
Suppose that $(\mathscr{U},\psi)$ is adopted to $(\mathbf{J}, \Lambda)$. If $\Psi: \kappa \pi \rightarrow \pi$ is $(\mathscr{U}, \psi)$-normalized, then $\Psi$ acts on the $\Lambda$-isotypic component $\pi^\Lambda$ of $\pi$ as identity.
\end{lemma}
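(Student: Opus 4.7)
The plan is to use the fact that the $\Lambda$-isotypic component of $\pi|_{\mathbf{J}}$ is a single copy of the representation space $W$ of $\Lambda$, show that $\Psi$ acts on it by a scalar, and pin that scalar down to $1$ by evaluating the Whittaker functional on a suitable vector.

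First I would make the identification $\pi^\Lambda \cong W$ explicit. Since $\pi = \mathrm{cInd}_{\mathbf{J}}^{G(F)}\Lambda$ and $\Lambda$ is irreducible, Frobenius reciprocity gives $\mathrm{Hom}_{\mathbf{J}}(\Lambda, \pi|_{\mathbf{J}})$ one-dimensional, so $\pi^\Lambda$ is a single isotypic copy of $W$ inside $\mathbb{V}$. The assumption $\mathbf{J} \subseteq \ker(\kappa\circ\det)$ forces $\kappa\pi|_{\mathbf{J}} = \pi|_{\mathbf{J}}$, hence $(\kappa\pi)^\Lambda = \pi^\Lambda$ as $\mathbf{J}$-subrepresentations. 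Because $\Psi$ intertwines $\kappa\pi$ with $\pi$, it is in particular $\mathbf{J}$-equivariant, and so its restriction $\Psi|_{\pi^\Lambda}$ is a $\mathbf{J}$-endomorphism of an irreducible $\mathbf{J}$-module. Schur's lemma then gives a scalar $c \in \mathbb{C}^\times$ with $\Psi|_{\pi^\Lambda} = c\cdot\mathrm{id}$.

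Next I would invoke Lemma~\ref{lemma 1 of 3} applied to $(\mathscr{U},\psi)$: the Whittaker functional $\lambda = \lambda_\psi : \mathbb{V}\to\mathbb{C}$ satisfies $\lambda\circ\Psi = \lambda$ (this is the content of the commutative diagram, read off on vectors in $\mathbb{V}$ rather than on induced functions). The key point is that the restriction $\lambda_0 := \lambda|_{\pi^\Lambda} : W \to \mathbb{C}$ is non-zero. This is precisely where the adoptedness assumption is used: by definition, $(\mathscr{U},\psi)$ being adopted to $(\mathbf{J},\Lambda)$ means $\Lambda|_{\mathbf{J}\cap\mathscr{U}}$ contains $\psi|_{\mathbf{J}\cap\mathscr{U}}$, so $\mathrm{Hom}_{\mathbf{J}\cap\mathscr{U}}(\Lambda, \psi)\neq 0$; by Frobenius reciprocity (or the uniqueness of Whittaker models for generic supercuspidals) the Whittaker functional $\lambda$ realizes this embedding on the canonical copy $\pi^\Lambda\subseteq\mathbb{V}$, and in particular $\lambda_0 \not\equiv 0$.

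Finally I would pick any vector $v\in\pi^\Lambda$ with $\lambda_0(v)\neq 0$. Applying $\lambda$ to $\Psi v = cv$ gives
\[
\lambda_0(v) = \lambda(v) = \lambda(\Psi v) = c\,\lambda_0(v),
\]
so $c=1$ and $\Psi$ acts on $\pi^\Lambda$ as the identity. I expect the only genuinely delicate step to be the non-vanishing of $\lambda_0$, i.e.\ checking that the canonical Whittaker functional does not vanish on the distinguished $\Lambda$-isotypic copy; once adoptedness is unpacked correctly (as in the discussion preceding the lemma and in \S 1.5 of \cite{BH-ET3}), the rest of the argument is a short application of Schur's lemma.
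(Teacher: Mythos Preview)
Your proof is correct and takes a somewhat different route from the paper's. The paper argues entirely inside the Whittaker model: it identifies $\pi^\Lambda$ with the subspace of functions in $\mathrm{Ind}^{G(F)}_{\mathscr{U}}\psi$ supported on the identity double coset $\mathbf{J}\mathscr{U}$ (this is what adoptedness buys, citing \cite{BH-ET3}), and then observes that the explicit operator $\mathbf{\Psi}$ of Lemma~\ref{lemma 1 of 3}, which multiplies a function by $\kappa(\det g)$, is the identity on such functions because $\mathbf{J}\mathscr{U}\subseteq\ker(\kappa\circ\det)$; the commutative diagram then transfers this back to $\Psi$. Your approach instead uses multiplicity one plus Schur's lemma to reduce $\Psi|_{\pi^\Lambda}$ to a scalar, and kills the scalar with the normalization condition together with the nonvanishing of $\lambda|_{\pi^\Lambda}$. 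Both proofs rest on the same nontrivial input from \cite{BH-ET3} --- adoptedness forces the Whittaker functional to live over the identity double coset --- which in your language is precisely the claim $\lambda_0\neq 0$ that you rightly flag as the delicate step. One minor correction: the identity $\lambda\circ\Psi=\lambda$ is the \emph{definition} of $(\mathscr{U},\psi)$-normalization, not a consequence of Lemma~\ref{lemma 1 of 3}; that lemma concerns the concrete realization $\mathbf{\Psi}$ in the induced model, which your argument never actually needs.
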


\begin{proof}
Identify $\pi$ with its Whittaker model in $\mathrm{Ind}^{G(F)}_\mathscr{U} \psi$. If $(\mathscr{U}, \psi)$ is adopted to $(\mathbf{J}, \Lambda)$, then $\pi^\Lambda$ is isomorphic to the subspace of functions in $\mathrm{Ind}^{G(F)}_\mathscr{U} \psi$ whose supports are all lying in the identity double coset $\mathbf{J}\mathscr{U}$ of $\mathbf{J} \backslash G(F) / \mathscr{U}$. Since $\mathbf{J}\mathscr{U} \subseteq \ker \kappa$, the operator $\mathbf{\Psi}$ acts on these functions as identity. By Lemma \ref{lemma 1 of 3}, we have that $\Psi|_{\pi^\Lambda}$ is also identity. For more details, see Lemma 3.(2) of \cite{BH-ET3}.
\end{proof}

For the last lemma, let $\alpha \in E^\times$ be the element defined in Proposition 2.4 and Definition 2.7 of \cite{BH-WHIT}. Indeed this $\alpha$ is our $\beta=\beta(\xi)$ defined in (\ref{b is sum of b_i}) when $E=E_0$, and is $\beta(\xi)+\zeta$ for a chosen regular $\zeta\in \mu_E$ when $E\neq E_0$.
\begin{lemma}\label{lemma 3 of 3}
If $(\mathscr{U}, \psi)$ is adopted to $(\mathbf{J}, \Lambda)$, then $(\mathscr{U},\psi)$ can be identified with the standard Whittaker datum with respect to the ordered basis $\mathfrak{a} = \{1, \alpha, ..., \alpha^{n-1}\}$.
\end{lemma}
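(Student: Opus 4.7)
The plan is to exhibit, with respect to the ordered basis $\mathfrak{a}=\{1,\alpha,\dots,\alpha^{n-1}\}$, the standard Whittaker datum $(\mathscr{U}_0,\psi_0)$ explicitly and to check directly that it is adopted to $(\mathbf{J},\Lambda)$. Since Proposition \ref{constant indep of repres} and the uniqueness of the Whittaker model guarantee that the adopted Whittaker datum is unique up to $G(F)$-conjugacy, this identification will give the lemma.

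First I would import from \cite{BH-WHIT} the defining properties of $\alpha$. The crucial point is that $\alpha$ realizes the chain of subfields $F\subseteq F[\alpha_{d+1}\cdots\alpha_0]=E_0\subseteq E$ coming from the Howe factorization of $\xi$, and has $F$-valuations on the basis vectors $\{\alpha^i\}_{i=0}^{n-1}$ that are adapted to the $\mathfrak{o}_F$-lattice chain $\{\mathfrak{p}_E^k\}$. Concretely, writing $G(F)=\mathrm{Aut}_F(E)$ with respect to $\mathfrak{a}$, the hereditary order $\mathfrak{A}$ becomes the standard upper-triangular (mod $\mathfrak{p}_F$) block order, and the groups $U_{\mathfrak{B}_i}^{h_{i-1}}$, $U_{\mathfrak{B}_i}^{j_{i-1}}$ entering the definition (\ref{group H(xi) J(xi) and bold-J(xi)}) of $H^1(\xi)$, $J^1(\xi)$, $J(\xi)$ become explicit block-matrix subgroups.

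Next I would compute the restriction of the simple character $\theta=\theta(\xi)$ to the upper-triangular unipotent $\mathscr{U}_0\cap H^1(\xi)$. Using Proposition \ref{jump proposition}.(\ref{additive char}), the values of $\theta$ on a $1$-unit $1+x$ of level $h_i$ are given by $\psi_E(\beta x)$, with $\beta$ as in (\ref{b is sum of b_i}); evaluating this on the superdiagonal coordinates in basis $\mathfrak{a}$ pins the character down to a sum of the form $\sum_{i=1}^{n-1}x_i$, up to a non-degenerate multiplicative constant on each simple root subgroup. Composing the chosen $\psi_F$ of level $0$ with the summing-up morphism $\mathscr{U}_0\to\mathbb{G}_a$ defined by $\mathfrak{a}$ then realizes $\psi_0=\theta|_{\mathscr{U}_0\cap H^1(\xi)}$. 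The genericity of each $\xi_i$ over $E_{i+1}$ ensures that $\psi_0$ is non-degenerate on every simple root. The adjustment $\alpha=\beta(\xi)+\zeta$ in the case $E\neq E_0$ is exactly what is needed to guarantee this non-degeneracy across the blocks corresponding to $E/E_0$, where $\beta(\xi)$ alone would fail to generate $E$.

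Finally I would verify the adoption condition: $\Lambda|_{\mathbf{J}\cap\mathscr{U}_0}$ contains $\psi_0|_{\mathbf{J}\cap\mathscr{U}_0}$. On $H^1(\xi)\cap\mathscr{U}_0$ this is by construction ($\Lambda$ extends $\theta$). On the larger intersection $J^1(\xi)\cap\mathscr{U}_0$ one uses that the Heisenberg extension $\eta$ contains $\theta$ and that the chosen basis $\mathfrak{a}$ diagonalizes the symplectic form $h_\theta$ into a sum of simple-root hyperbolic planes, making the upper triangular part a Lagrangian; and on $J(\xi)\cap\mathscr{U}_0$ one invokes that the Green parameter $\bar\lambda_{-1}$ realising the tame block is itself generic, with a Whittaker vector for the upper-triangular unipotent of $\mathrm{GL}_{|E/E_0|}(\mathbf{k}_{E_0})$ agreeing with the reduction of $\psi_0$.

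The main obstacle I expect is the last step at the tame level: one needs to show that the Whittaker vector for $\bar\lambda_{-1}$ is compatible with the specific unipotent character coming from $\mathfrak{a}$ rather than an arbitrary generic one. This rests on the fact---established in \cite{BH-WHIT}---that the genericity condition on $\alpha$ forces the flag defined by $\mathfrak{a}$ modulo $\mathfrak{P}_{\mathfrak{A}}$ to be the standard one in $J/J^1\cong\mathrm{GL}_{|E/E_0|}(\mathbf{k}_{E_0})$, so that the two notions of ``standard Whittaker datum'' coincide. Once this compatibility is in place, uniqueness of the adopted Whittaker datum finishes the proof.
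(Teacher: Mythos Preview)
The paper does not give a proof of this lemma at all: it simply writes ``We refer the proof to Proposition 2.4 and Theorem 2.9 of \cite{BH-WHIT}.'' Your proposal is therefore not competing with a proof in the paper but rather sketching what the cited Bushnell--Henniart argument does. In that sense your outline is along the right lines and captures the structure of \cite{BH-WHIT}: one shows directly that the standard Whittaker datum attached to the cyclic basis $\mathfrak{a}=\{1,\alpha,\dots,\alpha^{n-1}\}$ is adopted to $(\mathbf{J},\Lambda)$, and then uniqueness pins down any adopted datum up to $\mathbf{J}$-conjugacy.

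Two points to flag. First, your appeal to Proposition~\ref{constant indep of repres} for the uniqueness step is misplaced: that proposition concerns independence of the normalization constant from $\rho$, not uniqueness of the adopted Whittaker datum. The correct input is the uniqueness of the Whittaker model together with Frobenius reciprocity (as stated two paragraphs above Lemma~\ref{lemma 1 of 3}), which gives a unique intertwining double coset in $\mathbf{J}\backslash G(F)/\mathscr{U}_0$ and hence a unique $\mathbf{J}$-orbit of adopted Whittaker data. Second, your sketch leaves the genuinely hard computations as assertions: that $\mathfrak{A}$ becomes the standard block order in basis $\mathfrak{a}$, that the upper-triangular part of $J^1/H^1$ is Lagrangian for $h_\theta$, and (as you yourself note) that the Whittaker vector of the Green representation $\bar{\lambda}_{-1}$ matches the reduction of $\psi_0$. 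These are exactly the substance of Proposition 2.4 and Theorem 2.9 of \cite{BH-WHIT}, so at the end you are citing the same source the paper does---just with more context about what is being cited.
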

\begin{proof}
We refer the proof to Proposition 2.4 and Theorem 2.9 of \cite{BH-WHIT}.
\end{proof}
We can now state the main result of this section. Let $(\mathscr{U}_0, \psi_0)$ be the standard Whittaker datum with respect to the basis $\mathfrak{b}$ defined in (\ref{chosen basis}). Write $\mathfrak{b}=\{b_1,\dots,b_m\}$ in this order. If the `Galois set' $\Gamma_{E/F}=\{g_1,\dots,g_n\}$ is also ordered, then we denote by $\mathrm{Van}(\mathfrak{b})$ the Vandemonde determinant
$$
\det\left(
\begin{matrix}
{}^{g_1}b_1 &\cdots &{}^{g_1}b_n \\
\vdots &{ } &\vdots \\
{}^{g_n}b_1 &\cdots &{}^{g_n}b_n \\
\end{matrix} \right).
$$
Let $(\mathscr{U}, \psi)$ be a Whittaker-datum adopted to $(\mathbf{J}, \Lambda)$. Denote by $\Psi^{\psi_0}$ and $\Psi^\psi$ respectively the $(\mathscr{U}_0, \psi_0)$- and $(\mathscr{U}, \psi)$-normalized intertwining operators of $\kappa \pi \rightarrow \pi$.

\begin{prop}\label{main prop on constants}
Let $c(\kappa, \psi_0)$ and $c(\kappa, \psi)$ be the automorphic induction constants with respect to $\Psi^{\psi_0}$ and $\Psi^{\psi}$ respectively. Then $c(\kappa, \psi_0) = \kappa(\mathrm{Van}(\mathfrak{a})\mathrm{Van}(\mathfrak{b})^{-1})c(\kappa, \psi)$.
\end{prop}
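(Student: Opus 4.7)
The plan is to find an explicit element $g \in G(F)$ conjugating one Whittaker datum to the other, show that the two normalized intertwining operators differ by the scalar $\kappa(\det g)$, and identify $\det g$ with $\mathrm{Van}(\mathfrak{a})\mathrm{Van}(\mathfrak{b})^{-1}$.

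First I would use Lemma \ref{lemma 3 of 3} to recognize $(\mathscr{U},\psi)$ as the ``standard'' Whittaker datum with respect to the basis $\mathfrak{a}$, while $(\mathscr{U}_0,\psi_0)$ is the standard one with respect to $\mathfrak{b}$. Let $M \in G(F)$ denote the change-of-basis matrix sending the $\mathfrak{a}$-coordinates to the $\mathfrak{b}$-coordinates. Written in any fixed coordinate system, $M$ satisfies $M \mathscr{U} M^{-1} = \mathscr{U}_0$ and $\psi_0(M u M^{-1}) = \psi(u)$ for $u\in\mathscr{U}$, since conjugation by a change-of-basis matrix transports the strictly upper-triangular group and the sum-of-superdiagonal character from one coordinate system to the other.

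Next I would compare the two normalizations via the translation isomorphism $L_M : \mathrm{Ind}^{G(F)}_{\mathscr{U}}\psi \to \mathrm{Ind}^{G(F)}_{\mathscr{U}_0}\psi_0$, $(L_M f)(x) = f(M^{-1}x)$. By the uniqueness of Whittaker models, if $\iota_\psi$ and $\iota_{\psi_0}$ are the respective models of $\pi$ then $L_M\circ\iota_\psi = c\cdot\iota_{\psi_0}$ for some scalar $c$. A direct computation with the twist operator $\mathbf{\Psi}$ from Lemma \ref{lemma 1 of 3} yields
\begin{equation*}
\mathbf{\Psi}_{\psi_0}\circ L_M = \kappa(\det M)\,L_M\circ\mathbf{\Psi}_\psi,
\end{equation*}
because $(L_M \mathbf{\Psi}_\psi f)(x) = \kappa(\det M^{-1}x)f(M^{-1}x)$ while $(\mathbf{\Psi}_{\psi_0} L_M f)(x) = \kappa(\det x) f(M^{-1}x)$. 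Combining this identity with the definition of the normalized intertwiners through the identities $\mathbf{\Psi}_\psi\iota_\psi = \iota_\psi\Psi^\psi$ and $\mathbf{\Psi}_{\psi_0}\iota_{\psi_0} = \iota_{\psi_0}\Psi^{\psi_0}$ forces $\Psi^{\psi_0} = \kappa(\det M)\,\Psi^\psi$. Since $c(\kappa,\cdot)$ in (\ref{automorphic-induction-character-relation}) depends linearly on the intertwining operator, I obtain $c(\kappa,\psi_0)=\kappa(\det M)\,c(\kappa,\psi)$.

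Finally, I would identify $\det M$ as a ratio of Vandermonde determinants. Writing $\mathfrak{a}=(a_1,\dots,a_n)$ and $\mathfrak{b}=(b_1,\dots,b_n)$ and fixing the ordering $\{g_1,\dots,g_n\}$ of $\Gamma_{E/F}$, the Galois matrices $A = ({}^{g_i}a_j)$ and $B = ({}^{g_i}b_j)$ are related by $A = BM$, since the change of basis is $F$-rational and commutes with the Galois action. Taking determinants gives $\det M = \mathrm{Van}(\mathfrak{a})\mathrm{Van}(\mathfrak{b})^{-1}$, and substituting into the previous formula yields the claim.

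The main obstacle I anticipate is the bookkeeping to align the two conventions in Lemma \ref{lemma 3 of 3} and in the definition of the standard Whittaker datum: one must verify that the construction of Bushnell--Henniart in \cite{BH-WHIT} produces the Whittaker datum in the $\mathfrak{a}$-basis in the same normalization (upper triangular plus superdiagonal sum) as our standard datum, so that a single element $M$ conjugates one to the other without an auxiliary diagonal twist which would alter $\det M$. The remaining steps -- the $L_M$-calculation and the Vandermonde identity -- are routine once this alignment is fixed.
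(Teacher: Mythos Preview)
Your argument is correct and in fact slightly more streamlined than the paper's. Both proofs use Lemma~\ref{lemma 3 of 3} to identify the conjugating element with the change-of-basis matrix and then read off its determinant as the ratio of Vandermonde determinants; the difference lies in how the scalar relating $\Psi^{\psi_0}$ and $\Psi^{\psi}$ is extracted. The paper computes each operator on the $\Lambda$-isotypic component $\pi^\Lambda$: Lemma~\ref{lemma 2 of 3} (which requires the standing hypothesis $\mathbf{J}\subseteq\ker(\kappa\circ\det)$) gives $\Psi^\psi|_{\pi^\Lambda}=\mathrm{id}$, while Lemma~\ref{lemma 1 of 3} together with the support description of $\pi^\Lambda$ in the $\psi_0$-model shows $\Psi^{\psi_0}|_{\pi^\Lambda}=\kappa(\det g)$; comparing the two scalars yields the result. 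You instead stay entirely inside the Whittaker models, transport one to the other by left translation $L_M$, and read off the scalar from the commutation $\mathbf{\Psi}_{\psi_0}\circ L_M=\kappa(\det M)\,L_M\circ\mathbf{\Psi}_\psi$. Your route never touches $\pi^\Lambda$ and does not invoke Lemma~\ref{lemma 2 of 3} or the hypothesis $\mathbf{J}\subseteq\ker(\kappa\circ\det)$, so it is a shade more general; on the other hand, the paper's route through $\pi^\Lambda$ dovetails with the remark following the proposition, where the $(\mathscr{U},\psi)$-normalization is tied to the twisted Mackey induction character via that same isotypic piece. The concern you flag about normalizations is exactly what Lemma~\ref{lemma 3 of 3} (quoting \cite{BH-WHIT}) settles, so no additional diagonal twist arises.
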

\begin{proof}
By Lemma \ref{lemma 2 of 3}, $\Psi^\psi$ acts on $\pi^\Lambda$ by identity. If we identify $\pi$ to its model in $\mathrm{Ind}^{G(F)}_{\mathscr{U}_0} \psi_0$, then, by Lemma \ref{lemma 1 of 3}, $\Psi^{\psi_0}$ acts on $\pi^\Lambda$ by $\kappa(\mathrm{det}g)$, where $g \in G(F)$ such that the functions in $(\mathrm{Ind}^{G(F)}_{\mathscr{U}_0} \psi_0)^\Lambda$ are supported in $\mathbf{J}g\mathscr{U}_0$. By Lemma \ref{lemma 3 of 3}, this element defines the change-of-basis isomorphism from $\mathfrak{b}$ to $\mathfrak{a}$, and so $\mathrm{det}(g) = \mathrm{Van}(\mathfrak{a})\mathrm{Van}(\mathfrak{b})^{-1}$. Since $\Psi^\psi$ and $\Psi^{\psi_0}$ are defined up to a scalar, the result follows.
\end{proof}

\begin{rmk}
  The $(\mathscr{U}, \psi)$-normalized intertwining operator $\Psi^\psi$ is the one used to compare the automorphic induction character with the twisted Mackey induction character (\ref{twisted mackey induction}). We refer to Lemma 3 in section 1.6 of \cite{BH-ET3} for the proof. \qed
\end{rmk}

The element $\mathrm{Van}(\mathfrak{a})\mathrm{Van}(\mathfrak{b})^{-1}$ is the determinant of that $x\in G(F)$ introduced in the Interlude, which is denoted by $x_{\mathfrak{ab}}$ subsequently. In the setting (\ref{AI-constant in totally-ram odd})-(\ref{AI-constant in unram}) above, the constant $c(\kappa,\psi)$ is denoted by $c$ in \cite{BH-ET1}, \cite{BH-ET2} and by $c_\theta$ in \cite{BH-ET3} respectively, where we can explicitly compute their values. We would use the notation $c_\theta$ for this constant from now on. By Proposition \ref{main prop on constants}, we would verify the variant of the equality (\ref{chi-c-delta=epsilon-delta}),
\begin{equation}\label{variant chi-constant-delta-epsilon-formula}
  \kappa(x_{\mathfrak{ab}})c_\theta\Delta^2(\gamma)=\epsilon_L(V_{G/H})\Delta_{\mathrm{I,II,III}}(\gamma),
\end{equation}
for certain choices of $\gamma$.

Before we move on we explain how to expand the product $$\mathrm{Van}(\mathfrak{a})=\prod_{\left[\begin{smallmatrix}
g \\h \end{smallmatrix}\right]\in \Phi(G,T)_+}({}^g\alpha-{}^h\alpha).$$ Let $E_i$ be an intermediate subfield in the jump-data of $\xi$. If $g|_{E_{i+1}}=h|_{E_{i+1}}$, then $${}^g\alpha-{}^h\alpha={}^g(\beta_i+\cdots+\beta_0+\zeta)-{}^h(\beta_i+\cdots+\beta_0+\zeta).$$
If further $g|_{E_{i}}\neq h|_{E_{i}}$, then we have
$${}^g\alpha-{}^h\alpha=({}^g\alpha_i-{}^h\alpha_i)u\text{ for some }u\in U^1_E,$$
since, by Proposition \ref{jump proposition}.(\ref{generic}), the first term $\alpha_i=\zeta_{i}\varpi_E^{-r_i}$ of $\beta_i$ is regular. In case when $\mathfrak{b}$ is also a cyclic base, we write $\mathfrak{b}=\{1,y,\dots,y^{n-1}\}$ for some $y\in E^\times$ and get
\begin{equation}\label{equation chi V(a)V(b) simplified}
  \kappa(x_{\mathfrak{ab}})=\kappa\left(\prod_{i=0}^d\prod_{\begin{smallmatrix}
 g|_{E_{i+1}}=h|_{E_{i+1}} \\ g|_{E_{i}}\neq h|_{E_{i}}
\end{smallmatrix}}\frac{{}^g\alpha_i-{}^h\alpha_i}{{}^gy-{}^hy}\right).
\end{equation}
Notice that, for arbitrary bases $\mathfrak{a}$ and $\mathfrak{b}$, the constant $\kappa(x_{\mathfrak{ab}})$ is independent of the order of the Galois set $\Gamma_{E/F}$.

\section{Case (\ref{AI-constant in totally-ram odd})}\label{label first cases}
We first compute the factor $\kappa(x_{\mathfrak{ab}})$ in the formula (\ref{variant chi-constant-delta-epsilon-formula}) in case (\ref{AI-constant in totally-ram odd}). We claim the following.
\begin{prop}
  $\kappa(x_{\mathfrak{ab}})=1$.
\end{prop}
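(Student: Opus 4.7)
The plan is to exploit that $\kappa$ has order $e=[E:F]$, which is odd in case (I), so $\kappa$ takes values in $\mu_e$. Since $\mu_e\cap\{\pm 1\}=\{1\}$ for odd $e$, it suffices to prove the weaker assertion that $x_{\mathfrak{ab}}^2\in N_{E/F}(E^\times)=\ker\kappa$; this will give $\kappa(x_{\mathfrak{ab}})^2=1$ and hence $\kappa(x_{\mathfrak{ab}})=1$. In particular, the plan bypasses the product formula \eqref{equation chi V(a)V(b) simplified} entirely.

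To show $x_{\mathfrak{ab}}^2$ is a norm, I would reinterpret the Vandermondes $\mathrm{Van}(\mathfrak{a})$ and $\mathrm{Van}(\mathfrak{b})$ as square roots of discriminants of power bases. Both $\mathfrak{a}=\{1,\alpha,\dots,\alpha^{n-1}\}$ and $\mathfrak{b}=\{1,\varpi_E,\dots,\varpi_E^{n-1}\}$ are $F$-bases of $E$ by Lemma~\ref{lemma 3 of 3} and construction, so $\alpha$ and $\varpi_E$ each generate $E/F$ with minimal polynomials $m_\alpha,m_{\varpi_E}\in F[X]$ of full degree $n=e$. Splitting $\prod_{i\ne j}=\prod_{i<j}\cdot\prod_{i>j}$ and then regrouping over a fixed index, one obtains the standard identity
\[
\mathrm{Van}(\mathfrak{a})^2=(-1)^{\binom{n}{2}}\prod_{i\ne j}\bigl({}^{g_i}\alpha-{}^{g_j}\alpha\bigr)=(-1)^{\binom{n}{2}}\,N_{E/F}\bigl(m_\alpha'(\alpha)\bigr),
\]
and similarly for $\mathfrak{b}$. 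Taking the ratio $x_{\mathfrak{ab}}^2=\mathrm{Van}(\mathfrak{a})^2\,\mathrm{Van}(\mathfrak{b})^{-2}$, the common sign cancels and we obtain
\[
x_{\mathfrak{ab}}^2=N_{E/F}\!\left(\frac{m_\alpha'(\alpha)}{m_{\varpi_E}'(\varpi_E)}\right)\in N_{E/F}(E^\times).
\]
Since $\kappa$ is, by local class field theory, the character of $F^\times$ with kernel $N_{E/F}(E^\times)$, we get $\kappa(x_{\mathfrak{ab}}^2)=1$, and the parity remark of the previous paragraph concludes.

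The main conceptual hurdle is the temptation to attack \eqref{equation chi V(a)V(b) simplified} head-on. A direct approach would require tracking the sign of the permutation $i\mapsto -ir_i\bmod e$ on $\mathbb{Z}/e$ at each jump, appealing to Zolotarev's lemma to produce Jacobi symbols, and then reconciling these with the factor $(q/e)$ that appears in $c_\theta$ and $\epsilon_{\mathrm{L}}(V_{G/H})$ in this case. That computation can be carried out but is awkward; the discriminant shortcut is available only because $e$ is odd, and genuinely delicate sign work will have to reappear in cases (II) and (III) of the following sections.
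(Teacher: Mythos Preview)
Your proof is correct and considerably slicker than the paper's. The paper proceeds by direct expansion: it writes out $\kappa(x_{\mathfrak{ab}})$ via the product formula \eqref{equation chi V(a)V(b) simplified}, pairs the $k$th and $(e-k)$th factors to cancel everything lying in $N_{K/F}(K^\times)$, and is left with a unit product $\prod_{k=1}^{(e-1)/2}\bigl((1-\zeta_e^{-kr_{i(k)}})/(1-\zeta_e^k)\bigr)^e$ that it must still argue lies in $F^{\times d}$; this last step requires a jump-by-jump evaluation of numerator and denominator, each collapsing to products of the shape $|E_i/E_{i+1}|^{|E/E_i|}$.

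Your discriminant shortcut sidesteps all of that. The identity $\mathrm{Van}^2=(-1)^{\binom{n}{2}}N_{E/F}(m')$ is classical, and combined with $K=E$ in this case it gives $x_{\mathfrak{ab}}^2\in\ker\kappa$ in one line; the odd-order observation finishes. The only cost is that the argument is genuinely specific to case~(I): in case~(II) one has $d=2$, so squaring kills all information, and in case~(III) the field $K$ is a proper subfield of $E$, so the norm you produce lands in $N_{E/F}(E^\times)$ rather than in $\ker\kappa=N_{K/F}(K^\times)$. The paper's longer computation, by contrast, sets up a template (pairing indices, isolating jump contributions) that is reused with modifications in the harder cases. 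Your closing paragraph already anticipates this trade-off.
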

\begin{proof}
The factor $\kappa(x_{\mathfrak{ab}})$ is equal to
\begin{equation*}
\kappa\left( \prod_{0\leq i < j \leq e-1} \frac{\sigma^i \beta - \sigma^j \beta}{\sigma^i \varpi_E - \sigma^j \varpi_E}\right) = \kappa \left( \prod^{e-1}_{k=1} \prod^{e-k-1}_{j=0} \frac{\sigma^j \beta - \sigma^{j+k} \beta}{\sigma^j \varpi_E - \sigma^{j+k} \varpi_E} \right).
\end{equation*}
 By (\ref{equation chi V(a)V(b) simplified}), the above is equal to
\begin{equation*}
\kappa \left( \prod^{e-1}_{k=1} \prod^{e-k-1}_{j=0} \zeta^{a_{r_{i(k)}}} \varpi_E^{-(r_{i(k)}+1)} \frac{\zeta_e^{j(-r_{i(k)})} - \zeta_e^{(j+k)(-r_{i(k)})}}{\zeta^j_e - \zeta^{j+k}_e} \right).
\end{equation*}
Here $r_{i(k)}$ is the jump corresponding to $[\sigma^k]$. We consider the $k$th factor and the $(e-k)$th factor. Notice that $i(k) = i(e-k)$. We can rewrite each quotient in the $k$th term as
\begin{equation*}
\frac{\zeta_e^{j(-r_{i(k)})} - \zeta_e^{(j+k)(-r_{i(k)})}}{\zeta^j_e - \zeta^{j+k}_e} = \zeta_e^{-j(r_{i(k)}+1)} \frac{1-\zeta_e^{k(-r_{i(k)})}}{1-\zeta^k_e}\text{, for } j=0, \dots, e-k-1,
\end{equation*}
and in the $(e-k)$th term as $$\frac{\zeta_e^{j(-r_{i(k)})} - \zeta_e^{(j+e-k)(-r_{i(k)})}}{\zeta^j_e - \zeta^{j+e-k}_e} = \zeta_e^{-(j+e-k)(r_{i(k)}+1)} \frac{\zeta_e^{k(-r_{i(k)})}-1}{\zeta^k_e-1} \text{, for } j=0, \dots, k-1.
$$
Hence by pairing up, the above is equal to
\begin{equation*}
\begin{split}
&\kappa \left( \prod^{\frac{e-1}{2}}_{k=1} \prod^{e-1}_{j=0} \zeta^{a_{r_{i(k)}}} \varpi_E^{-(r_{i(k)}+1)} \zeta_e^{-j(r_{i(k)}+1)} \frac{1-\zeta_E^{k(-r_{i(k)})}}{1-\zeta^k_e} \right)
\\
= &\kappa \left( \prod^{\frac{e-1}{2}}_{k=1} (\zeta^{a_{r_{i(k)}}} )^e(\varpi_F^{-(r_{i(k)}+1)})( \zeta_e^{(e(e-1)/{2})(r_{i(k)}+1)} )\left( \frac{1-\zeta_e^{k(-r_{i(k)})}}{1-\zeta^k_e} \right)^e \right).
\end{split}
\end{equation*}
The first three factors in the product belong to $N_{K/F}(K^\times)$, so the above is equal to
\begin{equation}\label{case-i-pair-up}
  \kappa \left( \prod^{\frac{e-1}{2}}_{k=1} \left( \frac{1-\zeta_e^{k(-r_{i(k)})}}{1-\zeta^k_e} \right)^e \right).
\end{equation}
The product belongs to $\mathfrak{o}_F^\times$. To claim that (\ref{case-i-pair-up}) is equal to 1, it suffices to show that
\begin{equation}\label{case-i-product-in-F}
  \prod^{\frac{e-1}{2}}_{k=1}\left( \frac{1-\zeta_e^{k(-r_{i(k)})}}{1-\zeta^k_e} \right)^{e/d} \in F^\times .
\end{equation}
We first compute the denominator (\ref{case-i-product-in-F}), starting from the fact that
$$\prod^{e-1}_{k=1}(1-\zeta^k_e) = e.$$
By pairing up the indices, we have
\begin{equation*}
\left( \prod^{\frac{e-1}{2}}_{k=1} (1-\zeta^k_e) \right)^2 \left( \prod^{\frac{e-1}{2}}_{k=1} \zeta^{-k}_e\right) = e.
\end{equation*}
Therefore $$ \left( \prod^{\frac{e-1}{2}}_{k=1} (1-\zeta^k_e) \right)^{e/d} = \left( \pm e^{1/2} \zeta^{\frac{1}{2}(\frac{e-1}{2})(\frac{e+1}{2})}_{2e}\right)^{e/d} = e^{e/2d}\left( \pm \zeta^{\frac{1}{2}(\frac{e-1}{2})(\frac{e+1}{2})}_{2d} \right).$$
The last factor $\left( \pm \zeta^{\frac{1}{2}(\frac{e-1}{2})(\frac{e+1}{2})}_{2d} \right)$ is in $F^\times$. We then compute the product in the numerator of (\ref{case-i-product-in-F}) for a fixed jump $i$. The product is then written as
 \begin{equation*}
\prod_{\begin{smallmatrix}
  k=1, ..., (e-1)/2 \\ |E_{i+1}/F||k,\, |E_i/F| \nmid k
\end{smallmatrix}} \left(1-\zeta_e^{-kr_i}\right)^{e/d}.
\end{equation*}
To compute its value, we proceed as computing the denominator above. We first consider the `full product'
 \begin{equation*}
 \prod_{\begin{smallmatrix}
  k=1, \dots, e-1 \\ |E_{i+1}/F||k,\, |E_i/F| \nmid k
\end{smallmatrix}} \left(1-\zeta_e^{-kr_i}\right).
\end{equation*}
Using Proposition \ref{jump proposition}, we can write $kr_i$ as $\ell |E_{i+1}/F| \cdot s_i |E/E_i|$, where $\ell = 1, \dots, |E/E_{i+1}|-1$ but $|E_i / E_{i+1}| \nmid \ell$. The `full product' is then equal to
\begin{equation*}
\prod_{\begin{smallmatrix}
 \ell = 1, \dots, |E/E_{i+1}|-1 \\ |E_i/E_{i+1}| \nmid \ell
\end{smallmatrix}}\left(1-\zeta^{-\ell s_i}_{|E_i/E_{i+1}|}\right).
\end{equation*}
Since $\gcd(s_i,|E_i/E_{i+1}|)=1$, the above is equal to
\begin{equation*}
  \prod_{\begin{smallmatrix}
 \ell = 1, \dots, |E/E_{i+1}|-1 \\ |E_i/E_{i+1}| \nmid \ell
\end{smallmatrix}}\left(1-\zeta^{-\ell }_{|E_i/E_{i+1}|}\right)
=\prod^{|E_i/E_{i+1}|-1}_{\ell =1}\left(1-\zeta^{-\ell}_{|E_i/E_{i+1}|}\right)^{|E/E_i|}
= |E_i/E_{i+1}|^{|E/E_i|}.
\end{equation*}
Therefore, using similar method as in the denominator of (\ref{case-i-product-in-F}), we can compute that
$$\prod_{\begin{smallmatrix}
  k=1, \dots, (e-1)/2 \\ |E_{i+1}/F||k,\, |E_i/F| \nmid k
\end{smallmatrix}} \left(1-\zeta^{-kr_i}_e\right)^{e/d} = |E_i/E_{i+1}|^{e |E/E_i|/2d}\left(\pm \zeta^*_{2d}\right).$$
Here $ \zeta^*_{2d}$ stands for certain integral power of $\zeta_{2d}$, so that $\left(\pm \zeta^*_{2d}\right)$ lies in $ F^\times$. Hence the quotient (\ref{case-i-product-in-F}) mod $F^\times$ is
\begin{equation*}
e^{e/2d}\left( \prod^d_{i=0} |E_i/E_{i+1}|^{{e}|E/E_i|/{2d}} \right)^{-1}  = \prod^d_{i=0} |E_i/E_{i+1}|^{\frac{e}{d}\left( \frac{1-|E/E_i|}{2} \right)},
\end{equation*}
which also lies in $F^\times$. Hence we have checked that $\kappa(x_{\mathfrak{ab}}) = 1$.
\end{proof}

Now we take $\gamma=\varpi_E$, which is elliptic. We then check the values of the remaining factors in (\ref{variant chi-constant-delta-epsilon-formula}).
\begin{enumerate}[(i)]
      \item Using Theorem 4.3 of \cite{BH-ET1}, we have $$c_\theta ={}_{K/F}\mu_\xi(\varpi_E)t_\varpi(V_{K/F})(\Delta^2(\varpi_E)/\Delta^1(\varpi_E))^{-1}.$$
          It turns out that $_{K/F}\mu_\xi(\varpi_E)=1$ by Proposition 4.5 of \cite{BH-ET1} and $\Delta^1(\varpi_E)=1$ by easy calculation.
  \item $\epsilon_L(V_{G/H})=\lambda_{K/F}^{-e/d}=\left(\frac{q}{e(K/F)}\right)=1$.
  \item $\Delta_{\mathrm{I,II,III}}(\varpi_E)=1$ by Proposition \ref{transfer factor trivial when totally-ram}.
\end{enumerate}
Hence, putting all together, we can change (\ref{variant chi-constant-delta-epsilon-formula}) into $$(c_\theta\Delta^2(\varpi_E))^{-1}\epsilon_L(V_{G/H})\Delta_{\mathrm{I,II,III}}(\varpi_E)=t_\varpi(V_{K/F}).$$
Recall that $t_\varpi(V_{K/F})=t^0_\varpi(V_{K/F})t^1_\varpi(V_{K/F})(\varpi_E)$. Therefore it suffices to show the following assertion.
\begin{prop}
  $t_\varpi(V_{K/F})=1.$
\end{prop}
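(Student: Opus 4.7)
The plan is to exploit the strong structural restriction that case (I) imposes on the double coset set. A tame cyclic totally ramified extension $K/F$ of degree $e$ forces $\mu_e \subset F$, i.e.\ $e \mid q-1$. In particular, with $E = K$ one has $f(E/F) = 1$, $\mathbf{k}_E = \mathbf{k}_F$, and the Galois closure $L$ of $E/F$ coincides with $E$, so $|L/E| = 1$. These collapses are what make the symmetry criterion of Proposition \ref{properties of symmetric [g]} degenerate.

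First I would apply that criterion. Since $f = 1$, only $i = 0$ is possible, and only $t = 0$ is available, so a non-trivial $[\sigma^k]$ is symmetric exactly when $e \mid (q^{0}+1)k = 2k$. Odd $e$ forces $e \mid k$, contradicting non-triviality. Hence every non-trivial double coset is asymmetric, the complete decomposition of $V = V_{K/F}$ consists entirely of hyperbolic summands $V_{\pm[\sigma^k]} = V_{[\sigma^k]} \oplus V_{[\sigma^{-k}]}$, and Proposition \ref{summary of t-factors}(ii)(a) yields $t_\varpi^0(V) = 1$ at once.

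Next I would compute $t_\varpi^1(V)(\varpi_E)$ as the product of the signs $\mathrm{sgn}_{\zeta_e^k}(V_{[\sigma^k]})$ over the non-trivial asymmetric pairs. Here I would use Proposition \ref{decomp of standard module}: since $q^f = q \equiv 1 \pmod e$, each $q^f$-orbit in $\mathbb{Z}/e$ is a singleton, so each non-trivial $V_{[\sigma^k]}$ has $\mathbf{k}_F$-dimension $1$ and can be identified with $\mathbb{F}_q$ with $\zeta_e^k$ acting by multiplication. The order of $\zeta_e^k$ is $d_k = e/\gcd(k,e)$, which divides $e$ and is therefore odd. Multiplication by $\zeta_e^k$ fixes $0$ and partitions $\mathbb{F}_q^\times$ into $\gcd(k,e)$ cycles of odd length $d_k$; each odd cycle is an even permutation, so $\mathrm{sgn}_{\zeta_e^k}(V_{[\sigma^k]}) = +1$. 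Multiplying over all pairs gives $t_\varpi^1(V)(\varpi_E) = 1$, and combining yields $t_\varpi(V_{K/F}) = 1$.

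The argument is genuinely short once the structural reduction is in place; there is no need to invoke the jump data of $\xi$ or any finer information about which $V_{[\sigma^k]}$ are non-trivial. The only real content is the observation that tame cyclicity of $K/F$ combined with odd degree rules out every anisotropic component of $V$ in advance. The main obstacle is just the bookkeeping to verify the dimension reduction in Proposition \ref{decomp of standard module} under the hypothesis $q \equiv 1 \pmod e$; once that is checked, the parity calculation on cycle lengths is immediate.
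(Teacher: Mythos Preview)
Your argument is correct and in fact cleaner than the paper's. Both approaches begin from the same observation that for asymmetric $[\sigma^k]$ one has $t_\varpi(\mathbf{V}_{[\sigma^k]})=+1$ because $\zeta_e^k$ has odd order and hence acts by an even permutation; the paper states this for general $U_{[g]}$, you verify it on the $1$-dimensional components afforded by $e\mid q-1$. The divergence is in how the symmetric contribution is handled. You use the fact that $E=K$ is already Galois over $F$ (so $L=E$, $|L/E|=1$) to collapse the symmetry criterion of Proposition~\ref{properties of symmetric [g]} to $e\mid 2k$, which has no non-trivial solution for odd $e$; hence there is nothing to count. The paper instead argues that for each divisor $a\mid e$ with $a>1$ the number of double cosets in the stratum $(e/a)\bigl(q\backslash(\mathbb{Z}/a)^\times\bigr)$ is even, using $d\mid q-1$ to force $\langle q\rangle$ into the kernel of the reduction $(\mathbb{Z}/a)^\times\to(\mathbb{Z}/\gcd(a,d))^\times$, and then pairs off the asymmetric ones; it also groups cosets by jump so that the parity of $r_i$ is constant on each stratum. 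Your route avoids both the stratification and the jump data, at the cost of being specific to the hypothesis $E=K$; the paper's counting argument is written in a form that would survive relaxing that hypothesis. One small slip: the number of cycles of multiplication by $\zeta_e^k$ on $\mathbb{F}_q^\times$ is $(q-1)/d_k$, not $\gcd(k,e)$ (these agree only when $e=q-1$); but since your conclusion only uses that the cycle \emph{length} $d_k$ is odd, this does not affect the proof.
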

\begin{proof}
  Since $e$ is odd, $t_\varpi(U_{[g]})$ is 1 or $-1$ depending on whether $[g]$ is asymmetric or symmetric respectively. Recall that $$V_{K/F}=\bigoplus_{[g]\in W_E\backslash W_F/W_E- W_E\backslash W_K/W_E}V_{[g]}$$ such that $V_{[g]}$ is non-trivial if the corresponding jump $r_i$ for $[g]$ is even. It is enough to show that, for fixed $i$, the number of symmetric $[g]$ with corresponding jump $r_i$ is always even.

Recall the following decomposition,
$$W_F/W_E\xrightarrow{\sim}\mathbb{Z}/e\xrightarrow{\sim}\bigoplus_{a\mid e}(e/a)(\mathbb{Z}/a)^\times,$$
such that $W_K/W_E$ decomposes accordingly as
$$W_K/W_E\xrightarrow{\sim}d\mathbb{Z}/e\xrightarrow{\sim}\bigoplus_{a\mid m}(e/a)(\mathbb{Z}/a)^\times.$$
Here $m=e/d$. Hence we have
$${W_E\backslash W_F/W_E- W_E\backslash W_K/W_E}\xrightarrow{\sim}\bigoplus_{a\mid e,\,a\nmid m}(e/a)\left(q\backslash(\mathbb{Z}/a)^\times\right).$$ For fixed $a$, the orbits in $(e/a)\left(q\backslash(\mathbb{Z}/a)^\times\right)$ have the same jump $r_i$. Hence it is enough to show that the cardinality of $q\backslash(\mathbb{Z}/a)^\times$ is even. This number is $\phi(a)/\mathrm{ord}(q,a)$ defined in section \ref{section Galois groups}. Since ${a\mid e}$ and ${a\nmid m}$, we have $\gcd(a,d)\neq1$. Consider the surjective morphism $$(\mathbb{Z}/a)^\times\xrightarrow{P}(\mathbb{Z}/\gcd(a,d))^\times.$$ The subgroup $\left<q\right>$ in $(\mathbb{Z}/a)^\times$ lies in $\ker P$ since $d\mid q-1$. Hence $\phi(a)/\mathrm{ord}(q,a)$ is divisible by $\phi(a)/\#\ker P =\phi(\gcd(a,d))$, which is always even.
\end{proof}

We have verified (\ref{variant chi-constant-delta-epsilon-formula}) in case (\ref{AI-constant in totally-ram odd}).

\section{Case (\ref{AI-constant in totally-ram quad})}
We would verify (\ref{variant chi-constant-delta-epsilon-formula}) in case (\ref{AI-constant in totally-ram quad}). In this case, $e$ is even and $d=2$. We sometimes write $m=e/2$. We first calculate the value of $\kappa(x_{\mathfrak{ab}})$.
\begin{prop}\label{case-ii-(e/2)}
$  \kappa(x_{\mathfrak{ab}}) =$
\begin{equation*}
  \begin{cases}
\left( \frac{-1}{q}\right)^{\frac{d^++i^+}{2}}\left( \frac{d^+}{q}\right) \left( \frac{\zeta^{a_{r_S}}}{q}\right)  &\text{ if }e/2\text{ is odd,}
\\
\left( \frac{-1}{q}\right)^{\frac{e}{4}(i_+-1)} &\text{ if }e/2\text{ is even.}
\end{cases}
\end{equation*}
\end{prop}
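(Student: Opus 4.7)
The plan is to follow the strategy of case (I), but to keep careful track of the additional middle term arising from $k = e/2$ now that $e$ is even. Using the cyclic basis $\mathfrak{b} = \{1, \varpi_E, \dots, \varpi_E^{e-1}\}$ and the totally ramified Galois set $\{\sigma^k : 0 \leq k < e\}$, equation (\ref{equation chi V(a)V(b) simplified}) gives
\begin{equation*}
\kappa(x_{\mathfrak{ab}}) = \kappa\left(\prod_{0 \leq i < j \leq e-1} \frac{\sigma^i\alpha - \sigma^j\alpha}{\sigma^i\varpi_E - \sigma^j\varpi_E}\right).
\end{equation*}
Writing $k = j-i$ and letting $r_{i(k)}$ denote the jump attached to $[\sigma^k]$, each factor reduces modulo $U_E^1$ to $\zeta_{i(k)}\varpi_E^{-r_{i(k)}-1}\zeta_e^{-i(r_{i(k)}+1)}(1-\zeta_e^{-kr_{i(k)}})/(1-\zeta_e^k)$, exactly as in case (I). A useful preliminary observation is that, by Proposition \ref{trivialness of some V[g]}, the jump $r_{i(e/2)}$ attached to $\sigma^{e/2}$ coincides with $r_T$ and is odd, so Lemma \ref{about jump S and T} forces $S = T$ and $i^+ = i_+$. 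Hence the two cases in the proposition are just two repackagings of the same cyclotomic data according to the parity of $m = e/2$.

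The first step is to pair the $k$th and $(e-k)$th products for $k < e/2$, using $r_{i(k)} = r_{i(e-k)}$. A direct cyclotomic manipulation yields, modulo $U_F^1$ on which $\kappa$ is trivial,
\begin{equation*}
P_k P_{e-k} \equiv \zeta_{i(k)}^e \, \varpi_F^{-(r_{i(k)}+1)} \, \zeta_e^{(r_{i(k)}+1)e(2k-e+1)/2} \left(\frac{1-\zeta_e^{-k r_{i(k)}}}{1-\zeta_e^k}\right)^e.
\end{equation*}
The factor $\zeta_{i(k)}^e \in \mu_F^2 \subseteq N_{K/F}(K^\times)$ is killed by $\kappa$; the factor $\kappa(\varpi_F^{-(r_{i(k)}+1)}) = \kappa(\varpi_F)^{r_{i(k)}+1}$ contributes a sign depending only on the parity of $r_{i(k)}$; the $\zeta_e$-power simplifies to $(-1)^{r_{i(k)}+1}$ because $e$ is even and $2k-e+1$ is odd; and the cyclotomic quotient raised to the $e$th power lies in $F^\times$ by the same product-over-roots identity used in case (I), so $\kappa$ evaluates it through Jacobi symbols $\left(\frac{\cdot}{q}\right)$ determined by $|E_i/E_{i+1}|$, $r_i$, and $\zeta_i$.

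The new ingredient is the middle product $P_{e/2}$. Because $\zeta_e^{e/2} = -1$ and $r_T$ is odd, the ratio $(1-\zeta_e^{-(e/2)r_T})/(1-\zeta_e^{e/2}) = 2/2 = 1$, so
\begin{equation*}
P_{e/2} \equiv \zeta_T^{e/2}\,\varpi_E^{-(e/2)(r_T+1)}\,\zeta_e^{-(r_T+1)(e/2)(e/2-1)/2} \pmod{U_E^1}.
\end{equation*}
Since $\varpi_E^{e/2} \in K^\times$ we have $\kappa(\varpi_E^{-(e/2)(r_T+1)}) = 1$, and the remaining $\kappa$-contribution reduces to a controlled sign determined by the parities of $m$ and $r_T+1$ together with the value $\kappa(\zeta_T^{e/2})$.

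The main obstacle will be the parity bookkeeping: assembling the signs from the paired terms, the middle term, and the Jacobi-symbol contributions of the cyclotomic residues into the compact form stated in the proposition. When $m$ is odd, the surviving signs should repackage into $\left(\frac{-1}{q}\right)^{(d^++i^+)/2}\left(\frac{d^+}{q}\right)\left(\frac{\zeta^{a_{r_S}}}{q}\right)$, with the normalized Gauss-sum factor $\mathfrak{n}(\psi_F)^{e/2d^+}$ from Proposition \ref{explicit rectifier quotient of sums ram-even case} absorbed into the Jacobi symbols; when $m$ is even all odd-jump contributions pair up and cancel, leaving only $\left(\frac{-1}{q}\right)^{(e/4)(i_+-1)}$. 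The concrete verification should run parallel to the Gauss-sum computation behind Proposition \ref{explicit rectifier quotient of sums ram-even case}, since both expressions ultimately arise from the same cyclotomic data attached to $\xi$; matching them term by term is the delicate final step.
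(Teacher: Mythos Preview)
Your overall strategy --- pair $k$ with $e-k$ for $1\le k<e/2$ and treat the middle term $k=e/2$ separately --- is exactly what the paper does. However, your ``useful preliminary observation'' contains a genuine error that derails the $e/2$-even case. The jump attached to $\sigma^{e/2}$ is $r_S$, not $r_T$: in the proof of Proposition~\ref{trivialness of some V[g]} the relevant index $j$ is characterised by $e(E/E_j)$ odd and $e(E/E_{j+1})$ even, and by Lemma~\ref{about jump S and T}(ii) this is precisely $j=S$. Since $r_S$ is odd \emph{by definition} of $S$, nothing here forces $r_T$ to be odd, and hence nothing forces $S=T$. In fact when $e/2$ is even one can have $T>S$, in which case $i_+=r_T$ is even while $i^+=r_S$ is odd; the paper's computation of the ``second product'' explicitly splits into the sub-cases $T=S$ and $T>S$ for exactly this reason. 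Your assertion that ``the two cases in the proposition are just two repackagings of the same cyclotomic data'' therefore erases precisely the distinction the proof must track, and your sketch for the $e/2$-even case (``all odd-jump contributions pair up and cancel'') would give $1$ rather than $\left(\frac{-1}{q}\right)^{e/4}$ in the sub-case $T>S$.

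Two smaller points. First, the justification ``$\varpi_E^{e/2}\in K^\times$ so $\kappa(\varpi_E^{-(e/2)(r_T+1)})=1$'' is not quite right: $\kappa$ is a character of $F^\times$, and what one actually uses is that $\varpi_E^{(r_S+1)e/2}=(\zeta_{E/F}\varpi_F)^{(r_S+1)/2}\in F^\times$ together with $-\varpi_F\in N_{K/F}(K^\times)$, which is how the paper obtains \eqref{case ii term e/2}. Second, your remark about a Gauss-sum factor $\mathfrak{n}(\psi_F)^{e/2d^+}$ being ``absorbed into the Jacobi symbols'' conflates this proposition with Proposition~\ref{explicit rectifier quotient of sums ram-even case}; no Gauss sum appears in the present statement, and the combination of the two only occurs later, in \eqref{simplified left side in case II}.
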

We refer the notations above to case (\ref{AI-constant in totally-ram quad}) in section \ref{section explicit rectifiers}.
\begin{proof}(of Proposition \ref{case-ii-(e/2)})
We proceed as in case (\ref{AI-constant in totally-ram odd}) up to (\ref{case-i-pair-up}), in which step we have an extra factor $$\kappa \left( \prod^{e/2-1}_{j=0} \frac{\sigma^j \beta - \sigma^{j+e/2} \beta}{\sigma^j \varpi_E - \sigma^{j+e/2} \varpi_E}\right)$$ corresponding to $k=e/2$. We recall the jump $r_S=i^+$ defined right before Lemma \ref{about jump S and T}. The above extra factor is equal to
\begin{equation*}
\kappa \left( (\zeta^{a_{r_S}})^{e/2} \varpi^{-(r_S+1)e/2}_{E} \prod^{e/2-1}_{j=0} \zeta^{-j(r_S+1)}_{e} \frac{1-\zeta^{e/2(-r_S)}_{e}}{1-\zeta^{e/2}_e}\right).
\end{equation*}
Since $r_S$ is odd and $-\varpi_F\in N_{K/F}(K^\times)$, the above is equal to
\begin{equation}\label{case ii term e/2}
\kappa \left(\zeta^{a_{r_S}}(-1)^{(r_S+1)/2}\right)^{e/2}
=
\begin{cases}
\left( \frac{\zeta^{a_{r_S}}}{q} \right) \left( \frac{-1}{q}\right)^{(r_S+1)/2}  &\text{ if }e/2\text{ is odd,}
\\
1  &\text{ if }e/2\text{ is even.}
\end{cases}
\end{equation}
We then compute the remaining terms. If we proceed as in (\ref{case-i-pair-up}) to get rid of the factors lying in $ N_{K/F}(K^\times)$, then we get
\begin{equation*}
\kappa\left(\prod^{e/2-1}_{k=1}\left( \frac{1-\zeta^{k(-r_{i(k)})}_e}{1-\zeta^k_e}\right)^e\right).
\end{equation*}
We then separate the product in half and regroup
\begin{equation*}
\begin{split}
  &\kappa\left(\left(\prod^{e/2-1}_{k=1}\left( \frac{1-\zeta^{k(-r_{i(k)})}_e}{1-\zeta^k_e}\right)\left( \frac{1-\zeta^{-k(-r_{i(k)})}_e}{1-\zeta^{-k}_e}\right)\zeta^{k(r_{i(k)}+1)}\right)^{e/2}\right)
\\
=&\kappa\left(\left(\prod^{e-1}_{k=1}\frac{1-\zeta^{k(-r_{i(k)})}_e}{1-\zeta^k_e}\right)^{e/2}
\left(\prod^{e/2-1}_{k=1}(-1)^{k(r_{i(k)}+1)}\right)\right).
\end{split}
\end{equation*}
We compute the values of the two products above. For the first product, we proceed as in case (\ref{AI-constant in totally-ram odd}) and get $$ \kappa\left(\prod^{e-1}_{k=1}\frac{1-\zeta^{k(-r_{i(k)})}_e}{1-\zeta^k_e}\right)^{e/2}=\kappa\left( \prod^d_{i=0} |E_i/E_{i+1}|^{|E/E_i|-1} \right)^{e/2}.$$
 It is clearly trivial if $e/2$ is even. When $e/2$ is odd, we check the parity of each power ${|E/E_i|-1}$ above. We recall the jump $r_T=i_+$ defined right before Lemma \ref{about jump S and T} and know that $|E/E_i|$ is odd if and only if $i\leq T$. Therefore we have
 \begin{equation}\label{case ii first product in kappa}
 \kappa\left( \prod ^d_{i=0} |E_i/E_{i+1}|^{|E/E_i|-1} \right) ^{e/2}=\begin{cases}
\left( \frac{|E_{T+1}/F|}{q} \right) =\left( \frac{d^+}{q}\right)  &\text{ if }e/2\text{ is odd,}
\\
1  &\text{ if }e/2\text{ is even.}
\end{cases}
 \end{equation}
For the second product, unlike case (\ref{AI-constant in totally-ram odd}), we now have to take care of $\kappa(-1)=\left(\frac{-1}{q}\right)$. We separate the factors according to the jumps like what we have done to the first product. We then obtain
 \begin{equation}\label{case II second product}
 \kappa\left(\prod^{e/2-1}_{k=1}(-1)^{k(r_{i(k)}+1)}\right)= \kappa\left(\prod ^d_{i=0} \prod_{\begin{smallmatrix}
  k=1, \dots, e/2-1 \\|E_{i+1}/F||k,\, |E_i/F| \nmid k
\end{smallmatrix}}(-1)^{k(r_i+1)}\right).
\end{equation}
Notice that $k$ is odd if and only if the corresponding jump $i \geq T$. We now compute the above sign in different cases.
\begin{enumerate}[(i)]
 \item  If $e/2$ is odd, then $S=T$. When $i=T$, we have that $r_T$ is odd. When $i>T$, the cardinality of the set
\begin{equation}\label{case-ii-the-set}
  \{k=1, \dots, e/2-1 | k \text{ is odd, } |E_{i+1}/F||k,\,|E_i/F| \nmid k \}
\end{equation}
is $\frac{1}{4}(|E/E_{i+1}| - |E/E_i|)$. Therefore the sign (\ref{case II second product}) is the parity of
$$\sum^d_{i=T+1} \frac{1}{4}\left(|E/E_{i+1}| - |E/E_i|\right) = \frac{1}{4}|E/E_{T+1}|\left(|E_{T+1}/F| - 1\right).$$
Notice that we have denoted $|E_{T+1}/F|$ by $d^+$. Also $|E/E_{T+1}|$ is even but not divisible by 4. Hence (\ref{case II second product}) is equal to $\left(\frac{-1}{q}\right)^{(d^+-1)/2}$.
\item
 If $e/2$ is even and $T=S$, then again $r_T=i_+$ is odd. When $i>T$, the cardinality of the set (\ref{case-ii-the-set}) is then even. Hence (\ref{case II second product}) is equal to $1=\left(\frac{-1}{q}\right)^{\frac{e}{4}(i_+-1)}$.
\item
If $e/2$ is even and $T>S$, then $r_T=i_+$ is even. When $i=T$, the cardinality of the set (\ref{case-ii-the-set}) is equal to $\frac{1}{4}|E/E_{T+1}|$, since the last condition $|E_T/F| \nmid k$ is void if $k$ is odd. The cardinality has the same parity as that of $e/4$, and so (\ref{case II second product}) is $\left(\frac{-1}{q}\right)^{\frac{e}{4}}=\left(\frac{-1}{q}\right)^{\frac{e}{4}(i_+-1)}$.
\end{enumerate}
Therefore $$\kappa\left(\prod ^d_{i=0} \prod_{\begin{smallmatrix}
  k=1, \dots, e/2-1 \\|E_{i+1}/F||k,\, |E_i/F| \nmid k
\end{smallmatrix}}(-1)^{k(r_i+1)}\right)=\begin{cases}
\left(\frac{-1}{q}\right)^{(d^+-1)/2}  &\text{ if }e/2\text{ is odd,}
\\
\left(\frac{-1}{q}\right)^{e/4}  &\text{ if }e/2\text{ is even.}
\end{cases}$$
If we combine this with (\ref{case ii term e/2}) and (\ref{case ii first product in kappa}), then we can verify Proposition \ref{case-ii-(e/2)}.
\end{proof}

Again we take $\gamma=\varpi_E$ elliptic. The next step is to simplify the product $c_\theta \Delta^2(\varpi_E)$.
\begin{prop}\label{case ii simplify c-delta}
We have $c_\theta \Delta^2(\varpi_E) = \mathrm{sgn}\left( (\mathfrak{G}_F/\mathfrak{G}_K)(\mathfrak{K}^\kappa_F/\mathfrak{K}_F ) \right). $
\end{prop}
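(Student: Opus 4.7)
The plan is to import the structural formula for $c_\theta$ from \cite{BH-ET2}, which is the quadratic totally ramified analogue of Theorem 4.3 of \cite{BH-ET1} that was already invoked in case \eqref{AI-constant in totally-ram odd}. That formula takes the shape
\begin{equation*}
c_\theta \;=\; {}_{K/F}\mu_\xi(\varpi_E)\,t_\varpi(V_{K/F})\,\bigl(\Delta^2(\varpi_E)/\Delta^1(\varpi_E)\bigr)^{-1},
\end{equation*}
and is proven there by comparing the automorphic induction character \eqref{automorphic-induction-character-relation} at $\gamma=\varpi_E$ with the twisted Mackey induction \eqref{twisted mackey induction} coming from the extended maximal type $(\mathbf{J},\Lambda)$. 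Multiplying across by $\Delta^2(\varpi_E)$ reduces the claim to
\begin{equation*}
{}_{K/F}\mu_\xi(\varpi_E)\,t_\varpi(V_{K/F})\,\Delta^1(\varpi_E)^{-1}\;=\;\mathrm{sgn}\!\left((\mathfrak{G}_F/\mathfrak{G}_K)(\mathfrak{K}^\kappa_F/\mathfrak{K}_F)\right).
\end{equation*}

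First I will verify that $\Delta^1(\varpi_E)=1$. Since $E/F$ is totally ramified and $\varpi_E$ is elliptic in both $G$ and $H$, the Weyl determinants are
\begin{equation*}
D_G(\varpi_E)=\prod_{\lambda\in\Phi(G,T)}\!(1-\lambda(\varpi_E)),\qquad D_H(\varpi_E)=\prod_{\lambda\in\Phi(H,T)}\!(1-\lambda(\varpi_E)).
\end{equation*}
Every $\lambda(\varpi_E)$ is of the form $\zeta_e^k$ for some $k\not\equiv 0\pmod{e}$, hence each non-zero factor $1-\lambda(\varpi_E)$ is a unit in $\mathfrak{o}_E$. Consequently $|D_G(\varpi_E)|_F=|D_H(\varpi_E)|_F=1$ and $\Delta^1(\varpi_E)=1$.

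Next, I substitute the explicit value of the $\nu$-rectifier from \eqref{explicit rectifier ram-even odd case},
\begin{equation*}
{}_{K/F}\mu_\xi(\varpi_E)\;=\;t_\varpi(V_{K/F})\,\mathrm{sgn}\!\left((\mathfrak{G}_F/\mathfrak{G}_K)(\mathfrak{K}^\kappa_F/\mathfrak{K}_F)\right),
\end{equation*}
which gives
\begin{equation*}
c_\theta\Delta^2(\varpi_E)\;=\;t_\varpi(V_{K/F})^{2}\,\mathrm{sgn}\!\left((\mathfrak{G}_F/\mathfrak{G}_K)(\mathfrak{K}^\kappa_F/\mathfrak{K}_F)\right).
\end{equation*}
Finally, $t_\varpi(V_{K/F})$ is a product of signs $t_\varpi(\mathbf{V}_{[g]})\in\{\pm 1\}$ over those $[g]\in W_E\backslash W_F/W_E - W_E\backslash W_K/W_E$ for which $V_{[g]}$ is non-trivial: the only place where a fourth root of unity could appear in the t-factor algorithm is the exceptional coset $[\sigma^{e/2}]$, and Proposition~\ref{trivialness of some V[g]} guarantees $V_{[\sigma^{e/2}]}=0$. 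Hence $t_\varpi(V_{K/F})^{2}=1$, and the identity drops out.

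The only non-routine step is the appeal to \cite{BH-ET2} for the structural formula for $c_\theta$; once that is in hand everything collapses, because the rectifier formula \eqref{explicit rectifier ram-even odd case} is already engineered so that its Gauss/Kloosterman part matches the right-hand side and its t-factor part cancels against its counterpart coming from $c_\theta$. The vanishing $V_{[\sigma^{e/2}]}=0$, while it sounds innocuous, is genuinely what prevents $t_\varpi(V_{K/F})$ from being a stray fourth root of unity, so it deserves to be flagged explicitly.
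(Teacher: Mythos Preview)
Your route is different from the paper's, and the key input you assume is not what \cite{BH-ET2} actually supplies.

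The paper starts from the identity recorded in chapter~6 of \cite{BH-ET2},
\[
c_\theta \;=\; (\mathfrak{G}_F/\mathfrak{G}_K)(\mathfrak{K}^\kappa_F/\mathfrak{K}_F)\,(\dim\Lambda/\dim\Lambda_K)\,\delta(1+\varpi)^{-1},
\]
with the \emph{unnormalized} Gauss and Kloosterman sums and with $\delta=\Delta^2/\Delta^1$ evaluated at $1+\varpi_E$, not at $\varpi_E$. It then does two honest computations: the sum-and-dimension piece has modulus $q^{m(d-1)/2}$ (because $\dim\Lambda/\dim\Lambda_K=q^{\dim V_{K/F}/2}$ and $|(\mathfrak{G}_F/\mathfrak{G}_K)(\mathfrak{K}^\kappa_F/\mathfrak{K}_F)|=q^{\dim W_{K/F}/2}$), and separately $\delta(1+\varpi_E)=\Delta^2(\varpi_E)\,q^{m(d-1)/2}$. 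The powers of $q$ cancel and only the sign remains.

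The formula you posit, $c_\theta = {}_{K/F}\mu_\xi(\varpi_E)\,t_\varpi(V_{K/F})\,(\Delta^2(\varpi_E)/\Delta^1(\varpi_E))^{-1}$, is not that chapter-6 formula. Once you substitute \eqref{explicit rectifier ram-even odd case} into it, it is literally equivalent to the proposition itself (modulo $t_\varpi^2=1$). And \cite{BH-ET2} obtains \eqref{explicit rectifier ram-even odd case} \emph{from} the chapter-6 formula via exactly the absolute-value and $\delta(1+\varpi)$ bookkeeping the paper carries out here. So treating both your ``structural formula'' and the rectifier value as independent imports from \cite{BH-ET2} is circular: you are using the conclusion to derive itself. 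If you want to avoid the $q^{m(d-1)/2}$ cancellation you would need an independent character comparison at $\gamma=\varpi_E$ (as opposed to $1+\varpi_E$) producing your formula, and that is genuine extra work not already packaged in \cite{BH-ET2}.
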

\begin{proof}
In this proof, we follow the notations in chapter 6 of \cite{BH-ET2}. From there we recall that $$c_\theta = (\mathfrak{G}_F/\mathfrak{G}_K)(\mathfrak{K}^\kappa_F/\mathfrak{K}_F)(\text{dim}\Lambda/\text{dim}\Lambda_K) \delta(1+ \varpi)^{-1}.$$
Since $$\text{dim}\Lambda / \text{dim}\Lambda_K = (J^1/H^1)^{1/2}/(J^1_K / H^1_K)^{1/2} = q^{(\text{dim}_{\mathbf{k}_F}V_{K/F})/2}$$ and
$$|(\mathfrak{G}_F/\mathfrak{G}_K)(\mathfrak{K}^\kappa_F/\mathfrak{K}_F)| = q^{(\text{dim}_{\mathbf{k}_F}W_{K/F})/2},$$ we have $$|(\mathfrak{G}_F/\mathfrak{G}_K)(\mathfrak{K}^\kappa_F/\mathfrak{K}_F)(\text{dim}\Lambda/\text{dim}\Lambda_K) | = q^{(\text{dim}_{\mathbf{k}_F}U_{K/F})/2} = q^{m(d-1)/2}.$$ Also
\begin{equation*}
  \begin{split}
    \delta(1+\varpi) &= \frac{\Delta^2(1+\varpi_E)}{\Delta^1(1+\varpi_E)} = \frac{\Delta^2(\varpi_E)}{|\tilde{\Delta}(\varpi_E)^2|^{1/2}_F |\det_G(1+\varpi_E)|^{m(d-1)/2}_F}
    \\
    &= \frac{\Delta^2(\varpi_E)}{|\varpi_E^{}|^{m^2d(d-1)/2}_F} = \Delta^2(\varpi_E) q^{m(d-1)/2}.
  \end{split}
\end{equation*}
Therefore $c_\theta \Delta^2(\varpi_E) = \mathrm{sgn}\left( (\mathfrak{G}_F/\mathfrak{G}_K)(\mathfrak{K}^\kappa_F/\mathfrak{K}_F ) \right).$
\end{proof}
Recall that we have computed $\mathrm{sgn} \left( (\mathfrak{G}_F/\mathfrak{G}_K)(\mathfrak{K}^\kappa_F/\mathfrak{K}_F)  \right)$ in Proposition \ref{explicit rectifier quotient of sums ram-even case}. Combining this with Propositions \ref{case-ii-(e/2)} and \ref{case ii simplify c-delta}, we can compute the left side of the equation (\ref{variant chi-constant-delta-epsilon-formula}), which is
\begin{equation}\label{simplified left side in case II}
  \kappa(x_{\mathfrak{ab}}) c_\theta \Delta^2(\varpi_E)=
  \begin{cases}
\left( \frac{-1}{q}\right)^{\frac{d^+-1}{2}}\mathfrak{n}(\psi_F)^{m/d^+}  &\text{ if }e/2\text{ is odd,}
\\
\left( \frac{-1}{q}\right)^{e/4} &\text{ if }e/2\text{ is even.}
\end{cases}
\end{equation}
Notice that when $e/2$ is odd, we have $$\left( \frac{-1}{q}\right)^{\frac{d^+-1}{2}}\mathfrak{n}(\psi_F)^{m/d^+}=\mathfrak{n}(\psi_F)^{d^+-1+m/d^+}=\mathfrak{n}(\psi_F)^{-m}.$$
 Therefore the right side of (\ref{simplified left side in case II}) in both cases is $$\mathfrak{n}(\psi_F)^{-m}=\lambda_{K/F}^{-m}=\epsilon_L(V_{G/H}).$$
Since $\Delta_{\mathrm{I,II,III}}(\varpi_E)=1$ by Proposition \ref{transfer factor trivial when totally-ram}, we have verified (\ref{variant chi-constant-delta-epsilon-formula}) in case (\ref{AI-constant in totally-ram quad}).

\section{Case (\ref{AI-constant in unram})}\label{section case unram}

We would verify (\ref{variant chi-constant-delta-epsilon-formula}) in case (\ref{AI-constant in unram}). We first compute the factor $ \kappa(x_{\mathfrak{ab}})$.
\begin{lemma}\label{chi-V-in case III}
 $ \kappa(x_{\mathfrak{ab}})= \zeta_f^{-\frac{1}{2}\left({f(e-1)}+ \sum^d_{i=0} {r_if}(|E/E_{i+1}| - |E/E_i|)\right)}.$
\end{lemma}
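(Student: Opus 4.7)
The plan is to exploit the fact that, in case (III), $\kappa$ is the unramified character of $F^\times$ of order $f$ associated to $K_0/F$; concretely $\kappa(\varpi_F) = \zeta_f$ for a primitive $f$-th root of unity and $\kappa|_{\mathcal{O}_F^\times} \equiv 1$. Hence $\kappa(x) = \zeta_f^{v_F(x)}$ for every $x \in F^\times$, and the lemma reduces to computing $v_F(x_{\mathfrak{ab}}) = v_F(\mathrm{Van}(\mathfrak{a})) - v_F(\mathrm{Van}(\mathfrak{b}))$. Since the factors of both Vandermondes lie in the Galois closure $L$ of $E/F$, and since $E/F$ is tame so that $L/E$ is unramified, I will work with $v_L$ throughout and invoke $v_F = e^{-1} v_L$ on $F^\times$ at the end.

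First I would compute $v_L(\mathrm{Van}(\mathfrak{a})^2) = \sum_{g \neq h} v_L({}^g\alpha - {}^h\alpha)$, where the sum is over ordered pairs in $\Gamma_F/\Gamma_E$. Recall that $\alpha = \zeta + \beta_0 + \cdots + \beta_d$, with the $\zeta$-summand present only when $E_0 \subsetneq E$, and that $\beta_j = \alpha_j u_j$ with $u_j \in U_E^1$ and $v_L(\alpha_j) = -r_j$. For each pair $(g,h)$ with $g \neq h$, let $i = i(g,h) \in \{0,\dots,d\}$ be the unique index such that $g|_{E_{i+1}} = h|_{E_{i+1}}$ but $g|_{E_i} \neq h|_{E_i}$; if instead $g|_{E_0} = h|_{E_0}$, set $i = -1$. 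For $j > i$ we have ${}^g\beta_j = {}^h\beta_j$, and by the genericity of $\alpha_i$ over $E_{i+1}$ (Proposition \ref{jump proposition}.(iii)) we obtain ${}^g\alpha_i \neq {}^h\alpha_i$; since the $u_j - 1$ corrections lie in $\mathfrak{p}_E$, the term of minimum valuation is ${}^g\alpha_i - {}^h\alpha_i$, giving $v_L({}^g\alpha - {}^h\alpha) = -r_i$, while in the case $i = -1$ the difference reduces to ${}^g\zeta - {}^h\zeta$, a unit in $L$.

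A fibre count for the surjections $\Gamma_F/\Gamma_E \to \Gamma_F/\Gamma_{E_{i+1}}$ and $\Gamma_F/\Gamma_E \to \Gamma_F/\Gamma_{E_i}$ shows that, for fixed $g$, exactly $|E/E_{i+1}| - |E/E_i|$ elements $h$ share jump index $i$ with $g$, yielding $n(|E/E_{i+1}| - |E/E_i|)$ ordered pairs in total. Therefore $v_L(\mathrm{Van}(\mathfrak{a})^2) = -n\sum_{i=0}^{d} r_i(|E/E_{i+1}| - |E/E_i|)$, so that $v_F(\mathrm{Van}(\mathfrak{a})) = -\tfrac{f}{2}\sum_{i=0}^{d} r_i(|E/E_{i+1}| - |E/E_i|)$. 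For $v_F(\mathrm{Van}(\mathfrak{b}))$ I would reuse the factorisation $\mathrm{Van}(\mathfrak{b}) = \det g \cdot (\det u)^e$ established in the proof of Proposition \ref{prop trivial cocycle}: $\det u = \prod_{0 \le i < k \le f-1}({}^{\phi^k}\zeta - {}^{\phi^i}\zeta)$ is a product of differences of distinct roots of unity in $\mu_L$, hence a unit, while each of the $f\binom{e}{2}$ factors of $\det g$ has the form ${}^{\phi^k\sigma^\alpha}\varpi_E - {}^{\phi^k\sigma^\beta}\varpi_E = \zeta_{\phi^k}(\zeta_e^\alpha - \zeta_e^\beta)\varpi_E$ with $v_L = 1$. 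Thus $v_L(\mathrm{Van}(\mathfrak{b})) = fe(e-1)/2$ and $v_F(\mathrm{Van}(\mathfrak{b})) = f(e-1)/2$.

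Subtracting yields $v_F(x_{\mathfrak{ab}}) = -\tfrac{1}{2}\bigl(f(e-1) + f\sum_{i=0}^d r_i(|E/E_{i+1}| - |E/E_i|)\bigr)$, which is a priori an integer because $x_{\mathfrak{ab}} \in F^\times$, and applying $\kappa(x) = \zeta_f^{v_F(x)}$ produces the claimed formula. The main obstacle will be the leading-term bookkeeping in $v_L({}^g\alpha - {}^h\alpha)$: one must verify that the $u_j$-perturbations of each $\beta_j = \alpha_j u_j$ and the lower-jump contributions $\beta_j$ with $j<i$ cannot cancel the leading term ${}^g\alpha_i - {}^h\alpha_i$, and separately confirm that the $\zeta$-summand (when $E_0 \subsetneq E$) contributes only unit-valued differences and hence drops out of the valuation sum.
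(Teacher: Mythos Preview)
Your proposal is correct and follows essentially the same route as the paper's proof: both reduce to a valuation computation via the unramifiedness of $\kappa$, evaluate $v_F(\mathrm{Van}(\mathfrak{b}))$ from the explicit factorisation in section~\ref{section Trivializing the splitting invariant}, and obtain $v_F(\mathrm{Van}(\mathfrak{a}))$ by grouping the factors ${}^g\alpha-{}^h\alpha$ according to the jump index~$i$ and counting fibres. Your write-up is in fact somewhat more explicit than the paper's about why the leading term ${}^g\alpha_i-{}^h\alpha_i$ has valuation exactly $-r_i$ and why the $i=-1$ contributions are units, but no new idea is involved.
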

\begin{proof}
If $K/F$ is unramified, then $\kappa$ is unramified and $\kappa(\varpi_F) = \zeta_f$. Hence it is enough to check the $v_F$-order of $x_{\mathfrak{ab}}$. Recall that $\mathrm{Van}(\mathfrak{b}) = \det \mathbf{g}$ defined in section \ref{section Trivializing the splitting invariant} and is equal to $$\prod^{f-1}_{k=0} \prod_{0 \leq_\phi i <_\phi j < e} ({}^{\phi^k \sigma^i} \varpi_E - {}^{\phi^k\sigma^j}\varpi_E) = \prod^{f-1}_{k=0} \prod_{0 \leq_\phi i <_\phi j < e} (\zeta_{\phi^k}(\zeta^{i{q^k}}_e - \zeta^{j{q^k}}_e) \varpi_E).$$
Each factor has $v_E$-order 1. Hence $\mathrm{Van}(\mathfrak{b})$ has $v_E$-order $ef(e-1)/2$, or $v_F$-order $f(e-1)/2$. We then compute the $v_E$-order of $\mathrm{Van}(\mathfrak{a}) = \mathrm{Van}(\{ 1, \alpha, ..., \alpha^{n-1}\})$, which is half of the $v_E$-order of
\begin{equation*}
    \prod_{g \neq h}({}^{g} \alpha - {}^{h} \alpha)
= \prod^d_{i=0} \prod_{\begin{smallmatrix}
  g|_{E_{i+1}} = h|_{E_{i+1}} \\   g|_{E_{i}} \neq h|_{E_{i}}
\end{smallmatrix} } ({}^g \alpha - {}^h \alpha)
= \prod^d_{i=0}\prod_{\begin{smallmatrix}
  g|_{E_{i+1}} = h|_{E_{i+1}} \\   g|_{E_{i}} \neq h|_{E_{i}}
\end{smallmatrix} } \varpi_E^{-r_i}.
\end{equation*}
If we use that $\#\{\left[\begin{smallmatrix}  g \\ h \end{smallmatrix} \right]\in \Phi|\left[\begin{smallmatrix}  g \\ h \end{smallmatrix}\right] |_{E_i} \equiv 1\} = |E/F|(|E/E_i| -1),$ then the above product is equal to
\begin{equation*}
 \prod^d_{i=0} \varpi_E^{-r_i |E/F|(|E/E_{i+1}|-|E/E_i|)} = \prod^d_{i=0} \varpi_F^{-r_i f(|E/E_{i+1}|-|E/E_i|)}.
\end{equation*}
Therefore by putting all together we have the formula indicated.
\end{proof}

 We consider the remaining factors in (\ref{variant chi-constant-delta-epsilon-formula}). In this case we take $\gamma=\varpi_Eu$ for some $u\in U_E^1$ such that $\gamma$ is elliptic. Write $f_0=|E/E_0|$ and $f_\varpi=|E/F[\varpi_E]|$. The factors are
\begin{enumerate}[(i)]
  \item $c_\theta = (-1)^{f_0 - 1} t_\mu^0(V_{K/F})$ by Theorem 5.2.(3) of \cite{BH-ET3},
  \item $\Delta^2(\varpi_E u) = (-1)^{e(f-1)+f_\varpi - 1}$ by (6.5.6) of \cite{BH-ET3} (still true even when $E \neq E_0$),
  \item $\epsilon_L(V_{G/H}) = \lambda^{-e}_{K/F} = (-1)^{e(f-1)}$, and
  \item the product $$\Delta_\mathrm{II,III_2}(\varpi_E u) = \prod_{\alpha \in \mathcal{R}_\mathrm{sym}-\Phi(H,T)} \chi_g \left( \frac{\varpi_E u - {}^g(\varpi_E u)}{a_\alpha}\right),$$
where $\alpha = [\begin{smallmatrix}
  1\\g
\end{smallmatrix}]$.
\end{enumerate}
We now distinguish between the cases when $f$ is odd and when $f$ is even. If $f$ = odd, then
\begin{enumerate}[(i)]
  \item $c_\theta = t_\mu^0(V_{K/F})=1$ since $V_{K/F}$ has no anisotropic component,
  \item $\Delta^2(\varpi_E u) =1$,
  \item $\epsilon_L(V_{G/H}) = 1$, and
  \item $\Delta_\mathrm{II,III_2}(\varpi_E u) = 1$ since $\Phi_\mathrm{sym}-\Phi(H)=\emptyset$.
\end{enumerate}
It remains to show the following.
\begin{prop}
 If $f$ is odd, then $ \kappa(x_{\mathfrak{ab}})$ in (\ref{chi-V-in case III}) is equal to 1.
\end{prop}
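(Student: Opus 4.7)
The plan is to read the claim directly off the explicit formula of Lemma \ref{chi-V-in case III}. Since $K/F$ is unramified of degree $f$, the character $\kappa$ is unramified and sends $\varpi_F$ to a primitive $f$-th root of unity $\zeta_f$, so the value of $\kappa$ at any element of $F^\times$ depends only on its $F$-valuation modulo $f$. The formula of Lemma \ref{chi-V-in case III} accordingly expresses $\kappa(x_{\mathfrak{ab}}) = \zeta_f^{-fM/2}$ with
\[
M = (e-1) + \sum_{i=0}^d r_i\bigl(|E/E_{i+1}| - |E/E_i|\bigr).
\]
When $f$ is odd, this equals $1$ precisely when $M$ is an even integer, so everything reduces to a parity calculation inside the jump data of $\xi$.

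To carry out that calculation, I will set $e_i = e(E/E_i)$ and $f_i = f(E/E_i)$. Because $f_i$ divides $f$ and $f$ is odd, each $f_i$ is odd, so $|E/E_i| = e_i f_i \equiv e_i \pmod 2$ and
\[
M \equiv (e-1) + \sum_{i=0}^d r_i(e_{i+1}-e_i) \pmod 2.
\]
Proposition \ref{jump proposition}.(\ref{gcd-level-ramification}) lets me write $r_i = e_i s_i$ with $\gcd(s_i, e_{i+1}/e_i) = 1$. A short parity audit of the summand $e_i s_i(e_{i+1}-e_i)$ shows that it is odd exactly when $e_i$ is odd and $e_{i+1}$ is even; in that situation $e_{i+1}/e_i$ is automatically even, and the coprimality condition then forces $s_i$ to be odd as well, so all three factors are odd.

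The decisive observation is that the divisibility chain $1 = e_0 \mid e_1 \mid \cdots \mid e_{d+1} = e$ can switch parity from odd to even at most once, and does so exactly once if and only if $e$ itself is even. Consequently $\sum_i r_i(e_{i+1}-e_i)$ has the same parity as $e-1$, and the two summands of $M$ cancel modulo $2$ in both the $e$ odd and the $e$ even cases. I do not foresee a substantive obstacle here; the only bookkeeping point to remain alert to is that the Howe factorization convention in section \ref{section admissible characters revisited} gives $e_0 = 1$ (since $E/E_0$ is unramified in both the $E_0 = E$ and $E_0 \subsetneq E$ cases), which ensures the parity chain starts in the odd range and makes the ``at most one switch'' observation applicable uniformly.
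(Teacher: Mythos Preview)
Your proof is correct and follows essentially the same approach as the paper: both reduce the question to showing that $M=(e-1)+\sum_i r_i(|E/E_{i+1}|-|E/E_i|)$ is even, via a parity count on the jump data. The paper phrases the $e$-even case in terms of the distinguished index $S$ (the largest odd jump, cf.\ Lemma~\ref{about jump S and T}) and observes that the unique odd summand occurs at $i=S$ with $r_S$ odd, whereas you derive the same conclusion directly from Proposition~\ref{jump proposition}(\ref{jump is multiple}),(\ref{gcd-level-ramification}) without naming $S$; the content is identical.
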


\begin{proof}
 If $e$ is odd, then the right side of (\ref{chi-V-in case III}) is equal to 1 since all $|E/E_{i+1}| - |E/E_i|$ are even. If $e$ is even, then all $|E/E_{i+1}| - |E/E_i|$ are even except at the place $i=S$ when $|E/E_S|$ is odd and $|E/E_{S+1}|$ is even. Recall that $r_S$ is the largest odd jump. A simple calculation shows that the power of the right side of (\ref{chi-V-in case III}) is a multiple of $f$.
\end{proof}
We have verified (\ref{variant chi-constant-delta-epsilon-formula}) in the case when $f$ is odd. We now assume that $f$ is even. To compute $ \kappa(x_{\mathfrak{ab}})$, we recall the jump $r_R$ from section \ref{section explicit rectifiers} such that $f(E/E_{R})$ is odd and $f(E/E_{R+1})$ is even. We distinguish between the cases when $e$ is odd and when $e$ is even.
\begin{enumerate}[(i)]
\item First we consider when $e$ is odd. If $f_0$ is odd, then we have
\begin{equation*}
   \kappa(x_{\mathfrak{ab}})= \zeta_f^{r_Rf/2} = (-1)^{r_R},
\end{equation*}  because $|E/E_{i+1}| - |E/E_i|$ is odd if and only if $i=R$. If $f_0$ is even, then $|E/E_{i+1}| - |E/E_i|$ is always even and so $\kappa(x_{\mathfrak{ab}}) = 1$.
\item When $e$ is even, we have to make more analysis. If $f_0$ is even, then all $|E/E_{i+1}| - |E/E_i|$ are even. Hence $$ \kappa(x_{\mathfrak{ab}})=\zeta_f^{f(e-1)/2} = -1.$$ We then assume that $f$ and $e$ are even and $f_0$ is odd. The parity of $|E/E_{i+1}| - |E/E_i|$ is odd when
\begin{equation}\label{e-charge}
  e(E/E_i)\text{ is odd and }e(E/E_{i+1})\text{ is even}
\end{equation}
or
\begin{equation}\label{f-charge}
  f(E/E_i)\text{ is odd and }f(E/E_{i+1})\text{ is even}.
\end{equation}
We know that (\ref{e-charge}) happens when $i=S$ and (\ref{f-charge}) happens when $i=R$. Therefore $ \kappa(x_{\mathfrak{ab}})$ is equal to $(-1)^{r_R+1}$ if $S>R$ and is equal to $(-1)^{r_S+1} = 1$  if $S \leq R$.
\end{enumerate}

To compute $c_\theta$, we need the value of $t^0_\mu(V_{K/F})$. This is already given in Proposition \ref{the sign t0mu computed here}. For $\Delta^2(\varpi_Eu)$, the work is easy. Therefore we have computed the constants on the left side of (\ref{variant chi-constant-delta-epsilon-formula}). It would be better to tabulate the factors in different cases as the following.
\begin{center}{\renewcommand{\arraystretch}{1.4}
      \begin{tabular}{ | l | l | l | l |}
    \hline
   & $\kappa(x_{\mathfrak{ab}})$ &$c_\theta$ &$\Delta^2(\varpi_Eu)$  \\ \hline
$e$ is odd, $f_0$ is odd           & $(-1)^{r_R}$      &$(-1)^{r_R+1}$    & $(-1)^{f_\varpi}$ \\
$e$ is odd, $f_0$ is even          & 1                 & $-1$             & $(-1)^{f_\varpi}$\\
$e$ is even, $f_0$ is odd, $S>R$     & $(-1)^{r_R+1}$    & $(-1)^{r_R+1}$   & $(-1)^{f_\varpi+1}$\\
$e$ is even, $f_0$ is odd, $S\leq R$ & 1                 & 1                & $(-1)^{f_\varpi+1}$\\
$e$ is even, $f_0$ is even         & $-1$               & $-1$            & $(-1)^{f_\varpi+1}$\\  \hline
    \end{tabular}}
\end{center}
Using this table, we can show that the left side of (\ref{variant chi-constant-delta-epsilon-formula}) is always $(-1)^{f_\varpi+1}$. We now compute the factors on the right side of (\ref{variant chi-constant-delta-epsilon-formula}). We have
$$\epsilon_L(V_{G/H}) = (-1)^e. $$
It remains to show that
\begin{prop}
If $f$ is even, then $ \Delta_\mathrm{II,III_2} (\varpi_E u)=(-1)^{e+f_\varpi-1}.$
\end{prop}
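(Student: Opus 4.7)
The strategy is to expand $\Delta_{\mathrm{II},\mathrm{III}_2}(\varpi_E u)$ via formula~(\ref{Delta-II,III2 as product of sym}):
$$\Delta_{\mathrm{II},\mathrm{III}_2}(\varpi_E u)=\prod_{g\in\mathcal{D}(F)_{\mathrm{sym}}-\mathcal{D}(K)}\chi_g\!\left(\frac{\varpi_E u-{}^g(\varpi_E u)}{a_{1,g}}\right).$$
Since $K$ is the maximal unramified sub-extension of $E/F$, the group $W_K/W_E$ is generated by $\sigma$, so all the ramified-symmetric cosets $[\sigma^k]$ lie in $\mathcal{D}(K)$, while no coset of the form $[\sigma^k\phi^{f/2}]$ does. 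Hence the product runs precisely over the unramified-symmetric cosets, which exist only because $f$ is even; by Proposition~\ref{parity of double coset with i=f/2}, their number has the same parity as $e(f-1)\equiv e \pmod{2}$.

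For each such $g=\sigma^k\phi^{f/2}$, the extension $E_g/E_{\pm g}$ is unramified quadratic, so $\chi_g|_{E_{\pm g}^\times}$ is the unique unramified quadratic character. Setting $\zeta=\zeta_e^k\zeta_{\phi^{f/2}}$, I would split into two cases. In the \emph{generic} case $\zeta\neq 1$, the $a$-data of section~\ref{section Trivializing the splitting invariant} gives $a_{1,g}=\varpi_E-{}^g\varpi_E=(1-\zeta)\varpi_E$, a prime element of $E$, so that
$$\frac{\varpi_E u-{}^g(\varpi_E u)}{a_{1,g}}=\frac{u-\zeta\,{}^gu}{1-\zeta}.$$
Since $E_g/E$ is unramified, valuations in $E$ and $E_{\pm g}$ agree on elements of $E$, and a direct evaluation of $\chi_g$ on this ratio will pin down a uniform sign for every generic $g$. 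The plan is to show that this sign is $-1$; combined with the counting above, the product over the generic factors contributes $(-1)^{e}$ or $(-1)^{e-1}$ according to whether there is $0$ or $1$ exceptional factor.

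In the \emph{exceptional} case $\zeta=1$, the element $g$ fixes $\varpi_E$ and the previous choice of $a$-data becomes singular; here I would choose $a_{1,g}$ from a root of unity term such as $\zeta_E-{}^g\zeta_E$. The numerator is $\varpi_E(u-{}^gu)$, which sits in a strictly higher filtration step, so the parity of the $E_{\pm g}$-valuation of the quotient flips and the factor becomes $+1$. By Lemma~\ref{exists root fixing varpi}, this exceptional case arises for a unique $[g]$ precisely when $f_\varpi$ is even, and for no $[g]$ when $f_\varpi$ is odd. Combining the contributions yields
$$\Delta_{\mathrm{II},\mathrm{III}_2}(\varpi_E u)=(-1)^{e+f_\varpi-1}.$$

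The main obstacle is the precise evaluation of $\chi_g$ at the quotient, because although the computation is naturally performed in $E$, the character is the unramified quadratic character on $E_{\pm g}^\times$. One must pass to the compositum $E_g$ (unramified over both $E$ and $E_{\pm g}$) so that $v_E$ and $v_{E_{\pm g}}$ can be compared, and verify that after normalization the ratio truly lies in $E_{\pm g}^\times$ with the claimed parity of valuation. The delicacy of the exceptional factor --- where the leading term of the numerator vanishes and one must re-choose $a_{1,g}$ --- is what forces the correction by $(-1)^{f_\varpi-1}$ in the final formula.
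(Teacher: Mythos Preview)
Your overall strategy matches the paper's exactly: expand $\Delta_{\mathrm{II},\mathrm{III}_2}(\varpi_E u)$ as a product over unramified-symmetric cosets $[\sigma^k\phi^{f/2}]$, show each generic factor is $-1$ and the possible exceptional factor is $+1$, then count using Proposition~\ref{parity of double coset with i=f/2} and Lemma~\ref{exists root fixing varpi}. The gap is your identification of the $a$-data.

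You claim that section~\ref{section Trivializing the splitting invariant} gives $a_{1,g}=\varpi_E-{}^g\varpi_E$ for $g=\sigma^k\phi^{f/2}$. That is false: the formula $a_{\sigma^i,\sigma^j}={}^{\sigma^i}\varpi_E-{}^{\sigma^j}\varpi_E$ in (\ref{delta-1-solution-for-a-totally-ramified}) is only for the totally ramified situation, whereas here $f\ge 2$. For roots with a nontrivial $\phi$-part, the second equation of (\ref{delta-1-trivial-cocycle}) yields $a_{1,\sigma^k\phi^{f/2}}=\zeta-{}^{\phi^{f/2}}\zeta$, a \emph{unit}. This is precisely what the paper uses: the quotient becomes
\[
\frac{\varpi_E\bigl(u-\zeta_e^k\zeta_{\phi^{f/2}}\,{}^gu\bigr)}{\zeta-{}^g\zeta}.
\]
The error matters. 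With your choice $a_{1,g}=(1-\zeta_e^k\zeta_{\phi^{f/2}})\varpi_E$ in the generic case, the quotient $\dfrac{u-\zeta_e^k\zeta_{\phi^{f/2}}\,{}^gu}{1-\zeta_e^k\zeta_{\phi^{f/2}}}$ is a $1$-unit in $E_g$, so the unramified quadratic character evaluates to $+1$, not $-1$; your ``plan to show this sign is $-1$'' would fail. With the correct unit $a$-data, the generic quotient has $E_g$-valuation~$1$ (hence $\chi_g=-1$) and the exceptional quotient has valuation~$2$ (hence $\chi_g=+1$), with no need to re-choose $a$-data. Once you insert the correct $a$-data, your argument becomes the paper's proof verbatim.
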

\begin{proof}
Be definition,
\begin{equation*}
  \begin{split}
    \Delta_\mathrm{II,III_2} (\varpi_E u) &= \prod_{\sigma^k \phi^{f/2} \in\mathcal{D}_\text{sym}} \chi_{\sigma^k \phi^{f/2}}\left( \frac{\varpi_E u - {}^{\sigma^k \phi^{f/2}}(\varpi_E u)}{a_{\sigma^k \phi^{f/2}}}\right)
\\
&= \prod_{\sigma^k \phi^{f/2} \in\mathcal{D}_\text{sym}} \chi_{\sigma^k \phi^{f/2}}\left( \frac{\varpi_E(u - \zeta_{\phi^{f/2}} \zeta^k_e(^{\sigma^k \phi^{f/2}}u))}{\zeta - {}^{\sigma^k \phi^{f/2}}\zeta}\right).
  \end{split}
\end{equation*}
Recall that $E_{\sigma^k\phi^{f/2}}/E_{\pm\sigma^k\phi^{f/2}}$ is unramified and $$N_{E_{\sigma^k\phi^{f/2}}/ E_{\pm\sigma^k\phi^{f/2}}}:\varpi_E\mapsto \zeta_{\phi^{f/2}}\zeta_e^k\varpi_E^2.$$
Since $\sigma^k \phi^{f/2} \in W_{F[\varpi_E]}$ if and only if $\zeta_{\phi^{f/2}} \zeta^k_e = 1$, we have
\begin{equation*}
\begin{split}&\prod_{[\sigma^k \phi^{f/2}] \in\mathcal{D}_\text{sym}} \chi_{\sigma^k \phi^{f/2}} \left( \frac{\varpi_E u - {}^{\sigma^k \phi^{f/2}} \varpi_E u}{\zeta - {}^{\sigma^k \phi^{f/2}}\zeta}\right)
\\
=&\begin{cases}
  \chi_{\sigma^k \phi^{f/2}} \left(\varpi_E^2 (\text{unit}) \right) = 1 &\text{ if }\sigma^k \phi^{f/2} \in W_{F[\varpi_E]},
  \\
  \chi_{\sigma^k \phi^{f/2}} \left(\varpi_E (\text{unit}) \right) = -1 &\text{ otherwise.}
\end{cases}\end{split}
\end{equation*}
Hence
$$ \Delta_\mathrm{II,III_2} (\varpi_E u) = \prod_{\begin{smallmatrix}
  \sigma^k \phi^{f/2} \in\mathcal{D}_\text{sym}\\ \sigma^k\phi^{f/2} \notin W_{F[\varpi_E]}
\end{smallmatrix}}(-1).$$
Recall
\begin{enumerate}[(i)]
  \item by Lemma \ref{exists root fixing varpi} that there exists $\sigma^k \phi^{f/2} \in W_{F[\varpi_E]}$ if and only if $f_\varpi$ is even, and
      \item by Proposition \ref{parity of double coset with i=f/2} that the parity of the number of symmetric $[\sigma^k\phi^{f/2}]$ is the same as that of $e$.
\end{enumerate} Combining these two facts we have
$  \Delta_\mathrm{II,III_2} (\varpi_E u)=(-1)^{e+f_\varpi-1}.$
\end{proof}
Therefore we have verified (\ref{variant chi-constant-delta-epsilon-formula}) in case (\ref{AI-constant in unram}). Notice in this case we have also shown that $$ \kappa(x_{\mathfrak{ab}})c_\theta=\epsilon_L(V_{G/H})\text{ and }\Delta^2=\Delta_\mathrm{II,III_2}.$$ The first statement is the same as Proposition \ref{constant equals epsilon proved in HH}, a fact proved in \cite{HL2010}.

\chapter{Rectifiers and transfer factors}\label{chapter rectifier and transfer}

We prove the second main result Theorem \ref{chi-data factor of BH-rectifier}. It asserts that, for each admissible character $\xi\in P(E/F)$, the rectifier ${}_F\mu_\xi$ is a product of restrictions of canonically chosen $\chi$-data. Therefore we can interpret the essentially tame local Langlands correspondence in terms of the locally constructed bijection $\Pi_n$ and the admissible embeddings of Langlands-Shelstad. We also have a simple Corollary \ref{rectifier as transfer factor} that we can express the $\nu$-rectifier associated to a cyclic extension $K/F$ as the transfer factor $\Delta_\mathrm{III_2}$ associated to the group $G=\mathrm{GL}_n$ and its endoscopic group $H=\mathrm{Res}_{K/F}\mathrm{GL}_{n/|K/F|}$.

\section{Main results}\label{section main result}

Let us recall the notations in chapter \ref{chapter finite cymplectic module}. For each double coset $ [g]\in (W_E\backslash W_F/W_E)'$, we let $U_{[g]}$ be the standard $\mathbf{k}_F\Psi_{E/F}$-module defined in section \ref{section standard module}. For each admissible character $\xi\in P(E/F)$, let $V=V_\xi$ be the $\mathbf{k}_F\Psi_{E/F}$-module defined by $\xi$. Let $V_{[g]}$ be the $U_{[g]}$-isotypic part of $V$, so that $V_{[g]}$ is either trivial or isomorphic to $U_{[g]}$. Let $W_{[g]}$ be the complementary submodule of $V_{[g]}$ defined in section \ref{section comp mod}. We denote by $\mathbf{V}_{[g]}$ the module ${V}_{[g]}\oplus {V}_{[g^{-1}]}$ if $[g]$ is asymmetric and the module ${V}_{[g]}$ if $[g]$ is symmetric. We write $\mathbf{W}_{[g]}$ similarly. The t-factors $$t^i_\mu(\mathbf{V}_{[g]}),\,t^i_\varpi(\mathbf{V}_{[g]}),\,i=0,1\text{, and }t(\mathbf{W}_{[g]})$$ are computed in Proposition \ref{summary of t-factors} and Proposition \ref{n-factor of comp-mod}. We write $$t_\Gamma(\mathbf{V}_{[g]})=t^0_\Gamma(\mathbf{V}_{[g]})t^1_\Gamma(\mathbf{V}_{[g]})(\gamma)$$ for arbitrary generator $\gamma$ of the cyclic group $\Gamma=\mu$ or $\varpi$.

We also recall that $\chi$-data are certain collection of characters $\{\chi_{\lambda}\}_{\lambda\in \Phi}$ defined in section \ref{section langlands shelstad chi data}. We take $\mathcal{D}$ to be a subset in $\Gamma_F/\Gamma_E$ representing the non-trivial double cosets in $
\Gamma_E\backslash\Gamma_F/\Gamma_E$. Let $\mathcal{D}_{\mathrm{asym}/\pm}$ and $\mathcal{D}_{\mathrm{sym}}$ be the subsets of $\mathcal{D}$ containing the representatives of asymmetric double cosets modulo the relation $[g]\sim[g^{-1}]$ and symmetric double cosets respectively. Write $\mathcal{D}_{\pm}=\mathcal{D}_{\mathrm{asym}/\pm}\sqcup\mathcal{D}_{\mathrm{sym}}$. If we write $\chi_g=\chi_{\left[\begin{smallmatrix}
  1\\g
\end{smallmatrix}\right]}$, then the collection of $\chi$-data is determined by its sub-collection $\{\chi_{g}\}_{g
\in\mathcal{D}_\pm}$. The product $$\prod_{g\in \mathcal{D}}\chi_{g}|_{E^\times}$$ is shown, in Remark \ref{two remarks about product}.(\ref{remark indep of choices of double coset}), to be independent of the choices of the representatives in $\mathcal{D}$. Therefore it is legitimate to write the above product as $$\prod_{[g]\in (W_E \backslash W_{F}/W_E)'}\chi_{g}|_{E^\times}.$$

\begin{thm}\label{chi-data factor of BH-rectifier}
Let $\xi$ be an admissible character of $E^\times$ over $F$.
 \begin{enumerate}[(i)]
\item  Let $V=V_\xi$ be the $\mathbf{k}_F\Psi_{E/F}$-module defined by $\xi$. The following conditions define a collection of $\chi$-data $\{\chi_{g,\xi}\}_{g\in\mathcal{D_\pm}}$.
 \begin{enumerate}
 \item All $\chi_{g,\xi}$ are tamely ramified.
 \item If $[g]$ is asymmetric, then $$\chi_{g,\xi}|_{\mu_{E_{g}}}=\mathrm{sgn}_{\mu_{E_{g}}}(V_{[g]})\text{ and }\chi_{g,\xi}(\varpi_E)\text{ can be anything appropriate.}$$
 \item If $[g]$ is symmetric, then $$\chi_{g,\xi}|_{\mu_{E}}=
\begin{cases}
\left(\frac{}{\mu_E}\right) & \text{if }[g]=[\sigma^{e/2}],\\
t^1_\mu(V_{[g]}) & \text{otherwise},
\end{cases}$$ and $$\chi_{g,\xi}(\varpi_E)=
t^0_\mu(V^{\varpi}_{[g]})t_\varpi(V_{[g]})t(W_{[g]}).$$
Moreover, $\chi_{g,\xi}|_{E_{\pm g}^\times}=\delta_{E_g^\times/E_{\pm g}^\times}$ the quadratic character of the field extension $E_g^\times/E_{\pm g}^\times$. Finally, outside $E^\times$ and $E_{\pm g}^\times$, the values of $\chi_{g,\xi}$ can be anything appropriate. \label{theorem sym case}
\end{enumerate}\label{theorem character}
\item Let ${}_F\mu_\xi$ be the rectifier of $\xi\in P(E/F)$ and  $\{\chi_{g,\xi}\}_{g\in\mathcal{D}_\pm}$ be the $\chi$-data defined above, then $${}_F\mu_\xi=\prod_{[g]\in (W_E \backslash W_{F}/W_E)'}\chi_{g,\xi}|_{E^\times}.$$
\end{enumerate}
\end{thm}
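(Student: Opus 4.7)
The plan is to prove Theorem \ref{chi-data factor of BH-rectifier} by matching the prescribed characters against the known factorization \eqref{product of rectifier} of the rectifier, verified piece-by-piece along the tower \eqref{sequence of fields, ramified}.

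First I would check that the prescription in (\ref{theorem character}) legitimately defines $\chi$-data in the sense of section \ref{section langlands shelstad chi data}. The tameness and the transition condition (\ref{chi-data condition transition}) are built in: the isotypic module $V_{[g]}$ and the complementary module $W_{[g]}$ depend only on the double coset $[g]\in(W_E\backslash W_F/W_E)'$, so defining $\chi_{g,\xi}$ via a chosen representative in $\mathcal{D}_\pm$ and extending by (\ref{chi-data condition transition}) is consistent. For symmetric $[g]\neq[\sigma^{e/2}]$ the quadratic requirement (\ref{chi-data condition quadratic}), namely $\chi_{g,\xi}|_{E_{\pm g}^\times}=\delta_{E_g/E_{\pm g}}$, is imposed in the statement, and the prescribed values on $\mu_E$ and $\varpi_E$ are seen to be compatible with this restriction by using Proposition \ref{summary of t-factors} (where the quadratic character $t^1_\mu(V_{[g]})$ arises naturally when $\zeta^k_e\zeta_{\phi^{f/2}}$ is not $\pm 1$) together with the fact that $E^\times \cap E_{\pm g}^\times$ is generated by $F^\times U_E^1$ and the image of $E_{\pm g}^\times$; the tame unit calculus in section \ref{section comp mod} assures compatibility.

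Next I would split the product $\prod_{[g]\in (W_E\backslash W_F/W_E)'}\chi_{g,\xi}|_{E^\times}$ according to the tower \eqref{sequence of fields, ramified}. For each consecutive pair $K_{i-1}\subsetneq K_i$, the double cosets $[g]$ with root $[\begin{smallmatrix}1\\g\end{smallmatrix}]$ trivial on $K_i$ but not on $K_{i-1}$ form exactly $(W_E\backslash W_{K_{i-1}}/W_E) \setminus (W_E\backslash W_{K_i}/W_E)$, which up to the bijection of Proposition \ref{orbit of roots as double coset} indexes the summands of $V_{K_i/K_{i-1}}$ and $W_{K_i/K_{i-1}}$. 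Thus the proposed formula partitions as
\begin{equation*}
\prod_{[g]\in (W_E\backslash W_F/W_E)'}\chi_{g,\xi}|_{E^\times}
=\Bigl(\prod_{[g]\in(W_E\backslash W_{K_l}/W_E)'}\chi_{g,\xi}|_{E^\times}\Bigr)\prod_{i=0}^{l}\prod_{[g]\in W_{K_{i-1}}/W_{K_i}\text{-piece}}\chi_{g,\xi}|_{E^\times},
\end{equation*}
which I aim to identify factor-by-factor with \eqref{product of rectifier}.

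Then I would verify each matching separately, using the explicit rectifier values recalled in section \ref{section explicit rectifiers}:
\begin{enumerate}[(a)]
\item For $E/K_l$ totally ramified of odd degree, \eqref{explicit rectifier ram-odd case} gives a trivial contribution on $\mu_E$ and $\left(\frac{q^f}{e(E/K_l)}\right)$ at $\varpi_E$. By Proposition \ref{summary of t-factors}, all $[g]$ in this range are asymmetric of odd order (the argument used in the verification of case (\ref{AI-constant in totally-ram odd}) in section \ref{label first cases} applies verbatim), so the $\chi_{g,\xi}|_{E^\times}$ combine, pair by pair $[g],[g^{-1}]$, to the same quadratic residue sign.
\item For quadratic totally ramified $K_j/K_{j-1}$, equations \eqref{explicit rectifier ram-even odd case} and \eqref{explicit rectifier ram-even even case} together with Proposition \ref{explicit rectifier quotient of sums ram-even case} give a product of $t_\varpi(V_{K_j/K_{j-1}})$ and a sign of Gauss--Kloosterman sums; the first matches $\prod t_\varpi(V_{[g]})$ from the symmetric components by Proposition \ref{summary of t-factors}, while the Gauss--Kloosterman sign matches $\prod t(W_{[g]})$ by \eqref{jacobi symbol depend on type only} and Proposition \ref{n-factor of comp-mod}.
\item For $K_0/F$ unramified, \eqref{explicit rectifier unram case} expresses ${}_{K_0/F}\mu_\xi$ via $t^1_\mu(V_{K_0/F})$, $t^0_\mu(V/V^\varpi)$, $t_\varpi(V_{K_0/F})$, and a sign $(-1)^{e(f-1)}$; each of these factorizes over the components $[g]$ with $i=f/2$, and combines with $t(W_{[g]})$ via \eqref{sym-unram t(V) is -t(W)} (whose exact role accounts for the symmetric anisotropic contributions).
\end{enumerate}

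The main obstacle is the third bullet, i.e.\ the sign-bookkeeping in the unramified-combined-with-quadratic-totally-ramified bookkeeping, and in particular ensuring that the factor $t^0_\mu(V/V^\varpi)t_\varpi(V_{K_0/F})$ of \eqref{explicit rectifier unram case} is absorbed correctly into the product $\prod t^0_\mu(V^\varpi_{[g]})t_\varpi(V_{[g]})t(W_{[g]})$ defining $\chi_{g,\xi}(\varpi_E)$ in case (\ref{theorem sym case}). This uses Proposition \ref{parity of double coset with i=f/2} (parity of the number of symmetric unramified double cosets matches $e(f-1)$) to convert the global sign $(-1)^{e(f-1)}$ into the product of local signs, and exploits \eqref{sym-unram t(V) is -t(W)} to swap $t^0_\mu(V_{[g]})$ with $-t(W_{[g]})$ on symmetric components. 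Once these sign manipulations are tracked, Proposition \ref{the sign t0mu computed here} identifies the remaining $t^0_\mu(V^\varpi)$ contribution with the unramified part of Lemma \ref{exists root fixing varpi}, completing the match.
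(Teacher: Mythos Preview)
Your overall strategy---partition $(W_E\backslash W_F/W_E)'$ along the tower \eqref{sequence of fields, ramified} and match each $\nu$-rectifier in \eqref{product of rectifier} to the corresponding sub-product of $\chi_{g,\xi}|_{E^\times}$---is exactly the paper's. But two steps in your outline do not go through as written.

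\textbf{First, part (a) contains a false claim.} You assert that for the layer $E/K_l$ (totally ramified of odd degree) ``all $[g]$ in this range are asymmetric of odd order''. This is not true: $\mathcal{D}_{l+1}$ generally contains symmetric cosets $[\sigma^k]$ (those $k$ of odd order in $\mathbb{Z}/e$ satisfying the divisibility condition of Proposition \ref{properties of symmetric [g]}). The paper treats $\mathcal{D}_{l+1,\mathrm{sym}}$ explicitly: for each such $k$ one computes $\chi_{\sigma^k,\xi}(\varpi_E)=t_\varpi(V_{[\sigma^k]})t(W_{[\sigma^k]})=-1$ in every case, and then identifies the full product $t_\varpi(U_{E/K_l})$ with $\left(\tfrac{q^f}{e(E/K_l)}\right)$ via Reimann's Lemma 4.2(ii). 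Your ``pair by pair $[g],[g^{-1}]$'' argument covers only the asymmetric part, and the reference to section \ref{label first cases} is to a different computation (the constant $\kappa(x_{\mathfrak{ab}})$, not the rectifier).

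\textbf{Second, the $\chi$-data existence verification is a genuine step you have skipped.} For symmetric $[g]$ one must produce a tamely ramified character of $E_g^\times$ which simultaneously (i) takes the prescribed values on $\mu_E$ and at $\varpi_E$, and (ii) restricts to $\delta_{E_g/E_{\pm g}}$ on $E_{\pm g}^\times$. These constraints are not automatically compatible; the paper verifies them by explicit index computations in $\mu_{E_g}$ (showing, e.g., that the subgroup generated by $\mu_{E_{\pm g}}$ and $\zeta_e^k$ has even index in the group generated by $\zeta_0$), handled separately for $g\in\mathcal{D}_{\mathrm{sym,ram}}$ and $g\in\mathcal{D}_{\mathrm{sym,unram}}$, with further case splits for $[\sigma^{e/2}]$, $[\sigma^{e/4}]$, and $\zeta_e^k\zeta_{\phi^{f/2}}=\pm 1$. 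Your sentence pointing to ``tame unit calculus in section \ref{section comp mod}'' refers to the wrong section and does not supply this argument.

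Your sketches for (b) and (c) are in the right direction; note however that (b) must distinguish $\mathcal{D}_l$ (which contains the exceptional $[\sigma^{e/2}]$ and where the Kloosterman quotient $\mathfrak{K}^\kappa/\mathfrak{K}$ contributes nontrivially) from $\mathcal{D}_{l-1},\dots,\mathcal{D}_1$ (where it is $1$), and (c) needs the further case split on $\zeta_e^k\zeta_{\phi^{f/2}}\in\{1,-1\}$ versus $\neq\pm 1$ to pin down $\chi_{g,\xi}(\varpi_E)$ correctly.
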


\begin{rmk}
  \begin{enumerate}[(i)]
    \item In the case when $[g]$ is asymmetric, that $\chi_{g,\xi}(\varpi_E)$ is assigned to any appropriate value is a natural result. Since $\chi_{g,\xi}$ and $\chi_{g^{-1},\xi}$ must come together, by the condition (\ref{chi-data condition transition}) of $\chi$-data we have $\chi_{g,\xi}\chi_{g^{-1},\xi}(\varpi_E)=\chi_{g,\xi}(\varpi_E({}^g\varpi_E^{-1}))$ and $\varpi_E({}^g\varpi_E^{-1})\in \mu_{E_g}.$ Hence we only need the values of $\chi_{g,\xi}$ on units. This result is comparable to Lemma 3.3.A of \cite{LS}.
    \item In (\ref{theorem sym case}) of the theorem, the module $V^\varpi_{[g]}$ consists of the unique isotypic component $V_{[\sigma^k\phi^{f/2}]}$ such that $({}^{\sigma^k\phi^{f/2}}\varpi_E)\varpi_E^{-1}  =\zeta_e^k\zeta_{\phi^{f/2}}=1.$ This is the one we considered in Lemma \ref{exists root fixing varpi}. Therefore if $[g]$ is symmetric, then $\chi_{g,\xi}(\varpi_E)$ is usually $t_\varpi(V_{[g]})t(W_{[g]})$ except when $f$ is even, $\zeta_e^k\zeta_{\phi^{f/2}}=1$, and $V_{[\sigma^k\phi^{f/2}]}$ is non-trivial.
  \end{enumerate}\qed
\end{rmk}

The proof of Theorem \ref{chi-data factor of BH-rectifier} occupies the remaining of this chapter. The structure of the proof is as follows. We recall the sequence of subfields in (\ref{sequence of fields, ramified}),
\begin{equation*}
  F=K_{-1}\hookrightarrow K_0\hookrightarrow K_{1}\hookrightarrow \cdots\hookrightarrow K_{l-1}\hookrightarrow K_l\hookrightarrow K_{l+1}=E,
\end{equation*}
and the factorization (\ref{product of rectifier}) of the rectifier into $\nu$-rectifiers
$${}_{F}\mu_\xi=({}_{K_l}\mu_\xi)({}_{K_l/K_{l-1}}\mu_\xi)\cdots({}_{K_1/K_0}\mu_\xi)({}_{K_0/F}\mu_\xi).$$
Notice that the rectifier ${}_{K_l}\mu_\xi$ can be regarded as the $\nu$-rectifier ${}_{E/K_l}\mu_\xi$. Each factor above is explicitly expressed in \cite{BH-ET1}, \cite{BH-ET2}, \cite{BH-ET3} respectively and restated in section \ref{section explicit rectifiers}. We then split the set $\mathcal{D}$ into the following subsets,
\begin{equation*}
\begin{split}
\mathcal{D}_{l+1}&=\mathcal{D} \cap ( W_{K_l}/W_E-[1]),
\\
\mathcal{D}_{l}&=\mathcal{D} \cap  (W_{K_{l-1}}/W_E- W_{K_l}/W_E),
\\
&\vdots
\\
\mathcal{D}_{1}&=\mathcal{D} \cap (  W_{K_{0}}/W_E- W_{K_1}/W_E)\text{, and }
\\
\mathcal{D}_{0}&=\mathcal{D} \cap (  W_{F}/W_E- W_{K_0}/W_E).
\end{split}
\end{equation*}
Let us call $\mathcal{D}_{\mathrm{ram}}=\mathcal{D}_{l+l}\sqcup\cdots\sqcup \mathcal{D}_{1} $ the totally ramified part and $\mathcal{D}_{0} $ the unramified part of $\mathcal{D}$. Our plan is the following.
\begin{enumerate}[(i)]
\item For each $j=l+1,\dots,0$ and each $g\in \mathcal{D}_{j}$, , we prove that if we define $\chi_{g,\xi}$ by the assigned values in Theorem \ref{chi-data factor of BH-rectifier}, then it satisfies the conditions of $\chi$-data.
\item For each $j=l+1,\dots,0$, we show that the product the restrictions of the $\chi$-data, $$\chi_{\mathcal{D}_j,\xi}|_{E^\times}:=\prod_{g\in \mathcal{D}_j}\chi_{g,\xi}|_{E^\times},$$ is equal to to the $\nu$-rectifier ${}_{K_j/K_{j-1}}\mu_\xi.$
\end{enumerate}
 We shall start proving Theorem \ref{chi-data factor of BH-rectifier} in the case when char$(\mathbf{k}_F)=p\neq2$.

\section{The asymmetric case}

For each $g=\sigma^k\phi^i\in \mathcal{D}_{\mathrm{asym}/\pm}$, we are going to compute the product of the characters
$$\chi_{g,\xi}\chi_{g^{-1},\xi}=\chi_{g,\xi}\left(\chi_{g,\xi}^{g}\right)^{-1}= \chi_{g,\xi}\circ\left[\begin{smallmatrix}
1\\  g
\end{smallmatrix}\right]$$
when restricted to $E^\times$. If we assign $\chi_{g,\xi}|_{\mu_{E_{g}}}=\mathrm{sgn}_{\mu_{E_{g}}}(V_{[g]})$ as stated in the assertion, then we have
\begin{equation}\label{value on mu-E for asymmetric}
\left(\chi_{g,\xi}\circ\left[\begin{smallmatrix}
1\\  g
\end{smallmatrix}\right]\right)|_{\mu_E}=\mathrm{sgn}_{\mu_{E}}(V_{[g]}) =t^1_{\mu}(\mathbf{V}_{[g]})
\end{equation}
and
\begin{equation}\label{asymmetric varpi not depend}
\left(\chi_{g,\xi}\circ\left[\begin{smallmatrix}
1\\  g
\end{smallmatrix}\right]\right)(\varpi_E)  = \text{sgn}_{\left[\begin{smallmatrix}
1\\  g
\end{smallmatrix}\right](\varpi_E)}(V_{[g]})=t^1_{\varpi}(\mathbf{V}_{[g]})(\varpi_E)= t_{\varpi}(\mathbf{V}_{[g]})(\varpi_E).
\end{equation}
The relation in (\ref{asymmetric varpi not depend}) does not depend on the exact value of  $\chi_{g,\xi}(\varpi_E)$ and $\chi_{g^{-1},\xi}(\varpi_E)$, so we can assign them to anything appropriate. We will use the t-factors on both right sides of (\ref{value on mu-E for asymmetric}) and (\ref{asymmetric varpi not depend}) later when we compare the product of $\chi$-data with the $\nu$-rectifier.

\section{The symmetric ramified case}

We now consider those $\chi$-data $\chi_{g,\xi}$ when $g \in \mathcal{D}_{\text{sym,ram}}=\mathcal{D}_{\text{sym}}\cap \mathcal{D}_{\text{ram}} $, the subset of symmetric ramified double cosets. They are of the form $g=\sigma^k$ such that $k$ satisfies certain divisibility condition in Proposition \ref{properties of symmetric [g]}.

\subsection{The case $g\in  \mathcal{D}_{l+1,\mathrm{sym}}$}

We first consider those $\chi_{g,\xi}$ when $g =\sigma^k\in  \mathcal{D}_{l+1,\mathrm{sym}}= \mathcal{D}_{l+1}\cap \mathcal{D}_{\mathrm{sym}}$. We assume that $k$ has odd order in the additive group $\mathbb{Z}/e$. We assign
\begin{equation}\label{value on mu-E for sym in S-(s+1)}
\chi_{\sigma^k,\xi}|_{\mu_E}=t^1_{\mu}(V_{[\sigma^k]}),
\end{equation}
which is 1 since $\mu_E$ acts on $V_{[\sigma^k]}$ trivially. We also assign
\begin{equation}\label{value on varpi for sym in S-(s+1)}
\chi_{\sigma^k,\xi}(\varpi_E) = t_\varpi(V_{[\sigma^k]})t(W_{[\sigma^k]}).
\end{equation}
Using the values in Proposition \ref{summary of t-factors}, we can compute the value on the right side of (\ref{value on varpi for sym in S-(s+1)}). If $V_{[\sigma^k]}$ is trivial, so that $W_{[\sigma^k]}$ is non-trivial, then $$t_\varpi(V_{[\sigma^k]})=1\text{ and }t(W_{[\sigma^k]})=-1.$$ Let $s$ be the minimal positive integer such that $e|(1+p^s)k$. If $V_{[\sigma^k]}$ is non-trivial, then we have $$t^0_\varpi(V_{[\sigma^k]})=-1\text{ and }t^1_\varpi(V_{[\sigma^k]})(\varpi_E) = \left(\frac{\zeta^k_e}{\mu_{p^s+1}}\right).$$
The latter is equal to 1 since $\zeta_e^k$ has odd order and $p^s+1$ is even. Therefore the right side of (\ref{value on varpi for sym in S-(s+1)}) is always equal to $t_\varpi(U_{[\sigma^k]})=-1$.

We need to further require that our $\chi_{\sigma^k,\xi}$ satisfies the condition of $\chi$-data
\begin{equation}\label{condition of chi data for sym in S-(s+1)}
\chi_{\sigma^k,\xi}|_{{E_{\pm\sigma^k}}} = \delta_{E_{\sigma^k}/E_{\pm\sigma^k}}.
\end{equation}
We can make such condition more explicit. Since $E_{\sigma^k}/E_{\pm\sigma^k}$ is quadratic unramified, the norm group $N_{E_{\sigma^k}/E_{\pm\sigma^k}}({E^\times_{\sigma^k}})$ has a decomposition
$$\mu_{E_{\pm\sigma^k}}\times\left<\zeta_e^k\varpi_E^2\right>\times U^1_{E_{\pm\sigma^k}}.$$
Take a root of unity $\zeta_0\in \mu_{E_{\sigma^k}}$ such that
 $$\zeta_0\varpi_E\in E_{\pm\sigma^k}-N_{E_{\sigma^k}/E_{\pm\sigma^k}}({E^\times_{\sigma^k}}).$$
  Then the condition (\ref{condition of chi data for sym in S-(s+1)}) is equivalent to the following explicit conditions:
\begin{equation}\label{3 conditions to check chi in S-(s+1)}
\chi_{\sigma^k,\xi}|_{\mu_{E_{\pm\sigma^k}}} \equiv 1,\,\chi_{\sigma^k,\xi}(\zeta^k_e \varpi^2_E)= 1\text{, and }\chi_{\sigma^k,\xi}(\zeta_0\varpi_E) = -1.
\end{equation}
If we define $\chi_{\sigma^k,\xi}$ to be the unramified character $$\chi_{\sigma^k,\xi}|_{\mu_{E_{\sigma^k}}} \equiv 1\text{ and }\chi_{\sigma^k,\xi}(\varpi_E) = -1,$$ then, since $\mu_E\subseteq \mu_{E_{\pm\sigma^k}}$, we can check that $\chi_{\sigma^k,\xi}$ satisfies all the conditions in (\ref{value on mu-E for sym in S-(s+1)}), (\ref{value on varpi for sym in S-(s+1)}) and (\ref{3 conditions to check chi in S-(s+1)}).

Finally, we have to show that the product of the characters when restricted to $E^\times$,
\begin{equation*}
 \chi_{\mathcal{D}_{l+1},\xi}=\prod_{{\sigma^k}\in \mathcal{D}_{l+1}}\chi_{\sigma^k,\xi}|_{E^\times}=\left(\prod_{{\sigma^k}\in\mathcal{D}_{{l+1},{\mathrm{asym}/\pm}}}\chi_{\sigma^k,\xi}\circ\left[\begin{smallmatrix}
1\\  \sigma^k
\end{smallmatrix}\right]|_{E^\times}\right)\left(\prod_{{\sigma^k}\in \mathcal{D}_{l+1,\text{sym}}}\chi_{\sigma^k,\xi}|_{E^\times}\right),
  \end{equation*}
 is equal to ${}_{K_l/E}\mu_\xi$. We first compare the values of both characters when restricted to $\mu_E$. Those asymmetric ${\sigma^k}$ have the right side of (\ref{value on mu-E for asymmetric}) being 1 since $\mu_E$ acts trivially on $\mathbf{V}_{{[\sigma^k]}}$. Combining them with the symmetric ones in (\ref{value on mu-E for sym in S-(s+1)}) and using the explicit value in (\ref{explicit rectifier ram-odd case}), we have
\begin{equation*}
 \chi_{\mathcal{D}_{l+1},\xi}|_{\mu_E}=1={}_{K_l}\mu_\xi|_{\mu_E}.
 \end{equation*}
 We then compare the values of both characters at $\varpi_E$. Combining (\ref{asymmetric varpi not depend}) and (\ref{value on varpi for sym in S-(s+1)}), we have
 \begin{equation*}
 \chi_{\mathcal{D}_{l+1},\xi}(\varpi_E)=t_\varpi(U_{E/K_l}).
 \end{equation*}
 By Lemma 4.2.(ii) of \cite{Rei}, we have
\begin{equation*}
 t_{\varpi}(U_{E/K_l}) = \left(\frac{q^f}{e(E/K_l)}\right).
 \end{equation*}
 This is equal to ${}_{E/K_l}\mu_\xi(\varpi_E)$ as in (\ref{explicit rectifier ram-odd case}). We have proved Theorem \ref{chi-data factor of BH-rectifier} for the part when $g \in \mathcal{D}_{l+1}$.

Before proceeding to the next case, we provide a couple of facts about the roots of unity appearing in (\ref{3 conditions to check chi in S-(s+1)}).

\begin{lemma}\label{lemma about explicit chi-data condition}
Suppose that $g=\sigma^k\in \mathcal{D}_{\mathrm{sym,ram}}$, with $g\neq 1$ or $\sigma^{e/2}$.
\begin{enumerate}[(i)]
\item The element $\zeta_e^k$ lies in $\ker(N_{{E_g}/{E_{\pm g}}})|_{\mu_{E_g}}-\mu_{E_{\pm g}}$. \label{lemma about zeta_e^k and kernel}
\item Suppose that $|E_g/E|=2t_k$. The element $\zeta_0$ satisfies the relation $\zeta_0^{1-q^{ft_k}} = \zeta^k_e$. \label{lemma about zeta_0}
\end{enumerate}
\end{lemma}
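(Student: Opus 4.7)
The plan is to exploit the fact that for any root $\lambda\in\Phi$ and $t\in E^\times$, the value $\lambda(t)$ lies in the fixed field $E_\lambda^\times$, and that under the Galois action on characters we have the equivariance $h(\lambda(t))=(h\cdot\lambda)(t)$ for all $h\in\Gamma_F$. For $\lambda=\left[\begin{smallmatrix}1\\ \sigma^k\end{smallmatrix}\right]$ this specializes to $\lambda(\varpi_E)=\varpi_E({}^{\sigma^k}\varpi_E)^{-1}=\zeta_e^{-k}$, and for a representative $h$ of the nontrivial class in $\Gamma_{\pm g}/\Gamma_g$ it gives $h(\lambda(t))=\lambda(t)^{-1}$.

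For part (i), I will apply the equivariance at $t=\varpi_E$: this produces $h(\zeta_e^{-k})=\zeta_e^{k}$, and hence $N_{E_g/E_{\pm g}}(\zeta_e^{-k})=\zeta_e^{-k}\cdot h(\zeta_e^{-k})=1$, so that $\zeta_e^{\pm k}\in\ker N_{E_g/E_{\pm g}}$. To rule out the membership $\zeta_e^k\in\mu_{E_{\pm g}}$, I will observe that such membership would force $h(\zeta_e^k)=\zeta_e^k$; combined with $h(\zeta_e^k)=\zeta_e^{-k}$ from the previous step, this yields $\zeta_e^{2k}=1$, i.e.\ $k\equiv 0$ or $e/2\pmod e$, which contradicts the standing assumption $g\neq 1,\sigma^{e/2}$.

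For part (ii), I will produce an explicit generator $h$ of $\mathrm{Gal}(E_g/E_{\pm g})$ and compute its action on $\varpi_E$. The natural candidate is $h=\sigma^k\phi^{ft_k}$: using $e\mid k(1+q^{ft_k})$ (the symmetry condition from Proposition~\ref{properties of symmetric [g]}) together with $\phi^{ft_k}\in\Gamma_{L/E}=\langle\phi^f\rangle$, a direct check with coset representatives gives $h\Gamma_E=\sigma^k\Gamma_E$ and $h\sigma^k\Gamma_E=\Gamma_E$, i.e.\ $h\cdot\lambda=-\lambda$. The key simplification is that $\phi^{ft_k}\in\Gamma_E$ fixes $\varpi_E$, so $h(\varpi_E)=\sigma^k(\varpi_E)=\zeta_e^k\varpi_E$. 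The condition $\zeta_0\varpi_E\in E_{\pm g}$ then reads $h(\zeta_0)\,h(\varpi_E)=\zeta_0\,\varpi_E$, which rearranges to $h(\zeta_0)/\zeta_0=\varpi_E/h(\varpi_E)=\zeta_e^{-k}$.

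The final step uses that $E_g/E_{\pm g}$ is an unramified quadratic extension whose nontrivial Galois element acts on residues as the $q^{ft_k}$-power Frobenius; since $\zeta_0\in\mu_{E_g}$ is a root of unity, this gives $h(\zeta_0)=\zeta_0^{q^{ft_k}}$, so $\zeta_0^{q^{ft_k}-1}=\zeta_e^{-k}$, which is the claimed identity $\zeta_0^{1-q^{ft_k}}=\zeta_e^k$. I do not anticipate any serious obstacle; the only delicate points are verifying that the specific $h=\sigma^k\phi^{ft_k}$ both represents the nontrivial class in $\Gamma_{\pm g}/\Gamma_g$ and acts trivially on the $\phi$-factor of $\varpi_E$, and both follow at once from $\Gamma_{L/E}=\langle\phi^f\rangle$ together with the integrality of $t_k$.
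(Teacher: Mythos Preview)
Your proof is correct and follows essentially the same route as the paper: both use the explicit representative $h=\sigma^k\phi^{ft_k}$ of the nontrivial class in $\Gamma_{\pm g}/\Gamma_g$, compute $N_{E_g/E_{\pm g}}(\zeta_e^k)=\zeta_e^{k(1+q^{ft_k})}=1$ for part (i), and apply $h$ to $\zeta_0\varpi_E$ for part (ii). Your write-up is in fact more complete than the paper's, since you explicitly verify the exclusion $\zeta_e^k\notin\mu_{E_{\pm g}}$ (via $h(\zeta_e^k)=\zeta_e^{-k}$ forcing $\zeta_e^{2k}=1$), which the paper's proof leaves implicit; your conceptual framing through the equivariance $h(\lambda(t))=(h\cdot\lambda)(t)$ is a nice touch but amounts to the same direct computation.
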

\begin{proof}
Recall that $\sigma^k\phi^{ft_k}\in W_{E_{\pm g}}-W_{E_{g}}$ and $(\sigma^k\phi^{ft_k})^2\in W_{E_{g}}$. We then have $$N_{{E_g}/{E_{\pm g}}}(\zeta_e^k)=\zeta_e^k({}^{\sigma^k\phi^{ft_k}}\zeta_e^k)=\zeta_e^{k(1+q^{ft_k})}=1,$$ hence (\ref{lemma about zeta_e^k and kernel}). Since $\zeta_0\varpi_E\in E_{\pm g}$, we have $$\zeta_0\varpi_E={}^{\sigma^k\phi^{ft_k}}(\zeta_0\varpi_E)=\zeta_0^{q^{ft_k}}\zeta_e^k\varpi_E,$$ hence (\ref{lemma about zeta_0}).
\end{proof}

\subsection{The case $g\in  \mathcal{D}_{l,\mathrm{sym}}$}

We next study those $\chi_{g,\xi}$ when $g \in  \mathcal{D}_{l,\mathrm{sym}}$. We first consider the distinguished element $g=\sigma^{e/2}$. By Proposition \ref{trivialness of some V[g]}, the module $V_{[\sigma^{e/2}]}$ is always trivial. Recall that $E_{\sigma^{e/2}}=E$ and $E/E_{\pm\sigma^{e/2}}$ is quadratic totally ramified. We assign
\begin{equation}\label{value on mu-E for e/2}
\chi_{\sigma^{e/2},\xi}|_{\mu_E}:\zeta\mapsto\left(\frac{\zeta}{\mu_E}\right).
\end{equation}
We would assign $\chi_{\sigma^{e/2},\xi}(\varpi_E)$ such that $\chi_{\sigma^{e/2},\xi}$ satisfies the condition of $\chi$-data $$\chi_{\sigma^{e/2},\xi}|_{E^\times_{\pm\sigma^{e/2}}}=\delta_{E/E_{\pm\sigma^{e/2}}}.$$
Since $N_{E/E_{\pm\sigma^{e/2}}}(\varpi_E)=-\varpi_{E_{\pm\sigma^{e/2}}}=-\varpi_E^2$, we just require that
$$\chi_{\sigma^{e/2},\xi}(\varpi_E)^2=\chi_{\sigma^{e/2},\xi}(-1)=\left(\frac{-1}{\mu_E}\right).$$
If we assign
\begin{equation}\label{value on varpi for e/2}
 \chi_{\sigma^{e/2},\xi}(\varpi_E)=t(W_{[\sigma^{e/2}]})= \mathfrak{n}(\psi_{K_{l-1}}),
\end{equation}
 then the above is clearly satisfied. Therefore we have shown that $\chi_{\sigma^{e/2},\xi}$ is a character in $\chi$-data.

We then consider other $\sigma^k\in \mathcal{D}_{l,\mathrm{sym}}$, $k \neq e/2$. Since $\mu_E$ acts trivially on $V_{[\sigma^k]}$, we can assign
\begin{equation}\label{value on mu-E for sym in S-s}
\chi_{\sigma^k,\xi}|_{\mu_E}=t^1_{\mu}(V_{[\sigma^k]})=1.
\end{equation}
We look for the value of $\chi_{\sigma^k,\xi}(\varpi_E)$ such that $\chi_{\sigma^k,\xi}$ satisfies the condition
$$\chi_{\sigma^k,\xi}|_{{E_{\pm\sigma^k}}}=\delta_{E_{\sigma^k} / E_{\pm\sigma^k}}. $$ Similar to (\ref{3 conditions to check chi in S-(s+1)}), we can write down such $\chi$-data conditions explicitly as
\begin{equation}\label{3 conditions to check chi in S-s}
\chi_{\sigma^k,\xi}|_{\mu_{E_{\pm\sigma^k}}} \equiv 1,\,\chi_{\sigma^k,\xi}(\zeta^k_e \varpi^2_E)= 1,\text{ and }\chi_{\sigma^k,\xi}(\zeta_0\varpi_E) = -1.
\end{equation}
We will check, in Lemma \ref{lemma determine a character, sym-ram case} below, that there exists a character $\chi$ of $E^\times_g$ with the following values,
\begin{equation}\label{conditions for a well-defined charcater in sym-ram case}
\begin{split}
 & \chi|_{\mu_{E_{\pm\sigma^k}}}\equiv 1,\,\chi(\zeta_e^k)=1,\,\chi(\zeta_0)= -t_{\varpi}(V_{[\sigma^k]})t(W_{[\sigma^k]}),
  \text{ and }\\&\chi(\varpi_E) = t_{\varpi}(V_{[\sigma^k]})t(W_{[\sigma^k]}).
\end{split}
\end{equation}
Since $\mu_E\subseteq \mu_{E_{\pm\sigma^k}}$ and $\zeta_0\in \mu_{E_{\sigma^k}}-(\mu_{E_{\pm\sigma^k}}\cup\{\zeta_e^k\})$, such character satisfies the conditions in (\ref{value on mu-E for sym in S-s}) and (\ref{3 conditions to check chi in S-s}). Therefore any character with the above values is our desired character $\chi_{\sigma^k,\xi}$.

\begin{lemma}\label{lemma determine a character, sym-ram case}
  There exists a character $\chi$ with the values specified in (\ref{conditions for a well-defined charcater in sym-ram case})
\end{lemma}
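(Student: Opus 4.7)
The plan is to construct $\chi$ directly on the canonical decomposition
$$E_g^\times = \langle \varpi_{E_g}\rangle \times \mu_{E_g} \times U^1_{E_g}.$$
Since $\chi$ must be tamely ramified, set $\chi|_{U^1_{E_g}} \equiv 1$, so the task reduces to specifying compatible values on $\mu_{E_g}$ and at $\varpi_{E_g}$, and then verifying that the prescribed values at $\zeta_e^k$, $\zeta_0$, $\varpi_E$ in (\ref{conditions for a well-defined charcater in sym-ram case}) are recovered.

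First I would construct $\chi$ on the cyclic group $\mu_{E_g}$. The condition $\chi|_{\mu_{E_{\pm g}}}\equiv 1$ forces $\chi|_{\mu_{E_g}}$ to factor through the cyclic quotient $\mu_{E_g}/\mu_{E_{\pm g}}$, whose order $q^{ft_k}+1$ is even (because $p \neq 2$). Applying Lemma \ref{lemma about explicit chi-data condition}(ii) in this quotient, the identity $\zeta_0^{1-q^{ft_k}} = \zeta_e^k$ reduces to
$$\overline{\zeta_0}^{\,2} = \overline{\zeta_e^k},$$
since $1-q^{ft_k}\equiv 2 \pmod{q^{ft_k}+1}$. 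This has two consequences: the value $\chi(\zeta_e^k)$ is automatically the square of $\chi(\zeta_0)$, so the requirement $\chi(\zeta_e^k)=1$ is forced as soon as $\chi(\zeta_0)\in\{\pm 1\}$; and the element $\overline{\zeta_0}$ is a square root of $\overline{\zeta_e^k}$ in a cyclic group of even order, hence can be chosen to have even order, allowing the assignment $\chi(\zeta_0) = \epsilon$ with $\epsilon := -t_\varpi(V_{[\sigma^k]})t(W_{[\sigma^k]})\in\{\pm 1\}$.

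Next I would analyze the two cases that arise from Proposition \ref{summary of t-factors}. If $V_{[\sigma^k]}$ is trivial then $W_{[\sigma^k]}$ is non-trivial, $t_\varpi(V_{[\sigma^k]})=1$, $t(W_{[\sigma^k]})=-1$, and $\epsilon=+1$, so the trivial character on $\mu_{E_g}/\mu_{E_{\pm g}}$ satisfies all constraints. If instead $V_{[\sigma^k]}$ is non-trivial then $t(W_{[\sigma^k]})=1$ and $\epsilon = \left(\frac{\zeta_e^k}{\mu_{p^s+1}}\right)$; this Jacobi symbol is precisely the value taken at $\overline{\zeta_0}$ by the unique order-two character of $\mu_{E_g}/\mu_{E_{\pm g}}$, so such a character exists and yields $\chi|_{\mu_{E_g}}$ with $\chi(\zeta_0)=\epsilon$.

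Finally I would define $\chi$ at $\varpi_{E_g}$ by the requirement $\chi(\varpi_E)=t_\varpi(V_{[\sigma^k]})t(W_{[\sigma^k]})$. Since $\varpi_E$ equals $\varpi_{E_g}^{e(E_g/E)}$ times a unit whose value under $\chi$ is already determined, this prescribes $\chi(\varpi_{E_g})$ uniquely, completing the construction on $E_g^\times$. As a final verification, one computes
$$\chi(\zeta_e^k\varpi_E^2)=1\cdot(t_\varpi t(W))^2=1,\qquad \chi(\zeta_0\varpi_E)=\epsilon\cdot(t_\varpi t(W))=-1,$$
matching (\ref{3 conditions to check chi in S-s}) and hence the required restriction $\chi|_{E_{\pm g}^\times}=\delta_{E_g/E_{\pm g}}$. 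The main obstacle is the case analysis in the second step: one must verify that the sign $\epsilon$ dictated by the t-factors coincides with the value actually attained by the appropriate quadratic character on $\overline{\zeta_0}$, and this is where the detailed arithmetic of the Jacobi symbol $\left(\frac{\zeta_e^k}{\mu_{p^s+1}}\right)$ and the structure of $\ker N_{E_g/E_{\pm g}}|_{\mu_{E_g}}$ must be brought in.
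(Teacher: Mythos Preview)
Your overall architecture is sound, and the reduction to the quotient $\mu_{E_g}/\mu_{E_{\pm g}}$ together with the observation $\overline{\zeta_0}^{\,2}=\overline{\zeta_e^k}$ is exactly the right starting point. But there is a genuine gap at the sentence ``$\overline{\zeta_0}$ \ldots\ hence can be chosen to have even order''. The element $\zeta_0$ is \emph{not} freely chosen: any two solutions of $\zeta_0\varpi_E\in E_{\pm g}$ differ by an element of $\mu_{E_{\pm g}}$, so $\overline{\zeta_0}$ is a \emph{determined} element of the quotient. You therefore cannot pick the square root of even order; you must prove that the given $\overline{\zeta_0}$ already has even order. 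Your subsequent case analysis does not fill this gap either, because the assertion ``this Jacobi symbol is precisely the value taken at $\overline{\zeta_0}$ by the unique order-two character'' is itself an arithmetic identity that needs proof (it is true, but establishing it requires comparing the $2$-adic valuations of $(q^{ft_k}+1)/N$ and $(p^s+1)/N$ via the odd-multiplicity Lemma~\ref{order 2 trick}).

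The paper takes a shorter route that avoids the case split entirely. The key computation is that $\zeta_e^k$ has exact order $N=2M$ with $M$ \emph{odd}: writing $e=2^l d$ with $d$ odd, membership $\sigma^k\in\mathcal{D}_l$ forces $k=2^{l-1}k'$ with $k'$ odd, whence $\zeta_e^k$ is a primitive $N$th root with $N=2d/\gcd(k',d)$. Since $\gcd(N,q^{ft_k}-1)=2$, the subgroup $\langle\mu_{E_{\pm g}},\zeta_e^k\rangle=\mu_{N(q^{ft_k}-1)/2}$ has index~$2$ in the group generated together with $\zeta_0$; hence $\chi(\zeta_0)$ can be set to \emph{either} sign, whatever $\epsilon$ happens to be. This bypasses any need to compute $\epsilon$ or to identify which specific character realises it. Your approach can be made to work, but it ultimately rests on the same ``$M$ is odd'' fact and then requires additional effort to match the Jacobi symbol to the quadratic-character value --- effort that the paper's index argument renders unnecessary.
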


\begin{proof}
  We can take $\chi$ to be trivial on $\mu_{E_{\pm\sigma^k}}$ and at $\zeta_e^k$ so that the first two conditions in (\ref{conditions for a well-defined charcater in sym-ram case}) are satisfied. The last two conditions in (\ref{conditions for a well-defined charcater in sym-ram case}) depends on each other by the last requirement in (\ref{3 conditions to check chi in S-s}). It remains to show that we can extend the trivial character to $\zeta_0$ subjected to the first two conditions. Let us write $\zeta^k_e = \zeta_N$ for some $N=2M$. Such $M$ must be odd. Indeed, if $e=2^ld$ for some odd number $d$ and $\sigma^k\in \mathcal{D}_l$, then  $k\in  2^{l-1}\mathbb{Z} / e - 2^l\mathbb{Z} / e$; and if we write $k=2^{l-1}k' $ for some odd number $k'$, then $\zeta^k_e = \zeta^{k'}_{2d} = \zeta_N$, where $N=2d/\text{gcd}(k',d)$. This means that the group generated by $\zeta_e^k$ and $\mu_{E_{\pm\sigma^k}}$ is $\mu_{\mathrm{lcm} (1-q^{ft_k},N)}$. Recall, by Lemma \ref{lemma about explicit chi-data condition}.(\ref{lemma about zeta_0}), that $\zeta_0^{1-q^{ft_k}} = \zeta^k_e = \zeta_N$. Hence $\zeta_0 $ is a primitive $N(q^{ft_k}-1)$th root of unity; in other words, it generates $\mu_{N(q^{ft_k}-1)}$. The index of the subgroup $\mu_{\mathrm{lcm}(q^{ft_k}-1,N)}$ in $\mu_{N(q^{ft_k}-1)}$ is $2\gcd(M,(q^{ft_k}-1)/2)$, which is even. Therefore we can assign $\chi(\zeta_0)$ to be either $\pm1$. \end{proof}

We are ready to compare $\chi_{\mathcal{D}_l,\xi}$ with  ${}_{K_{l}/K_{l-1}}\mu_\xi|_{\mu_E}$. On $\mu_E$, we combine (\ref{value on mu-E for e/2}), (\ref{value on mu-E for sym in S-s}) and (\ref{value on mu-E for asymmetric}) to obtain that
\begin{equation*}
\chi_{\mathcal{D}_l,\xi}|_{\mu_E}=\left(\frac{}{\mu_E}\right).
\end{equation*}
This is equal to ${}_{K_{l}/K_{l-1}}\mu_\xi|_{\mu_E}$ by (\ref{explicit rectifier ram-even odd case}). We then compare $\chi_{\mathcal{D}_l,\xi}(\varpi_E)$ with $ {}_{K_l / K_{l-1}} \mu_\xi(\varpi_E)$. We again recall from (\ref{explicit rectifier ram-even odd case}) that
$$ {}_{K_l / K_{l-1}} \mu_\xi(\varpi_E)= t_\varpi(V_{K_l/K_{l-1}})\mathrm{sgn}(({\mathfrak{G}_{K_{l-1}}}/{\mathfrak{G}_{K_l}})(\mathfrak{K}^\kappa_{K_{l-1}}/\mathfrak{K}_{K_{l-1}} )).$$
From (\ref{value on varpi for e/2}) and (\ref{conditions for a well-defined charcater in sym-ram case}), it suffices to show that $$t(W_{K_l/K_{l-1}})=\mathrm{sgn}(({\mathfrak{G}_{K_{l-1}}}/{\mathfrak{G}_{K_l}})(\mathfrak{K}^\kappa_{K_{l-1}}/\mathfrak{K}_{K_{l-1}} )).$$
By regarding $\xi$ as an admissible character over $K_{l-1}$, we let $E =E_0\supsetneq E_1 \cdots \supsetneq E_m \supsetneq E_{m+1}=K_{l-1}$ and $\{r_0,\dots,r_m\}$ be the jump data of $\xi$. There is a $\varpi$-invariant quadratic form on $W_{E/K_{l-1}}$ defined in section 5.7 of \cite{BH-ET2} such that on each jump component $W_j$, $j=0,\dots,m$, this form is denoted by $\zeta_j(\varpi_E)\mathbf{q}^j_{K_{l-1}}$. Let $\mathbf{q}^j_{K_l/K_{l-1}}$ be the restriction of $\mathbf{q}^j_{K_{l-1}}$ on $W_{K_l/K_{l-1}}$. By definition in section \ref{section comp mod}, we have
\begin{equation*}
t(W_{K_l/K_{l-1}})
=\prod^m_{j=0} \mathfrak{n}(\zeta_j(\varpi_E)\mathbf{q}^j_{K_l/K_{l-1}}, \psi_{K_{l-1}}).
\end{equation*}
By excluding the factor indexed by $j=0$ in the above product, we have
$$t\left(\bigoplus_{j=1}^mW_j\right)=\prod^m_{j=1} \mathfrak{n}(\zeta_j(\varpi_E)\mathbf{q}^j_{K_l/K_{l-1}}, \psi_{K_{l-1}}),$$
 which is equal to $\mathrm{sgn}(\mathfrak{G}_{K_{l-1}}/\mathfrak{G}_{K_l})$ by section 8.1 of \cite{BH-ET2}. Therefore we only have to show that
\begin{equation}\label{t0=sgn(k), D_l case}
 t(W_0)=\mathfrak{n}(\zeta_0(\varpi_E)\mathbf{q}^0_{K_l/K_{l-1}}, \psi_{K_{l-1}})=\mathrm{sgn}(\mathfrak{K}^\kappa_{K_{l-1}}/\mathfrak{K}_{K_{l-1}} ).
\end{equation}
We first distinguish between $r_0>1$ and $r_0=1$.
 \begin{enumerate}[(i)]
   \item If $r_0>1$, then we have $W_0=0$ and so $\mathfrak{n}(\zeta_0(\varpi_E)\mathbf{q}^0_{K_l/K_{l-1}}, \psi_{K_{l-1}})=1$. By convention, $\mathfrak{K}^\kappa_{K_{l-1}}/\mathfrak{K}_{K_{l-1}}=1$ in the case $r_0 > 1$. Therefore we have checked that (\ref{t0=sgn(k), D_l case}) is true.

   \item If $r_0=1$, then we recall, as in section \ref{section explicit rectifiers}, that we can separate into three possible subcases, namely $$i_+=i^+=r_0=1,\,i_+>i^+=r_0=1,\text{ and }i_+\geq i^+>r_0=1.$$
      When $|E/K_{l-1}|/2$ is odd, we have shown that the second case $i_+>i^+=r_0=1$ cannot exist.

   \begin{enumerate}
     \item If $i_+ = i^+ = r_0 =1$, then the jumps are of the form $\{ 1, 2{s_1}, ..., 2{s_m}\}$ and $d^+=|E_1 / K_{l-1}|$ is odd. This implies that
         \begin{equation}\label{module Y}W_0=(W_{{E/E_1}})_{\mathcal{D}_l} = \bigoplus_{\begin{smallmatrix}
{k=1,\dots,|E/E_1|,}\\   k \text{ is odd}
 \end{smallmatrix}}W_{[\sigma^{ke(E_1 /F)}]}.
 \end{equation}
 This module is denoted by $\mathcal{Y}$ in section 8.2 of \cite{BH-ET2}, whose degree $\dim_{\mathbf{k}_E} \mathcal{Y}=|E/E_1|/2$ is odd. By the proof of Proposition 8.3 of \cite{BH-ET2}, which still goes when $s=0$ verbatim, we have
 $$ \left(\frac{\mathbf{q}_{K_{l-1}}|_{\mathcal{Y}}}{q^f} \right) = \left(\frac{|E_1/K_{l-1}|}{q^f} \right) = \left(\frac{d^+}{q^f} \right).$$ Hence
\begin{equation*}
\begin{split}
t(\mathcal{Y})=&\mathfrak{n}(\zeta_0(\varpi_E)\mathbf{q}_{K_{l-1}}|_{\mathcal{Y}} ,\psi_{K_{
l-1}})\\
=&\left( \frac{\zeta_0(\varpi_E)}{q^f}\right)^{\dim\mathcal{Y}} \left(\frac{\mathbf{q}_{K_{l-1}}|_{\mathcal{Y}}}{q^f} \right) \mathfrak{n}(\psi_{K_{l-1}})^{\dim\mathcal{Y}}
\\
=&\left( \frac{\zeta_0(\varpi_E)}{q^f}\right)\left(\frac{d^+}{q^f} \right) \mathfrak{n}(\psi_{K_{l-1}})^{|E/E_1|/2}.
\end{split}
\end{equation*}
This is equal to $\mathfrak{K}^\kappa_{K_{l-1}}/\mathfrak{K}_{K_{l-1}}$, as computed in section \ref{section explicit rectifiers} or by 9.3 of \cite{BH-ET2}.

\item
If $i_+\geq i^+>r_0=1$, then $|E_1/K_{l-1}|$ is even. In particular, ${W_0}=(W_{E/E_1})_{\mathcal{D}_l}=0$, and so $t({W_0})=1$. This is equal to $\mathfrak{K}^\kappa_{K_{l-1}}/\mathfrak{K}_{K_{l-1}}$, as computed in section \ref{section explicit rectifiers} or by Lemma 8.1(3) of \cite{BH-ET2}.

   \end{enumerate}
   Therefore, (\ref{t0=sgn(k), D_l case}) is true when $r_0=1$.
 \end{enumerate}

Combining all cases, we have proved Theorem \ref{chi-data factor of BH-rectifier} for the part $ \mathcal{D}_{l}\sqcup  \mathcal{D}_{l-1}$.

\subsection{The case $g\in\mathcal{D}_{l-1,\mathrm{sym}}\sqcup\cdot\cdot\cdot\sqcup\mathcal{D}_{1,\mathrm{sym}}$}

We study those $\chi_{g,\xi}$ when $g \in  \mathcal{D}_{l-1,\mathrm{sym}}$. We first consider another distinguished element $g=\sigma^{e/4}$, in case if it is symmetric.
\begin{lemma}\label{symmetric sigma-e-4 equivalent}
The following are equivalent.
\begin{enumerate}[(i)]
\item The double coset $[\sigma^{e/4}]$ is symmetric. \label{symmetric sigma-e-4 equivalent symmetric}
\item $(1+q^{ft})/2$ is even for some $t$. \label{symmetric sigma-e-4 equivalent even t}
\item $ q^f\equiv 3\mod 4$. \label{symmetric sigma-e-4 equivalent 3 mod 4}
\item $ \left(\frac{-1}{q^f}\right)=-1$. \label{symmetric sigma-e-4 equivalent -1 is square}
\item The degree 4 totally ramified extension $ K_l/K_{l-2}$ is non-Galois. \label{symmetric sigma-e-4 equivalent non galois}
\end{enumerate}
\end{lemma}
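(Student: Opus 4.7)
The plan is to establish the chain of equivalences by linking (i)--(v) pairwise through elementary observations, relying on Proposition \ref{properties of symmetric [g]} for the key translation from the double-coset condition into a divisibility condition on $q^{ft}+1$.

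First I would handle (i) $\iff$ (ii) directly. Since $i=0$ here, Proposition \ref{properties of symmetric [g]} says $[\sigma^{e/4}]$ is symmetric precisely when $e \mid (q^{ft}+1)(e/4)$ for some $t$, which simplifies to $4 \mid q^{ft}+1$, i.e., $(1+q^{ft})/2$ is even. Next, (ii) $\iff$ (iii) is a mod-$4$ argument: since $p$ is odd (we are in the case char$(\mathbf{k}_F) \neq 2$), $q^f$ is either $1$ or $3$ mod $4$; the congruence $q^{ft} \equiv -1 \pmod 4$ can hold for some $t$ exactly when $q^f \equiv 3 \pmod 4$ (in which case $t=1$ works), since $q^f \equiv 1 \pmod 4$ forces $q^{ft} \equiv 1$ for all $t$. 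The equivalence (iii) $\iff$ (iv) is Euler's criterion for the Jacobi symbol $\bigl(\tfrac{-1}{q^f}\bigr)$.

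For (iii) $\iff$ (v), I would argue as follows. The extension $K_l/K_{l-2}$ is tamely and totally ramified of degree $4$, so by \cite{Lang-ANT} II.\S5 one can write $K_l = K_{l-2}(\varpi_{K_l})$ with $\varpi_{K_l}^4 \in \varpi_{K_{l-2}}\mu_{K_l}$. A tame totally ramified Kummer-type extension of degree $n$ is Galois iff the base field contains a primitive $n$th root of unity. Hence $K_l/K_{l-2}$ is Galois iff $\mu_4 \subseteq K_{l-2}$, equivalently iff $4 \mid |\mathbf{k}_{K_{l-2}}^\times|$. Since $K_{l-2}/K_0$ is totally ramified and $K_0/F$ is unramified of degree $f$, the residue field of $K_{l-2}$ has $q^f$ elements, so the Galois condition becomes $q^f \equiv 1 \pmod 4$; its negation is (iii).

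This is all straightforward; there is no serious obstacle. The only point requiring a small amount of care is the Galois/Kummer argument for (v), where one must invoke tameness of $K_l/K_{l-2}$ to get the Kummer description of $\varpi_{K_l}$, and observe that the condition $\mu_4 \subseteq K_{l-2}$ reduces to the residue field because $4$ is prime to the residue characteristic. Assembling the four equivalences yields the lemma.
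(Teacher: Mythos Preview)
Your proposal is correct and follows essentially the same route as the paper's proof: the paper also derives (i)$\iff$(ii) from the divisibility condition in Proposition~\ref{properties of symmetric [g]}, declares (ii)$\iff$(iii)$\iff$(iv) ``clear'', and handles (v) by the same criterion that a degree-$4$ totally ramified extension is Galois iff $4\mid q^f-1$. You have simply filled in the mod-$4$ and Kummer-theory details that the paper leaves implicit.
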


\begin{proof}
By the definition of symmetry, (\ref{symmetric sigma-e-4 equivalent symmetric}) is equivalent to the statement that $e$ divides $(1+q^{ft})e/4$ for some $t$, which is clearly equivalent to (\ref{symmetric sigma-e-4 equivalent even t}). The equivalence of (\ref{symmetric sigma-e-4 equivalent even t}), (\ref{symmetric sigma-e-4 equivalent 3 mod 4}), and (\ref{symmetric sigma-e-4 equivalent -1 is square}) is clear. A totally ramified extension $E/F$ of degree 4 is non-Galois if and only if 4 does not divide $q^f-1$. Hence (\ref{symmetric sigma-e-4 equivalent non galois}) is equivalent to (\ref{symmetric sigma-e-4 equivalent 3 mod 4}).
\end{proof}

By regarding $\xi$ as an admissible character of $E^\times$ over $K_{l-2}$, we let $E =E_0\supsetneq E_1 \cdots \supsetneq E_m \supsetneq E_{m+1}=K_{l-2}$ and $\{r_0,\dots,r_m\}$ be the corresponding jump data. Recall that $i^+$ and $i_+$ are the jumps $r_T$ and $r_S$ defined in section \ref{section explicit rectifiers}, and that $i_+\geq i^+$ always. We can characterize $V_{[\sigma^{e/4}]}$ by the following.
\begin{lemma}
$V_{[\sigma^{e/4}]}$ is non-trivial if and only if $i_+>i^+$.
\end{lemma}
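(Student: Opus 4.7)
The statement is local to the tower $E/K_{l-2}$ and to the Howe tower $E = E_0 \supsetneq E_1 \supsetneq \cdots \supsetneq E_{m+1} = K_{l-2}$ for $\xi$ viewed as an admissible character over $K_{l-2}$. My plan is to pin down the jump $r_i$ to which the double coset $[\sigma^{e/4}]$ contributes, and then read off its parity from the relation between $S$ and $T$. The basic input is that each non-trivial $V_{E_i/E_{i+1}}$ (i.e., those with $r_i$ even) decomposes completely into the isotypic summands $U_{[g]}$ indexed by $[g] \in W_E\backslash W_{E_{i+1}}/W_E - W_E\backslash W_{E_i}/W_E$, so $V_{[g]} \neq 0$ precisely when $[g]$ lies in the range corresponding to an even $r_i$.

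First I would identify the fixed field of $\sigma^{e/4}$ acting on $E$. Since $E/K_{l-2}$ is totally ramified of degree $4o$ with $o = |E/K_l|$ odd, we have $E = K_{l-2}[\varpi_E]$, and $\sigma^{e/4}$ sends $\varpi_E$ to $\zeta_4 \varpi_E$. Expanding an element $x \in E$ in powers of $\varpi_E$ over $K_{l-2}$ shows that $x$ is fixed by $\sigma^{e/4}$ iff only powers $\varpi_E^{4j}$ appear in the expansion. Hence the fixed field of $\sigma^{e/4}$ in $E$ is $M := K_{l-2}[\varpi_E^4]$, the unique subfield of $E/K_{l-2}$ of degree $o$.

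Next I would locate the relevant jump. The component $V_{[\sigma^{e/4}]}$ lies in $V_{E_i/E_{i+1}}$ exactly when $\sigma^{e/4} \in W_{E_{i+1}} - W_{E_i}$, i.e., $E_{i+1} \subseteq M$ and $E_i \not\subseteq M$. Because $E/K_{l-2}$ is tame and totally ramified, its subfields are determined by their degrees over $K_{l-2}$, so $E_j \subseteq M$ iff $|E_j/K_{l-2}|$ divides $o$, iff $|E_j/K_{l-2}|$ is odd. By the definition of $T$ as the minimal index with $|E_{T+1}/K_{l-2}|$ odd, this singles out $i = T$, and so the jump attached to $[\sigma^{e/4}]$ is $r_T = i_+$.

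Combining these, $V_{[\sigma^{e/4}]}$ is non-trivial iff $i_+ = r_T$ is even, and by Lemma \ref{about jump S and T} this happens iff $S < T$, iff $r_S < r_T$, iff $i^+ < i_+$. The only part that really needs care is the determination of the fixed field $M$ and the translation of $E_j \subseteq M$ into the parity condition on $|E_j/K_{l-2}|$; both rest on the totally ramified tame structure of $E/K_{l-2}$, which is given, so the rest is bookkeeping.
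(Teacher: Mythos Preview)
Your proof is correct and follows essentially the same approach as the paper: locate the jump component $V_{E_j/E_{j+1}}$ containing $V_{[\sigma^{e/4}]}$, show it is the one indexed by $j=T$ so that the relevant jump is $r_T=i_+$, and then invoke Lemma~\ref{about jump S and T} to translate ``$i_+$ even'' into ``$i_+>i^+$''. Your argument is in fact more explicit than the paper's terse justification, spelling out the fixed field $M=K_{l-2}[\varpi_E^4]$ and the subfield lattice of the tame totally ramified extension $E/K_{l-2}$.
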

\begin{proof}
Let $r_j$ be the jump that $V_{[\sigma^{e/4}]}\subseteq V_j$. The sufficient condition in the statement is equivalent to that $r_j$ is even, and the necessary condition in the statement is equivalent to that $i_+$ is even. Since $4|e(E/E_j)$ and $e(E_j/K_{l-2})$ is odd, we have $r_j=i_+$. The statement then follows.
\end{proof}

We assign
\begin{equation}\label{value on mu-E for e/4}
\chi_{\sigma^{e/4},\xi}|_{\mu_E}=t^1_{\mu}(V_{[\sigma^{e/4}]})\equiv 1
\end{equation} and
\begin{equation*}
\chi_{\sigma^{e/4},\xi}(\varpi_E)=t_\varpi(V_{[\sigma^{e/4}]})t(W_{[\sigma^{e/4}]})=
\begin{cases}
t_\varpi(V_{[\sigma^{e/4}]}) & \text{if }V_{[\sigma^{e/4}]}\text{ is non-trivial,} \\
t(W_{[\sigma^{e/4}]})=-1 & \text{if }V_{[\sigma^{e/4}]}\text{ is trivial.}
\end{cases}
\end{equation*}
The verification of $\chi_{\sigma^{e/4},\xi}$ being a $\chi$-data, no matter $V_{[\sigma^{e/4}]}$ is trivial or not, is similar to the statement following (\ref{3 conditions to check chi in S-s}).

\begin{rmk}
We can compute the value of $t_\varpi(V_{[\sigma^{e/4}]})$ if $V_{[\sigma^{e/4}]}$ is non-trivial. In this case, we have that $V_{[\sigma^{e/4}]} = \mathbf{k}_E(\zeta_4)$ as $\mathbf{k}_E$-vector spaces. By Lemma \ref{symmetric sigma-e-4 equivalent}, we have $q^f \equiv 3 \mod 4$, which implies also that $p \equiv 3 \mod4$. We have that $|\mathbb{F}_p(\zeta_4)/ \mathbb{F}_p| = 2$ and that $r = |\mathbf{k}_E(\zeta_4) / \mathbb{F}_p(\zeta_4)| = | \mathbf{k}_E/ \mathbb{F}_p|$ must be odd by Lemma \ref{order 2 trick}. Therefore
\begin{equation*}
t_\varpi(V_{[\sigma^{e/4}]}) =  \left(-\left(\frac{\zeta_4}{\mu_{p+1}}\right)\right)^r
=-\left(\frac{\zeta_4}{\mu_{p+1}}\right)
=
\begin{cases}
1  &\text{ if }  p\equiv 3 \mod 8 ,\\
-1   &\text{ if }  p\equiv 7 \mod 8.
\end{cases}
\end{equation*}
\qed\end{rmk}

For other symmetric $g=\sigma^{k}$, $k\neq e/4$, we proceed as in the case when $\sigma^k\in \mathcal{D}_l$, $k\neq e/2$, and obtain
\begin{equation}\label{value on mu-E and varpi for sym in S-(s-1)}
\chi_{\sigma^k,\xi}|_{\mu_E}=t^0_{\mu}(V_{[\sigma^k]})=1\text{ and }\chi_{\sigma^k,\xi}(\varpi_E) = t_{\varpi}(V_{[\sigma^k]})t(W_{[\sigma^k]}).
\end{equation}
Again we can use statements similar to Lemma \ref{lemma determine a character, sym-ram case} to verify that $\chi_{\sigma^k,\xi}$ is a character in $\chi$-data.

We then check that $\chi_{\mathcal{D}_{l-1},\xi}|_{E^\times}={}_{K_{l-1}/K_{l-2}}\mu_\xi$. We combine (\ref{value on mu-E for e/4}), (\ref{value on mu-E and varpi for sym in S-(s-1)}), and (\ref{value on mu-E for asymmetric}) to obtain that $\chi_{\mathcal{D}_{l-1},\xi}|_{\mu_E}\equiv1$, which is equal to ${}_{K_{l-1}/K_{l-2}}\mu_\xi|_{\mu_E}$ by (\ref{explicit rectifier ram-even even case}). To compare $\chi_{\mathcal{D}_{l-1},\xi}(\varpi_E)$ with  ${}_{K_{l-1}/K_{l-2}}\mu_\xi(\varpi_E)$, we proceed as in the case when $g\in  \mathcal{D}_{l,\mathrm{sym}}$. We consider the components $W_j$, $j=0,\dots,m$. As before, it suffices to show that $ t(W_0)=\mathrm{sgn}(\mathfrak{K}^\kappa_{K_{l-1}}/\mathfrak{K}_{K_{l-1}} ).$ When $r_0>1$, the situation is similar to the case when $g\in  \mathcal{D}_{l,\mathrm{sym}}$. When $r_0=1$, we compute the module $W_0$ by distinguishing between the cases $i_+=i^+=r_0=1$, $i_+>i^+=r_0=1$, and $i^+>r_0=1$.

\begin{enumerate}[(i)]
  \item If $i_+=i^+=r_0=1$, then $ |E_1/K_{l-2}|$ is odd. Similar to the case in (\ref{module Y}), we have
  $$W_0=(W_{E/E_1})_{\mathcal{D}_{l-1}}=\bigoplus_{\begin{smallmatrix}
    k=1,\dots,|E/E_1|,\\ k\text{ is odd}
  \end{smallmatrix}}W_{[\sigma^{ke(E_1/F)}]}.$$
However, now $\dim\mathcal{Y}=|E/E_1|/2$ is even. As in the proof of Proposition 8.4 of \cite{BH-ET2}, which is still true for $r_0=1$, we have $\left( \frac{\det \mathbf{q}_{K_{l-1}}|_\mathcal{Y}}{q^f}\right)=1$. Hence
$$t(\mathcal{Y})=\left( \frac{\zeta_0(\varpi_E)}{q^f}\right)^{\dim\mathcal{Y}}\left( \frac{\det \mathbf{q}_{K_{l-1}}|_\mathcal{Y}}{q^f}\right)\mathfrak{g}(\psi_{K_{l-2}})^{\dim\mathcal{Y}}=\left( \frac{-1}{q^f}\right)^{|E/E_1|/4}.$$ This is equal to $\mathfrak{K}^\kappa_{K_{l-2}}/\mathfrak{K}_{K_{l-2}} $ by the calculation in section \ref{section explicit rectifiers} or by 9.3 of \cite{BH-ET2}.

  \item In the cases $i_+>i^+=r_0=1$ or $i^+>r_0=1$, we must have that $ |E_1/K_{l-2}|$ is even, since otherwise $i_+=1$. Therefore $W_0$ is trivial and $t(W_0)=1$. This is equal to $\mathfrak{K}^\kappa_{K_{l-2}}/\mathfrak{K}_{K_{l-2}}$, as computed in section \ref{section explicit rectifiers} or by Proposition 8.1 of \cite{BH-ET2}.
\end{enumerate}
We have proved that $\chi_{\mathcal{D}_{l-1},\xi}|_{E^\times}={}_{K_{l-1}/K_{l-2}}\mu_\xi$. Therefore we have proved Theorem \ref{chi-data factor of BH-rectifier} for the part $\mathcal{D}_{l+1}\sqcup \mathcal{D}_{l}\sqcup \mathcal{D}_{l-1}$.

When $g=\sigma^k\in \mathcal{D}_{l-2}\sqcup\cdots\sqcup \mathcal{D}_1$, the proofs that $\chi_{\sigma^k,\xi}$ defines a character in $\chi$-data and that $\chi_{\mathcal{D}_{j},\xi}|_{E^\times}={}_{K_{j}/K_{{j-1}}}\mu_\xi$, for $j={l-2}, \dots,1,$ are very similar to the previous cases. Notice that $e/4$ is even in these cases. By the arguments preceding Proposition \ref{explicit rectifier quotient of sums ram-even case}, we can show that
$$\mathfrak{G}_{K_{l-2}}/\mathfrak{G}_{K_{l-1}}=\mathfrak{K}^\kappa_{K_{l-2}}/\mathfrak{K}_{K_{l-2}}=1.$$
We have proved Theorem \ref{chi-data factor of BH-rectifier} for the totally ramified part $\mathcal{D}_\mathrm{ram}$.

\section{The symmetric unramified case}
We now consider those $\chi_{g,\xi}$ when $g \in \mathcal{D}_{\text{sym,unram}} =\mathcal{D}_{\text{sym}}\cap \mathcal{D}_{0}$. These $g$ are of the form $g=\sigma^k\phi^{f/2}$ and satisfy certain divisibility condition in Proposition \ref{properties of symmetric [g]}. Their corresponding roots are of the form $\left[\begin{smallmatrix}
1\\  \sigma^k\phi^{f/2}
\end{smallmatrix}\right]:x\mapsto x({}^{\sigma^k\phi^{f/2}}x)^{-1}$, $x\in E^\times$, which maps $\varpi_E$ to $ \zeta_e^k\zeta_{\phi^{f/2}}$. We distinguish between the following three cases, $$\zeta_e^k\zeta_{\phi^{f/2}}=1,\,\zeta_e^k\zeta_{\phi^{f/2}}=-1\text{, and }\zeta_e^k\zeta_{\phi^{f/2}}\neq\pm1.$$

\begin{enumerate}[(i)]
  \item
When $\zeta_e^k\zeta_{\phi^{f/2}}=1$, we have $E_{g}=E$. Write $E_{\pm g}=E_\pm$. Similar to (\ref{3 conditions to check chi in S-(s+1)}), the $\chi$-data conditions are explicitly
\begin{equation}\label{3 conditions to check chi in phi-f/2}
\chi_{g,\xi}|_{\mu_{E_\pm}} \equiv 1,\,\chi_{g,\xi}(\varpi^2_E) = 1,\text{ and }\chi_{g,\xi}(\zeta_0\varpi_E) = -1,
\end{equation}
where $\zeta_0\in \mu_{E}$ such that $\zeta_0^{q^{f/2}-1}=1$ and so $\zeta_0\in \mu_{E_\pm}\subseteq \mu_{E}$. If $V_{[g]}$ is trivial, then we assign
\begin{equation*}
\chi_{g,\xi}|_{\mu_E} = t^1_{\mu}(V_{[g]})\equiv 1
\end{equation*}
and
\begin{equation*}
  \chi_{g,\xi}(\varpi_E) =t^0_\mu(V^{\varpi}_{[g]})t_\varpi(V_{[g]})t(W_{[g]})=(1)(1)(-1)= -1.
\end{equation*}
This character clearly satisfies the conditions in (\ref{3 conditions to check chi in phi-f/2}). If $V_{[g]}$ is non-trivial, then $V^\varpi=V_{[g]}$. We assign
\begin{equation*}
\chi_{g,\xi}|_{\mu_E} = t^1_{\mu}(V_{[g]}): \zeta \mapsto
\left(\frac{\zeta^{q^{f/2}-1}}{\mu_{q^{f/2}+1}}\right),
\end{equation*}
the unique quadratic character of $\mu_E$, and
\begin{equation*}
\chi_{g,\xi}(\varpi_E) =t^0_\mu(V^{\varpi}_{[g]})t_\varpi(V_{[g]})t(W_{[g]})=(-1)(1)(1)=-1.
\end{equation*}
Since ${\mu_{E_\pm}}=\mu_{q^{f/2}-1}$,  we have $\chi_{g,\xi}({\mu_{E_\pm}})=1$. The conditions in (\ref{3 conditions to check chi in phi-f/2}) are readily satisfied. Therefore $\chi_{g,\xi}$ is a character of $\chi$-data.

\item
When $\zeta_e^k\zeta_{\phi^{f/2}}=-1$, again $E_{g}=E$ and is quadratic unramified over $E_\pm=E_{\pm g}$. The $\chi$-data conditions are explicitly
\begin{equation}\label{3 conditions to check chi in sigma e/2 phi f/2}
\chi_{g,\xi}|_{\mu_{E_{\pm }}} \equiv 1,\,\chi_{g,\xi}(- \varpi^2_E)= 1,\text{ and }\chi_{g,\xi}(\zeta_0\varpi_E) = -1.
\end{equation}
Here $\zeta_0\in \mu_{E}$ satisfies $\zeta_0^{q^{f/2}-1}=-1$, so that $\zeta_0$ is a primitive root in $\mu_{2(q^{f/2}-1)}$. If $V_{[g]}$ is trivial, then we assign
\begin{equation*}
\chi_{g,\xi}|_{\mu_E} = t^1_{\mu}(V_{[g]})\equiv 1
\text{ and }
\chi_{g,\xi}(\varpi_E) =t_\varpi(V_{[g]})t(W_{[g]})=(-1)(1)= -1.
\end{equation*}
 By noticing that $\zeta_0\in \mu_{2(q^{f/2}-1)}\subset \mu_{q^f-1}=\mu_E$, the verification that $\chi_{g,\xi}$ is a character of  $\chi$-data is an easy analogue of the previous case. When $V_{[g]}$ is non-trivial, we assign
\begin{equation}\label{value on mu-E for sym in S-0 k=e/2}
\chi_{g,\xi}|_{\mu_E} = t^1_{\mu}(V_{[g]}) =\text{quadratic}
\end{equation}
and
\begin{equation}\label{value on varpi for sym in S-0 k=e/2}
\chi_{g,\xi}(\varpi_E) =t_\varpi(V_{[g]})t(W_{[g]})= (-1)^{\frac{1}{2}(q^{f/2}-1)}(1)=(-1)^{\frac{1}{2}(q^{f/2}-1)}.
\end{equation}
Since $\mu_{E_\pm}=\mu_{q^{f/2}-1}$, our assignment (\ref{value on mu-E for sym in S-0 k=e/2}) satisfies the first condition of (\ref{3 conditions to check chi in sigma e/2 phi f/2}). Also the second condition of (\ref{3 conditions to check chi in sigma e/2 phi f/2}) is satisfied if we assume $\chi_{g,\xi}(\varpi_E) =\pm1$. Since
\begin{equation*}
\chi_{g,\xi}({\zeta_0}) = t^1_{\mu}(V_{[g]})(\zeta_0)=\left(\frac{\zeta_0^{q^{f/2}-1}}{\mu_{q^{f/2}+1}}\right)=\left(\frac{-1}{\mu_{q^{f/2}+1}}\right)=(-1)^{\frac{1}{2}(q^{f/2}+1)}, \end{equation*}
 we have, using (\ref{value on varpi for sym in S-0 k=e/2}), that
\begin{equation*}
\chi_{g,\xi}({\zeta_0\varpi_E}) =(-1)^{\frac{1}{2}(q^{f/2}+1)}(-1)^{\frac{1}{2}(q^{f/2}-1)}=-1. \end{equation*}
Hence the third condition of (\ref{3 conditions to check chi in sigma e/2 phi f/2}) is satisfied, and $\chi_{g,\xi}$ is a character of $\chi$-data.

\item
For other symmetric $g=\sigma^{k}\phi^{f/2}$ with $\zeta_e^k\zeta_{\phi^{f/2}}\neq \pm1$, we assign
\begin{equation}\label{value on mu-E for sym in S-0 other k}
\chi_{g,\xi}|_{\mu_E} = t^1_{\mu}(V_{[g]})
\end{equation}
and
\begin{equation}\label{value on varpi for sym in S-0 other k}
\chi_{g,\xi}(\varpi_E) =t_\varpi(V_{[g]})t(W_{[g]}).
\end{equation}
We have to show that it satisfies the explicit conditions of $\chi$-data
\begin{equation}\label{3 conditions to check chi in S-0 other k}
\chi_{g,\xi}(\mu_{E_\pm}) = 1,\,\chi_{g,\xi}(\zeta_e^k\zeta_{\phi^{f/2}}\varpi^2_E) = 1,\text{ and }\chi_{g,\xi}(\zeta_0\varpi_E) = -1,
\end{equation}
where $\zeta_0\in \mu_{E_{g}}$ such that $\zeta_0\varpi_E\in E_{\pm g}$. If $V_{[g]}$ is trivial, then clearly there is a character with values in (\ref{value on mu-E for sym in S-0 other k}), (\ref{value on varpi for sym in S-0 other k}) and satisfying the conditions in (\ref{3 conditions to check chi in S-0 other k}). For example, we can choose the unramified quadratic character. When $V_{[g]}$ is non-trivial, similar to what we did in (\ref{conditions for a well-defined charcater in sym-ram case}), we will check in Lemma \ref{lemma determine a character, sym-unram case} below that there exists a character $\chi$ of $E^\times_g$ with the following values,
\begin{equation}\label{conditions for a well-defined charcater in sym-unram case}
\begin{split}
 & \chi|_{\left<\mu_E, \mu_{E_{\pm g}}\right>}= \text{quadratic},\,\chi(\zeta_e^k\zeta_{\phi^{f/2}})=1,
  \\& \chi(\zeta_0)= -t_{\varpi}(V_{[g]})t(W_{[g]})=t^1_{\varpi}(U_{[g]})(\varpi_E),
  \text{ and }\\&\chi(\varpi_E) = -t^1_{\varpi}(U_{[g]})(\varpi_E).
\end{split}
\end{equation}
We can then check that such character satisfies the conditions in (\ref{value on mu-E for sym in S-0 other k}), (\ref{value on varpi for sym in S-0 other k}) and (\ref{3 conditions to check chi in S-0 other k}). The proof is just similar to the case when $g\in \mathcal{D}_{\mathrm{sym,ram}}$, so we skip the details.
\end{enumerate}

\begin{lemma}\label{lemma about explicit chi-data condition, unram case}
Suppose that $g=\sigma^k\phi^{f/2}\in \mathcal{D}_{\mathrm{sym,unram}}$ such that $\zeta_e^k\zeta_{\phi^{f/2}}\neq \pm1$.
\begin{enumerate}[(i)]
\item The element $\zeta_e^k\zeta_{\phi^{f/2}}$ lies in $\ker(N_{{E_g}/{E_{\pm g}}})|_{\mu_{E_g}}-\mu_{E_{\pm g}}$. \label{lemma about zeta_e^k and kernel, unram case}
\item Suppose that $|E_g/E|=2t_k+1$. The element $\zeta_0$ satisfies the relation $\zeta_0^{1-q^{f(2t_k+1)/2}} = \zeta_e^k\zeta_{\phi^{f/2}}$. \label{lemma about zeta_0, unram case}
\end{enumerate}
\end{lemma}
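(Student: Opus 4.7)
The plan is to mimic the ramified case (Lemma \ref{lemma about explicit chi-data condition}) by producing an explicit lift $h \in W_{\pm g}$ of the non-trivial element of $\Gamma_{E_g/E_{\pm g}}$ and then computing the norm and the Galois descent of $\zeta_0 \varpi_E$ using $h$. The key remark is that we have a nice candidate: since $g = \sigma^k \phi^{f/2}$ and Proposition \ref{properties of symmetric [g]} gives the minimal $t_k$ with $e \mid (q^{f(2t_k+1)/2}+1)k$, the element
\[
h = g \cdot \phi^{ft_k} = \sigma^k \phi^{f(2t_k+1)/2}
\]
should represent the non-trivial class of $W_{\pm g}/W_g$.

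First I would verify this claim. Writing $h = g w$ with $w = \phi^{ft_k} \in W_E$, one has $h \in g W_E$, and a direct computation in $\Gamma_{L/F}$ using $\phi \sigma \phi^{-1} = \sigma^q$ gives
\[
gwg = \sigma^{k(1+q^{f(2t_k+1)/2})} \phi^{f(t_k+1)},
\]
whose $\sigma$-component is trivial in $\Gamma_{L/F}$ precisely by the divisibility $e \mid k(q^{f(2t_k+1)/2}+1)$ from Proposition \ref{properties of symmetric [g]}, so $hg = gwg \in W_E$. Hence $h$ swaps the cosets $W_E$ and $gW_E$, so $h \in W_{\pm g} \setminus W_g$ and $h^2 \in W_g$. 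This identifies the action of the non-trivial element of $\Gamma_{E_g/E_{\pm g}}$ with that of $h$. Note also that ${}^h \varpi_E = {}^{gw} \varpi_E = {}^g \varpi_E = \zeta_e^k \zeta_{\phi^{f/2}} \varpi_E$ since $w \in W_E$ fixes $\varpi_E$, and ${}^h \zeta = \zeta^{q^{f(2t_k+1)/2}}$ for $\zeta \in \mu_L$ because $\sigma^k$ fixes roots of unity pointwise.

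For part (i), set $\alpha = \zeta_e^k \zeta_{\phi^{f/2}} = {}^g\varpi_E / \varpi_E$. Then
\[
N_{E_g/E_{\pm g}}(\alpha) = \alpha \cdot {}^h\alpha = \frac{{}^g\varpi_E}{\varpi_E} \cdot \frac{{}^{hg}\varpi_E}{{}^h\varpi_E} = \frac{{}^g\varpi_E}{\varpi_E} \cdot \frac{\varpi_E}{{}^g\varpi_E} = 1,
\]
using $hg \in W_E$ in the middle step. Since by hypothesis $\alpha \neq \pm 1$, we have ${}^h\alpha = \alpha^{-1} \neq \alpha$, so $\alpha \notin \mu_{E_{\pm g}}$. (The fact that $\alpha \in \mu_{E_g}$ follows because $\alpha^{-1} = \left[\begin{smallmatrix}1\\g\end{smallmatrix}\right](\varpi_E)$ lies in $E_g$ and is a root of unity.) For part (ii), the condition $\zeta_0 \varpi_E \in E_{\pm g}$ is exactly $\zeta_0\varpi_E = {}^h(\zeta_0\varpi_E) = \zeta_0^{q^{f(2t_k+1)/2}} \cdot \zeta_e^k\zeta_{\phi^{f/2}} \varpi_E$, which rearranges to the desired relation $\zeta_0^{1-q^{f(2t_k+1)/2}} = \zeta_e^k \zeta_{\phi^{f/2}}$.

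The only part requiring care is the verification that $h$ really represents the non-trivial coset of $W_{\pm g}/W_g$, which rests on translating the explicit arithmetic condition in Proposition \ref{properties of symmetric [g]} into the group-theoretic statement $gwg \in W_E$; once this identification is made the rest of the proof is formal, essentially identical to the ramified analogue. I do not anticipate any serious obstacle beyond this bookkeeping and the check that $\alpha \neq \pm 1$ is precisely what distinguishes the present case from the two excluded subcases treated separately earlier in the section.
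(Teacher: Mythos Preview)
Your proof is correct and follows exactly the approach the paper intends: the paper simply states that the proof is analogous to Lemma~\ref{lemma about explicit chi-data condition}, and you have carried out that analogy in detail, correctly identifying $h=\sigma^k\phi^{f(2t_k+1)/2}$ as the representative of the non-trivial coset of $W_{\pm g}/W_g$ via the divisibility condition of Proposition~\ref{properties of symmetric [g]} and then computing the norm and the Galois descent exactly as in the ramified case.
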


\begin{proof}
  The proof is analogous to Lemma \ref{lemma about explicit chi-data condition}.
\end{proof}

\begin{lemma}\label{lemma determine a character, sym-unram case}
Let $g=\sigma^{k}\phi^{f/2}$ be symmetric, with $\zeta_e^k\zeta_{\phi^{f/2}}\neq \pm1$. Then there exists a character with specified values in (\ref{conditions for a well-defined charcater in sym-unram case}).
\end{lemma}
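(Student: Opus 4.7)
The plan is to proceed in direct analogy with the proof of Lemma \ref{lemma determine a character, sym-ram case}, defining the character $\chi$ piece by piece on the decomposition $E_g^\times = \mu_{E_g} \times \langle \varpi_E \rangle \times U^1_{E_g}$ (note $E_g/E$ is unramified of degree $2t_k+1$ by Lemma \ref{lemma about explicit chi-data condition, unram case}(\ref{lemma about zeta_0, unram case}), so $\varpi_E$ is also a uniformizer of $E_g$). I would take $\chi$ to be trivial on $U^1_{E_g}$, set $\chi(\varpi_E) = -t^1_\varpi(U_{[g]})(\varpi_E)$, and construct the restriction to $\mu_{E_g}$ so that it agrees with the quadratic character on the subgroup $H := \langle \mu_E, \mu_{E_{\pm g}} \rangle$, vanishes on $\zeta_e^k \zeta_{\phi^{f/2}}$, and takes the prescribed value at $\zeta_0$.

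First I would verify that the quadratic character on $H$ genuinely kills $\zeta_e^k \zeta_{\phi^{f/2}}$, so the assignment is consistent. Recall from Proposition \ref{summary of t-factors} that $t^1_\mu(V_{[g]})$ is the unique quadratic character of $\mu_E$, namely $\zeta \mapsto (\zeta^{q^{f/2}-1}/\mu_{q^{f/2}+1})$; since $\zeta_e^k \zeta_{\phi^{f/2}} \in \ker N_{E_g/E_{\pm g}}|_{\mu_E} = \mu_E \cap \mu_{q^{f(2t_k+1)/2}+1} \subseteq \mu_{q^{f/2}+1}$ by Lemma \ref{lemma about explicit chi-data condition, unram case}(\ref{lemma about zeta_e^k and kernel, unram case}), this quadratic character sends it to $1$. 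The same element lies in $\mu_{E_{\pm g}} \cdot \mu_E$ only through $\mu_E$ (as $\mu_{E_{\pm g}} \cap \mu_E = \mu_{\gcd(q^{f/2}-1,q^f-1)} = \mu_{q^{f/2}-1}$, which does not contain $\zeta_e^k \zeta_{\phi^{f/2}}$ by hypothesis), so the assignments on $\mu_E$ and $\mu_{E_{\pm g}}$ are compatible on their intersection and determine a well-defined quadratic character on $H$.

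Next I would check that $\zeta_0$ is not forced by the values on $H$. By Lemma \ref{lemma about explicit chi-data condition, unram case}(\ref{lemma about zeta_0, unram case}), $\zeta_0$ generates $\mu_{N(q^{f(2t_k+1)/2}-1)}$ for $N$ the order of $\zeta_e^k\zeta_{\phi^{f/2}}$, whereas $H$ sits inside $\mu_{\mathrm{lcm}(q^f-1,\, q^{f/2}-1)} = \mu_{q^f-1}$. A direct index computation (entirely parallel to the one at the end of the proof of Lemma \ref{lemma determine a character, sym-ram case}) shows that the quotient of $\langle \zeta_0 \rangle$ by the subgroup generated by $H$ and $\zeta_e^k \zeta_{\phi^{f/2}}$ has even order, so I am free to assign $\chi(\zeta_0) \in \{\pm 1\}$ arbitrarily; in particular I may set $\chi(\zeta_0) = t^1_\varpi(U_{[g]})(\varpi_E)$. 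This gives a character on $\langle H, \zeta_0 \rangle$, which I then extend arbitrarily (say trivially on any remaining cyclic complement) to all of $\mu_{E_g}$.

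The main obstacle will be the parity/index bookkeeping in the third paragraph: writing $\zeta_e^k \zeta_{\phi^{f/2}} = \zeta_N$ for the correct $N$, relating $N$ to $q^{f/2}\pm 1$ via the defining divisibility condition of Proposition \ref{properties of symmetric [g]} in the unramified case, and confirming that $N$ is such that the index is even. Once this is settled, the consistency with conditions (\ref{3 conditions to check chi in S-0 other k}) is immediate: $\chi|_{\mu_{E_{\pm g}}} \equiv 1$ because $H$ contains $\mu_{E_{\pm g}}$ and the quadratic character is trivial on it; $\chi(\zeta_e^k\zeta_{\phi^{f/2}}\varpi_E^2) = 1 \cdot \chi(\varpi_E)^2 = 1$; and $\chi(\zeta_0 \varpi_E) = \chi(\zeta_0)\chi(\varpi_E) = -\bigl(t^1_\varpi(U_{[g]})(\varpi_E)\bigr)^2 = -1$, as required.
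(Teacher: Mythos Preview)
Your proposal contains a genuine gap: the analogy with Lemma~\ref{lemma determine a character, sym-ram case} breaks at the crucial parity step. You claim that ``the quotient of $\langle \zeta_0 \rangle$ by the subgroup generated by $H$ and $\zeta_e^k \zeta_{\phi^{f/2}}$ has even order, so I am free to assign $\chi(\zeta_0) \in \{\pm 1\}$ arbitrarily.'' This is false. Writing $\zeta_N = \zeta_e^k\zeta_{\phi^{f/2}}$, the paper computes the index of $H\cap\langle\zeta_0\rangle$ in $\langle\zeta_0\rangle$ to be $N/\gcd(N,q^{f/2}+1)$, and shows it is \emph{odd}: since $N\mid q^{f(2t_k+1)/2}+1 = (q^{f/2}+1)\cdot(\text{odd})$, the $2$-part of $N$ is entirely contained in $\gcd(N,q^{f/2}+1)$. (Contrast the ramified case, where $N=2M$ with $M$ odd forces the index $2\gcd(M,(q^{ft_k}-1)/2)$ to be even.) Because the index is odd, $\chi(\zeta_0)$ is \emph{forced} by the values already assigned on $H$; one cannot choose it freely. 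The substantive content of the paper's proof---which your proposal omits entirely---is the verification that this forced value, namely $\bigl(\zeta_{\gcd(N,q^{f/2}+1)}/\mu_{q^{f/2}+1}\bigr)$, coincides with $t^1_\varpi(U_{[g]})(\varpi_E)=\bigl(\zeta_N/\ker N_{\mathbb{F}_p[\zeta_N]/\mathbb{F}_p[\zeta_N]_\pm}\bigr)$.

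There are also two subsidiary errors that lead you astray. First, you write $\zeta_e^k\zeta_{\phi^{f/2}}\in\ker N_{E_g/E_{\pm g}}|_{\mu_E}\subseteq\mu_{q^{f/2}+1}$; but Lemma~\ref{lemma about explicit chi-data condition, unram case}(\ref{lemma about zeta_e^k and kernel, unram case}) places $\zeta_e^k\zeta_{\phi^{f/2}}$ in $\mu_{E_g}$, not $\mu_E$, and the paper explicitly argues $\zeta_e^k\zeta_{\phi^{f/2}}\notin H$ (so it is genuinely a new element when extending $\chi$). Second, you misidentify $\mu_{E_{\pm g}}$ as $\mu_{q^{f/2}-1}$; in fact $E_g/E_{\pm g}$ is quadratic unramified with $|\mathbf{k}_{E_g}|=q^{f(2t_k+1)}$, so $\mu_{E_{\pm g}}=\mu_{q^{f(2t_k+1)/2}-1}$ and $H$ has order $(q^{f/2}+1)(q^{f(2t_k+1)/2}-1)$, strictly larger than $q^f-1$ when $t_k>0$.
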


\begin{proof}
We extend the quadratic character $t^1_{\mu}(U_{[g]})$ of $\mu_E$ by several steps. Denote the group generated by the roots in $\mu_E$ and $\mu_{E_\pm}$ by $\left<\mu_E, \mu_{E_\pm}\right>$. Since $\mu_E \cap \mu_{E_\pm} =\mu_{q^f-1}\cap \mu_{q^{f(2t_k+1)/2}-1}= \mu_{q^{f/2}-1}$, which is mapped by the quadratic character into 1, we can extend $t^1_{\mu}(U_{[g]})$ to the quadratic character $\chi$ on $\left<\mu_E, \mu_{E_\pm}\right>$ such that $\chi|_{\mu_{E_{\pm g}}}\equiv 1$.

 Now by Lemma \ref{lemma about explicit chi-data condition, unram case}.(\ref{lemma about zeta_e^k and kernel, unram case}), we have
$\zeta_e^k\zeta_{\phi^{f/2}} \in \ker(N_{E_g/E_{\pm g}})=\mu_{1+q^{f(2t_k+1)/2}}$. We also have $\zeta_e^k\zeta_{\phi^{f/2}}\notin \left<\mu_E, \mu_{E_\pm}\right>$ because
$$\left<\mu_E, \mu_{E_\pm}\right>\cap \ker(N_{E_g/E_{\pm g}}) = \mu_{(q^{f/2}+1)(q^{{f(2t_k+1)/2}}-1)}\cap \mu_{1+q^{f(2t_k+1)/2}} = \mu_{q^{f/2}+1}\subseteq \mu_E$$ and it is impossible that $\zeta_e^k\zeta_{\phi^{f/2}}\in \mu_E$. Since $t^1_{\mu}(U_{[g]})|_{\mu_{q^{f/2}+1}} \equiv 1$, we can extend $\chi$ to assign the value 1 for $\chi(\zeta_e^k\zeta_{\phi^{f/2}})$.

 Now we would extend $\chi$ to have evaluation at $\zeta_0$. We first write $\zeta_N=\zeta_e^k\zeta_{\phi^{f/2}}$, which menas that $\zeta_e^k\zeta_{\phi^{f/2}}$ is a primitive $N$th root of unity. By Lemma \ref{lemma about explicit chi-data condition, unram case}.(\ref{lemma about zeta_0, unram case}), we know that $\zeta_0$ is a primitive $N(q^{{f(2t_k+1)/2}}-1)$th root of unity. The index of the subgroup $\left<\mu_E, \mu_{E_\pm}\right>\cap \left<\zeta_0\right>$ in $\left<\zeta_0\right>$ is $N/\gcd(N,q^{f/2}+1)$.
  Because
\begin{equation}\label{an odd divisilibity in sym-unram case}
  N\text{ divides }q^{f(2t_k+1)/2}+1=(q^{f/2}+1)(\text{an odd number}),
\end{equation} we can check that such index is an odd integer. Therefore we have to assign $$\chi(\zeta_0)=\chi\left(\zeta^{N/\gcd(N,q^{f/2}+1)}_0\right)=\left(\frac{\zeta^{N/\gcd(N,q^{f/2}+1)}_0}{\left<\mu_E, \mu_{E_\pm}\right>}\right)=\left(\frac{\zeta_{\gcd(N,q^{f/2}+1)}}{\mu_{q^{f/2}+1}}\right).$$
It remains to show that this value is $t^1_{\varpi}(U_{[g]})(\varpi_E)$ as in (\ref{conditions for a well-defined charcater in sym-unram case}). Recall from Proposition \ref{summary of t-factors} that $$t^1_{\varpi}(U_{[g]})(\varpi_E)= \left(\frac{\zeta_N}{\ker(N_{\mathbb{F}_p[\zeta_N]/\mathbb{F}_p[\zeta_N]_\pm})}\right).$$ Let $s$ be the degree of $\mathbb{F}_p[\zeta_N]/\mathbb{F}_p$, so that $\ker(N_{\mathbb{F}_p[\zeta_N]/\mathbb{F}_p[\zeta_N]_\pm})=\mu_{p^{s/2}+1}$. Let $r$ be the degree of the extension $U_{[g]}/\mathbb{F}_p[\zeta_N]$, which can be shown to be odd using the argument similar to that of Lemma \ref{order 2 trick}. Since \begin{equation*}
  q^{f(2t_k+1)/2}+1=(p^{rs/2}+1)=(p^{s/2}+1)(\text{an odd number}),
\end{equation*}
we can use (\ref{an odd divisilibity in sym-unram case}) and show that  $$\left(\frac{\zeta_N}{\mu_{p^{s/2}+1}}\right)=\left(\frac{\zeta_{\gcd(N,q^{f/2}+1)}}{\mu_{q^{f/2}+1}}\right)$$ as desired.
\end{proof}

We have checked that each $\chi_{g,\xi}$ is a character in $\chi$-data for all $g\in\mathcal{D}_{\text{sym,unram}}$. We then check whether
\begin{equation*}
\chi_{\mathcal{D}_0,\xi}|_{\mu_E}={}_{K_0/F}\mu_\xi|_{\mu_E}\text{ and }\chi_{\mathcal{D}_0,\xi}(\varpi_E)={}_{K_0/F}\mu_\xi(\varpi_E).
\end{equation*}
The first equality is clear by construction. Using the explicit value in (\ref{explicit rectifier unram case}) or by the Main Theorem 5.2 of \cite{BH-ET3}, to check the second equality is reduced to check whether
\begin{equation*}
t^0_\mu\left(\prod_{g\in\mathcal{D}_0}V^{\varpi}_{[g]}\right)t_\varpi\left(\prod_{g\in\mathcal{D}_0}V_{[g]}\right)t\left(\prod_{g\in\mathcal{D}_0}W_{[g]}\right)=(-1)^{e(f-1)}t^0_{\mu}(V_{K/F})t^0_{\mu}(V^\varpi_{K/F})t_\varpi(V_{K/F}).
\end{equation*}
By applying the fact (\ref{sym-unram t(V) is -t(W)}) that $t(W_{[g]})=- t_\mu^0(V_{[g]})$ if $[g]$ is symmetric unramified, all t-factors on both sides cancel out. We then have to check an equality on explicit signs. The sign on the left side is equal to the parity of $\#(W_E\backslash W_F/  W_E)_{\text{sym-unram}}$, while the sign on the right side is equal to $(-1)^{e(f-1)}$. They are equal just by Proposition \ref{parity of double coset with i=f/2}.

We have proved Theorem \ref{chi-data factor of BH-rectifier} for $g\in \mathcal{D}_0$. Hence we have finished the proof of Theorem \ref{chi-data factor of BH-rectifier} for the case when the residual characteristic $p$ is odd.

\section{Towards the end of the proof}
The case when the residual characteristic $p = 2$ is much simpler. When $E/F$ is tamely ramified, we have the sequence of subfields $F \subseteq  K \subseteq  E$ where $K/F$ is unramified and $E/K$ is totally ramified of odd degree. Since the order of $\Psi_{E/F}$ is odd, all sign characters and Jacobi symbols, and so all t-factors for $V_{[g]}$, are trivial. Hence Theorem \ref{chi-data factor of BH-rectifier} reduces to the following.
\begin{enumerate}[(i)]
\item If $g\in\mathcal{D}_{\text{asym}/\pm}$, then $$\chi_{g,\xi}|_{\mu_{E_{g}}} \equiv 1\text{ and }\chi_{g,\xi}(\varpi_E) = \text{anything appropriate}.$$
 \item If $g\in \mathcal{D}_{\text{sym}}$, then $$\chi_{g,\xi}|_{\mu_{E_{g}}} \equiv 1\text{ and }\chi_{g,\xi}(\varpi_E) =t^0_\varpi(V_{[g]})t(W_{[g]})= -1.$$
\end{enumerate}
The proof then proceed just as a simpler analogue of the odd residual characteristic case. We have completed the proof of Theorem \ref{chi-data factor of BH-rectifier}. Therefore we can express the essentially tame local Langlands correspondence by the inverse bijection of $\mathcal{L}_n$ as  $$\mathcal{L}_n^{-1}:\pi_\xi \rightarrow \chi_{\{\chi^{-1}_{\lambda,\xi}\}}\circ\tilde{\xi}$$ as stated in Theorem \ref{introduce main result ETLLC as adm-emb}.

\begin{rmk}\label{remark res rectifier is delta}
One of the conditions of the local Langlands correspondence, that $\omega_\pi=\det\sigma$ if $\mathcal{L}_n(\pi)=\sigma$ in Theorem \ref{langlands corresp}(\ref{ETLLC det is central char}), implies that ${}_F\mu_\xi|_{F^\times}=\delta_{E/F}$. This is a general fact about the restriction of the product of the characters in any $\chi$-data, as established in Proposition \ref{restriction of mu to F}.
\qed\end{rmk}

\begin{rmk}
If we examine the proof of Theorem \ref{chi-data factor of BH-rectifier} carefully, we see that the $\chi$-data  $\{\chi_{g,\xi}\}_{g\in\mathcal{D_\pm}}$ depend only on the $W_F$-equivalence of the jump data of the admissible character $\xi$ for almost all $g$, except when $e$ is even and $g=\sigma^{e/2}$. In this exceptional case, $\chi_{g,\xi}$ depends additionally on the additive character $\psi_F$ of $F$. \qed
\end{rmk}

Let $K$ be an intermediate subfield between $E/F$. We regard an $F$-admissible character $\xi$ as being admissible over $K$ and compute the corresponding rectifier ${}_K\mu_\xi$ as follows. Write $\mathcal{D}(K)_\pm={ \mathcal{D}_\pm}\cap W_K/W_E$.
\begin{cor}\label{factorization of rectifier over L}
Let $\{\chi_{g,\xi}\}_{g\in \mathcal{D}_\pm}$ be the collection of $\chi$-data corresponding to the admissible character $\xi$ of $E^\times$ over $F$. Let $K$ be a subfield between $E/F$. We then have
\begin{equation}\label{equation factorization of rectifier over L}
{}_{K}\mu_\xi=\prod_{[g]\in (W_E \backslash W_{K}/W_E)'}\chi_{g,\xi}|_{E^\times}.
\end{equation}
We see that the sub-collection $\{\chi_{g,\xi}\}_{g\in \mathcal{D}(K)_\pm}$ contains those $\chi$-data corresponding to $\xi$ as an $K$-admissible character.
\end{cor}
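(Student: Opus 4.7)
The plan is to apply Theorem \ref{chi-data factor of BH-rectifier} with the base field $F$ replaced by $K$ and then to identify the resulting $\chi$-data with the sub-collection $\{\chi_{g,\xi}\}_{g\in \mathcal{D}(K)_\pm}$. Recall from section \ref{section admissible characters} that if $\xi$ is admissible over $F$, then it is automatically admissible over every intermediate field $K$. Applying Theorem \ref{chi-data factor of BH-rectifier} to $\xi$ viewed as a $K$-admissible character therefore produces a collection of $\chi$-data $\{\chi'_{g,\xi}\}_{g\in \mathcal{D}(K)_\pm}$ over $K$ such that
\begin{equation*}
{}_{K}\mu_\xi=\prod_{[g]\in (W_E \backslash W_{K}/W_E)'}\chi'_{g,\xi}|_{E^\times}.
\end{equation*}
It then suffices to verify that $\chi'_{g,\xi}=\chi_{g,\xi}$ for every $g\in \mathcal{D}(K)_\pm$.

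First, I would observe that the symmetry or asymmetry of a double coset $[g]\in(W_E\backslash W_K/W_E)'$ is intrinsic to the coset $W_Eg W_E$ itself (the condition $[g]=[g^{-1}]$ does not involve the ambient Weil group), so the case distinctions in Theorem \ref{chi-data factor of BH-rectifier}.(\ref{theorem character}) are governed by the same data whether we work over $F$ or over $K$. Next, the standard module component $U_{[g]}$, the submodule $V_{[g]}$, and its complement $W_{[g]}$ all depend only on the admissible character $\xi$ (through the associated simple character $\theta$ and the groups $H^1(\xi)\subseteq J^1(\xi)$) and on the coset $[g]$, via the residual root space decomposition of Proposition \ref{decomp of standard module}; none of these modules is altered by changing the base field. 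Consequently the t-factors $t^i_\mu(V_{[g]}), t^i_\varpi(V_{[g]})$, and $t(W_{[g]})$ entering the defining formulae in Theorem \ref{chi-data factor of BH-rectifier} are the same regardless of whether $\xi$ is regarded as $F$- or $K$-admissible.

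With these identifications, the prescription in Theorem \ref{chi-data factor of BH-rectifier}.(\ref{theorem character}) forces $\chi'_{g,\xi}|_{\mu_{E_g}}$ and $\chi'_{g,\xi}(\varpi_E)$ to coincide with $\chi_{g,\xi}|_{\mu_{E_g}}$ and $\chi_{g,\xi}(\varpi_E)$ respectively, for each $g\in \mathcal{D}(K)_\pm$. The remaining values of each $\chi_{g,\xi}$ on a symmetric coset are determined by the $\chi$-datum condition $\chi_{g,\xi}|_{E^\times_{\pm g}}=\delta_{E_g/E_{\pm g}}$, which is itself an intrinsic property of the quadratic extension $E_g/E_{\pm g}$ and of $g$; this condition is identical for $\chi'_{g,\xi}$. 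For asymmetric cosets the only relevant values are on $\mu_{E_g}$, which already match. Hence $\chi'_{g,\xi}=\chi_{g,\xi}$ on all loci needed to evaluate the restriction to $E^\times$, yielding the desired factorisation \eqref{equation factorization of rectifier over L}.

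The main potential obstacle is the exceptional case $g=\sigma^{e/2}$ for even $e$, where the formula for $\chi_{\sigma^{e/2},\xi}$ involves the additive character (through the Gauss sum $\mathfrak{n}(\psi_{K_{l-1}})$). However, this root appears among the $\mathcal{D}(K)_\pm$ only when $K\subseteq K_{l-1}$, and in that case the relevant additive character used in the $K$-admissible construction of $\chi'_{\sigma^{e/2},\xi}$ is $\psi_{K_{l-1}}=\psi_F\circ\mathrm{tr}_{K_{l-1}/F}$ as well, so the two prescriptions still agree. This completes the verification.
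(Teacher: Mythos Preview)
Your approach is sound and gives a genuinely different route from the paper's. You re-invoke Theorem \ref{chi-data factor of BH-rectifier} with $K$ as base field and then argue that the prescription for the $\chi$-data is intrinsic to $[g]$ and $\xi$, so that $\chi'_{g,\xi}|_{E^\times}=\chi_{g,\xi}|_{E^\times}$ for $g\in\mathcal D(K)_\pm$. The paper instead works entirely over $F$: it takes the product $\prod_{[g]\in(W_E\backslash W_K/W_E)'}\chi_{g,\xi}|_{E^\times}$, evaluates it on $\mu_E$ and at $\varpi_E$ by reading off the explicit t-factor formulas from Theorem \ref{chi-data factor of BH-rectifier}, regroups (introducing the maximal unramified subfield $L$ of $E/K$ and the sign $(-1)^{e(E/K)(f(E/K)-1)}$), and matches the two resulting pieces with ${}_L\mu_\xi(\varpi_E)$ and ${}_{L/K}\mu_\xi(\varpi_E)$ from the explicit rectifier values in section \ref{section explicit rectifiers}. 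Your argument is cleaner conceptually and avoids this regrouping; the paper's is more self-contained and does not need to re-run the full machinery of Theorem \ref{chi-data factor of BH-rectifier} over a new base.

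One point in your sketch deserves a sharper justification. You assert that the t-factors $t^i_\mu(V_{[g]})$, $t^i_\varpi(V_{[g]})$, $t(W_{[g]})$ are unchanged under base change $F\rightsquigarrow K$, but these are defined via the cyclic groups $\mu=\mu_E/\mu_F$ and $\varpi=\langle\varpi_E\rangle/\langle\varpi_F\rangle$, which do change when $F$ is replaced by $K$. What makes your claim true is that for $g\in W_K$ the $\Psi_{E/F}$-action on $U_{[g]}$ already factors through $\Psi_{E/K}$ (the root $[\begin{smallmatrix}1\\ g\end{smallmatrix}]$ is trivial on $K^\times$), so the character images $\lambda(\mu_E/\mu_F)=\lambda(\mu_E/\mu_K)$ coincide, and the t-factors---which by their definition in section \ref{section symplectic modules} depend only on $\lambda(\Gamma)$---therefore agree. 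Similarly, the identification of the $K$-based module $(J^1_K/H^1_K)_{[g]}$ with the $F$-based $V_{[g]}$ rests on the fact that both are governed by the parity of the same jump $r_{i(g)}$, since the jump data of $\xi$ over $K$ are exactly the truncation of those over $F$ to the subfields $E_i\supseteq K$. Once you spell this out, your proof goes through.
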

\begin{proof}
For each subfield $K$ between $E/F$, we write $$V^K=\bigoplus_{[g]\in (W_E\backslash W_K/W_E)'}V_{[g]}.$$ We first compute the character in (\ref{equation factorization of rectifier over L}) when restricted to $\mu_E$. If $e(E/K)$ is odd, then
\begin{equation*}
\prod_{[g]\in (W_E \backslash W_{K}/W_E)'}\chi_{g,\xi}|_{\mu_E}=\prod_{g\in \mathcal{D}(K)_\pm}t^1_{\mu}(\mathbf{V}_{[g]})=t^1_{\mu}(V^K),
\end{equation*}
which is equal to ${}_K\mu_\xi|_{\mu_E}$ in this case. If $e(E/K)$ is even, then
\begin{equation*}
\prod_{[g]\in (W_E \backslash W_{K}/W_E)'}\chi_{g,\xi}|_{\mu_E}=\left(\prod_{g\in \mathcal{D}(K)_\pm-\{\sigma^{e/2}\}}t^1_{\mu}(\mathbf{V}_{[g]})\right)\left(\frac{}{\mu_E}\right)=t^1_{\mu}(V^K)\left(\frac{}{\mu_E}\right),
\end{equation*}
which is also equal ${}_K\mu_\xi|_{\mu_E}$ in this case. We then compute (\ref{equation factorization of rectifier over L}) at $\varpi_E$. Let $L$ be the maximal unramified extension in $E/K$. We write
 $$V_{L/K}=\bigoplus_{[g]\in W_E\backslash W_K/W_E-W_E\backslash W_L/W_E}V_{[g]},$$
 such that $V^K=V^L\oplus V_{L/K}$. We have
\begin{equation*}
\begin{split}
&\prod_{[g]\in (W_E \backslash W_{K}/W_E)'}\chi_{g,\xi}({\varpi_E})
\\= & \left(\prod_{g\in\mathcal{D}(K)_{\text{asym}/\pm} } t_{\varpi_E}(\mathbf{V}_{[g]})t(\mathbf{W}_{[g]})\right)\left(\prod_{g\in\mathcal{D}(K)_{\text{sym}}} t^0_{\mu}(V^\varpi_{[g]})t_{\varpi}(V_{[g]})t(W_{[g]})\right).
\end{split}
\end{equation*}
We re-group the t-factors and obtain
\begin{equation*}
\left(t_{\varpi}(V^L)t(W^L)\right)
\left((-1)^{e(E/K)(f(E/K)-1)}t^0_{\mu}(V_{L/K})t^0_{\mu}(V_{L/K}^\varpi)t_{\varpi}(V_{L/K})\right).
\end{equation*}
The first factor is ${}_{L}\mu_\xi(\varpi_E)$ and the second factor is ${}_{L/K}\mu_\xi(\varpi_E)$. Therefore the product is ${}_{K}\mu_\xi(\varpi_E)$.
\end{proof}

\section{Rectifiers in the theory of endoscopy}

Let $K/F$ be a sub-extension of $E/F$ of degree $d$. Using Corollary \ref{factorization of rectifier over L}, we can extend the definition of $\nu$-rectifiers by defining $${}_{K/F}\mu_\xi= {}_F\mu_\xi {}_K\mu^{-1}_\xi.$$
 By Corollary \ref{factorization of rectifier over L}, this $\nu$-rectifier is equal to $$ \prod_{[g]\in W_E \backslash W_F / W_E - W_E \backslash W_K / W_E}  \chi_{g,\xi}|_{E^\times} .$$
When $K/F$ is cyclic, we let $\Delta_\mathrm{III_2}$ be the transfer factor associated to the groups $(G,H)=(\mathrm{GL}_n,\mathrm{Res}_{K/F}\mathrm{GL}_{n/d})$.
\begin{cor}\label{rectifier as transfer factor}
  The transfer factor $\Delta_\mathrm{III_2}$ is equal to the $\nu$-rectifier ${}_{K/F}\mu_\xi$. In particular, if $E/F$ is cyclic, then the rectifier ${}_F\mu_\xi$ is exactly $\Delta_\mathrm{III_2}$.
\end{cor}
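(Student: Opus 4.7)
The plan is to combine the two main ingredients already established, namely Theorem \ref{chi-data factor of BH-rectifier} together with Corollary \ref{factorization of rectifier over L} on one side, and Corollary \ref{definition of delta III2} on the other. First I would unpack the extended definition of the $\nu$-rectifier. By definition ${}_{K/F}\mu_\xi = {}_F\mu_\xi \cdot {}_K\mu_\xi^{-1}$, and applying Theorem \ref{chi-data factor of BH-rectifier} to the ambient extension $E/F$ together with Corollary \ref{factorization of rectifier over L} to $\xi$ regarded as an admissible character of $E^\times$ over $K$, one obtains
\begin{equation*}
{}_F\mu_\xi = \prod_{[g]\in (W_E\backslash W_F/W_E)'} \chi_{g,\xi}|_{E^\times}, \qquad {}_K\mu_\xi = \prod_{[g]\in (W_E\backslash W_K/W_E)'} \chi_{g,\xi}|_{E^\times}.
\end{equation*}
Since the second product ranges over a subset of the indices of the first, taking the quotient yields
\begin{equation*}
{}_{K/F}\mu_\xi = \prod_{[g]\in W_E\backslash W_F/W_E - W_E\backslash W_K/W_E} \chi_{g,\xi}|_{E^\times},
\end{equation*}
which makes sense because the sub-collection $\{\chi_{g,\xi}\}_{g\in\mathcal{D}(K)_\pm}$ constitutes precisely the $\chi$-data associated to $\xi$ as a $K$-admissible character.

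Next I would match this product against the explicit formula for $\Delta_{\mathrm{III}_2}$. Corollary \ref{definition of delta III2} (or the rewriting just above Proposition \ref{transfer factor trivial when totally-ram}) expresses $\Delta_{\mathrm{III}_2}(\gamma)$ as exactly the same product of restrictions of a chosen set of $\chi$-data, indexed by the double cosets in $W_E\backslash W_F/W_E - W_E\backslash W_K/W_E$. Choosing the collection $\{\chi_{g,\xi}\}$ (respectively its inverses, depending on the sign convention adopted in Remark \ref{changing back from right to left} and in the formulation of Theorem \ref{introduce main result ETLLC as adm-emb}) as the $\chi$-data entering the construction of the admissible embedding then identifies $\Delta_{\mathrm{III}_2}(\gamma)$ with ${}_{K/F}\mu_\xi(\gamma)$ on every $\gamma\in E^\times$ regular in $G(F)$. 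The specialization to $K=F$ and cyclic $E/F$ then gives the second sentence, since in that case ${}_{K/F}\mu_\xi = {}_F\mu_\xi$.

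The only point that requires any care is the reconciliation of sign/inversion conventions between the $\chi$-data produced by Theorem \ref{chi-data factor of BH-rectifier}, which are tailored so that their restricted product reproduces the Bushnell--Henniart rectifier, and the $\chi$-data whose associated admissible embedding is used to compute $\Delta_{\mathrm{III}_2}$. This is the only substantive step, but it is settled by invoking Remark \ref{changing back from right to left}: replacing the $\chi$-data by their inverses still yields a valid set of $\chi$-data (by conditions (\ref{chi-data condition transition}) and (\ref{chi-data condition quadratic})) and has the effect of inverting the whole product, so the equality holds on the nose once one fixes the convention used in the statement of Corollary \ref{definition of delta III2}.
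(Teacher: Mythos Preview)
Your proposal is correct and follows essentially the same approach as the paper: the paper's proof simply invokes Corollary \ref{definition of delta III2}, having already recorded (in the paragraph immediately preceding the corollary) that Corollary \ref{factorization of rectifier over L} gives ${}_{K/F}\mu_\xi$ as the product of $\chi_{g,\xi}|_{E^\times}$ over $[g]\in W_E\backslash W_F/W_E - W_E\backslash W_K/W_E$. Your additional care with the inversion convention is apt---the paper handles this silently by taking the $\chi$-data defining the admissible embedding (and hence $\Delta_{\mathrm{III}_2}$) to be $\{\chi_{g,\xi}^{-1}\}$, consistent with Theorem \ref{introduce main result ETLLC as adm-emb}, so that the $^{-1}$ in Corollary \ref{definition of delta III2} cancels and the two products agree on the nose.
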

\begin{proof}
The first assertion is just a consequence of Corollary \ref{definition of delta III2}. The second assertion is clear.
\end{proof}

\addcontentsline{toc}{chapter}{Bibliography}

\end{document}